\newtheoremstyle{note}{12pt}{12pt}{}{}{\bfseries}{.}{.5em}{}
\title{\LARGE\textbf{Newhouse Laminations}}
\author{Michael Benedicks, Marco Martens and Liviana Palmisano}
\newtheorem{theo}{Theorem}
\newtheorem{prop}{Proposition}
\numberwithin{equation}{section}
\newtheorem{defin}{Definition}
\newtheorem{conj}[equation]{Conjecture}
\newtheorem{rem}{Remark}
\newtheorem{cor}{Corollary}
\numberwithin{equation}{section}
\newtheorem{lem}{Lemma}
\newcommand{\N}{{\mathbb N}}
\newcommand{\Z}{{\mathbb Z}}
\newcommand{\R}{{\mathbb R}}
\newcommand{\Q}{{\mathbb Q}}
\newcommand{\Cinf}{{{\mathcal C}^\infty}}
\newcommand{\Cd}{{{\mathcal C}^2}}
\newcommand{\Ct}{{{\mathcal C}^3}}
\newcommand{\Cq}{{{\mathcal C}^4}}
\newcommand{\Cuno}{{{\mathcal C}^1}}
\begin{document}
\maketitle
\author
\textcolor{blue}{}\global\long\def\sbr#1{\left[#1\right] }
\textcolor{blue}{}\global\long\def\cbr#1{\left\{  #1\right\}  }
\textcolor{blue}{}\global\long\def\rbr#1{\left(#1\right)}
\textcolor{blue}{}\global\long\def\ev#1{\mathbb{E}{#1}}
\textcolor{blue}{}\global\long\def\R{\mathbb{R}}
\textcolor{blue}{}\global\long\def\E{\mathbb{E}}
\textcolor{blue}{}\global\long\def\norm#1#2#3{\Vert#1\Vert_{#2}^{#3}}
\textcolor{blue}{}\global\long\def\pr#1{\mathbb{P}\rbr{#1}}
\textcolor{blue}{}\global\long\def\qq{\mathbb{Q}}
\textcolor{blue}{}\global\long\def\aa{\mathbb{A}}
\textcolor{blue}{}\global\long\def\ind#1{1_{#1}}
\textcolor{blue}{}\global\long\def\pp{\mathbb{P}}
\textcolor{blue}{}\global\long\def\cleq{\lesssim}
\textcolor{blue}{}\global\long\def\ceq{\eqsim}
\textcolor{blue}{}\global\long\def\Var#1{\text{Var}(#1)}
\textcolor{blue}{}\global\long\def\TDD#1{{\color{red}To\, Do(#1)}}
\textcolor{blue}{}\global\long\def\dd#1{\textnormal{d}#1}
\textcolor{blue}{}\global\long\def\eqdef{:=}
\textcolor{blue}{}\global\long\def\ddp#1#2{\left\langle #1,#2\right\rangle }
\textcolor{blue}{}\global\long\def\En{\mathcal{E}_{n}}
\textcolor{blue}{}\global\long\def\Z{\mathbb{Z}}
\textcolor{blue}{{} }

\textcolor{blue}{}\global\long\def\nC#1{\newconstant{#1}}
\textcolor{blue}{}\global\long\def\C#1{\useconstant{#1}}
\textcolor{blue}{}\global\long\def\nC#1{\newconstant{#1}\text{nC}_{#1}}
\textcolor{blue}{}\global\long\def\C#1{C_{#1}}
\textcolor{blue}{}\global\long\def\meas{\mathcal{M}}
\textcolor{blue}{}\global\long\def\cSpace{\mathcal{C}}
\textcolor{blue}{}\global\long\def\pspace{\mathcal{P}}

\begin{abstract}
Newhouse laminations occur in unfoldings of rank-one homoclinic tangencies. Namely, in these unfoldings, there exist codimension $2$ laminations of maps with infinitely many sinks which move simultaneously along the leaves. As consequence, in the space of real polynomial maps, there are examples of: 
\begin{itemize}
\item[-] H\'enon maps, in any dimension, with infinitely many sinks,
\item[-] quadratic H\'enon-like maps with infinitely many sinks and a period doubling attractor,
\item[-] quadratic H\'enon-like maps with infinitely many sinks and a strange attractor,
\item[-] non trivial analytic families of polynomial maps with infinitely many sinks.
\end{itemize}
\end{abstract}

\section{Introduction}
Systems describing nature have a certain form of stability, which makes it possible to observe them. The strongest form of stability is structural stability. Hyperbolic systems, which are known to be structurally stable, have been intensively studied and completely understood. The situation becomes much more complicated if a hyperbolic system is deformed until it ceases to be hyperbolic. In particular, homoclinic tangencies can appear. Unfolding of homoclinic tangencies are only very partially understood. 

In the case of dissipative rank one\footnote{ A rank one system is a system with only one unstable Lyapunov exponent.} systems, part of the dynamics of unfolding of homoclinic tangencies can be described by H\'enon-like maps. 
Three phenomena have been detected in unfoldings of dissipative rank one homoclinic tangencies. 
\begin{enumerate}
\item Newhouse phenomena (see \cite{Newhouse}): there are maps near homoclinic tangencies which have infinitely many sinks.
\item Strange attractors (see \cite{BC, MV}): there are maps with an attractor having an SRB measure. In particular, the invariant measure has positive Lyapunov exponent, see \cite{BY, BV}.
\item Period doubling Cantor attractors (see \cite{LM}): there are maps with a Cantor attractor having zero Lyapunov exponent. 
\end{enumerate}
These phenomena appear in unfoldings of homoclinic tangencies, i.e near the boundary of hyperbolicity where the system ceases to be structurally stable. 
Although these phenomena are intrinsically related to instability, they all have their own form of stability. In other words, the set of parameters where these phenomena occur is not open, but it is large and has structure. In more details, the strange attractors appear for a positive Lebesgue measure set of parameters and the maps with a period doubling Cantor attractor form a codimension one manifold.

We study here the stability of the Newhouse phenomenon. We prove that there are codimension  $2$ laminations\footnote{We recall that a lamination is a Hausdorff space $X$ which is equipped with a covering $\left\{U_i\right\}$ by open subsets and coordinate charts $\phi_i:U_i\to T_i\times D_i$, where $D_i$ is homeomorphic to a domain in Euclidean space and where  $T_i$ is some topological space. The sheets are the subsets of $X$ which are sent locally by the mappings $\phi_i$ to the Euclidean factors and the transition mappings $\phi_{i,j}:\phi_j\left(U_i\cap U_j\right)\to \phi_i\left(U_i\cap U_j\right)$ are homeomorphisms which preserve the sheets. The lamination of Theorem B has only one chart. It is homeomorphic to $\mathbb{R}\setminus \mathbb{Q}\times \mathbb{R}^d$.} of maps with infinitely many sinks. The sinks moves smoothly along the leaves of the lamination.

More specifically, we consider $\Cinf$ local diffeomorphisms on a $\Cinf$ manifold of any dimension with a strong homoclinic tangency, see Definition \ref{stronghomtang}. The term {\it strong homoclinic tangency} refers to the fact that the initial map has an homoclinic tangency and a transversal homoclinic intersection between the stable and unstable manifold of a rank one saddle point. Moreover if $\mu$ is the unstable eigenvalue and $\lambda_1$ is the dominant stable eigenvalue, then a map with a strong homoclinic tangency satisfies also:
\begin{equation}\label{ourcond}
|\lambda_1||\mu|^3<1.
\end{equation}
 Given a map with a strong homoclinic tangency $f$, we consider finite dimensional unfoldings of $f$, see Definition \ref{unfolding}. The collection of these unfoldings is the complement of finitely many manifolds in the space of all families through $f$. As a preparation for our main result, namely Theorem B, we prove also the following previously known theorem.

\vskip .2 cm

\paragraph{\bf Theorem A.} 
Let $F:\mathcal P\times M \to M$ be a $2$ dimensional unfolding of a map $f$ with a strong homoclinic tangency, where $\mathcal P$ is a two dimensional parameter space. Then there exists a set $NH\subset\mathcal P$ such that
\begin{itemize}
\item[-] every map in $NH$ has infinitely many sinks,
\item[-] $NH$ is homeomorphic to $\R\setminus\Q$.
\end{itemize}
\vskip .2 cm
\noindent
The conclusions of this theorem hold in particular for the following families.
\begin{itemize}
\item[-] The two-dimensional real H\'enon family $F_{a,b}:\mathbb{R}^2\times\mathbb{R}^2\to\mathbb{R}^2$, 
$$
F_{a,b}\left(\begin{matrix}
x\\y
\end{matrix}
\right)=\left(\begin{matrix}
a-x^2-by\\x
\end{matrix}
\right)
$$
It was already shown in \cite{BP, Da, GST, Ro,VaS} that there are real H\'enon maps with infinitely many sinks. 
\item[-] The real H\'enon family of maps of $\R^n$.
It was already shown in \cite{GST, PV} that there are real H\'enon maps of $\R^n$ with infinitely many sinks. 
\end{itemize}

 Our main result describes the stability aspects of the Newhouse phenomenon. There are codimension $2$ laminations of maps with infinitely many sinks. All the sinks move smoothly and simultaneously along the leaves of the laminations. These laminations appear in higher dimensional families which extend two dimensional unfoldings, i.e. obtained by adding any number of parameters. This answers a question in \cite{CCH}. The existence of the laminations is a consequence of only infinitesimal properties of the saddle, namely $|\lambda_1||\mu|^3<1$ and $\partial\mu/\partial t\neq 0$, see Remark \ref{anglebound}. In particular the laminations occur among systems which are not necessarily globally area contracting. The transversal structure of the lamination is remarkable regular both in topological and geometrical sense, see Figure \ref{Fig6} and Figure \ref{Fig7}. 
\vskip .2 cm
\paragraph{\bf Theorem B.} 
Let $M$, $\mathcal P$ and $\mathcal T$ be $\Cinf$ manifolds and $F:\left(\mathcal P\times\mathcal T\right)\times M\to M$ a $\Cinf$ family with $\text{dim}(\mathcal P)=2$ and $\text{dim}(\mathcal T)\geq 1$. If $F_0:\left(\mathcal P\times\left\{\tau_0\right\}\right)\times M\to M$ is an unfolding of a map $f_{\tau_0}$ with a strong homoclinic tangency, then the set of maps with infinitely many sinks, $NH_F\subset \mathcal P\times \mathcal T$, satisfies the following:
\begin{itemize}
\item[-] $NH_F$ contains a codimension $2$ lamination $L_F$,
\item[-] $L_F$ is homeomorphic to $\left(\mathbb R\setminus\mathbb Q\right)\times\mathbb R^{\dim (\mathcal T)}$,
\item[-] the leaves of $L_F$ are $\Cuno$ codimension $2$ manifolds,
\item[-] infinitely many sinks persist along each leave of the lamination.
\end{itemize}
\vskip .2 cm

An application of the main theorems to two and higher dimensional H\' enon dynamics is the following. In the space of polynomial maps, there are codimension $2$ laminations of maps with infinitely many sinks. The lamination intersects the H\' enon family transversally.
\vskip .2 cm
\paragraph{\bf Theorem C.}
The real H\' enon family contains a set $NH$, homeomorphic to $\mathbb{R}\setminus \mathbb{Q}$, of maps with infinitely many sinks.  Moreover the space $\text{Poly}_d({\mathbb{R}^n})$ of real polynomials of $\mathbb{R}^n$ of degree at most $d$, with $d\ge 2$, contains a codimension $2$ lamination of maps with infinitely many sinks. The lamination is homeomorphic to $\left(\mathbb R\setminus\mathbb Q\right)\times\mathbb R^{\text{D}-2}$ where $\text{D}$ is the dimension of $\text{Poly}_d({\mathbb{R}^n})$ and the leaves of the lamination are $\Cuno$ smooth. The sinks persist along each leave of the lamination. Moreover the leaves of the laminations in $\text{Poly}_d({\mathbb{R}^2})$ are real-analytic.
\vskip .2 cm

Observe that the laminations mentioned in Theorem C are non trivial. Consider the two-dimensional H\' enon family $F_{a,b}$. One can perturb this family by adding polynomial terms to obtain a new family $\tilde F_{a,b}$. According to \cite{HMT}, one can adjust the polynomial perturbation such that strongly dissipative H\' enon maps at the boundary of chaos of $F$ are never topological conjugate to strongly dissipative H\' enon maps at the boundary of chaos of $\tilde F$. The two families are topologically different. Nevertheless, Theorem C says that the Newhouse points with their topological characteristics, persist.

The fact that there are infinitely many sinks which persist along codimension $2$ leaves, allows us to construct examples of maps with intricate attractor coexistence. In particular, in the three dimensional family of quadratic H\'enon-like maps, 
\begin{equation}
F_{a,b,\tau}\left(\begin{matrix}
x\\y
\end{matrix}
\right)=\left(\begin{matrix}
a-x^2-by+\tau y^2
\\
x
\end{matrix}
\right),
\end{equation}
 there are maps with infinitely many sinks and a period doubling Cantor attractor\footnote{See Definition \ref{cantorA}} and there are maps with infinitely many sinks and a strange attractor\footnote{See Definition \ref{strange}}. 
\vskip .2 cm
\paragraph{\bf Theorem D.} There are uncountable many quadratic H\'enon-like maps with infinitely many sinks and a period doubling Cantor attractor.
\vskip .2 cm
\paragraph{\bf Theorem E.} The set of quadratic H\'enon-like maps with infinitely many sinks and a strange attractor has Hausdorff dimension at least $1$.
\vskip .2 cm
The sinks and the Cantor attractor move smoothly along codimension $3$ sub-manifolds in the space of polynomial maps.
\paragraph{\bf Theorem F.}
The space $\text{Poly}_d({\mathbb{R}^2})$, $d\ge 2$,  of real polynomials of $\mathbb{R}^2$ of degree at most $d$ contains a codimension $3$ lamination of maps with infinitely many sinks and a period doubling Cantor attractor. The lamination is homeomorphic to $\left(\mathbb R\setminus\mathbb Q\right)\times\mathbb R^{\text{D}-3}$ where $\text{D}$ is the dimension of $\text{Poly}_d({\mathbb{R}^2})$ and the leaves of the lamination are real-analytic. The sinks and the period doubling Cantor attractor persist along the leaves. 
\vskip .2 cm
The next theorem should be considered in the context of the Palis Conjecture, \cite{Palis}.  The main result in \cite{Berger} states that there exists a Baire residual set of smooth $d$-dimensional families such that each map in such a family has infinitely many sinks. Theorem G states that there are analytic families of arbitrary dimension of polynomial maps such that each map in the family has infinitely many sinks.
\vskip .2 cm
\paragraph{\bf Theorem G.}
Every $d+2$-dimensional unfolding, $d\geq 1$, of a map with a strong homoclinic tangency contains smooth $d$-dimensional families of maps where each map has infinitely many sinks.
In particular, there are non trivial $d$-dimensional analytic families of polynomial H\'enon-like maps in which every map has infinitely many sinks.
\vskip .2 cm
In order to stress the fact that the families in Theorem G are non trivial we would like to emphasize the following. Take a one-parameter family as in Theorem G. The map at the beginning of the curve has infinitely many sinks and a period doubling Cantor attractor and the map at the end has still infinitely many sinks, but the Cantor attractor is replaced by a strange attractor. All maps in the middle have infinitely many sinks. 
\bigskip

%
Maps with the Newhouse phenomenon have been constructed in different contexts. In particular, there are many examples of Baire\footnote{A Baire set in a locally compact Hausdorff space $X$ is a countable intersection of open and dense subsets.} sets of maps with infinitely many sinks in the space of systems, see \cite{ABC, Berger, BDP, DR, GST1, KS, Newhouse,    PV,  Ures}.
In most studies the method behind these results is based on persistency of tangencies where the thickness (in the sense of Newhouse) of the stable and the unstable Cantor sets plays a crucial role.
 
In order to study the stability of maps with infinitely many sinks we needed to introduce a different method. This method does not rely on the persistency of tangency and the thickness condition is replaced by condition (\ref{ourcond}) which depends only on the eigenvalues at a saddle point. {Notice that in the H\'enon family the set of maps satisfying the thickness condition and the one satisfying (\ref{ourcond}) intersect but they are not contained in each other. }

The method is inductive, starting at a map with a sink and a homoclinic tangency. The sink will persist in a neighborhood of the map. Similar as in the classical Newhouse construction one uses the homoclinic tangency to create, by small changes of the parameter, a new homoclinic tangency and another sink. There are two essential differences of our method and the method due to Newhouse, see \cite{Newhouse}. This method uses perturbations in the space of systems. Our method is designed to work within a given family. The creation of the new sink and the new tangency occurs by parameter adjustment. 

The main difference is in the creation of the new secondary  tangency. The creation of the secondary tangency in the classical Newhouse method uses the persistency of tangencies, i.e. in a neighborhood of the starting map, every map has somewhere a tangency. This secondary tangency varies very discontinuously throughout the neighborhood. This discontinuity of the secondary tangency makes analysis very hard. Our method is inspired by \cite{BC, BP} and uses {\it critical} points and {\it binding} points and does not at all not rely on persistency of tangencies. 

\bigskip

{\it Outline of the method.} A critical point of a diffeomorphism is usually identified with a homoclinic tangency, see \cite{PRH}. A crucial aspect of homoclinic tangencies is that nearby, there are domains whose first return map are H\'enon-like, in the sense of \cite{PT}. There is precise analysis available to locate the sinks in such H\'enon-like maps. This analysis is summarized in Proposition \ref{sink}.

Iterations of the local unstable manifold at the tangency will accumulate at the unstable manifold of the original saddle point. In particular, these iterations will create a package of nearly parallel pieces of local unstable manifold near the tangency. One of such folded local unstable manifolds is  illustrated in Figure \ref{Fig4a}. The orbit of such a piece of folded local unstable manifold is controlled by the orbit of the original local unstable manifold at the tangency. They are created at binding points, similar to those in \cite{BC}, denoted here by $c'$. A next passage near the saddle point will create a piece of local unstable manifold with a large parameter speed. The folded local unstable manifold used to create the secondary tangency is illustrated in Figure \ref{Fig4b}. A precise analysis is available for the parameter dependance of these local unstable manifolds. This analysis is summarized in Proposition \ref{speed}.   

The creation of the secondary tangency is directly inspired by 
\cite{BC, BP}. The main idea is to use  the hyperbolicity of the saddle point to create sufficiently strong parameter dependance of the newly created folded local unstable manifolds. This parameter dependance allows to create a secondary tangency, see Proposition \ref{newtangency}.

The secondary tangencies occur along curves in parameter space, Proposition \ref{angle}. This is the most delicate aspect of the analysis. In particular, the angle between the curves of secondary tangencies and the curves of super attracting sinks is non-zero but tends exponentially to zero in the period of the sink. The key in the construction of the lamination is this delicate transversality which is responsible for its existence.

As all sinks, the sinks under consideration persist in open sets of parameters.  The consequence of the transversality is that these open sets are not small balls but are strongly elongated and contain the leaves of the lamination, see (\ref{alongated}).

\bigskip

As a final remark, indeed the Newhouse laminations constructed here are a small part of the set of all parameters for which the corresponding map has infinitely many sinks. However, in the construction we explore the effect of only one transversal homoclinic intersection. One can use all transversal homoclinic intersections and one expects the following.
\begin{conj}
Every $d$-dimensional unfolding of a map with a strong homoclinic tangency contains a codimension $2$ Newhouse lamination with Hausdorff dimension $d$. 
\end{conj}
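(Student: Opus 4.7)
The plan is to extend the construction behind Theorem B by exploiting all transversal homoclinic intersections of the saddle simultaneously, rather than only the single one used there. Any transversal homoclinic intersection produces a Smale horseshoe near the saddle whose invariant set is an uncountable product-type Cantor set, so there are infinitely many geometrically distinct binding schemes that can serve as the backbone of the inductive step. Each choice of an infinite symbolic itinerary in the associated subshift should produce, by the same arguments as in Theorem B, a one-parameter family of secondary tangencies and an associated leaf of a codimension-$2$ Newhouse lamination. The transversal of the union of all these laminations is then indexed by the full space of admissible itineraries, and therefore carries a $2$-dimensional Cantor structure reflecting the product structure of the horseshoe.

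Concretely, I would proceed in three steps. First, fix a Markov partition for the horseshoe and its associated subshift of finite type, so that finite admissible words parametrize $n$-fold binding schemes. Second, for each admissible finite word $w$ run the inductive construction of Theorem B with binding point chosen according to $w$, applying Proposition \ref{speed}, Proposition \ref{newtangency} and Proposition \ref{angle} to produce the secondary tangencies and super-attracting curves. The decisive point to be checked is that the parameter speeds and transversality angles depend only on the infinitesimal data $|\lambda_1|$, $|\mu|$ and $\partial\mu/\partial t$, so that all estimates are uniform over admissible words. Third, assemble the resulting Moran-type structure in parameter space, with branching multiplicity governed by the subshift and with contraction ratios governed by the eigenvalues, and verify that it is entirely contained in $NH_F$.

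The Hausdorff dimension estimate would then follow by constructing a Bernoulli-type measure on the space of admissible itineraries and pushing it forward to the $2$-parameter transversal via the Moran coding. The conjecture asserts that the branching rate of the subshift balances the contraction of the Moran scheme so as to yield full Hausdorff dimension $2$ in the transversal, which combined with the $(d-2)$-dimensional smooth leaves of Theorem B gives total dimension $d$. A natural route is to compare the pressure of the horseshoe with the log-contraction factors appearing at each inductive step, in the style of Bowen's formula.

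The main obstacle will be a uniform quantitative version of Proposition \ref{angle}. In the proof of Theorem B the angle between a secondary-tangency curve and the corresponding super-attracting curve decays exponentially with the period and is controlled branch by branch; for the conjecture one has to control this angle simultaneously along an exponentially branching tree of choices and ensure that the distortion of the Moran scheme remains bounded. If the angular decay outruns the branching rate of the subshift, the resulting transversal Cantor set will have dimension strictly less than $2$ and the conjecture fails. A secondary but more manageable difficulty is preserving all previously constructed sinks under the parameter adjustments needed to produce new sinks along different branches, which should follow from the elongated shape of the persistence neighborhoods mentioned at the end of the outline, provided the nested Moran boxes are chosen to respect those neighborhoods at every scale.
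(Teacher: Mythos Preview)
The statement you are attempting is labeled a \emph{Conjecture} in the paper, and the paper does not supply a proof. The only relevant passage is the remark immediately preceding the conjecture: the authors note that their construction uses a single transversal homoclinic intersection and that ``one can use all transversal homoclinic intersections,'' after which they state the conjecture as an expectation, not a result. So there is no proof in the paper to compare your proposal against.

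That said, your plan is exactly the strategy the authors hint at: replace the single binding scheme by the full symbolic tree coming from the horseshoe and run the inductive machinery of Propositions~\ref{speed}, \ref{newtangency}, \ref{angle} along every branch. You have correctly identified the real obstruction. The angle in Proposition~\ref{angle} between the tangency curve $b_{n,n_0}$ and the sink curve $a_n$ is of order $n|\lambda_1|^{\theta n}$, and the horizontal extent of a Newhouse box is controlled by (\ref{horizsize}), namely of order $n^{-1}(|\lambda_1|^\theta|\mu|^2)^{-n}$. To get full Hausdorff dimension in the transversal you need the number of admissible branches at generation $n$ to compensate these contractions, and the branching rate of the horseshoe is governed by $\mu$. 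Whether the resulting Moran pressure equation balances to dimension $2$ is precisely what is not established; your sketch does not contain an argument for this, only the correct observation that it is the decisive point. The paper's lower bound $MD(NH)\ge 1/2$ in Theorem~A, obtained from a single branch, shows how far the existing estimates are from what the conjecture demands.

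In short: your proposal is a faithful expansion of the authors' one-line hint, and you have isolated the right difficulty, but it remains a program rather than a proof, which is consistent with the statement's status as an open conjecture.
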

Although one anticipates Newhouse laminations with Hausdorff dimension $d$ one should not expect codimension one Newhouse laminations.

\paragraph{Acknowledgements}
The first author was supported by the Swedish Research Council Grant 2016-05482 and partially by the NSF grant 1600554 and the IMS at Stony Brook University. The second author was partially supported by the NSF grant 1600554. The third author was supported by the Trygger foundation and partially by the NSF grant 1600503. The authors would like to thank the institutes where the paper was realized: KTH, the Mittag-Leffler Institute, Stony Brook University and Uppsala University.  

\section{Preliminaries}\label{section:preliminaries} The following well-known linearization result is due to Sternberg, see \cite{S}.
\begin{theo}\label{Ctlinearization}
Given $\left(\lambda_0,\lambda_1,\dots,\lambda_{m-1}\right)\in\R^{m}$, with $\lambda_i\neq\lambda_j$ for $i\neq j$, there exists $N\left(\lambda_0,\lambda_1,\dots,\lambda_{m-1}\right)\in\N$ such that the following holds. 
Let $M$ be a $m$ dimensional $\Cinf$ manifold and let $f:M\to M$ be a diffeomorphism with a rank one saddle point $p\in M$, with unstable eigenvalue $|\lambda_0|>1$ and stable eigenvalues $\lambda=(\lambda_1,\lambda_2,\dots,\lambda_{m-1})$. If for all $j=0,\dots,m-1$,
\begin{equation}\label{nonresonance}
\lambda_j\neq\prod_{i\neq j}\lambda_i^{k_i}
\end{equation}
for $k=\left(k_0,\dots,k_{m-1}\right)\in\N^{m}$ with $2\leq |k|=k_0+\dots+ k_{m-1}\leq N$, with $N$ large enough, then $f$ is $\Cq$ linearizable.
\end{theo}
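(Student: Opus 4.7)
The plan is to follow Sternberg's classical strategy in two stages: first perform a polynomial change of coordinates that kills the low-order resonant obstructions, and then run an iterative scheme that uses hyperbolicity of $L=\mathrm{diag}(\lambda_0,\lambda_1,\dots,\lambda_{m-1})$ to convert a sufficiently high-order tangency with $L$ into a bona fide $\Cq$ conjugacy with $L$. Throughout, we work in a local chart near $p$ in which $p=0$ and $Df(0)=L$; the eigenvalues are distinct and real, so this diagonalization is smooth.

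First I would handle the formal part. Write $f(x)=Lx+R(x)$ with $R(x)=O(|x|^{2})$ and, for each integer $k\ge 2$, look for a near-identity polynomial change of coordinates of the form $h_k(x)=x+P_k(x)$, where $P_k$ is a vector of homogeneous polynomials of degree $k$, such that $h_k\circ f\circ h_k^{-1}$ has no terms of degree $k$. Expanding and collecting degree-$k$ contributions gives the cohomological equation
\begin{equation*}
L\,P_k(x)-P_k(Lx)=R_k(x),
\end{equation*}
where $R_k$ is determined by the previous normalizations. The linear operator $P\mapsto L\,P-P\circ L$ acts diagonally on the basis of vector-valued monomials $x^{\alpha}e_j$, with eigenvalue $\lambda_j-\lambda^{\alpha}=\lambda_j-\prod_i \lambda_i^{\alpha_i}$. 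Under the non-resonance hypothesis (\ref{nonresonance}) for $|\alpha|=k\le N$, each such eigenvalue is non-zero, so the equation has a unique polynomial solution. Composing these changes of coordinates in order $k=2,3,\dots,N$ produces a $\Cinf$ local diffeomorphism $H$ with $H(0)=0$, $DH(0)=\mathrm{Id}$, such that
\begin{equation*}
\widetilde f:=H\circ f\circ H^{-1}=L+R_{N+1},\qquad R_{N+1}(x)=O(|x|^{N+1}).
\end{equation*}

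Second, I would promote this high-order tangency to a true $\Cq$ conjugacy. The idea, going back to Sternberg, is to define a candidate conjugacy as the limit
\begin{equation*}
h(x)=\lim_{n\to\infty}L^{-n}\circ\widetilde f^{\,n}(x),
\end{equation*}
on the local stable manifold, and by the analogous backward iteration on the unstable direction (here one-dimensional), and then to show these glue together across the saddle. The difference $L^{-(n+1)}\widetilde f^{\,n+1}-L^{-n}\widetilde f^{\,n}$ is controlled by iterating $R_{N+1}$: since $\widetilde f^{\,n}(x)$ decays at rates dictated by the stable spectrum and the unstable direction is treated symmetrically, one obtains bounds of the form
\begin{equation*}
\bigl\|L^{-(n+1)}\widetilde f^{\,n+1}-L^{-n}\widetilde f^{\,n}\bigr\|_{\Cq}\le C\,\rho^{\,n}
\end{equation*}
with $\rho<1$, provided $N$ is chosen large enough that the $(N+1)$-fold contraction defeats the $\Cq$ loss coming from inverting $L$ and from differentiating $R_{N+1}$ four times. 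Explicitly, one needs $N+1$ to exceed a threshold determined by the ratios $\log|\lambda_i|/\log|\lambda_j|$ and by the target regularity $r=4$; this is exactly the quantitative content of the constant $N(\lambda_0,\dots,\lambda_{m-1})$ appearing in the statement.

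The main obstacle, and the reason the theorem is delicate, is precisely this quantitative choice of $N$. The formal step is elementary once (\ref{nonresonance}) is assumed, but the telescoping sum defining $h$ only converges in $\Cq$ if the geometric small-denominator estimates for monomials of degree $\le N$ are matched against hyperbolicity gains of order $|x|^{N+1}$ under iteration. I would therefore spend most of the work quantifying the worst spectral ratio appearing in the $\Cq$ chain rule applied to $\widetilde f^{\,n}$ and calibrating $N$ accordingly; once that threshold is fixed, Banach-space fixed-point arguments or a direct Cauchy-sequence estimate on $L^{-n}\widetilde f^{\,n}$ yield a $\Cq$ diffeomorphism $h$ with $h\circ\widetilde f=L\circ h$, and hence a $\Cq$ linearization $h\circ H$ of the original $f$.
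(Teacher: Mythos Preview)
The paper does not prove this theorem: it is stated as a well-known result due to Sternberg, and immediately after Theorem~\ref{familydependence} the authors simply write that ``the proofs of Theorem~\ref{Ctlinearization} and Theorem~\ref{familydependence} can be found in \cite{BrKo, IlaYak}.'' So there is no in-paper argument to compare against; your sketch already goes well beyond what the paper supplies.

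Your outline is the classical Sternberg two-step scheme and is correct in spirit. The formal normalization is exactly right: the homological operator $P\mapsto LP-P\circ L$ is diagonal on monomials with eigenvalues $\lambda_j-\prod_i\lambda_i^{\alpha_i}$, and (\ref{nonresonance}) for $2\le|\alpha|\le N$ lets you kill all terms up to order $N$. The one place I would flag is the convergence step in the saddle case. The limit $h=\lim_{n\to\infty}L^{-n}\widetilde f^{\,n}$ only makes sense on the local stable set, and the backward analogue only on the local unstable set; for a generic point near a saddle neither iteration converges, so the phrase ``these glue together across the saddle'' hides the real work. In Sternberg's original argument (and in the treatments in \cite{BrKo, IlaYak}) this is handled either by first straightening the invariant foliations and then linearizing fiberwise, or by a more elaborate contraction-mapping setup on a space of candidate conjugacies; either way the quantitative threshold on $N$ in terms of the spectral ratios and the target regularity $r=4$ emerges exactly as you describe. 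If you want a self-contained write-up, that gluing/foliation step is where the effort should go.
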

\begin{defin}
Let $M$ be an $m$-dimensional $\Cinf$ manifold and $f:M\to M$ a diffeomorphism with a rank one saddle point $p\in M$. We say that $p$ satisfies the $\Cq$ non-resonance condition if (\ref{nonresonance}) holds.
\end{defin}

\begin{theo}\label{familydependence}
Let $M$ be a $m$ dimensional $\Cinf$ manifold and $f:M\to M$ a diffeomorphism with a rank one saddle point $p\in M$ which satisfies the $\Cq$ non-resonance condition. Let $0\in \mathcal P\subset\R^{n}$ and $F:\mathcal P\times M\to M$ a $\Cinf$ family with $F_0=f$. Then, there exists a neighborhood $U$ of $p$ and a neighborhood $V$ of $0$ such that, for every $t\in V$, $F_t$ has a saddle point $p_t\in U$ satisfying the $\Cq$ non-resonance condition. Moreover $p_t$ is $\Cq$ linearizable in the neighborhood $U$ and the linearization depends $\Cq$ on the parameters. 
\end{theo}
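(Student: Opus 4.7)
The plan is to proceed in three stages: persistence of the saddle point, persistence of the non-resonance condition, and parameter-smooth linearization.

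First, since $p$ is a hyperbolic fixed point of $f=F_0$, the operator $DF_0(p)-\operatorname{Id}$ is invertible (no eigenvalue equals $1$). The implicit function theorem applied to $(t,q)\mapsto F_t(q)-q$ at $(0,p)$ yields a $\Cinf$ family $t\mapsto p_t$ of fixed points of $F_t$, defined on a neighborhood $V$ of $0$ and taking values in a neighborhood $U$ of $p$.

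Second, the linearization $A_t:=DF_t(p_t)$ depends $\Cinf$ on $t$. The eigenvalues $\lambda_0,\dots,\lambda_{m-1}$ of $A_0$ are mutually distinct (this is implicit in the hypothesis of Theorem \ref{Ctlinearization}; take $k$ to have a single nonzero entry equal to $1$ to see $\lambda_j\neq\lambda_i$ once one interprets the condition appropriately, and in any case distinctness is assumed). Therefore, shrinking $V$ if necessary, the eigenvalues $\lambda_j(t)$ of $A_t$ remain simple and depend smoothly on $t$. Each inequality $\lambda_j(t)\neq\prod_{i\neq j}\lambda_i(t)^{k_i}$ is an open condition, and since only finitely many multi-indices $k$ with $2\leq |k|\leq N$ need be considered, a further shrinking of $V$ guarantees the $\Cq$ non-resonance condition for every $t\in V$. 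In particular $p_t$ remains a rank-one saddle with non-resonant eigenvalues.

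Third, for each fixed $t\in V$ Theorem \ref{Ctlinearization} produces a $\Cq$ conjugation $h_t:U\to\R^m$ satisfying $h_t\circ F_t\circ h_t^{-1}=A_t$. The content of the claim is that $h_t$ can be chosen to depend $\Cq$ on $t$ as well. I would obtain this by introducing the skew product $\tilde F:V\times U\to V\times M$, $\tilde F(t,x)=(t,F_t(x))$, for which $\Gamma:=\{(t,p_t):t\in V\}$ is a normally hyperbolic invariant manifold; after straightening $\Gamma$ to $V\times\{0\}$ by a $\Cinf$ change of coordinates, one runs Sternberg's proof fiberwise with $t$ playing the role of a center variable of eigenvalue one. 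The two ingredients are (i) polynomial normalization up to degree $N$, achieved by recursively inverting the homological operator on Taylor coefficients, which is smooth in $t$ because the eigenvalues and the Taylor coefficients of $F_t$ are smooth in $t$ and the (non-resonant) denominators are bounded below on $V$; and (ii) smooth conjugation of the $(N+1)$-flat remainder, obtained as the unique fixed point of a contraction on a Banach space of $\Cq$ maps whose coefficients are smooth in $t$. Parameter regularity of $h_t$ then follows from the parametrized contraction principle.

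The main obstacle is the third step, specifically the uniform control of the loss of regularity in Sternberg's argument. For fixed $t$ the proof passes from a $\Cr$ map (with $r$ large, depending only on the spectrum, uniformly on $V$) to a merely $\Cq$ conjugation; one must verify that this loss is uniform in $t$, so that $h$ is jointly $\Cq$ on $V\times U$. This amounts to checking that the relevant function-space norms, the contraction constants, and the bounds on the homological operator all vary continuously with $t$, which is a standard but technical parametrized adaptation of Sternberg's argument.
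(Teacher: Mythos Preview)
The paper does not give its own proof of this theorem: immediately after stating Theorems~\ref{Ctlinearization} and~\ref{familydependence} it writes ``The proofs of Theorem~\ref{Ctlinearization} and Theorem~\ref{familydependence} can be found in \cite{BrKo, IlaYak}.'' So there is nothing to compare against beyond a citation.

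Your outline is the standard route and is consistent with what those references do. The first two stages (persistence of the hyperbolic fixed point via the implicit function theorem, and openness of the finitely many non-resonance inequalities) are correct and complete as stated. For the third stage, your idea of running Sternberg's scheme with $t$ treated as a parameter is exactly the content of the Ilyashenko--Yakovenko result on finitely smooth normal forms for local families; the Bronstein--Kopanski\u\i{} monograph carries out the parametrized version in detail. Your identification of the main technical point---uniform control of the loss of regularity so that the conjugacy is jointly $\Cq$ in $(t,x)$---is accurate, and is precisely what those references establish. One minor remark: the skew-product/normally-hyperbolic framing is a convenient way to package the argument, but be careful that the center direction (eigenvalue~$1$ in $t$) does not itself introduce resonances; the cited proofs avoid this by working directly with parametrized Taylor expansions rather than by literally applying Sternberg to $\tilde F$.
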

The proofs of Theorem \ref{Ctlinearization} and Theorem \ref{familydependence} can be found in \cite{BrKo, IlaYak}. The following lemma is a direct consequence of Theorem \ref{Ctlinearization}.
\begin{lem}
Let $M$ be an $m$-dimensional $\Cinf$ manifold and $f:M\to M$ a diffeomorphism with a rank one saddle point $p\in M$ satisfying the $\Cq$ non-resonance condition with $|\lambda_1|>\max_{2\leq i\leq m-1}|\lambda_i|$. If $q\in W^u_p$, then
$$
E_q=\left\{v\in T_qM | \lim_{n\to\infty}Df_q^{-n}(v)\lambda^n_1 \text{ exists }\right\}
$$
is a two-dimensional vector space with $T_q W_p^u\subset E_q$.
\end{lem}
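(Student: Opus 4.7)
The plan is to use the $\Cq$ linearization of $f$ near $p$ supplied by Theorem \ref{Ctlinearization}. Let $\phi:U\to\R^m$ be a linearizing chart, so that $\phi\circ f=L\circ\phi$ with $L=\operatorname{diag}(\lambda_0,\lambda_1,\dots,\lambda_{m-1})$. Since $q\in W^u_p$, the backward orbit $f^{-n}(q)$ converges to $p$, so I fix $N$ large enough that $f^{-n}(q)\in U$ for every $n\ge N$. Iterating the identity $D\phi\circ Df=L\circ D\phi$ gives
$$
Df^{-n}_q \;=\; D\phi_{f^{-n}(q)}^{-1}\circ L^{-(n-N)}\circ D\phi_{f^{-N}(q)}\circ Df^{-N}_q,
$$
which reduces the problem to a componentwise analysis on $\R^m$ once a tangent vector is transported into the linearizing chart.

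Writing $w=D\phi_{f^{-N}(q)}\circ Df^{-N}_q(v)\in\R^m$ with coordinates $(w_0,\dots,w_{m-1})$, one computes
$$
\lambda_1^n\cdot L^{-(n-N)}w \;=\; \bigl(\lambda_0^{N}(\lambda_1/\lambda_0)^n w_0,\;\lambda_1^{N}w_1,\;\lambda_2^{N}(\lambda_1/\lambda_2)^n w_2,\;\dots,\;\lambda_{m-1}^{N}(\lambda_1/\lambda_{m-1})^n w_{m-1}\bigr).
$$
Because $p$ is a rank-one saddle, $|\lambda_1/\lambda_0|<1$ and the $0$-component tends to $0$; the $1$-component is constant; and for $i\ge 2$ the dominance hypothesis $|\lambda_1|>|\lambda_i|$ forces divergence unless $w_i=0$. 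Combining this with continuity of $D\phi^{-1}$, which yields $D\phi^{-1}_{f^{-n}(q)}\to D\phi^{-1}_p$, I conclude that $\lambda_1^{n}\,Df^{-n}_q(v)$ converges if and only if $w_2=\dots=w_{m-1}=0$. This is a codimension $(m-2)$ linear condition on $w$, hence (via the isomorphism $D\phi_{f^{-N}(q)}\circ Df^{-N}_q$) a codimension $(m-2)$ condition on $v\in T_qM$, so $\dim E_q=2$.

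For the inclusion $T_qW^u_p\subset E_q$, note that in the linearizing chart the local unstable manifold near $p$ is exactly the $e_0$-axis; so for $v\in T_qW^u_p$ the image $w=D\phi_{f^{-N}(q)}\circ Df^{-N}_q(v)$ lies on the $e_0$-axis, in particular $w_i=0$ for all $i\ge 2$, which places $v$ in $E_q$. The only delicate point in the argument is bookkeeping across varying basepoints: the derivative $Df^{-n}_q$ maps into the shifting fiber $T_{f^{-n}(q)}M$, and one needs the convergence $f^{-n}(q)\to p$ together with continuity of $D\phi$ to transfer the convergence in the fixed vector space $\R^m$ back into convergence in the ambient tangent bundle. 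The $\Cq$ regularity of the linearization provided by Theorem \ref{Ctlinearization} is more than enough for this transfer.
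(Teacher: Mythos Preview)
Your argument is correct and is exactly what the paper intends: the paper does not give a proof, stating only that the lemma ``is a direct consequence of Theorem~\ref{Ctlinearization}'', and your linearize--then--diagonalize computation is precisely that consequence spelled out. Your handling of the varying basepoints via $D\phi^{-1}_{f^{-n}(q)}\to D\phi^{-1}_p$ and of the inclusion $T_qW^u_p\subset E_q$ via invariance of the $e_0$-axis is the natural (and only reasonable) way to make the one-line remark rigorous.
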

In the following we define a map with a strong homoclinic tangency by listing several conditions.  To summarize, a map with a strong homoclinic tangency has a non degenerate homoclinic tangency $q_1$ and a transversal homoclinic intersection $q_2$. Moreover $q_1$ and $q_2$ are in general position. Namely, any tangent vector points in the direction of the strongest stable eigenvalue. This condition is redundant when the manifold has dimension $2$. Conditions $(f8)$, $(f9)$ and $(f10)$ ensure that $q_1$ and the unstable local manifolds of $q_1$ and $q_2$ accumulate on the leg of the unstable manifold of the saddle point containing the transversal homoclinic intersection $q_2$. These conditions are redundant if $\mu<-1$. To verify this, see Figure \ref{Fig1}. Moreover to implement the construction, a condition on the eigenvalues is also required, see $(f2)$. In the following the reader should keep Figure \ref{Fig1} in mind.
\begin{defin}\label{stronghomtang}
Let $M$ be an $m$-dimensional $\Cinf$ manifold and $f:M\to M$ a local diffeomorphism satisfying the following conditions:
\begin{itemize}
\item[$(f1)$] $f$ has a rank one saddle point $p\in M$, with unstable eigenvalue $|\mu|>1$ and distinct stable eigenvalues $\lambda=(\lambda_1,\lambda_2,\dots,\lambda_{m-1})$, where $\lambda_1$ is the largest one, namely $$|\lambda_1|>\max_{2\leq i\leq m-1}|\lambda_i|,$$
\item[$(f2)$] $|\lambda_1||\mu|^3<1$,
\item[$(f3)$] $p$ satisfies the $\Cq$ non-resonance condition,
\item[$(f4)$] $f$ has a non degenerate homoclinic tangency,  $q_1\in W^u(p)\cap W^s(p)$ in general position, namely $$\lim_{n\to\infty}\frac{1}{n}\log d(f^n(q_1),p)=\log|\lambda_1|,$$
\item[$(f5)$] the direction $0\neq B\in T_{q_1}W^u(p)$ is in general position, namely
$$
\lim_{n\to\infty}\frac{1}{n}\log |Df^n_{q_1}(B)|=\log|\lambda_1|,
$$
and 
$$
E_{q_1}\pitchfork W^s_{q_1}(p),
$$
\item[$(f6)$] $f$ has a transversal homoclinic intersection,  $q_2\in W^u(p)\pitchfork W^s(p)$ in general position, namely $$\lim_{n\to\infty}\frac{1}{n}\log d(f^n(q_2),p)=\log|\lambda_1|,$$
\item[$(f7)$] the direction of $0\neq v\in E_{q_2}\cap T_{q_2}W^s(q_2)$ is in general position, namely
$$
\lim_{n\to\infty}\frac{1}{n}\log |Df^n_{q_2}(v)|=\log|\lambda_1|,
$$
and \footnote{Observe that this is always verified because $q_2$ is a transversal intersection.}
$$
E_{q_2}\pitchfork W^s_{q_2}(p),
$$
\item[$(f8)$] let $[p,q_2]^u\subset W^u(p)$ be the arc connecting $p$ to $q_2$, then there exist arcs  $W^u_{\text{\rm loc},n}(q_2)=[q_2, u_n]^u\subset W^u(q_2)$ such that $[p,q_2]^u\cap [q_2,u_n]^u=\left\{q_2\right\}$ and 
$$
\lim_{n\to\infty}f^n\left(W^u_{\text{\rm loc},n}(q_2)\right)=[p,q_2]^u,
$$
\item[$(f9)$] there exist neighborhoods $W^u_{\text{\rm loc},n}(q_1)\subset W^u(q_1)$ such that 
$$
\lim_{n\to\infty}f^n\left(W^u_{\text{\rm loc},n}(q_1)\right)=[p,q_2]^u,
$$
\item[$(f10)$] there exists $N\in\N$ such that 
$$
f^{-N}(q_1)\in [p,q_2]^u.
$$
\end{itemize}
A map $f$ with these properties is called a map with a \emph{strong homoclinic tangency}, see Figure \ref{Fig1}.
\end{defin}
\begin{figure}[h]
\centering
\includegraphics[width=0.6\textwidth]{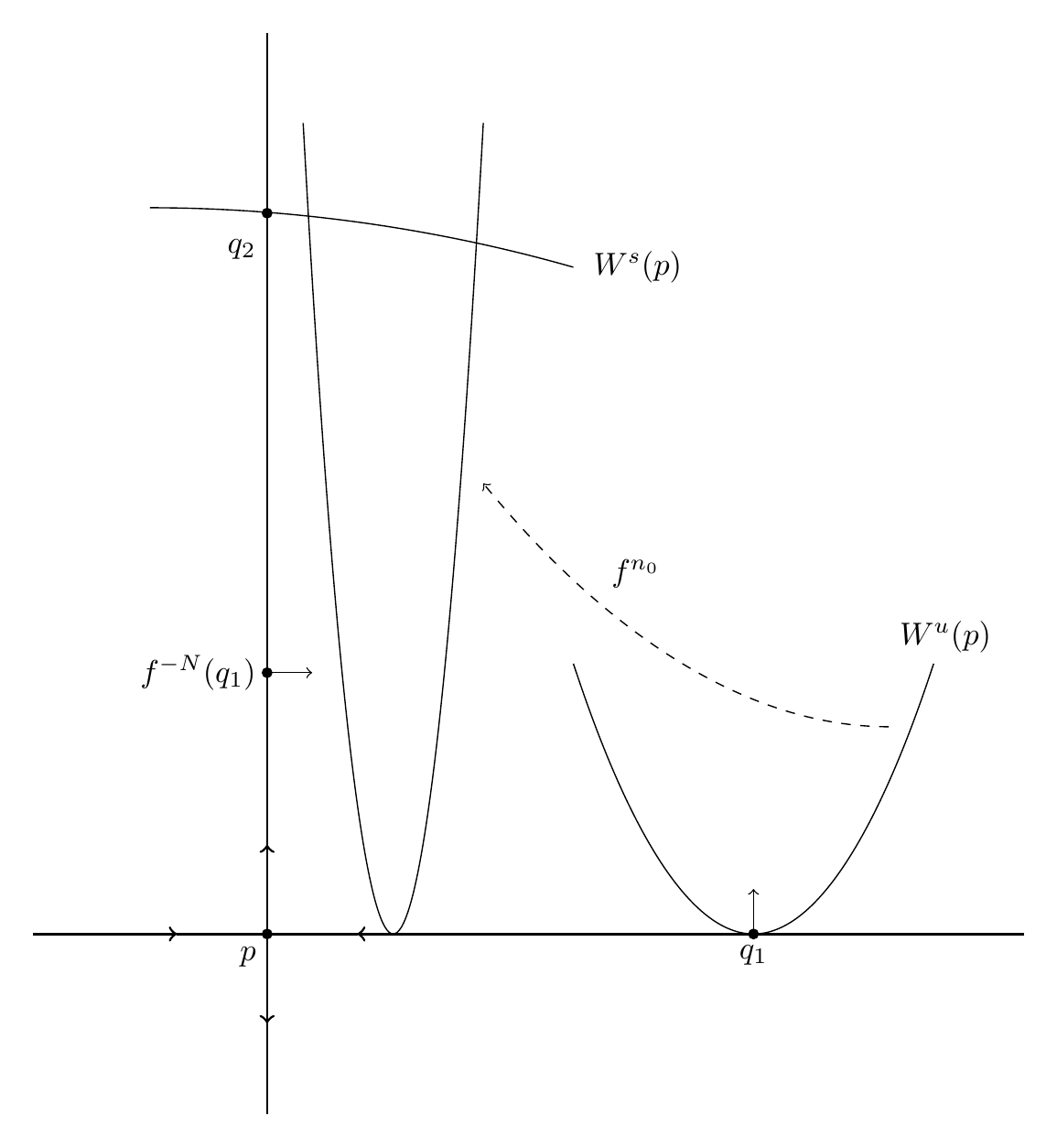}
\caption{Strong homoclinic tangency}
\label{Fig1}
\end{figure}

\begin{rem} The conditions defining a map with a strong homoclinic tangency are natural, except $(f2)$. This condition requires that the contraction at the saddle is strong enough. It plays a crucial role in many fundamental places such as the construction of the sink, (see Proposition \ref{sink}), and the transversality (see Proposition \ref{angle}).
\end{rem}

\begin{rem}
 Observe that all conditions are open in the space of maps with an homoclinic and transversal tangency. Also, except for $(f2)$, all conditions are dense.
\end{rem}

\begin{rem} If the unstable eigenvalue is negative, $\mu<-1$, then $(f8)$, $(f9)$, and $(f10)$ are redundant.
\end{rem}

\begin{rem} If \rm{dim}$(M)=2$ and the unstable eigenvalue is negative, $\mu<-1$, then $(f5)$, $(f6)$, $(f7)$, $(f8)$, $(f9)$, and $(f10)$ are redundant. The condition $(f4)$ reduces to the map having a non degenerate homoclinic tangency.
\end{rem} 
Following \cite{PT}, we construct now a family of unfolding of a map $f$ with a strong homoclinic tangency.
Let  $\mathcal P=[-r,r]\times [-r,r]$ with $r>0$.
Given a map $f$ with a strong homoclinic tangency, we consider a $\Cinf$ family $F:\mathcal P\times M\to M$ through $f$ with the following properties:
\begin{itemize}
\item[$(F1)$] $F_{0,0}=f$,
\item[$(F2)$] $F_{t, a}$ has a saddle point $p(t, a)$ with unstable eigenvalue $|\mu(t, a)|>1$, with largest stable eigenvalue $\lambda_1(t,a)$, and
$$
\frac{\partial \mu}{\partial t}\ne 0,
$$
\item[$(F3)$] let 
$\mu_{\text{max}}=\max_{(t,a)}|\mu(t,a)|$,
  $\lambda_{\text{max}}=\max_{(t,a)}|\lambda_1(t,a)|$ and assume $$\lambda_{\text{max}}\mu_{\text{max}}^3<1,$$
\item[$(F4)$] there exists a $\Cd$ function $[-r,r]\ni t\mapsto q_1(t)\in W^u(p(t,0))\cap W^s(p(t,0))$ such that $q_1(t)$ is a non degenerate homoclinic tangency and it is in general position, namely 
$$\lim_n\frac{1}{n}\log d(F_{t,0}^n(q_1(t)),p(t,0))=\log |\lambda_1(t,0)|,$$
\item[$(F5)$] the direction $0\neq B\in T_{q_1(t)}W^u(p(t,0))$ is in general position, namely
$$
\lim_n\frac{1}{n}\log |DF_{t,0}^n(B)|=\log |\lambda_1(t,0)|.
$$
\end{itemize}
According to Theorem \ref{familydependence} we may make a change of coordinates to ensure  that the family $F$ is $\Ct$ and for all $(t,a)\in [-r_0,r_0]\times [-r_0,r_0]$ with $0<r_0<r$ and by rescaling we can assume that $F_{t,a}$ is linear on the ball $[-2,2]^m$, namely 
\begin{equation}
\label{Flinear}
F_{t,a}=\left(\begin{matrix}
\lambda_1(t,a)&0&\dots&0&0\\
0&\vdots&\vdots&\vdots&\vdots\\
0&0&\dots&\lambda_{m-1}(t,a)&0\\
0&0&\dots&0&\mu(t,a)\\
\end{matrix}\right).
\end{equation}
Observe that, by $(F3)$ and by continuity, for $t_0$ small enough, \begin{equation}\label{thetacond0}
0<\frac{\log\mu_{\text{max}}}{\log{\frac{1}{\lambda_{\text{max}}}}}<\frac{3}{2}\frac{\log\mu_{\text{min}}}{\log{\frac{1}{\lambda_{\text{min}}}}}<\frac{1}{2},
\end{equation} where $\mu_{\text{min}}=\min_{(t,a)}|\mu(t,a)|$ and $\lambda_{\text{min}}=\min_{(t,a)}|\lambda_1(t,a)|$. Moreover the saddle point $p(t,a)=(0,0)$ and the local stable and unstable manifolds satisfy:
\begin{itemize}
\item[-] $W^s_{\text{loc}}(0)=[-2,2]^{m-1}\times \left\{0\right\}$,
\item[-] $W^u_{\text{loc}}(0)=\left\{0\right\}\times [-2,2]$ ,
\item[-] $q_1(t)\in [-1,1]^{m-1}\times \left\{0\right\}\subset W^s_{\text{loc}}(0)$ and it has first coordinate equal to $1$,
\item[-] $q_2(t,a)\in \left\{0\right\}\times \left(\frac{1}{\mu},1\right)\subset W^u_{\text{loc}}(0)$,
\item[-] there exists $N$ such that $f^{N}(q_3(t))=q_1(t)$ where $q_3(t)=(0,1)$,
\item[-]$Df^N_{q_3}(e_1)\notin T_{q_1}W^s(0)
$
and points in the positive $y$ direction,
\item[-] the direction $B=T_{q_1}W^u(0)$ has a non zero first coordinate.  
\end{itemize}

\bigskip

In the next lemma we prove that $q_3$ is contained in a curve of points whose vertical expanding tangent vectors are mapped by $DF^{N}$ to horizontal contractive ones. Let $(x,y)$ be in a neighborhood of $q_3$ and consider the point $$(X_{t,a}(x,y),Y_{t,a}(x,y))=F^N_{t,a}(x,y).$$
The following lemma says that $F^N_{t,a}(x,y)$ produces an unfolding in the sense of \cite{PT}. The formal definition of unfolding is given in Definition \ref{unfolding} below.
\begin{lem}\label{functionc}

There exist $x_0, a_0>0$, a $\Cd$ function $c:[-x_0,x_0]^{m-1}\times [-t_0,t_0]\times  [-a_0,a_0]\to\R $ and a positive constant $Q$ such that 
\begin{equation}\label{partialYpartialt}
\frac{\partial Y_{t,a}}{\partial y}\left(x, c(x,t,a)\right)=0,
\end{equation}
and 
$$\frac{\partial^2 Y_{t,a}}{\partial y^2}\left(x, c(x,t,a)\right)\geq Q.$$
Moreover 
\begin{eqnarray}\label{gamma}
|c(x,t,a)-c(0,t,a)|=O\left(|x|\right).
\end{eqnarray}
\end{lem}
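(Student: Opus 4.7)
The plan is to derive the lemma from the implicit function theorem applied to the equation $G(x,y,t,a) := \partial_y Y_{t,a}(x,y) = 0$ near the base point $(x,y,t,a) = (0,1,0,0)$, where by $e_y$ I denote the last coordinate direction and $x$ the first $m{-}1$ coordinates.

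The first step is to translate the geometric conditions at $q_1$ and $q_3$ into analytic conditions on $Y_{0,0}$. The curve $s \mapsto F_{0,0}^{N}(0, 1+s)$ parametrizes the image $F^N$ of a neighborhood of $q_3$ in $W^u_{\mathrm{loc}}(0) = \{0\}\times[-2,2]$, and it passes through $q_1(0)$ at $s=0$ with tangent vector $B \in T_{q_1}W^u(0)$. Since $q_1$ is a homoclinic tangency, $B \in T_{q_1}W^s(0) = \mathbb{R}^{m-1}\times\{0\}$, so the $y$-component of this tangent vanishes, i.e.\ $\partial_y Y_{0,0}(0,1) = 0$. The non-degeneracy in $(f4)$ (equivalently in $(F4)$) is exactly the statement that the contact of the curve with $\{y=0\}$ is quadratic, so $\partial_y^2 Y_{0,0}(0,1) \neq 0$. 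Reversing the orientation of the $y$-axis if necessary, we may assume $\partial_y^2 Y_{0,0}(0,1) \geq 2Q > 0$.

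Next, after the change of coordinates of Theorem \ref{familydependence} the family $F$ is $\Ct$, so $Y_{t,a}(x,y)$ is $\Ct$ jointly in all variables and $G$ is $\Cd$. Combining $G(0,1,0,0) = 0$ with $\partial_y G(0,1,0,0) \geq 2Q > 0$, the implicit function theorem produces constants $x_0, t_0, a_0 > 0$ and a $\Cd$ function
\[
c : [-x_0,x_0]^{m-1} \times [-t_0, t_0] \times [-a_0, a_0] \longrightarrow \mathbb{R},
\]
with $c(0,0,0) = 1$, uniquely solving $G(x, c(x,t,a), t, a) = 0$, which is (\ref{partialYpartialt}). By continuity of $\partial_y^2 Y_{t,a}$, after possibly shrinking $x_0, t_0, a_0$, the inequality $\partial_y^2 Y_{t,a}(x, c(x,t,a)) \geq Q$ holds on the whole domain.

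Finally, the Lipschitz bound (\ref{gamma}) follows from the $\Cuno$ regularity of $c$ in $x$: the IFT yields
\[
\frac{\partial c}{\partial x_i}(x,t,a) = -\frac{\partial_{x_i} G(x, c(x,t,a), t, a)}{\partial_y G(x, c(x,t,a), t, a)},
\]
which is uniformly bounded on the compact domain since the denominator is bounded below by $Q$, so $|c(x,t,a) - c(0,t,a)| = O(|x|)$ by the mean value theorem.

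The only non-routine step is the translation in the first paragraph; once $\partial_y Y_{0,0}(0,1) = 0$ and the non-degeneracy of $\partial_y^2 Y_{0,0}(0,1)$ are read off from $(F4)$ and $(F5)$, the rest is a direct application of the implicit function theorem and continuity.
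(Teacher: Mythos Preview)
Your argument is essentially the paper's: apply the implicit function theorem to $\partial_y Y_{t,a}(x,y)=0$, read off the vanishing and the non-degeneracy from the fact that $q_1$ is a non-degenerate homoclinic tangency, and get the Lipschitz bound from the bounded gradient of $c$.

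There is one genuine difference in scope. You apply the IFT only at the single base point $(x,y,t,a)=(0,1,0,0)$, using the tangency of the map $f=F_{0,0}$, and then take whatever neighborhood the IFT produces; in particular you output a new, possibly smaller $t_0$. In the lemma as stated, however, $t_0$ is the constant already fixed in the preceding setup, and only $x_0,a_0$ are asserted to exist. The paper closes this gap by using the family hypothesis $(F4)$ rather than $(f4)$: for every $t\in[-t_0,t_0]$ there is a non-degenerate tangency at $q_3(t)$, so the IFT applies at $(0,q_3(t),t,0)$ for each $t$, and a compactness argument on $[-t_0,t_0]$ glues the local solutions into a single $\Cd$ function $c$ defined on the full interval. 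Your proof becomes complete either by inserting this compactness step, or by remarking that $t_0$ may be shrunk once more (which is harmless for the rest of the paper). A minor point: the non-degeneracy of $\partial_y^2 Y$ is already contained in $(F4)$ (non-degenerate tangency); $(F5)$ plays no role here.
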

\begin{proof}
Let $\Phi: [-\frac{1}{2},\frac{3}{2}]\times\left( [-1,1]^{m-1}\times [-t_0,t_0]\times  [-a_0,a_0]\right)\to\R$ be defined by 
$$\Phi(y, x, t,a)=\frac{\partial Y_{t,a}}{\partial y}\left(x, y\right).$$ Observe that $\Phi$ is $\Cd$. Let $q_3(t)=F^{-N}_{t,0}(q_1(t))$. Because $q_1(t)$ is an homoclinic tangency, see $(F4)$, we have $$\Phi (q_3(t),0, t,0)=0,$$ and because $q_1(t)$ is a non degenerate tangency, we get $$\frac{\partial\Phi}{\partial y}(q_3(t),0, t,0)>0.$$ For every $t\in[-t_0,t_0]$ there exist, by the implicit function theorem, $\epsilon >0$ and a unique $\Cd$ function $c:[-\epsilon ,\epsilon]^{m-1}\times [-t_0-\epsilon,t_0+\epsilon]\times  [-\epsilon,\epsilon]\to\R $ locally satisfying the requirements of the lemma. These local functions extend to global ones because of the compactness of the interval $[-t_0,t_0]$ and the local uniqueness.
\end{proof}
\begin{rem}\label{c0=1}
Without lose of generality, by a smooth change of coordinates in the $y$ direction, we may assume that $c(0,t,a)=1$.
\end{rem}
\begin{defin}\label{criticalpoints}
Let $\left(t,a\right)\in [-t_0,t_0]\times  [-a_0,a_0]$. We call the point $$c_{t,a}=\left(0, c(0,t,a)\right),$$ the {primary critical point} and 
 $$z_{t,a}=F_{t,a}^N\left(c_{t,a}\right)=(z_x(t,a),z_y(t,a)),$$ the {primary critical value} of $F_{t,a}$.
\end{defin}
Observe that, near the saddle point, vertical vectors are expanding and horizontal ones are contracting. The critical points are defined to have the property that the expanding vertical vector is sent to the contracting horizontal one under $DF^N$. Let us briefly discuss the concept of critical points for dissipative maps in order to compare our definition with the ones previously used. 

Let us start by recalling that critical points are fundamental in the study of one-dimensional dynamics and they are easy to detect: those are the points where the map is not locally a diffeomorphism. This definition has no meaning for diffeomorphism of higher dimensional manifolds. The formal definition in this setting is given in \cite{PRH}, where the authors define the critical points as homoclinic tangencies, i.e. a tangency between the stable and unstable manifold of a saddle point. One can also, which is the basis of the corresponding construction in \cite{BC}, define a critical point as a tangency between a local stable manifold and an unstable manifold of a periodic point.

Homoclinic tangencies play a crucial topological role in general. However, they are difficult to detect. That is why in varies studies, starting with \cite{BC}, there are notions of approximate critical points which share with homoclinic tangencies the property that expanding vectors are mapped into contracting ones.  In particular, in \cite{BC}, critical points are rather tangencies between the unstable manifold of the saddle point and an approximate local stable manifold, not necessarily of the saddle point.
In our situation, similarly the tangent vector of the unstable manifold at the critical point $c_{t,a}$ is mapped into the contractive horizontal vector at the critical value $z_{t,a}$.

The crucial fact, which comes from the fundamental property of the approximate critical points (and homoclinic tangencies), is that  in a neighborhood, the return map is an H\'enon-like map. The local 
H\'enon behavior is what  is important and allows the analysis. From a technical point of view, the notion of critical point itself is less important; the local H\'enon behavior is all what is needed.

Another instance where critical points occur but play only a secondary role is in the context of strongly dissipative H\'enon maps at the boundary of chaos. These maps have a period doubling Cantor attractor, see Definition \ref{cantorA} and \cite{CLM}. The Cantor attractors are studied using renormalization zooming in to the so-called {\it tip} of the Cantor set as in \cite{CLM}. Indeed, return maps to neighborhoods of the tip are 
H\'enon-like maps. A posteriori one shows that the stable manifold at the tip is tangent to the direction of the neutral Lyapunov exponent. The tip plays the role of a critical point. However, this fact did not play any role during the renormalization analysis. This is another instance where, from a technical point of view, the notion of critical point is not that important. 

%
%
%
%
%
%



\begin{defin}\label{unfolding}
A family $F_{t,a}$ is called an {unfolding} of $f$ if it can be reparametrized such that
\begin{itemize}
\item[$(P1)$] $z_y(t,0)=0$,
\item[$(P2)$] $\frac{\partial z_y(t,0)}{\partial a}\neq 0.$
\end{itemize}
\end{defin}
\begin{rem}\label{highzya}
Without lose of generality by a suitable coordinate change we may assume that if $F$ is an unfolding then  $z_y(t,a)=a$, the primary critical value is  at height $a$ and the primary critical point $c(t,a)=(0,1)$, see Figure \ref{Fig2}. 
\end{rem}
\begin{rem}
A generic $2$ dimensional family through $f$ can locally be re-parametrized to become an unfolding.
\end{rem}
\begin{figure}[h]
\centering
\includegraphics[width=.9\textwidth]{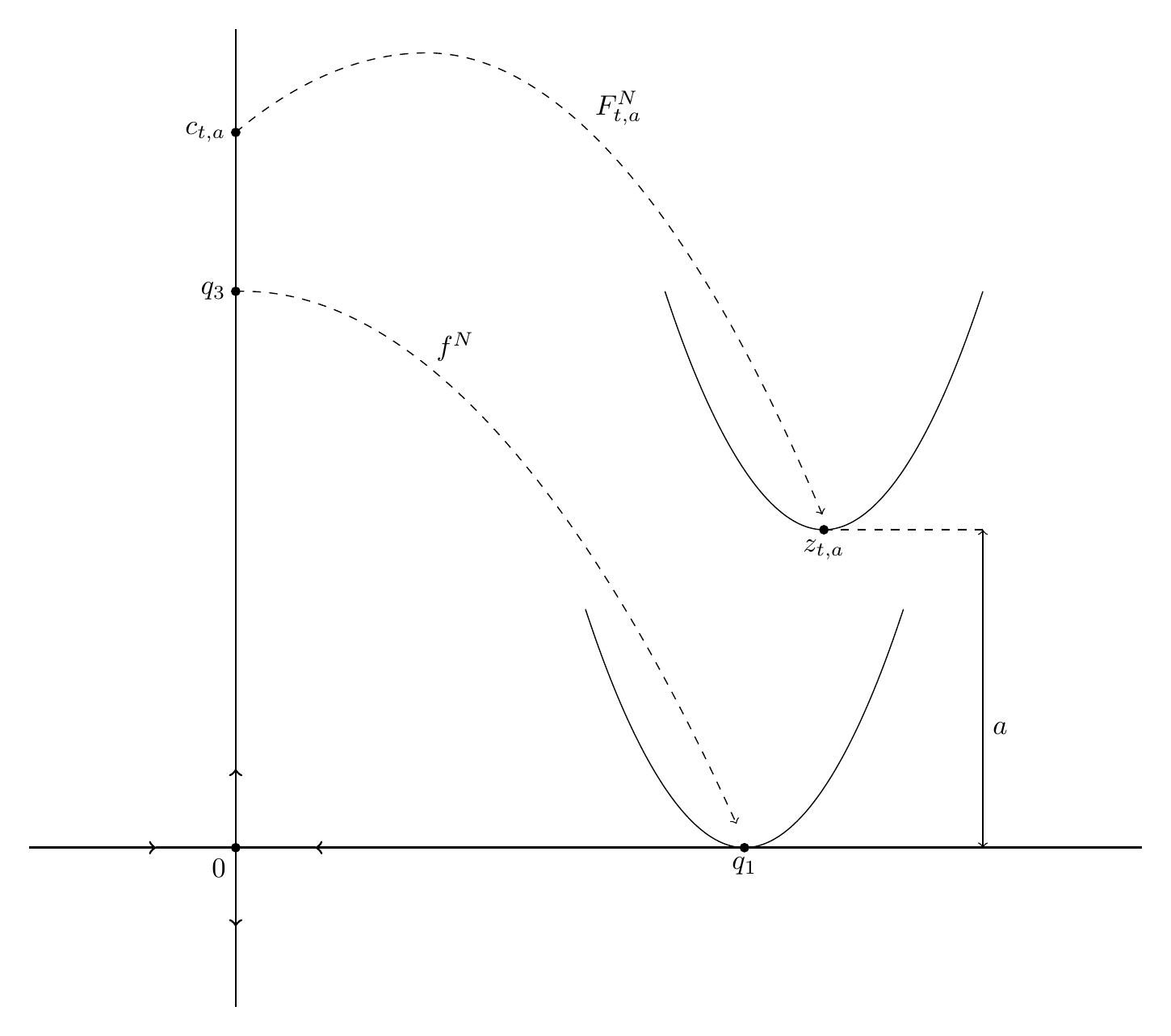}
\caption{Unfolding}
\label{Fig2}
\end{figure}
The condition $\lambda_{\text{max}}\mu_{\text{max}}^3<1$, see $(F3)$, allows us to choose $\theta\in(0,\frac{1}{2})$ such that 
\begin{equation}\label{thetacond}
1<\lambda_{\text{min}}^{2\theta}\mu_{\text{min}}^3 \text{ and } \lambda_{\text{max}}^{\theta}\mu_{\text{max}}< 1.
\end{equation}
We can choose any $\theta$ satisfying 
\begin{equation}\label{thetacond1}
0<\theta_0=\frac{\log\mu_{\text{max}}}{\log{\frac{1}{\lambda_{\text{max}}}}}<\theta<\frac{3}{2}\frac{\log\mu_{\text{min}}}{\log{\frac{1}{\lambda_{\text{min}}}}}=\theta_1<\frac{1}{2},
\end{equation}
where we used $(F3)$, the initial condition $\lambda_{\text{max}}\mu_{\text{max}}^3<1$ and (\ref{thetacond0}).

\section{Cascades of sinks}
In this section we are going to prove that, for a two dimensional set of parameters, an unfolding contains maps which have a sink of high period. Fix an unfolding $F$ and for each $\left(t, a\right)\in  [-t_0,t_0]\times  [-a_0,a_0]$ let $$\Gamma_{t, a}=\left\{(x,c(x,t, a))| x\in[-x_0,x_0]^{m-1}\right\}.$$ 
In the next lemma we build a curve $a_n$ of points, in the parameter space, whose corresponding critical values are mapped after $n$ steps  into $\Gamma_{t, a}$. 
\begin{lem}\label{Imtang}
For $n$ large enough, there exists a $\Cd$ function $a_n:[-t_0,t_0]\to (0,a_0]$ such that $$F^n_{t, a_n\left(t\right)}\left(z_{\left(t, a_n\left(t\right)\right)}\right)\in\Gamma_{\left(t, a_n\left(t\right)\right)}.$$ Moreover
\begin{equation}\label{dandt}
\frac{d a_n}{d t}=-n\frac{\partial\mu}{\partial t}\frac{1}{\mu^{n+1}}\left[1+O\left(|\lambda_1|^n\right)\right].
\end{equation}
\end{lem}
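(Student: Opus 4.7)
The plan is to rewrite the landing condition $F^n_{t,a}(z_{t,a}) \in \Gamma_{t,a}$ as a scalar equation in $a$ and apply the implicit function theorem. On the linearization box $[-2,2]^m$, provided $|a| = O(\mu^{-n})$, the entire forward orbit of $z_{t,a} = (z_x(t,a), a)$ up to time $n$ stays inside the box, so that
$$F^n_{t,a}(z_{t,a}) \;=\; \bigl(\Lambda(t,a)^n\, z_x(t,a),\; \mu(t,a)^n\, a\bigr),$$
with $\Lambda = \mathrm{diag}(\lambda_1,\dots,\lambda_{m-1})$. Since $\Gamma_{t,a}$ is the graph of $c(\cdot,t,a)$ over its first $m-1$ coordinates, the containment reduces to the single equation
$$H(t,a)\;:=\;\mu(t,a)^n\, a \;-\; c\bigl(\Lambda(t,a)^n\, z_x(t,a),\, t,\, a\bigr)\;=\;0.$$
Remark \ref{c0=1} gives $c(0,t,a) = 1$ and (\ref{gamma}) gives $c(\Lambda^n z_x, t, a) = 1 + O(|\lambda_1|^n)$, so the natural approximate root is $a \sim \mu^{-n}$.

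For $\partial H/\partial a$ the explicit term $\mu^n$ dominates all corrections: the term $n\mu^{n-1}(\partial\mu/\partial a)\,a$ is $O(n/\mu)$ when $a = O(\mu^{-n})$, and the $c$-corrections are $O(|\lambda_1|^n)$ or smaller because the normalization $c(0,t,a)\equiv 1$ kills the constant part of $\partial c/\partial a$ and forces it to vanish to first order in $x$. Thus $\partial H/\partial a = \mu^n(1 + o(1))$, and the implicit function theorem produces a $\Cd$ function $a_n(t)$ with
$$a_n(t) \;=\; \mu(t,a_n(t))^{-n}\,[1+O(|\lambda_1|^n)];$$
for $n$ large this lies in $(0,a_0]$.

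To obtain the derivative formula, differentiate $H(t, a_n(t))=0$ to get $da_n/dt = -(\partial H/\partial t)/(\partial H/\partial a)$. The main term of $\partial H/\partial t$ is $n\mu^{n-1}(\partial\mu/\partial t)\, a_n$, and, as in the computation of $\partial H/\partial a$, the remaining $c$-contributions are $O(|\lambda_1|^n)$ relative to this main term thanks again to $c(0,t,a)\equiv 1$. Substituting $a_n = \mu^{-n}[1+O(|\lambda_1|^n)]$ into the main term and dividing by $\mu^n(1+o(1))$ produces exactly (\ref{dandt}).

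The only delicate aspect is the error bookkeeping: one must verify that every non-leading contribution to both partials is $O(|\lambda_1|^n)$ relative to the leading term, not merely $o(1)$. The normalization $c(0,t,a)\equiv 1$ (Remark \ref{c0=1}) is what makes this possible, since it forces every $\partial c/\partial t$ and $\partial c/\partial a$ appearing in the computation to vanish to first order in its first argument, hence to contribute an extra factor of order $|\Lambda^n z_x| = O(|\lambda_1|^n)$. Without this cancellation the direct $t$- and $a$-dependencies of $c$ would produce only a polynomial $O(1/n)$ error and the exponential sharpness of (\ref{dandt}) would be lost.
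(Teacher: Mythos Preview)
Your proof is correct and follows essentially the same approach as the paper. Both arguments reduce the landing condition to the scalar equation $\mu^n a = c(\Lambda^n z_x, t, a)$, apply the implicit function theorem (the paper phrases this slightly more geometrically as transversality of the critical-value surface $Z$ with the pullbacks $\Gamma_n$, but the content is the same), and then differentiate this relation to obtain the slope formula, exploiting the normalization $c(0,t,a)\equiv 1$ to force $\partial c/\partial t,\,\partial c/\partial a = O(x) = O(|\lambda_1|^n)$.
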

\begin{proof}
Let $\Gamma=\text{graph}(c)$, namely, $$\Gamma=\left\{\left(x, c(x,t,a), t,a\right)| \left( x,t,a\right)\in [-x_0,x_0]^{m-1}\times [-t_0,t_0]\times [-a_0,a_0] \right\}.$$ 
Then $\Gamma$ is a $\Cd$ codimension $1$ manifold transversal to $W^u(0)$. For $n\geq 0$ let
\begin{equation}\label{defgamma}
\Gamma_n=\left\{\left(x,y,t,a\right)| F^n_{t,a}\left( x,y\right)\in\Gamma\right\},
\end{equation}
and the limit of the $\Gamma_n$:s as $n\to\infty$ is, $$\Gamma_{\infty}=\left\{\left(x,0,t,a\right)| \left( x,t,a\right)\in [-x_0,x_0]^{m-1}\times [-t_0,t_0]\times [-a_0,a_0]  \right\}.$$
This follows since, for each $(t,a)\in [-t_0,t_0]\times [-a_0,a_0] $, $F_{t,a}$ is linear, $\Gamma_n$ converges to $\Gamma_{\infty}$ in the $\Cd$ topology. Namely for large $n$, $\Gamma_n$ is a graph of a function also denoted by $\Gamma_n$ and 
$$
\left\|\Gamma_n-\Gamma_{\infty}\right\|_{\Cd}\to 0.
$$
Let $z:[-t_0,t_0]\times [-a_0,a_0]\to\R^m\times [-t_0,t_0]\times [-a_0,a_0]$ be the $\Cd$ critical value function defined as
$$z(t,a)=\left(z_x(t,a), z_y(t,a),t,a\right)=\left(z_x(t,a), a,t,a\right),$$
where $z_{t,a}=\left(z_x(t,a),z_y(t,a)\right)$.
Observe that 

\begin{equation}\label{dz}
\frac{\partial z_y}{\partial a}=1\text{ and }z_y(t,0)=0. 
\end{equation}
Let $Z=\text{Image}(z)$. Because of (\ref{dz}), $Z$ is a manifold transversal to $\Gamma_{\infty}$. Hence, there exists $n_1>0$ such that, for all $n\geq n_1$, $Z$ is transversal to $\Gamma_n$. As consequence, for all $n\geq n_1$,
$$A_n=z^{-1}\left(\Gamma_n\right)$$ is a $\Cd$ codimension $1$ manifold. We define $a_n: t\mapsto a_n(t)$ as a function whose graph is $A_n$. 
Observe that, by Lemma \ref{functionc} and Remark \ref{c0=1}, the $\Cd$ function $c$ satisfies, $c(0,t,a)=1$. Hence $\partial c/\partial t=O(x)$ and   $\partial c/\partial a=O(x)$. 
Abusing the notation, we denote by $\lambda$ the diagonal matrix whose entries are the eigenvalues $\left\{\lambda_1,\dots,\lambda_{m-1}\right\}$ and we recall that
\begin{equation}\label{munan}
\mu^n a_n=c\left(\lambda^n z_x, t, a_n\right).
\end{equation}
By differentiating (\ref{munan}) and using $\partial c/\partial a, \partial c/\partial t=O(x)$ we have
$$
\mu^n\frac{d a_n}{dt}+na_n\mu^{n-1}\frac{\partial\mu}{\partial t}=O\left(\lambda_1^n\right).
$$
The lemma follows.
\end{proof}
Choose $\epsilon_0>0$ and define, for $n$ large enough,
$$
\mathcal{A}_n=\left\{(t,a)\in [-t_0,t_0]\times [-a_0,a_0] \left|\right. |a-a_n(t)|\leq\frac{\epsilon_0}{|\mu(t,a_n(t))|^{2n}}\right\}.
$$
The strip $\mathcal{A}_n$ is built to contain the parameters having a sink. The size has to be chosen carefully. The following remark describes the error obtained in the change of the eigenvalues while changing the parameters within $\mathcal{A}_n$.
\begin{rem}\label{muzeroothersforan}
Since $\partial\mu/\partial a$, $\partial\lambda_1/\partial t$ and $\partial\lambda_1/\partial a$ are all bounded we have that, for all $(t,\tilde a)\in\mathcal A_n$,
$$
\left(\frac{|\mu(t,a_n(t))|}{|\mu( t,\tilde a)|}\right)^n=1+O\left(\frac{n}{|\mu(t,a_n(t))|^{2n}}\right), \left(\frac{|\lambda_1(t,a_n(t))|}{|\lambda_1(\tilde t,\tilde a)|}\right)^n=1+O\left(\frac{n}{|\mu(t,a_n(t))|^{2n}}\right).
$$
\end{rem}

In the following we prove, for a properly chosen $\epsilon_0$, that for all parameters $(t,a)\in \mathcal{A}_n$, $F_{t,a}$ has a sink of period $n+N$ which we call {\it primary sink}. The method used appears already in \cite{ BP, Ro}. Namely, we find an invariant box in the phase space and we prove that the $F_{t,a}^{n+N}$ contracts this box. As a consequence, we get a sink. The following is a preliminary lemma.



\begin{lem}\label{Nderivatives}
There exist $x'_0<x_0$, $a'_0<a_0$, $b>0$ and $Q>0$, such that, for all $(t,a)\in [-t_0,t_0]\times [-a'_0,a'_0] $ and for every $(x,y)\in\Gamma_{t,a}$ with $|x|<| x'_0|$ the following holds. There exist matrices $A_{x,y}$, $B_{x,y}\neq 0$ and $C_{x,y}$ such that 
$F^N_{t,a}$ in coordinates centered in $(x,y)$ and $F^N_{t,a}(x,y)$ has the form
\begin{eqnarray}\label{NstepsTaylorformula}
\nonumber
F^N_{t,a}\left(\begin{matrix}
\Delta x\\\Delta y
\end{matrix}\right)&=&\left(\begin{matrix}
A_{x,y}& B_{x,y}\\C_{x,y} & 0
\end{matrix}\right)\left(\begin{matrix}
\Delta x\\\Delta y
\end{matrix}\right)\\&+&\left(\begin{matrix}
O_{1,1}\Delta x^2+O_{1,2}\Delta x\Delta y+O_{1,3}\Delta y^2\\ Q_{x,y}\Delta y^2+O_{2,1}\Delta x^2+O_{2,2}\Delta x\Delta y+O_{2,3} \Delta y^3
\end{matrix}\right),
\end{eqnarray} 
where the matrices $A_{x,y}$, $C_{x,y}$ and the vector $B_{x,y}$ are $\Cd$ dependent on $x$ and $y$, $Q_{x,y}>Q$, $|B_{x,y}|>b>0$ and the vector valued functions $O_{i,j}$ are $\Cd$ dependent on $x$ and $y$ and they are uniformly bounded.
\end{lem}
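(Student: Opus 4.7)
The plan is to take a third-order Taylor expansion of $F^N_{t,a}$ centered at a base point $(x,y)\in \Gamma_{t,a}$ and read off the desired structural features from Lemma \ref{functionc} together with the defining properties of the unfolding. Since $F$ is $\Cq$ and $N$ is fixed, $F^N_{t,a}$ is $\Cq$ uniformly on compact sets. Writing $F^N_{t,a}=(X_{t,a},Y_{t,a})$ with $X_{t,a}\in \R^{m-1}$ and $Y_{t,a}\in\R$, I would set
\[
A_{x,y}=\frac{\partial X_{t,a}}{\partial x}(x,y),\quad B_{x,y}=\frac{\partial X_{t,a}}{\partial y}(x,y),\quad C_{x,y}=\frac{\partial Y_{t,a}}{\partial x}(x,y),
\]
all depending $\Cd$ on the arguments.

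The two non-trivial structural features follow immediately from Lemma \ref{functionc} applied at $(x,y)\in \Gamma_{t,a}$. First, the defining identity (\ref{partialYpartialt}) gives $\partial Y_{t,a}/\partial y(x,y)=0$, which produces the zero in the $(2,2)$ slot of the linear part. Second, the non-degeneracy bound $\partial^2 Y_{t,a}/\partial y^2(x,y)\ge Q$ shows that the pure $\Delta y^2$ coefficient $Q_{x,y}=\tfrac{1}{2}\, \partial^2 Y_{t,a}/\partial y^2(x,y)$ is bounded below by $Q/2$, which I would rename $Q$. For the remaining terms: in the $X$ component I would use a second-order Taylor expansion with remainder, whose three quadratic error coefficients give the bounded functions $O_{1,j}$; in the $Y$ component, after separating out $Q_{x,y}\Delta y^2$, the third-order Taylor remainder has all its monomials containing at least one $\Delta x$ absorbed into $O_{2,1}\Delta x^2$ or $O_{2,2}\Delta x\Delta y$ (each picks up a bounded factor of $\Delta x$ or $\Delta y$), leaving only the pure $\Delta y^3$ term as $O_{2,3}\Delta y^3$. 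Uniform bounds on the $O_{i,j}$ on a compact neighborhood follow from $\Cq$ regularity.

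For the bound $|B_{x,y}|\ge b>0$, I would reduce by continuity to verifying $B_{0,1}\neq 0$ at the primary critical point $q_3=(0,1)$ for $(t,a)=(0,0)$. The vertical unit vector $e_m$ spans $T_{q_3}W^u(0)$, so $DF^N_{q_3}(e_m)\in T_{q_1}W^u(0)$ coincides with the direction $B$ from $(f5)$. Because $\partial Y_{t,a}/\partial y(q_3)=0$, the $y$-component of this image vector vanishes, so $B_{0,1}$ carries the whole projection of $DF^N_{q_3}(e_m)$ onto the $x$-factor. By the normalization listed after (\ref{thetacond0}), the direction $B=T_{q_1}W^u(0)$ has a non-zero first coordinate, so $B_{0,1}\neq 0$; continuity of $B_{x,y}$ in $(x,y,t,a)$ then upgrades this to a uniform $b>0$ after shrinking $x_0$ to $x'_0$ and $a_0$ to $a'_0$.

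The main obstacle will be the book-keeping in the $Y$-component expansion: after extracting $Q_{x,y}\Delta y^2$, I need to confirm that no residual $\Delta y^2$ contribution leaks into the $O_{2,j}$ coefficients and spoils the uniform lower bound on $Q_{x,y}$. This should be handled by writing the integral form of the third-order Taylor remainder, which vanishes to order three at $\Delta=0$: its $\Delta x\Delta y^2$ monomial can be absorbed into $O_{2,2}\Delta x\Delta y$ without cost in the pure $\Delta y^2$ coefficient, and the $\Cd$ dependence of the extracted coefficients on the base point is preserved because $F^N_{t,a}$ is $\Cq$.
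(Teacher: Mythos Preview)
Your proposal is correct and follows essentially the same approach as the paper's proof: both take a Taylor expansion of $F^N_{t,a}$ at points of $\Gamma_{t,a}$, invoke Lemma~\ref{functionc} for the vanishing $(2,2)$ linear entry and the lower bound on $Q_{x,y}$, and obtain $B_{x,y}\neq 0$ from properties of the homoclinic tangency together with continuity. One small refinement: since the statement does not permit shrinking $t_0$, verifying $B\neq 0$ only at $(t,a)=(0,0)$ is not quite sufficient; the paper instead checks $B_{q_3(t)}\neq 0$ for every $t\in[-t_0,t_0]$ (which follows equally well from your argument via $(F5)$, or simply from invertibility of $DF^N_{q_3(t)}$) before shrinking $a_0$ and $x_0$.
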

\begin{proof}
The lemma gives the Taylor expansions of $F^N_{t,a}$ when $(x,y)\in\Gamma_{t,a}$. It follows immediately from Lemma \ref{functionc} where we state that a vertical curve trough $(x,y)$ in $\Gamma_{t,a}$ is mapped to a curve with a non degenerate horizontal tangency. In particular $\partial Y_{t,a}/\partial y=0$ by (\ref{partialYpartialt}) and the horizontal tangency, $$DF^N_{t,a}(x,y)\left(\begin{matrix}
0\\1
\end{matrix}\right)=\left(\begin{matrix}
B_{x,y}\\0
\end{matrix}\right)$$ is not degenerate for all $(x,y)\in\Gamma_{t,a}$. This is a consequence of the following argument. Let $t\in[-t_0,t_0]$. Because $F^N_{t,0}(q_3(t))=q_1(t)$ is a non degenerate homoclinic tangency, we know that the vector $B_{q_{3}(t)}\neq 0$ and $Q_{q_3(t)}>0$. By taking $|a|<|a'_0|$, $|x|<|x'_0|$ small enough, the lower bounds on $Q_{x,y}$ and $B_{x,y}$ follow.
\end{proof}
For $n$ large enough, take $(t,a)\in A_n$ and denote by $c_n(t)=c_{t,a_n(t)}$ and by $z_n(t)=z_{t,a_n(t)}=(z_{n,x}(t),z_{n,y}(t))$. When the choice of $t$ is clear, we just use the notation $c_n$ and $z_n$.
Take $(t,a)\in\mathcal A_n$. For $\delta>0$ we define the box that is going to contain the sink,  
$$
B^n_{\delta}(t,a)=\left\{(x,y)| |x-z_{n,x}(t)|\leq\frac{1}{3},|y-z_{n,y}(t)|\leq\frac{\delta}{|\mu(t, a_n(t))|^{2n}}\right\}.
$$
 When the choice of $(t,a)$ is clear we just use the notation $B^n_{\delta}$.
 
In the next lemma we prove that $B^n_{\delta}$ returns into itself, see Figure \ref{Fig3}. Let $\tilde Q=\max Q_{x,y}$ where $Q_{x,y}$ as in Lemma \ref{Nderivatives}. 
\begin{figure}[h]
\centering
\includegraphics[width=0.9\textwidth]{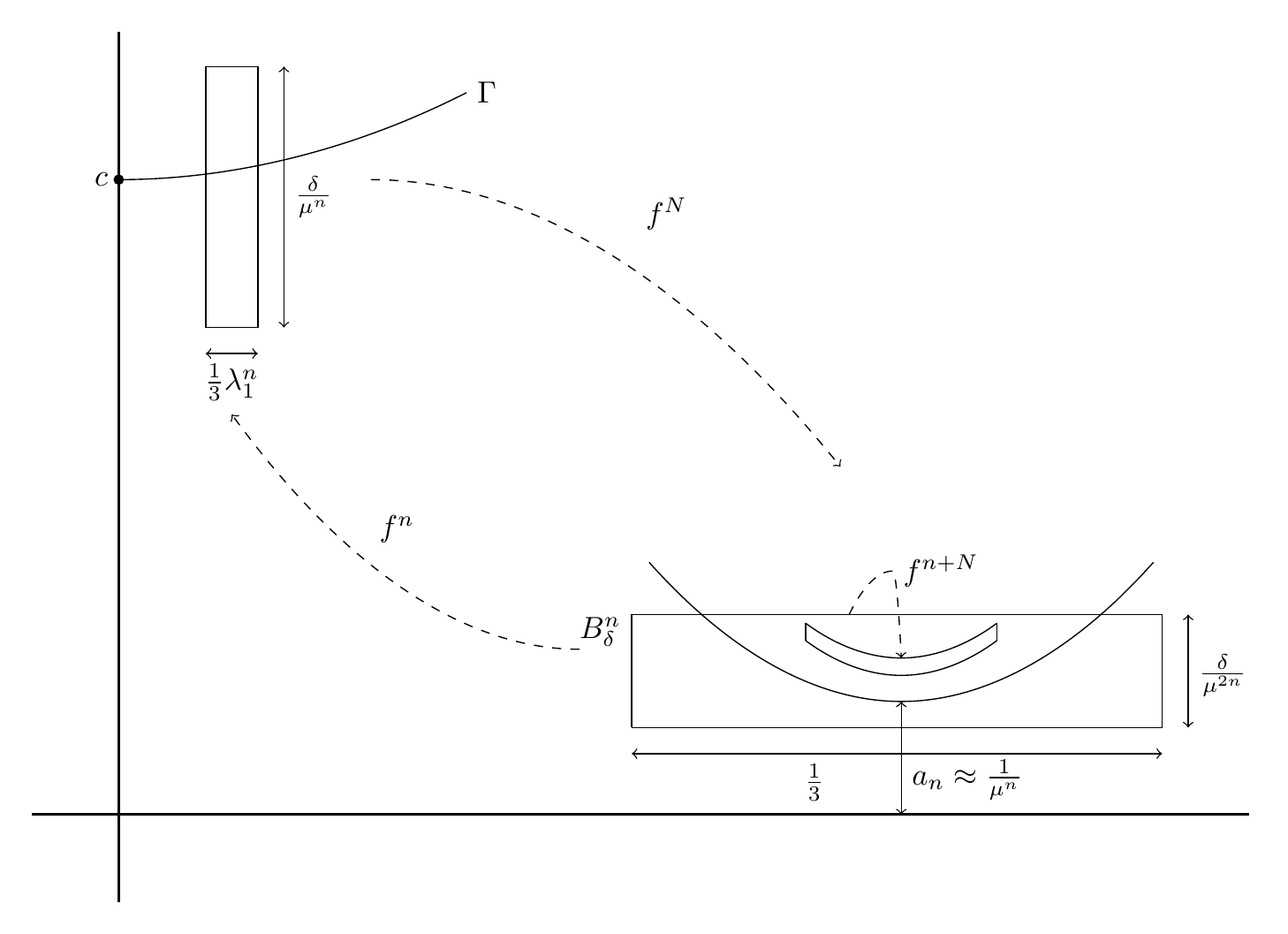}
\caption{Invariant Region}
\label{Fig3}
\end{figure}
\begin{lem}\label{periodicpoint}
Choose $\delta=\frac{1}{4\tilde Q}, \epsilon_0=\frac{1}{32\tilde Q}$. Then, for $n$ large enough and $(t,a)\in\mathcal A_n$,
$$
F^{n+N}_{t,a}\left(B^n_{\delta}\right)\subset B^n_{\delta}.
$$
\end{lem}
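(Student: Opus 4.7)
The plan is to decompose $F^{n+N}_{t,a}=F^N_{t,a}\circ F^n_{t,a}$, use the hypothesis that $F_{t,a}$ is linear on $[-2,2]^m$ to control $F^n_{t,a}$ exactly, and then apply the Taylor expansion of Lemma \ref{Nderivatives} centered at the critical point $c_{t,a}=(0,1)\in\Gamma_{t,a}$ to estimate the image after $F^N_{t,a}$. The size of $B^n_\delta$ was designed so that the quadratic term $Q_{x,y}\Delta y^2$ in the $y$-component of (\ref{NstepsTaylorformula}) is the dominant obstruction, which fixes the scaling $\delta/|\mu|^{2n}$ for the vertical dimension.

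Let $(x,y)=z_n(t)+(\tilde\Delta x,\tilde\Delta y)\in B^n_\delta(t,a)$, so $|\tilde\Delta x|\le 1/3$ and $|\tilde\Delta y|\le\delta/|\mu|^{2n}$. Applying the diagonal linear map $F^n_{t,a}$ gives the point $(\lambda^n x,\mu^n y)$. Using the defining identity $\mu^n a_n=c(\lambda^n z_x,t,a_n)=1+O(\lambda_1^n)$ from (\ref{munan}) together with Remark \ref{muzeroothersforan} to compare $\mu(t,a)^n$ with $\mu(t,a_n(t))^n$ across $\mathcal A_n$, one writes $F^n_{t,a}(x,y)=c_{t,a}+(\xi,\eta_0)$ where
\begin{equation*}
|\xi|=O(\lambda_1^n),\qquad |\eta_0|\le \delta/|\mu|^n+O\bigl(\lambda_1^n+n/|\mu|^{2n}\bigr).
\end{equation*}
Now I feed $(\xi,\eta_0)$ into the Taylor expansion (\ref{NstepsTaylorformula}) at $c_{t,a}$, recalling $F^N_{t,a}(c_{t,a})=z_{t,a}$. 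The $x$-component of $F^N(\xi,\eta_0)$ is bounded by $|A\xi|+|B\eta_0|+O(\eta_0^2)=O(\delta/|\mu|^n)$, and the $y$-component has no linear term in $\eta_0$, so it is bounded by $|C\xi|+\tilde Q\eta_0^2+O(\xi^2+\xi\eta_0+\eta_0^3)\le\tilde Q\delta^2/|\mu|^{2n}+O(\lambda_1^n)$.

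Finally I compare with the center $z_n(t)=z_{t,a_n(t)}$: by smoothness of the critical value and the definition of $\mathcal A_n$, $|z_{t,a}-z_n(t)|=O(|a-a_n(t)|)\le\epsilon_0/|\mu|^{2n}$. Adding everything, the $x$-deviation from $z_n(t)$ is $O(\delta/|\mu|^n)\ll 1/3$, and the $y$-deviation is at most $(\epsilon_0+\tilde Q\delta^2)/|\mu|^{2n}+O(\lambda_1^n)$. With $\delta=1/(4\tilde Q)$ we have $\tilde Q\delta^2=\delta/4$ and with $\epsilon_0=1/(32\tilde Q)=\delta/8$ this yields the bound $(3\delta/8)/|\mu|^{2n}+O(\lambda_1^n)$; since condition $(F3)$ implies $\lambda_1\mu^2<1$, the remainder $O(\lambda_1^n)$ is $o(1/|\mu|^{2n})$, so the $y$-deviation is less than $\delta/|\mu|^{2n}$ for $n$ sufficiently large. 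Thus $F^{n+N}_{t,a}(B^n_\delta)\subset B^n_\delta$.

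The main obstacle is the bookkeeping of the $y$-component, where the budget $\delta/|\mu|^{2n}$ must absorb three contributions of identical order: the parameter shift $|a-a_n(t)|\le\epsilon_0/|\mu|^{2n}$, the quadratic $\tilde Q\eta_0^2\sim\tilde Q\delta^2/|\mu|^{2n}$, and the sub-dominant errors coming from the drift of $\mu^n$ across $\mathcal A_n$ and from $F^n(z_n(t))$ not being exactly $c_{t,a}$. Condition $(F3)$ — equivalently $\lambda_1\mu^2<1$ — is exactly what renders all other error terms of smaller order than $1/|\mu|^{2n}$, so that the inequality $\epsilon_0+\tilde Q\delta^2<\delta$ (enforced by the choice $\delta=1/(4\tilde Q)$, $\epsilon_0=1/(32\tilde Q)$) suffices to close the estimate.
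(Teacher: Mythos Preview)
Your argument is correct and follows essentially the same route as the paper: decompose $F^{n+N}_{t,a}=F^N_{t,a}\circ F^n_{t,a}$, use the linearity of $F^n_{t,a}$ on $[-2,2]^m$ together with (\ref{munan}) and Remark~\ref{muzeroothersforan} to locate the image near $c_{t,a}$, and then apply the Taylor expansion of Lemma~\ref{Nderivatives} at $c_{t,a}$, recentering the output at $z_n$ via the shift $|z_{t,a}-z_n|\le\epsilon_0/|\mu|^{2n}$ (the paper absorbs this shift as an extra $|\Delta a|$ term in an extended Taylor expansion, which is the same thing). One minor slip: in the $y$-estimate the remainder $O(\xi^2+\xi\eta_0+\eta_0^3)$ is not $O(\lambda_1^n)$, since $\eta_0^3=O(|\mu|^{-3n})$ and under $(F3)$ one has $|\lambda_1|^n<|\mu|^{-3n}$; however both $|\lambda_1|^n$ and $|\mu|^{-3n}$ are $o(|\mu|^{-2n})$, so the final inequality $\epsilon_0+\tilde Q\delta^2=3\delta/8<\delta$ still closes the argument exactly as you wrote.
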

\begin{proof}
Fix $(t,a)\in\mathcal A_n$. For $n$ large enough, we write $F^{n+N}_{t,a}$ in coordinates centered at $z_n$, namely $F^{n+N}_{t,a}\left(\Delta x,\Delta y\right)=\left(\Delta \tilde x,\Delta \tilde y\right)$. Let $\left(\Delta x,\Delta y\right)$ be such that if $z_n+\left(\Delta x,\Delta y\right)\in B^n_{\delta}$ and $\Delta a=(a-a_n(t))$, then
\begin{equation}\label{deltas}
|\Delta x|\leq\frac{1}{3}, |\Delta y|\leq \frac{\delta}{|\mu(t, a_n(t))|^{2n}} \text{ and } |\Delta a|\leq \frac{\epsilon_0}{|\mu(t, a_n(t))|^{2n}}.
\end{equation}
Denote by $\left(\Delta x',\Delta y'\right)=F^n_{t,a}\left(\Delta x,\Delta y\right)-c_{t,a}$. Using that $F^{n}_{t,a}$ is linear on $B^n_{\delta}$, see (\ref{Flinear}), we get
\begin{eqnarray*}\label{deltaxprime}
|\Delta x'|\leq 2|\lambda_1(t,a)|^n,
\end{eqnarray*}
and 
\begin{eqnarray*}\label{deltayprime}
|\Delta y'|&\leq &\left|\mu(t, a)^{n}\right|\left|\Delta y\right|+\left|\left|\mu(t, a)^{n}\right|z_{n,y}-c(0,t,a)\right|\\&\leq&\left|\mu(t, a)^{n}\right|\left|\Delta y\right|+\left|\left(\left[1+O\left(\frac{n}{\mu(t, a_n(t))^{2n}}\right)\right]\left|\mu(t, a_n(t))^{n}\right|z_{n,y}\right)-c_{n,y}\right|\\&+&\left|c_{n,y}-c(0,t,a)\right|,
\end{eqnarray*}
where we used Remark \ref{muzeroothersforan}. Since $F^n_{t,a_n(t)}\left(z_n\right)\in\Gamma_{t,a_n(t)}$, then $$\left|\mu(t, a_n(t))^{n}\right|z_{n,y}=c_{n,y}+O\left(\lambda_1(t, a_n(t))^n\right).$$ Moreover $\left|c_{n,y}-c(0,t,a)\right|=O\left(\Delta a\right)$ (see Lemma \ref{functionc}). Hence, by (\ref{deltas}) and by Remark \ref{muzeroothersforan} we get
\begin{eqnarray*}\label{deltayprime}
|\Delta y'|
&\leq &\left|\mu(t, a)^{n}\right|\left|\Delta y\right|+O\left(\lambda_1(t, a_n(t))^n\right)+O\left(\frac{n}{\mu(t, a_n(t))^{2n}}\right)+O\left(\frac{1}{\mu(t, a_n(t))^{2n}}\right)\\&\leq &\frac{\delta}{\left|\mu(t, a_n(t))^{n}\right|}\left[1+O\left(\frac{n}{\mu(t, a_n(t))^{2n}}\right)\right]+O\left(\frac{n}{\mu(t, a_n(t))^{2n}}\right).
\end{eqnarray*}
By Lemma \ref{Nderivatives} (center $F^{N}_{t,a}$ in coordinates around $c_{t,a}$) extended to include also the Taylor expansion in $\Delta a$   
we get, for $n$ large enough
\begin{eqnarray*}
|\Delta\tilde x|&=&O\left(\Delta x'\right)+O\left(\Delta y'\right)+O\left(\Delta a\right)\leq\frac{1}{3},
\end{eqnarray*}
and 
\begin{eqnarray*}
|\Delta\tilde y|&\leq &O\left(|\lambda_1(t,a)|^n \right)+Q_{x,y}\left|\Delta y'\right|^2+O\left(\frac{1}{\mu(t, a_n(t))^{3n}}\right)+|\Delta a|\\
&\leq &\frac{\tilde Q \delta^2}{\left|\mu(t, a_n(t))^{2n}\right|}+O\left(\frac{1}{\mu(t, a_n(t))^{3n}}\right)+\frac{\epsilon_0}{\left|\mu(t, a_n(t))^{2n}\right|},
\end{eqnarray*}
where we also used $(F3)$ and Remark \ref{muzeroothersforan}.
By our choice of $\epsilon_0$ and $\delta$, for $n$ large enough, the lemma follows.
\end{proof}

We fix $\epsilon_0,$ and $\delta$ such that Lemma \ref{periodicpoint} holds. 
We are now ready to prove the existence of a sink. This is achieved in the next proposition by proving that $F^{n+N}$ contracts the box $B_{\delta}^n$.
\begin{prop}\label{sink}
For $n$ large enough and for all $(t,a)\in \mathcal A_n$, $ B^n_{\delta}(t)$ has a unique periodic point which is a sink. 
\end{prop}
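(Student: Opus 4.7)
The plan is to combine the invariance from Lemma \ref{periodicpoint} with a spectral estimate for $DF^{n+N}_{t,a}$ on the box. By Lemma \ref{periodicpoint}, $F^{n+N}_{t,a}(B^n_\delta)\subset B^n_\delta$, so Brouwer's fixed point theorem produces a fixed point $p^*\in B^n_\delta$, i.e.\ a periodic point of period $n+N$ for $F_{t,a}$. What remains is to verify that $p^*$ is a sink and is the unique fixed point of $F^{n+N}_{t,a}$ in $B^n_\delta$. Both follow once we show that $DF^{n+N}_{t,a}$ has spectral radius bounded by some $\alpha<1$ uniformly on $B^n_\delta$.

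To estimate the derivative I would factor $DF^{n+N}_{t,a}(p)=DF^N_{t,a}(F^n_{t,a}(p))\cdot DF^n_{t,a}(p)$. The second factor is the diagonal $\mathrm{diag}(\lambda_1^n,\ldots,\lambda_{m-1}^n,\mu^n)$ by (\ref{Flinear}). The first factor is controlled by Lemma \ref{Nderivatives}, whose Taylor expansion on $\Gamma_{t,a}$ gives a matrix whose $(m,m)$ entry vanishes. The geometric fact that ties the two factors together is that $F^n_{t,a}(B^n_\delta)$ lies within vertical distance $O(\delta/|\mu|^n)$ of $\Gamma_{t,a}$: the box has vertical extent $\delta/|\mu|^{2n}$, $F^n$ stretches this by $\mu^n$, the center $z_n$ is mapped into $\Gamma$ by the defining property of $a_n(t)$, and the parameter drift $|a-a_n(t)|\leq \epsilon_0/|\mu|^{2n}$ contributes at the same order via Remark \ref{muzeroothersforan}. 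Taylor expanding $\partial Y/\partial y$ off $\Gamma$, the $(m,m)$ entry of $DF^N(F^n(p))$ is $O(\delta/|\mu|^n)$, so after multiplication by $\mu^n$ it remains $O(\delta)$.

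The resulting matrix $DF^{n+N}_{t,a}(p)$ has the following structure: the $(m,m)$ diagonal entry is $O(\delta)$; the last column (other rows) is of order $|\mu|^n$ from the $B\mu^n$ contribution; the last row (other columns) and the entire $(m-1)\times(m-1)$ upper block are of order $|\lambda_1|^n$. Its trace is therefore $O(\delta)+O(|\lambda_1|^n)$ and its determinant is $O((|\lambda_1|\mu)^n)$; note that $(F3)$ gives $|\lambda_1|\mu\leq |\lambda_1|\mu^3<1$, so the determinant decays geometrically. A direct characteristic-polynomial computation (in the $m=2$ case one gets eigenvalues approximately $(T\pm\sqrt{T^2-4D})/2$) then bounds all eigenvalues by $O(\delta)+O((|\lambda_1|\mu)^{n/2})$, which is strictly less than one for $\delta$ small and $n$ large. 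This gives the sink property at $p^*$. For uniqueness, one rewrites the derivative bound in an adapted norm $\|(x,y)\|_*=|x|+K|y|$ with $K$ of order $|\mu|^n/\sqrt{(|\lambda_1|\mu)^n}$; the large off-diagonal entry is absorbed by $K$, while the $(m-1)\times(m-1)$ block is amplified only by $K\cdot O(|\lambda_1|^n)=O(\sqrt{(|\lambda_1|\mu)^n})\to 0$, making $F^{n+N}_{t,a}$ a strict contraction on the entire box.

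The main obstacle I anticipate is the spectral analysis in the presence of the off-diagonal entry of size $|\mu|^n$: this blows up the operator norm in the Euclidean metric and forbids a naive contraction argument. The resolution is that this entry contributes to the eigenvalues only through the determinant, which is tamed by the hypothesis $|\lambda_1|\mu^3<1$ together with the near-vanishing of the $(m,m)$ diagonal; the latter is itself a consequence of the carefully chosen thickness $\delta/|\mu|^{2n}$ of $B^n_\delta$, which forces $F^n(p)$ to hug the critical curve $\Gamma$ at distance $O(\delta/|\mu|^n)$.
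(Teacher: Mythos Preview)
Your argument is correct and follows essentially the same route as the paper: the same factorization $DF^{n+N}=DF^{N}\cdot DF^{n}$, the same key observation that $F^{n}(B^{n}_{\delta})$ lies within vertical distance $O(\delta/|\mu|^{n})$ of $\Gamma_{t,a}$ so that the $(m,m)$ entry of $DF^{N}$ is $O(|\mu|^{-n})$, and the same block structure with trace $O(\delta)+O(|\lambda_{1}|^{n})$ and determinant $O((|\lambda_{1}||\mu|)^{n})$. The only cosmetic difference is in the final step: the paper packages the contraction via a single positive dominating $2\times 2$ matrix $D$ acting on $(|\Delta x_{k}|,|\Delta y_{k}|)$ (which simultaneously yields the sink property and uniqueness), whereas you separate the spectral-radius estimate from an adapted-norm contraction; both are equivalent ways of handling the large off-diagonal $O(|\mu|^{n})$ entry.
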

\begin{proof}
Because $F_{t,a}^n$ is linear on $B_{\delta}^n$, the image $F_{t,a}^n\left(B_{\delta}^n\right)$ is contained in a neighborhood of $c_{t,a}$ of diameter smaller than  $\delta|\mu|^{-n}\left(1+O\left({n}|\mu|^{-n}\right)\right)
$. From this and by differentiating (\ref{NstepsTaylorformula}) with respect to $\Delta x$ and $\Delta y$ we get
\begin{eqnarray*}
DF_{t,a}^N&=&\left(\begin{matrix}
O(1) &O(1) \\
O(1) & 2{\delta|\mu|^{-n} Q_{x,y}}\left(1+O\left({n}{|\mu|^{-n}}\right)\right)\\
\end{matrix}\right).
\end{eqnarray*}
Note that $N$ is fixed. Using again that $F_{t,a}^n$ is linear on $B_{\delta}^n$ we obtain that 
\begin{eqnarray*}
DF_{t,a}^{n+N}&=&\left(\begin{matrix}
O(1) &O(1) \\
O(1) &{2\delta }{|\mu|^{-n}}Q_{x,y}\left(1+O\left({n}{|\mu|^{-n}}\right)\right)
\end{matrix}\right)\left(\begin{matrix}
O\left(|\lambda_{1}|^n\right) &0 \\
0& \mu^n
\end{matrix}\right)\\
&=&\left(\begin{matrix}
O\left(|\lambda_{1}|^n\right) & O\left(|\mu|^n\right) \\
O\left(|\lambda_{1}|^n\right)& 2\delta Q_{x,y}\left(1+O\left({n}{|\mu|^{-n}}\right)\right)
\end{matrix}\right).
\end{eqnarray*}
Let $D=\left(\begin{matrix}
O\left(|\lambda_{1}|^n\right) & O\left(|\mu|^n\right) \\
O\left(|\lambda_{1}|^n\right)& \frac{3}{4}
\end{matrix}\right)$ be a positive matrix and $\left(DF_{t,a}^{n+N}\right)^k(\Delta x,\Delta y)=(\Delta x_k,\Delta y_k)$, then by the choice of $\delta$
\begin{eqnarray*}
\left(\begin{matrix}
|\Delta x_{k+1}| \\
|\Delta y_{k+1}| 
\end{matrix}\right)\leq D\left(\begin{matrix}
|\Delta x_{k}| \\
|\Delta y_{k}| 
\end{matrix}\right).
\end{eqnarray*}
Observe that $\text{tr}(D)=\frac{3}{4}+O\left(|\lambda_{1}|^n\right)$ and $\text{det}(D)=O(\left(|\lambda_1||\mu|\right)^n)$. As consequence, for $n$ large enough, $|\Delta x_{k}|, |\Delta y_{k}|\to 0$ exponentially fast. Hence, the periodic point in $B^n_{\delta}$ is a sink whose basin of attraction contains $B^n_{\delta}$.
\end{proof}

\section{Curves of secondary tangencies}
In this section we are going to show that, for $n$ large enough,  there exist curves $b_n$ whose parameters have a new homoclinic tangency which is again a strong homoclinic tangency. The curves  $b_n$ intersect the strip $\mathcal A_n$ where the sinks occur, see Figure \ref{Fig6}. In particular, maps in this intersection have a sink and a strong homoclinic tangency. The following lemmas are a preparation for constructing these tangencies.
\begin{figure}
\centering
\includegraphics[width=1\textwidth]{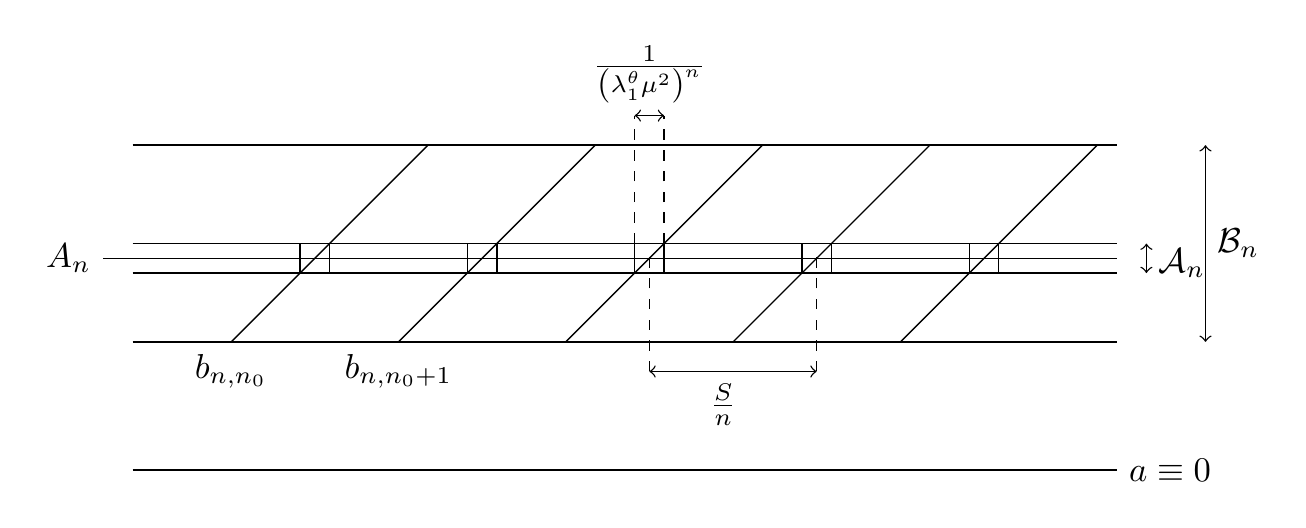}
\caption{Curves of Secondary Tangencies (Section $4$)}
\label{Fig6}
\end{figure}
\subsection{Existence of the secondary tangencies}
Remember the choice of $\theta$ in (\ref{thetacond}) and (\ref{thetacond1}).
Choose $\epsilon_1>0$ small enough and define, for large $n$,
$$
\mathcal{B}_n=\left\{(t,a)\in [-t_0,t_0]\times [-a_0,a_0] \left|\right. |a-a_n(t)|\leq{\epsilon_1}{|\lambda_1(t,a_n(t))|^{\theta n}}\right\}.
$$

The strip $\mathcal{B}_n$ is built to contain the curves of tangencies. The size has to be chosen carefully. Later the value of $\epsilon_1$ will be adjusted downward, see for example Lemma \ref{curvaturez3}. Observe that, by (\ref{thetacond}) $|\lambda_1(t,a_n(t))|^{\theta n}|\mu(t,a_n(t))|^{2 n}>1$ and for $n$ large enough $|\lambda_1(t,a_n(t))|^{\theta n}|\mu(t,a_n(t))|^{2 n}>\epsilon_0/\epsilon_1$. In particular, for $n$ large enough, $\mathcal{B}_n$ contains $\mathcal{A}_n$. The following remark points out the distortion of the eigenvalues when changing the parameters in $\mathcal{B}_n$.
\begin{rem}\label{muzeroothers}
Observe that if we take $n$ large enough, then for all $t\in [-t_0,t_0]$, $(\tilde t,\tilde a)$ with $|\tilde t- t|=O\left({|\lambda_1(t,a_n(t)))|^{\theta n}}\right)$ and $|\tilde a- a|=O\left({|\lambda_1(t,a_n(t)))|^{\theta n}}\right)$, we have 
$$
\left(\frac{|\mu(t,a_n(t))|}{|\mu(\tilde t,\tilde a)|}\right)^n=1+O\left(n|\lambda_1(t,a_n(t))|^{\theta n}\right), \left(\frac{|\lambda_1(t,a_n(t))|}{|\lambda_1(\tilde t,\tilde a)|}\right)^n=1+O\left(n|\lambda_1(t,a_n(t))|^{\theta n}\right)
$$
where we used the fact that $\partial\mu/\partial t$ and $\partial\mu/\partial a$ are bounded.
Moreover if $(t,a)\in\mathcal B_n$, $|a-a_n(t)|\leq{\epsilon_1}{|\lambda_1(t,a_n(t))|^{\theta n}} $, then
$$
\frac{|\mu(t,a_n(t))^n |a-a_n(t)||}{|\lambda_1(t,a)|^{\theta n}|\mu(t,a)|^{n}}\leq{2\epsilon_1}.
$$
\end{rem}
Fix $(t,a)\in\mathcal B_n$ and in the sequel we will suppress this choice in the notation, for example, $z=z_{t,a}$, $c=c_{t,a}$ (see Definition \ref{criticalpoints}), $\lambda_1=\lambda_1(t,a)$ and $\mu=\mu(t,a)$. By the initial conditions on the family, for $n$ large enough and $W^u_{\text{loc}}(z)$ small enough, $F^{\theta n}_{t,a}\left(W^u_{\text{loc}}(z)\right)$ intersects $\Gamma$ in exactly two points.  Choose one of these points
$$c'\in F^{\theta n}_{t,a}\left(W^u_{\text{loc}}(z)\right)\cap \Gamma,$$ and a local unstable manifold $W^u_{\text{loc}}(c')$ of diameter $L|\lambda_1|^{\theta n}$ with $L$ a big constant which is going to be chosen later. Let $z^{(1)}=(z_{x}^{(1)},z_{y}^{(1)})$ be the lowest point of $ F^N\left (W^u_{\text{loc}}(c')\right)$, see Figure \ref{Fig4a}. 
\begin{figure}
\centering
\includegraphics[width=0.9\textwidth]{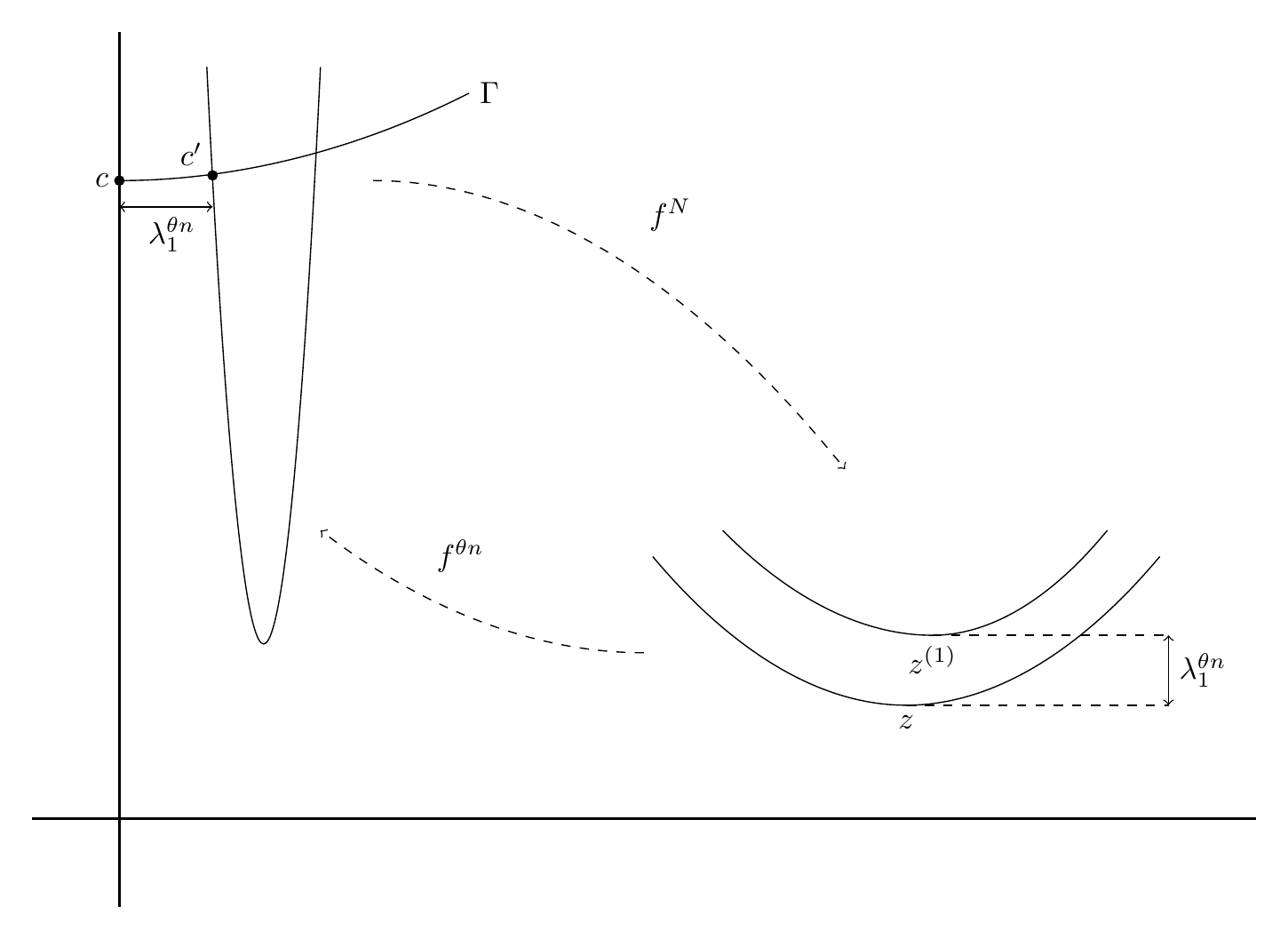}
\caption{$N+\theta n$ iterates}
\label{Fig4a}
\end{figure}
\begin{lem}\label{z1distz}
There exists a uniform constant $K>0$ such that, for $n$ large enough and for $(t,a)\in\mathcal B_n$,
$$\frac{1}{K}|\lambda_1|^{\theta n}\leq |z_{y}^{(1)}-z_{y}|\leq {K}|\lambda_1|^{\theta n},$$
and the tangent space
$T_{z^{(1)}}W^u_{\text{loc}}(z^{(1)})\subset\R^{m-1}\times \left\{0\right\}$ is horizontal. Moreover, the limit $\lim_{n\to\infty} { |z_{y}^{(1)}-z_{y}|}/{|\lambda_1|^{\theta n}}$ exists.
\end{lem}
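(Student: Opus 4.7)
The plan is to track the parabolic piece of the unstable manifold through two stages: the linear saddle dynamics $F^{\theta n}$, which rescales $W^u_{\text{loc}}(z)$ into a curve that transversally crosses $\Gamma$, followed by the nonlinear homoclinic passage $F^N$, whose normal form near $\Gamma$ is Lemma \ref{Nderivatives}. The key step is to show that the intersection point $c'$ has $x$-coordinate of exact order $|\lambda_1|^{\theta n}$; the Taylor expansion of $F^N$ at the original critical point $c$ then converts this into the displacement of $z^{(1)}$ from $z$.

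Concretely, I would parametrize $W^u_{\text{loc}}(z)$ around $z=(z_x,a)$ as $s\mapsto(z_x+s\,e_{x_1},\,a+\beta(t,a)s^2+O(s^3))$. The horizontal tangent at $z$ and the nondegenerate $\beta$ follow from Lemma \ref{Nderivatives}, which asserts that $DF^N(c_{t,a})$ sends the vertical tangent of $W^u_{\text{loc}}(0)$ to $(B_{0,1},0)$ and produces the nonzero quadratic coefficient $Q_{0,1}$. Using linearity of $F$ on $[-2,2]^m$, $F^{\theta n}$ sends this parametrization to $(\lambda^{\theta n}(z_x+s\,e_{x_1}),\,\mu^{\theta n}(a+\beta s^2+\cdots))$. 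Intersecting with $\Gamma=\{(x,c(x,t,a)):c(x,t,a)=1+O(x)\}$, under the condition $(t,a)\in\mathcal{B}_n$ which forces $a\ll\mu^{-\theta n}$, gives $s_\pm\approx\pm\mu^{-\theta n/2}/\sqrt{\beta}$, so
\[
c'_x=\lambda^{\theta n}z_x+O(|\lambda_1|^{\theta n}\mu^{-\theta n/2}),
\]
which is of exact order $|\lambda_1|^{\theta n}$ because the first component of $z_x$ is uniformly close to the first coordinate of $q_1(t)$, namely $1$, by Remark \ref{muzeroothers}.

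For the second stage, differentiating the parametrization yields tangent $(\lambda_1^{\theta n},2\beta\mu^{\theta n}s_\pm)$ at $c'$, nearly vertical with horizontal slope $\epsilon=O(\lambda_1^{\theta n}\mu^{-\theta n/2})$, so $W^u_{\text{loc}}(c')$ of diameter $L|\lambda_1|^{\theta n}$ is an almost-vertical arc. Applying Lemma \ref{Nderivatives} at $c'\in\Gamma$ and substituting the arc-length parametrization $\sigma\tau$ with $\tau\approx(\epsilon,1)$, the $y$-coordinate of $F^N(\xi(\sigma))$ becomes $(F^N(c'))_y+C_{c'}\epsilon\,\sigma+Q_{c'}\sigma^2+O(\sigma^3)$, minimized at $\sigma^\ast=-C_{c'}\epsilon/(2Q_{c'})=O(\lambda_1^{\theta n}\mu^{-\theta n/2})$, well inside $[-L|\lambda_1|^{\theta n}/2,L|\lambda_1|^{\theta n}/2]$. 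The horizontal-tangent claim $T_{z^{(1)}}W^u_{\text{loc}}(z^{(1)})\subset\mathbb{R}^{m-1}\times\{0\}$ is immediate from this vanishing first derivative. Taylor-expanding $F^N$ around $c=(0,1)$ and using $c'-c=(c'_x,O(c'_x))$ then gives $(F^N(c'))_y-a=C_{0,1}\cdot c'_x+O(|c'_x|^2)$, so overall
\[
z^{(1)}_y-z_y=C_{0,1}\cdot(\lambda^{\theta n}z_x)+O(|\lambda_1|^{\theta n}\mu^{-\theta n/2}).
\]
The dominant piece $(C_{0,1})_1(z_x)_1|\lambda_1|^{\theta n}$ is uniformly bounded above and bounded below away from zero on $\mathcal{B}_n$ thanks to the nondegeneracy conditions of Definition \ref{stronghomtang} (in particular $(f5)$) together with the invertibility of $DF^N(c)$; the two-sided bound with uniform $K$ and the existence of $\lim_{n\to\infty}|z^{(1)}_y-z_y|/|\lambda_1|^{\theta n}$ then both follow, since every remainder is $o(|\lambda_1|^{\theta n})$ uniformly.

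The main obstacle will be the bookkeeping of the three distinct scales $|\lambda_1|^{\theta n}$, $|\lambda_1|^{\theta n}\mu^{-\theta n/2}$, and $|\lambda_1|^{2\theta n}\mu^{-\theta n}$ arising respectively from the translation of $z_x$ under $F^{\theta n}$, the curvature of the folded image, and the quadratic perturbation at the minimum, together with the verification that the coefficient $(C_{0,1})_1(z_x)_1$ is uniformly nonzero on $\mathcal{B}_n$. A side subtlety is ensuring that the $O(s^3)$ remainders in the parametrization of $W^u_{\text{loc}}(z)$ (coming from higher-order terms in $W^u(p)$ away from $z$) do not spoil the leading-order estimate after the large stretching by $F^{\theta n}$, which is controlled by choosing $W^u_{\text{loc}}(z)$ of size $O(\mu^{-\theta n/2})$ and invoking the uniform $\Cd$ bounds on $W^u(p)$.
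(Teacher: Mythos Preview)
Your proposal is correct and follows essentially the same route as the paper: both arguments push the parabolic arc $W^u_{\text{loc}}(z)$ through the linear stage $F^{\theta n}$ to locate $c'\in\Gamma$ with $|c'_x|\asymp|\lambda_1|^{\theta n}$, then apply the Taylor expansion of Lemma~\ref{Nderivatives} centered at $c$ so that the $C\Delta x$ term produces the displacement $z^{(1)}_y-z_y$, and finally use the small slope $O((|\lambda_1|^2/|\mu|)^{\theta n/2})$ at $c'$ to show the true minimum of $F^N(W^u_{\text{loc}}(c'))$ lies within $o(|\lambda_1|^{\theta n})$ of $F^N(c')$. One cosmetic point: the tangent to $W^u_{\text{loc}}(z)$ at $z$ is $(B_{0,1},0)$ rather than $e_{x_1}$, but since only the $e_{x_1}$-component survives at leading order after $\lambda^{\theta n}$ this does not affect your argument.
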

\begin{proof}
Use coordinates centered at the critical point $c$  of the parameter $(t,a)$ and let $(\Delta x,\Delta y)\in W^u_{\text{loc}}(c')$ with $(\Delta \tilde x,\Delta\tilde y)\in F_{t,a}^N\left(W^u_{\text{loc}}(c')\right)$ centered in $z$. By Lemma \ref{Nderivatives} and the fact that $|\Delta y|=O\left(|\lambda_1|^{\theta n}\right) $we have
\begin{eqnarray*}
\Delta\tilde y&=&C\Delta x+Q(\Delta y)^2+O\left(|\Delta x|^2+|\Delta x||\Delta y|+|\Delta y|^3\right)\\
&=&C\Delta x+O\left(|\lambda_1|^{2\theta n}\right).
\end{eqnarray*}
Because the first coordinate of $q_1$ is equal to $1$, see $(F4)$ and the following normalization, we have that $\Delta x=\lambda_1^{\theta n}e_1+O\left(|\lambda_2|^{\theta n}\right)$ and because $Df^N_{q_3}(e_1)\notin T_{q_1}W^s(p)$ has a component in the positive $y$ direction we get that 
\begin{eqnarray*}
\frac{1}{K}|\lambda_1|^{\theta n}+O\left(|\lambda_1|^{2\theta n}+|\lambda_2|^{\theta n}\right)\leq \Delta\tilde y&\leq &K|\lambda_1|^{\theta n}+O\left(|\lambda_1|^{2\theta n}+|\lambda_2|^{\theta n}\right),
\end{eqnarray*}
with $K$ a positive uniform constant. The first claim of the lemma is then proved. For the second one, use the coordinates centered in $c'$. Let  $(\Delta x,\Delta y)\in W^u_{\text{loc}}(c') $ with $(\Delta \tilde x,\Delta\tilde y)= F_{t,a}^N\left((\Delta x,\Delta y)\right)$ written in coordinates centered in $F^N(c')$. Consider $W^u_{\text{loc}}(c')$ as a graph over the $y$-axis and take a tangent vector $(\Delta u,\Delta v)\in T_{c'}W^u_{\text{loc}}(c')$. Define the slope-vector  at $c'$ as $s=\Delta u/\Delta v$. Then \begin{eqnarray}\label{slopeespr}
s=O\left(\left(\frac{|\lambda_1|^2}{|\mu|}\right)^{\frac{\theta n}{2}}\right).
\end{eqnarray}
This estimate is obtained as follows. Let $(z_x+\Delta z_x, z_y+\Delta z_y)\in W^u_{\text{loc}}(z)$ such that $F^{\theta n}\left(z_x+\Delta z_x, z_y+\Delta z_y\right)=c'$ and $(\Delta u_0,\Delta v_0)$ be a tangent vector to $W^u_{\text{loc}}(z)$ at the point $\left(z_x+\Delta z_x, z_y+\Delta z_y\right)$. Then there exist uniform constants $K_1,K_2$ such that $${K_1^{-1}}{|\mu|^{-\theta n}}\leq \Delta z_y\leq K_1{|\mu|^{-\theta n}},$$ and because of the quadratic behavior of $W^u_{\text{loc}}(z)$,  $$K_2^{-1}{|\mu|^{-\frac{\theta n}{2}}}\leq |\Delta z_x|\leq K_2{|\mu|^{-\frac{\theta n}{2}}}.$$ In particular $$K_2^{-1}{|\mu|^{-\frac{\theta n}{2}}}|\Delta u_0|\leq |\Delta v_0|\leq K_2{|\mu|^{-\frac{\theta n}{2}}}|\Delta u_0|.$$ Observe that $$s=O\left(\frac{|\lambda_1|^{\theta n}}{|\mu|^{\theta n}}\frac{\Delta u_0}{\Delta v_0}\right).$$ The estimate for the slope follows. Use now coordinates $\left(\Delta x',\Delta y'\right)$ centered around $z$ with $F^{\theta n}\left(\Delta x',\Delta y'\right)=\left(\Delta x,\Delta y\right)$. Then there is a non zero smooth function $R$ such that
$$|\Delta x'|=R\left(\Delta y'\right)\left(\Delta y'\right)^{\frac{1}{2}},$$ and 
$$\Delta y'=O\left(\frac{1}{\mu^{\theta n}}\right).$$
In particular,
$$
\frac{d^2|\Delta x'|}{d \left(\Delta y'\right)^2}=O\left(\left(\Delta y'\right)^{-\frac{3}{2}}\right)=O\left(|\mu|^{\frac{3}{2}\theta n}\right).
$$
This implies that 
$$
\frac{d^2|\Delta x|}{d \left(\Delta y\right)^2}=O\left(|\mu|^{\frac{3}{2}\theta n}\frac{|\lambda_1|^{\theta n}}{|\mu|^{2\theta n}}\right)=O\left(\left(\frac{|\lambda_1|^2}{|\mu|}\right)^{\frac{\theta n}{2}}\right).
$$
Hence,
\begin{equation}\label{straightline}
\Delta x=s\Delta y+O\left(\left(\frac{|\lambda_1|^2}{|\mu|}\right)^{\frac{\theta n}{2}}\right)(\Delta y)^2.
 \end{equation}
  From (\ref{straightline})  and (\ref{slopeespr}) we have that $|\Delta x|=O\left(s\Delta y\right)=o\left(\Delta y\right)$. From Lemma \ref{Nderivatives}, 
\begin{eqnarray*}
\Delta\tilde x&=&B\Delta y+O_x\left(\Delta y\right) (\Delta y)^2\\
\Delta\tilde y&=&s\Delta y+Q(\Delta y)^2+O_{y}\left(\Delta y\right)(\Delta y)^3,
\end{eqnarray*}
where $B, Q\neq 0$, $O_x$ and $O_{y}$ are bounded differentiable functions in $\Delta y$. 
After differentiating one obtains
\begin{equation}\label{curveatz1}
\frac{d\tilde y}{d\tilde x}=\frac{2Q}{B}\Delta y+O\left(\left(\frac{|\lambda_1|^2}{|\mu|}\right)^{\frac{\theta n}{2}}\right),
\end{equation} 
where we used (\ref{slopeespr}). As consequence $z^{(1)}$ which is the point where ${d\tilde y}/{d\tilde x}=0$ corresponds to $$\Delta y=O\left(\left(\frac{|\lambda_1|^2}{|\mu|}\right)^{\frac{\theta n}{2}}\right)=o\left(|\lambda_1|^{\theta n}\right)\leq L|\lambda_1|^{\theta n}.$$ Indeed $W^u_{\text{loc}}(z^{(1)})$ has a horizontal tangency.
\end{proof}

 Let $z^{(2)}=(z_{x}^{(2)},z_{y}^{(2)})=F_{t,a}^n(z^{(1)})$ and take a local unstable manifold $W^u_{\text{loc}}(z^{(2)})$ of vertical size $L\left(|\lambda_1|^{\theta}|\mu|\right)^n$ with $L$ a large constant which is going to be chosen later. Observe that $z^{(2)}$ is located close to $c$, see Figure \ref{Fig4b}. The following lemma gives information on the size and the position of $W^u_{\text{\rm loc}}(z^{(2)})$. In particular Lemma \ref{ymaxymin} states how much $W^u_{\text{loc}}(z^{(2)})$ is shifted up with respect to the critical point $c$, see Figure \ref{Fig4b}. Its proof follows from Lemma \ref{z1distz} and from the fact that we are in the domain of linearization.
 \begin{lem}\label{ymaxymin}
There exists a uniform constant $K>0$ such that the following holds. For $n$ large enough and for $(x,y)\in W^u_{\text{\rm loc}}(z^{(2)})$,
 \begin{eqnarray*}
 \frac{1}{K}\left(|\lambda_1|^{\theta}|\mu|\right)^n\leq y- |\mu_0|^n z_{n,y}+O\left(|\mu_0|^n|a-a_n(t)|\right)\leq K\left(|\lambda_1|^{\theta}|\mu|\right)^n,
  \end{eqnarray*}
  and 
  $$x=O\left(|\lambda_1|^n\right),$$
  where $z_n=z(t,a_n(t))$, $\mu_0=\mu(t,a_n(t))$ and $\mu=\mu(t,a)$.
 \end{lem}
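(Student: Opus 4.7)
The plan is to iterate $z^{(1)}$ directly by $F^n_{t,a}$, which acts linearly on the box $[-2,2]^m$ as the diagonal matrix with entries $\lambda_1^n,\ldots,\lambda_{m-1}^n,\mu^n$. Lemma \ref{z1distz} gives $z_y^{(1)} = z_y + \delta_n$ with $K^{-1}|\lambda_1|^{\theta n}\le |\delta_n|\le K|\lambda_1|^{\theta n}$ and with $\delta_n/|\lambda_1|^{\theta n}$ converging to a non-zero limit, and the unfolding normalization (Remark \ref{highzya}) gives $z_y = a$. Hence $z_y^{(2)} = \mu^n z_y^{(1)} = \mu^n a + \mu^n \delta_n$, and $|\mu^n\delta_n|$ is of exact order $(|\mu||\lambda_1|^\theta)^n$, providing the main term of the two-sided bound.

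To match the form of the lemma, I would decompose
\[
\mu^n a - |\mu_0|^n a_n(t) = |\mu_0|^n\bigl(a-a_n(t)\bigr) + \bigl(|\mu|^n - |\mu_0|^n\bigr)\,a.
\]
The first term is exactly the allowed $O(|\mu_0|^n|a-a_n(t)|)$ error. For the second, Remark \ref{muzeroothers} gives $(|\mu|/|\mu_0|)^n = 1 + O(n|\lambda_1|^{\theta n})$ on $\mathcal B_n$, and Lemma \ref{Imtang} provides $|\mu_0|^n a_n(t) = O(1)$; together with $|a - a_n(t)| \le \epsilon_1|\lambda_1|^{\theta n}$, the second term is $O(n|\lambda_1|^{\theta n})$, exponentially smaller than the main term $(|\mu||\lambda_1|^\theta)^n$. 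For arbitrary $(x,y)\in W^u_{\text{loc}}(z^{(2)})$, the vertical extent $L(|\mu||\lambda_1|^\theta)^n$ of the local unstable manifold only modifies the constant $K$. For the horizontal coordinate, $z_x^{(1)} = O(1)$ since $z^{(1)}$ lies near the critical-value region, so $z_x^{(2)} = \lambda^n z_x^{(1)} = O(|\lambda_1|^n)$. Since $T_{z^{(1)}}W^u_{\text{loc}}(z^{(1)})$ is horizontal (Lemma \ref{z1distz}) and the horizontal subspace is preserved by the diagonal action, the tangent $T_{z^{(2)}}W^u_{\text{loc}}(z^{(2)})$ is horizontal as well; the quadratic bending of this parabolic curve (the leading curvature coefficient of $W^u_{\text{loc}}(z^{(1)})$ is uniformly bounded, and $n$ iterates multiply it by $(|\mu|/|\lambda_1|^2)^n$) then implies that a vertical excursion of order $(|\mu||\lambda_1|^\theta)^n$ along the manifold produces a horizontal deviation of order $|\lambda_1|^{n(1+\theta/2)}$, which is much smaller than $|\lambda_1|^n$; hence $x = O(|\lambda_1|^n)$.

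The delicate point will be securing the lower bound: for $(t,a)\in\mathcal B_n$ the allowed error $O(|\mu_0|^n|a-a_n(t)|)$ can be of the same order as the main term $(|\mu||\lambda_1|^\theta)^n$, so I must know the sign of $\mu^n\delta_n$ in order to place the error on the opposite side and preserve the inequality. This is exactly where the existence of the limit $\lim_n\delta_n/|\lambda_1|^{\theta n}$ asserted in Lemma \ref{z1distz} is essential, guaranteeing a fixed sign for $\mu^n\delta_n$ (along one parity of $n$ when $\mu<0$) and hence a genuine two-sided bound.
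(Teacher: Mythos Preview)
Your approach is correct and is precisely what the paper intends: its proof is the one-liner ``follows from Lemma~\ref{z1distz} and from the fact that we are in the domain of linearization,'' and your argument is a faithful expansion of that. Two small remarks. First, the $x$-bound needs no curvature analysis: since $W^u_{\text{loc}}(z^{(1)})$ lies in the linearization box $[-2,2]^m$, the diagonal action $x\mapsto \lambda^n x$ immediately gives $x=O(|\lambda_1|^n)$ for every point of $W^u_{\text{loc}}(z^{(2)})=F^n\bigl(W^u_{\text{loc}}(z^{(1)})\bigr)$. Second, your final ``delicate point'' is overstated: the term $O(|\mu_0|^n|a-a_n(t)|)$ is part of the \emph{statement}, not an error you must absorb; and when the lemma is applied (see the proof of Lemma~\ref{hnbounds}) this term is controlled via Remark~\ref{muzeroothers} by $2\epsilon_1(|\lambda_1|^\theta|\mu|)^n$ and then removed by taking $\epsilon_1$ small, so no sign-tracking of $\mu^n\delta_n$ is required for the two-sided estimate.
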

 The curves $W^u_{\text{\rm loc}}(z^{(1)})$, $W^u_{\text{\rm loc}}(z^{(2)})$ and $W^u_{\text{\rm loc}}(z^{(3)})$ (which will be introduced later) are graphs over smooth curves in the $x$-plane. We can describe these curves  in coordinates centered at $z^{(i)}$. In particular, for each component of $W^u_{\text{\rm loc}}(z^{(i)})\setminus z^{(i)}$, there exists a smooth function $w$ such that if $(\Delta x,\Delta y)\in W^u_{\text{\rm loc}}(z^{(i)})$ then $\Delta y=w\left(|\Delta x|\right)$.

 In the following lemma we study the curvature of $W^u_{\text{\rm loc}}(z^{(2)})$.
 \begin{lem}\label{shape}
For $n\ge 1$ large enough, in a coordinate system centered in $z^{(2)}$ the $\Cq$ curve $W^u_{\text{\rm loc}}(z^{(2)})$ is given by 
$$
\Delta y=Q_2\left(\frac{|\mu|}{|\lambda_1|^2}\right)^n|\Delta x|^2+O\left(|\Delta x|^3\left(\frac{|\mu|}{|\lambda_1|^3}\right)^n\right),
$$
where $(\Delta x,\Delta y)\in W^u_{\text{\rm loc}}(z^{(2)})$, $0\leq\Delta y<L\left(|\lambda_1|^{\theta}|\mu|\right)^n$, $|\Delta x|=O\left(|\lambda_1|^{2+\theta}\right)^{\frac{n}{2}}$ and the constant $Q_2>0$ is a uniformly bounded and bounded away from zero.
 \end{lem}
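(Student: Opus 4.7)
My plan is to first derive the local quadratic shape of $W^u_{\text{\rm loc}}(z^{(1)})$ near $z^{(1)}$, and then push it forward $n$ times by $F$, which acts linearly on the relevant linearization box by (\ref{Flinear}). For the first step I continue the calculation of Lemma \ref{z1distz}. In the coordinates $(\Delta \tilde x, \Delta \tilde y)$ centered at $F^N(c')$, the image curve $F^N(W^u_{\text{\rm loc}}(c'))$ admits the parametric form
$$\Delta \tilde x = B\Delta y + O(\Delta y^3), \qquad \Delta \tilde y = s\Delta y + Q\Delta y^2 + O(\Delta y^3),$$
with $B, Q \neq 0$ uniform by Lemma \ref{Nderivatives} and $s = O\!\left((|\lambda_1|^2/|\mu|)^{\theta n/2}\right)$ by (\ref{slopeespr}). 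The point $z^{(1)}$ corresponds to the horizontal-tangent value $\Delta y^{\ast} = -s/(2Q) + O(s^2)$ determined by the vanishing of $s + 2Q\Delta y^{\ast}$. After shifting to coordinates $(\hat x,\hat y)$ centered at $z^{(1)}$, the linear term in the expansion of $\hat y$ in $\hat x$ disappears by construction, and one reads off
$$\hat y = Q_1\,\hat x^2 + O(|\hat x|^3), \qquad Q_1 = \frac{Q}{B^2}\,(1 + O(s)),$$
where $Q_1$ is uniformly bounded and uniformly bounded away from zero for $(t,a)\in\mathcal B_n$.

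For the second step, by Lemma \ref{ymaxymin} both $z^{(1)}$ and all the iterates of $W^u_{\text{\rm loc}}(z^{(1)})$ involved lie inside the linearization box $[-2,2]^m$, on which $F$ is the diagonal matrix from (\ref{Flinear}). Writing $(\Delta x,\Delta y)$ for the coordinates centered at $z^{(2)} = F^n(z^{(1)})$, we have $\hat x = \Delta x/\lambda_1^n$ up to the subdominant contributions from the other stable directions with eigenvalues $\lambda_i$, $i\ge 2$, and $\hat y = \Delta y/\mu^n$. Substituting into the quadratic expansion and multiplying by $\mu^n$ yields
$$\Delta y = Q_2\left(\frac{|\mu|}{|\lambda_1|^2}\right)^{\!n}\!|\Delta x|^2 + O\!\left(\left(\frac{|\mu|}{|\lambda_1|^3}\right)^{\!n}\!|\Delta x|^3\right),$$
with $Q_2 = |Q_1|$. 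The range $|\Delta x| = O(|\lambda_1|^{(2+\theta)n/2})$ then follows by solving the leading quadratic under the hypothesis $\Delta y < L(|\lambda_1|^\theta|\mu|)^n$.

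The main obstacle I foresee is the careful bookkeeping of the cubic error. One has to verify that in the coordinates centered at $z^{(1)}$ the remainder in $\hat y$ is genuinely $O(|\hat x|^3)$ and not of mixed form $s|\hat x|^2 + O(|\hat x|^3)$, since a leftover $s|\hat x|^2$ term would push forward under $F^n$ to a spurious $s(|\mu|/|\lambda_1|^2)^n|\Delta x|^2$ contribution that does not fit into the claimed $|\Delta x|^3$ error. This exact cancellation is precisely what the choice of $\Delta y^{\ast}$ is designed to enforce, and it relies on the nondegeneracy $Q \neq 0$ from Lemma \ref{Nderivatives}. Uniformity of $Q_2$ over $(t,a)\in\mathcal B_n$ then follows from the uniform positivity of $B$ and $Q$ in the same lemma, combined with the smooth dependence of the tangency data on the parameters.
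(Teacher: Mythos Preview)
Your argument is correct and follows essentially the same approach as the paper: first establish the quadratic expansion $\hat y = Q_1\hat x^2 + O(|\hat x|^3)$ of $W^u_{\text{loc}}(z^{(1)})$ near $z^{(1)}$ (the paper simply cites (\ref{curveatz1}) for this, while you carry out the recentering at $\Delta y^\ast$ explicitly), and then push forward by the linear map $F^n$ from (\ref{Flinear}), which rescales $\hat x$ by $\lambda_1^n(1+O(|\lambda_2/\lambda_1|^n))$ and $\hat y$ by $\mu^n$, producing the curvature factor $(|\mu|/|\lambda_1|^2)^n$. One minor correction: in your parametric form $\Delta\tilde x = B\Delta y + O(\Delta y^3)$ the remainder should be $O(\Delta y^2)$ (from the $O_{1,3}\Delta y^2$ term in Lemma~\ref{Nderivatives}), but this does not affect the conclusion since after recentering at $z^{(1)}$ it only contributes to the cubic error in $\hat y$.
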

 \begin{proof}
 Consider the $W^u_{\text{loc}}(z^{(1)})$ of vertical size $L|\lambda_1|^{\theta n}$ and let  $B\in T_{z^{(1)}}W^u_{\text{loc}}(z^{(1)})$ be a unit vector. Then in coordinate centered in $z^{(1)}$ the $\Cq$ curve $W^u_{\text{loc}}(z^{(1)})$ is given by 
 \begin{eqnarray*}
  \Delta y &=& Q_1|\Delta x|^2+O(|\Delta x|^3).
 \end{eqnarray*}
where $Q_1$ is a uniform constant, see (\ref{curveatz1}).
 Given the fact that $B$ has a non zero first coordinate, see $(F5)$, then the linear map $F^n_{t,a}$ turns the curve $W^u_{\text{loc}}(z^{(1)})$ into a curve of the form as stated in the lemma. Namely, recall that the curves $W^u_{\text{loc}}(z^{(1)})$ and $W^u_{\text{loc}}(z^{(2)})$ are graphs of functions over smooth curves in the $x$-plane. The curve $W^u_{\text{loc}}(z^{(1)})$ is parametrized by $|\Delta x|$ and by iteration by the linear map we obtain that the curve $W^u_{\text{loc}}(z^{(2)})$ is parametrized by $\lambda^n_1\left(1+O\left(\left|\frac{\lambda_2}{\lambda_1}\right|^n\right)\right)|\Delta x|$.   \\
  Observe also that for $n$ large enough and for all $(t,a)\in\mathcal B_n$, the highest vertical point of $W^u_{\text{loc}}(z^{(2)})$ is still in the domain of linearization. The factor $\left({|\mu|}/{|\lambda_1|^2}\right)^n$ comes from the change of curvature by $n$ linear iterates.
  \end{proof}

Let $z^{(3)}=(z_{x}^{(3)},z_{y}^{(3)})$ be the lowest point of $F^N_{t,a}\left(W^u_{\text{loc}}(z^{(2)})\right)$. In the next lemma we prove that the new critical value $z^{(3)}$ is far from the critical value $z$ and from now on they will have independent behavior. In particular, while $z$ is contained in the basin of the sink, the new critical value $z^{(3)}$ it is outside of the basin and it will be involved in the creation of the new tangency.
\begin{lem}\label{hnbounds}
For $\epsilon_1$ small enough, there exists a uniform constant $H>0$ such that
$$
\frac{1}{H}\left(\left|\lambda_1\right|^{\theta}\left|\mu\right|\right)^{2n}\leq z_{y}^{(3)}-z_{y}\leq {H}\left(\left|\lambda_1\right|^{\theta}\left|\mu\right|\right)^{2n},
$$
and 
$T_{z^{(3)}}W^u_{\text{loc}}(z^{(3)})\subset\R^{m-1}\times \left\{0\right\}$ is horizontal.
\end{lem}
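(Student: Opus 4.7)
\emph{Proof plan.} I will work in coordinates centered at the primary critical point $c=(0,1)$ (Remark \ref{c0=1}), where Lemma \ref{Nderivatives} provides a Taylor expansion of $F^N$, and compute the lowest point of the image of $W^u_{\text{loc}}(z^{(2)})$ under $F^N$. Combining Lemma \ref{ymaxymin} with Lemma \ref{Imtang} (whose identity $\mu^n a_n=c(\lambda^n z_x,t,a_n)$ gives $|\mu_0|^n z_{n,y}=1+O(|\lambda_1|^n)$) and with the estimate $|\mu_0|^n|a-a_n(t)|=O((|\lambda_1|^\theta|\mu|)^n)$ valid throughout $\mathcal{B}_n$, one sees that $z^{(2)}$ has horizontal position $u_0=O(|\lambda_1|^n)$ and a positive vertical offset $v_0\asymp(|\lambda_1|^\theta|\mu|)^n$ from $c$. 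Lemma \ref{shape} then describes $W^u_{\text{loc}}(z^{(2)})$ in these coordinates as the graph $v=v_0+Q_2(|\mu|/|\lambda_1|^2)^n|\Delta x|^2+O(|\Delta x|^3(|\mu|/|\lambda_1|^3)^n)$ with $u=u_0+\Delta x$.

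Next I substitute this parametrization into the $y$-component of the expansion in Lemma \ref{Nderivatives}, namely $\Delta\tilde y = C\,u+Q\,v^2+O(|u|^2+|u||v|+|v|^3)$, with $C,Q$ evaluated at $c$. The $\Delta x$-independent contribution $Q v_0^2$ is of order $(|\lambda_1|^\theta|\mu|)^{2n}$ and positive since $Q>0$. The $\Delta x$-dependent part is dominated by the convex quadratic $C\,\Delta x+2Q v_0 Q_2(|\mu|/|\lambda_1|^2)^n|\Delta x|^2$, whose unique minimizer $\Delta x^{*}$ sits well inside the admissible range $|\Delta x|=O(|\lambda_1|^{(2+\theta)n/2})$ because $\theta<1/2$, and whose value at the minimum is exponentially smaller than $Q v_0^2$ (the ratio being a power of $|\lambda_1|^{2-3\theta}/|\mu|^4$, which decays since $|\lambda_1|<1$, $|\mu|>1$ and $2-3\theta>0$). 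The higher-order corrections from Lemmas \ref{Nderivatives} and \ref{shape}, together with the constant $Cu_0=O(|\lambda_1|^n)$, are absorbed by the same type of estimate, and the residual $O(|\mu_0|^n|a-a_n(t)|)$ fluctuation in $v_0$ is made harmless by shrinking $\epsilon_1$.

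Reading off the minimum then yields $z_y^{(3)}-z_y=Q v_0^2(1+o(1))\asymp(|\lambda_1|^\theta|\mu|)^{2n}$, which is the required two-sided bound. The horizontal tangency at $z^{(3)}$ follows from the first-order condition $d(\Delta\tilde y)/d(\Delta x)=0$ at the interior minimum $\Delta x^{*}$: the image of $W^u_{\text{loc}}(z^{(2)})$ has a well-defined nonzero tangent there (its horizontal component does not vanish, being amplified by the large slope $dv/d\Delta x$ at $\Delta x^{*}$), whose $y$-component is zero, so the tangent lies in $\R^{m-1}\times\{0\}$. The principal obstacle I anticipate is the uniform control of the nested error terms on $\mathcal{B}_n$, and in particular ruling out that the negative correction produced by minimizing the linear term $C\,\Delta x$ cancels the positive leading term $Q v_0^2$; this is exactly where the hypothesis $\theta<1/2$, itself a consequence of condition $(F3)$ ($\lambda_{\max}\mu_{\max}^3<1$), is used in an essential way.
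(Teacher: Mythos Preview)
Your proposal is correct and follows essentially the same approach as the paper: center coordinates at $c$, use Lemma~\ref{ymaxymin} together with $|\mu_0|^n z_{n,y}=1+O(|\lambda_1|^n)$ and the $\mathcal B_n$-bound on $|\mu_0|^n|a-a_n(t)|$ to pin down $\Delta y\asymp(|\lambda_1|^\theta|\mu|)^n$ and $|\Delta x|=O(|\lambda_1|^n)$ along $W^u_{\mathrm{loc}}(z^{(2)})$, and then apply the Taylor expansion of Lemma~\ref{Nderivatives} to obtain $\Delta\tilde y=Q(\Delta y)^2+O((|\lambda_1|^\theta|\mu|)^{3n})$.

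The only difference is one of granularity: the paper simply observes that \emph{every} point of $W^u_{\mathrm{loc}}(z^{(2)})$ already satisfies $\tilde K(|\lambda_1|^\theta|\mu|)^n\le\Delta y\le(\tilde K+L)(|\lambda_1|^\theta|\mu|)^n$, so $\Delta\tilde y\asymp(|\lambda_1|^\theta|\mu|)^{2n}$ holds uniformly and in particular at the minimum, with the interiority of the minimum secured by taking $L$ large. You instead parametrize via Lemma~\ref{shape} and carry out the explicit one-variable minimization in $\Delta x$, locating $\Delta x^*$ and bounding the resulting negative correction $-C^2/(4A)=O((|\lambda_1|^{2-\theta}/|\mu|^2)^n)$ against $Qv_0^2$. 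Your route is a bit more laborious but perfectly valid; it has the virtue of making the interiority of the minimizer and the horizontal tangency transparent without invoking the choice of $L$, and it foreshadows the curvature computation of Lemma~\ref{curvaturez3}. The paper's shortcut buys brevity by avoiding Lemma~\ref{shape} here altogether.
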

\begin{proof}
Use coordinates centered at $c$ and let $(\Delta x,\Delta y)\in W^u_{\text{loc}}(z^{(2)})=F_{t,a}^n\left(W^u_{\text{loc}}(z^{(1)})\right)$. From Lemma \ref{ymaxymin} we get 
\begin{eqnarray*}
\Delta y&=&y- |\mu_0|^n z_{n,y}+ |\mu_0|^n z_{n,y}-1\\&\geq &\frac{1}{K}\left(\left|\lambda_1\right|^{\theta}\left|\mu\right|\right)^{n}+O\left(|\mu_0|^n|a-a_n(t)|\right)+O\left(|\lambda_1 (t,a_n(t))|^n\right),
\end{eqnarray*}
where we used that $F^n_{t,a_n(t)}(z_{n})\in\Gamma$ which implies that $$| |\mu_0|^n z_{n,y}-1|=O\left(\left|F^n_{t,a_n(t)}(z_{n})_x\right|\right)=O\left(|\lambda_1 (t,a_n(t))|^n\right).$$
As a consequence, for $\epsilon_1$ small enough, the position of $W^u_{\text{loc}}(z^{(2)})$ satisfies
\begin{equation}\label{delatydist}
\tilde K\left(\left|\lambda_1\right|^{\theta}\left|\mu\right|\right)^{n}\leq\Delta y \leq (\tilde K+L)\left(\left|\lambda_1\right|^{\theta}\left|\mu\right|\right)^{n},
\end{equation}

\begin{equation}\label{delatxdist}
|\Delta x|=O\left(\left|\lambda_1\right|^{n}\right),
\end{equation}
where $\tilde K$ is a uniform constant. We apply Lemma \ref{Nderivatives} in image coordinates $(\Delta\tilde x, \Delta\tilde y)$ centered at $z$ and we find 
$$
\Delta\tilde y=O\left(\left|\lambda_1\right|^{n}\right)+Q\left(\Delta y\right)^2+O\left(\left(\left|\lambda_1\right|^{\theta}\left|\mu\right|\right)^{3n}\right)=Q\left(\Delta y\right)^2+O\left(\left(\left|\lambda_1\right|^{\theta}\left|\mu\right|\right)^{3n}\right),
$$ 
where we also used $(\ref{thetacond})$.
Here $Q$ is the initial curvature of Lemma \ref{Nderivatives}. From (\ref{delatydist}) we get the stated bounds for $ z_{y}^{(3)}-z_{y}$. Moreover for $L$ large enough the minimum is obtained in the interior of $W^u_{\text{loc}}(z^{(2)})$ and 
$T_{z^{(3)}}W^u_{\text{loc}}(z^{(3)})\subset\R^{m-1}\times \left\{0\right\}$ is horizontal.
\end{proof}
In the following lemma we study the curvature of $W^u_{\text{\rm loc}}(z^{(3)})$. One can observe that the curvature of $W^u_{\text{\rm loc}}(z^{(3)})$ has grown considerably compared to that of $W^u_{\text{\rm loc}}(z)$. 
\begin{lem}\label{curvaturez3} 
For $\epsilon_1$ small enough the following holds. Let $(t,a)\in\mathcal B_n$ then $W^u_{\text{loc}}(z^{(3)}(t,a))$ is the graph of a function and its curvature satisfies
$$
\text{\rm curv}\left(W^u_{\text{loc}}(z^{(3)}(t,a))\right)\geq C\left(\frac{|\mu|^4}{|\lambda_1|^{2-3\theta}}\right)^n,
$$
with $C>0$ a uniform constant and ${|\mu|^4}/{|\lambda_1|^{2-3\theta}}>1$.
\end{lem}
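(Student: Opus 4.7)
The plan is to parametrize $W^u_{\text{loc}}(z^{(2)})$ by $u=\Delta x$ using the quadratic expansion of Lemma \ref{shape}, push it forward by $F^N$ through a second-order Taylor expansion, and directly extract the curvature of the image graph at the lowest point $z^{(3)}$.

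First I would work in coordinates centered at the critical point $c=(0,1)\in \Gamma_{t,a}$. By Lemma \ref{ymaxymin} together with the bound $|a-a_n(t)|\le \epsilon_1|\lambda_1|^{\theta n}$ defining $\mathcal B_n$, the vertex $z^{(2)}$ sits at horizontal distance $O(|\lambda_1|^n)$ and vertical distance of order $(|\lambda_1|^\theta|\mu|)^n$ above $c$. Writing $F^N=(X,Y)$, the crucial estimate is for $\partial Y/\partial y$: since this vanishes identically on $\Gamma_{t,a}$, Taylor expansion in $y$ yields
\[
\frac{\partial Y}{\partial y}(z^{(2)})=\frac{\partial^2 Y}{\partial y^2}(c)\,\bigl(z^{(2)}_y-1\bigr)+O(|z^{(2)}_x|)\sim Q\,(|\lambda_1|^\theta|\mu|)^n,
\]
with $Q>0$ coming from Lemma \ref{functionc}. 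The remaining partials $A=X_x$, $B=X_y$, $C_0=Y_x$ evaluated at $z^{(2)}$ stay $O(1)$, with $|B|,|C_0|$ uniformly bounded away from zero by $(f5)$ and Lemma \ref{Nderivatives}, provided $\epsilon_1$ is small enough.

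Substituting the parabolic form $\Delta y(u)=Q_2\alpha_n u^2+O\bigl(u^3(|\mu|/|\lambda_1|^3)^n\bigr)$, with $\alpha_n=(|\mu|/|\lambda_1|^2)^n$, into the Taylor expansion of $F^N$ around $z^{(2)}$ produces, in coordinates centered at $F^N(z^{(2)})$,
\[
\tilde X(u)=Au+\kappa_1 u^2+O(u^3),\qquad \tilde Y(u)=C_0\,u+\kappa_2 u^2+O(u^3),
\]
with $\kappa_1=BQ_2\alpha_n+\tfrac12 X_{xx}$ and $\kappa_2=\tfrac{\partial Y}{\partial y}(z^{(2)})\,Q_2\alpha_n+\tfrac12 Y_{yy}\sim QQ_2(|\mu|^2/|\lambda_1|^{2-\theta})^n$ (the first summand dominates because $|\mu|^2/|\lambda_1|^{2-\theta}>1$). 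The lowest point of the image is the unique zero $u_0=-C_0/(2\kappa_2)+O(\cdot)$ of $\tilde Y'$. The essential cancellation then occurs in $\tilde X'(u_0)=A+2\kappa_1 u_0+O(u_0^2)$: since the condition $\theta>\theta_0$ built into (\ref{thetacond1}) forces $|\mu||\lambda_1|^\theta<1$, the ratio $\kappa_1/\kappa_2\sim (B/Q)(|\mu||\lambda_1|^\theta)^{-n}$ blows up, so the quadratic contribution dominates $A$ and yields $\tilde X'(u_0)\sim -BC_0/\bigl(Q(|\lambda_1|^\theta|\mu|)^n\bigr)$. Using $\tilde Y'(u_0)=0$, the curvature at $z^{(3)}$ equals $\tilde Y''(u_0)/\tilde X'(u_0)^2$, which after substitution gives
\[
\mathrm{curv}\bigl(W^u_{\mathrm{loc}}(z^{(3)})\bigr)\sim\frac{2Q^3 Q_2}{B^2 C_0^2}\left(\frac{|\mu|^4}{|\lambda_1|^{2-3\theta}}\right)^n,
\]
which is the asserted bound; the inequality $|\mu|^4/|\lambda_1|^{2-3\theta}>1$ is immediate from $|\mu|>1$, $|\lambda_1|<1$ and $2-3\theta>0$ (since $\theta<1/2$).

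The main obstacle is the rigorous control of all dropped terms, as several exponentially competing scales interact. Each $O(u^3)$ remainder coming from the parabola and from the Taylor expansion of $F^N$ must be verified to be subleading relative to $\kappa_2 u_0^2$ and to $2\kappa_1 u_0$ when evaluated at $u_0$; this is where the smallness of $\epsilon_1$ is required, keeping $z^{(2)}$ close enough to $c$ that the uniform lower bounds on $|B|$, $|C_0|$ and $Q$ persist. Finally, one also has to confirm that $W^u_{\text{loc}}(z^{(3)})$ is a graph throughout (not only at $z^{(3)}$); this follows from the observation that the dominant quadratic contribution keeps $|\tilde X'|$ away from zero on the full relevant $u$-range and not merely at $u_0$.
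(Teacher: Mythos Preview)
Your argument is correct and uses the same ingredients as the paper (Lemma~\ref{shape} for the shape of $W^u_{\text{loc}}(z^{(2)})$, the structure of $DF^N$ near $\Gamma$ from Lemma~\ref{Nderivatives}, and the estimate $\partial Y/\partial y(z^{(2)})\sim Q(|\lambda_1|^\theta|\mu|)^n$ from Lemma~\ref{ymaxymin}), but the packaging of the final step differs. The paper locates the preimage $m_5=F^{-N}(z^{(3)})$ on the parabola, computes the slope $dy/dx=O((|\lambda_1|^\theta|\mu|)^{-n})$ and second derivative $d^2y/dx^2\ge Q_2(|\mu|/|\lambda_1|^2)^n$ of $W^u_{\text{loc}}(z^{(2)})$ at $m_5$, then converts to a graph over $y$ via $d^2x/dy^2=-(dy/dx)^{-3}\,d^2y/dx^2$ and simply notes that the bounded diffeomorphism $F^N$ preserves this order of curvature. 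Your route instead parametrizes by $u=\Delta x$, pushes through the Taylor expansion of $F^N$ explicitly, and reads off the curvature from $\tilde Y''/\tilde X'^2$; this is more computational but makes the final step completely transparent rather than relying on the sentence ``$F^N$ will preserve this order of curvature''. One small slip: in your expression for $\kappa_2$ the subleading term should be $\tfrac12 Y_{xx}$ (from $(\Delta x)^2=u^2$), not $\tfrac12 Y_{yy}$ (which contributes only at order $u^4$); this is harmless since the dominant term $D\,Q_2\alpha_n$ is unaffected.
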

\begin{proof} Use coordinates centered at $z^{(2)}$ and let $m_5=(\Delta x,\Delta y)\in W^u_{\text{\rm loc}}(z^{(2)})$ be the preimage of $z^{(3)}$ under $N$ iterates, i.e. $F^N_{t,a}(\Delta x,\Delta y)=z^{(3)}$.
Then $0\leq\Delta y\leq L\left(|\lambda_1|^{\theta}|\mu|\right)^{n}$. Let $v=\left(\begin{matrix}
v_1\\v_2
\end{matrix}\right)$ be the tangent vector at $m_5$ to $ W^u_{\text{loc}}(z^{(2)})$. We use Lemma \ref{shape} and we obtain that 
\begin{equation}\label{eq1}
|v_2|=\left[2Q_2\left(\frac{|\mu|}{|\lambda_1|^2}\right)^n|\Delta x|+O\left(|\Delta x|^2\left(\frac{|\mu|}{|\lambda_1|^3}\right)^n\right)\right]|v_1|.
\end{equation}
By Lemma \ref{functionc} we have that ${\partial^2 Y}/{\partial y^2}$ is bounded away from zero in a neighbourhood of $c$. As a consequence, by Lemma \ref{Nderivatives} we have $$DF^N_{t,a}(m_5)=\left(\begin{matrix}
A&B\\C&D
\end{matrix} \right),$$ where $A$, $B$, $C$ are matrices and $D$ is a number. Observe that, from Lemma \ref{functionc}, there exists a uniform constant $d'>0$ such that  $$D=d'\left(m_{5,y}-1\right)+O(m_{5,x}),$$ and by Lemma \ref{ymaxymin}, $m_{5,x}=O\left(\lambda_1^n\right)$.  By using again the Lemma \ref{ymaxymin} and the bound on $m_{5,x}$, by taking $\epsilon_1$ small enough, we get that $D\geq d\left(|\lambda_1|^{\theta}|\mu|\right)^{n}$ where $d>0$ is a uniform constant. Because $F^N_{t,a}(m_5)$ is the minimum of the curve $W^u_{\text{loc}}(z^{(3)})$, then 
\begin{equation}\label{eq2}
Cv_1+Dv_2=0.
\end{equation}
Use coordinates $\left(\Delta x',\Delta y'\right)$ along $W^u_{\text{loc}}\left(z^{(1)}\right)$ centered around $z^{(1)}$ such that $F^n\left(\Delta x',\Delta y'\right)=\left(\Delta x,\Delta y\right)$. Then, from (\ref{delatydist}), $\Delta y'=O\left(|\lambda_1|^{\theta n}\right)$ and by the quadratic behavior, $|\Delta x'|=O\left(|\lambda_1|^{\frac{\theta}{2}n}\right)$. This implies that 
\begin{equation}\label{nightbeforetheday}
|\Delta x|=O\left(|\lambda_1|^{\left(1+\frac{\theta}{2}\right)n}\right).
\end{equation}
By (\ref{eq1}), (\ref{eq2}), Lemma \ref{shape}, the fact that $D\geq d\left(|\lambda_1|^{\theta}|\mu|\right)^{n}$ and that $C$ is a uniform constant, we have
$$
2dQ_2|\Delta x|+O\left(|\Delta x|^2\left(\frac{1}{|\lambda_1|}\right)^n\right)=O\left(\left(\frac{|\lambda_1|^{2-\theta}}{|\mu|^2}\right)^n\right).
$$
Moreover by (\ref{nightbeforetheday}), $|\Delta x|\left(\frac{1}{|\lambda_1|}\right)^n=O\left(|\lambda_1|^{\frac{\theta}{2}n}\right)$ and because $|\lambda_1|^{\frac{\theta}{2}n}<1$ we obtain
\begin{equation}\label{eq3}
|\Delta x|=O\left(\left(\frac{|\lambda_1|^{2-\theta}}{|\mu|^2}\right)^n\right).
\end{equation}
Observe that we can reduce the higher dimensional problem to a purely $2$-dimensional one by considering the projection of $ W^u_{\text{loc}}(z^{(2)})$ to the $(x_1,y)$-plane. In fact the curve $ W^u_{\text{loc}}(z^{(2)})$ projects to the $(x_1,y)$-plane and this projection distorts the curvature by an exponential (with respect to $n$) small amount, see $(f1)$, $(F4)$ and $(F5)$. As consequence it is enough to estimate the curvature of the projection of the curve $ W^u_{\text{loc}}(z^{(2)})$ to the $(x_1,y)$-plane. The curvature of this projected curve is proportional to the curvature of the curve $ W^u_{\text{loc}}(z^{(2)})$. Because $F^n$ is linear, we can estimate the curvature of the projection of the curve $ W^u_{\text{loc}}(z^{(2)})$ to the $(x_1,y)$-plane by calculating how the linear map, which preserves the $(x_1,y)$-plane, changes the curvature of $ W^u_{\text{loc}}(z^{(1)})$ on the $(x_1,y)$-plane. We denote the $x_1$-coordinate by $x$. 
We calculate now the second derivatives using again Lemma \ref{shape} and the fact that the curve $ W^u_{\text{loc}}(z^{(2)})$ is in $n$ $\Cd$ exponentially close to the projected curve to the $x_1y$-plane. We get 
\begin{equation}\label{m5secondderivative}
\frac{d^2y}{dx^2}\ge \frac32 Q_2\left(\frac{|\mu|}{|\lambda_1|^2}\right)^n+O\left(|\Delta x|\left(\frac{|\mu|}{|\lambda_1|^3}\right)^n\right)\geq Q_2\left(\frac{|\mu|}{|\lambda_1|^2}\right)^n,
\end{equation}
where we used (\ref{eq3}). 
Observe that $W^u_{\text{loc}}(m_5)$ is a graph over an $x$-direction with second derivative at $m_5$ given by (\ref{m5secondderivative}). By (\ref{eq1}) and (\ref{eq3}), the slope satisfies
\begin{equation}\label{eq5}
\frac{dy}{dx}=O\left(\left(\frac{1}{|\lambda_1|^{\theta}|\mu|}\right)^n\right).
\end{equation}
Consider now the same curve as a graph over the $y$-axis. Then, by (\ref{m5secondderivative}), (\ref{eq5}) and using $d^2x/dy^2=-1/\left(dy/dx\right)^3d^2y/dx^2$, we have
$$
\frac{d^2x}{dy^2}\geq C\left(\frac{|\mu|^4}{|\lambda_1|^{2-3\theta}}\right)^n.
$$
The map $F^N_{t,a}$ will preserve this order of curvature. As final remark observe that, by (\ref{thetacond}), ${|\mu|^4}/{|\lambda_1|^{2-3\theta}}=\left({|\mu|}/{|\lambda_1|^{2-\theta}}\right){|\mu|^3}{|\lambda_1|^{2\theta}}>1$.
\end{proof}
\begin{figure}
\centering
\includegraphics[width=0.9\textwidth]{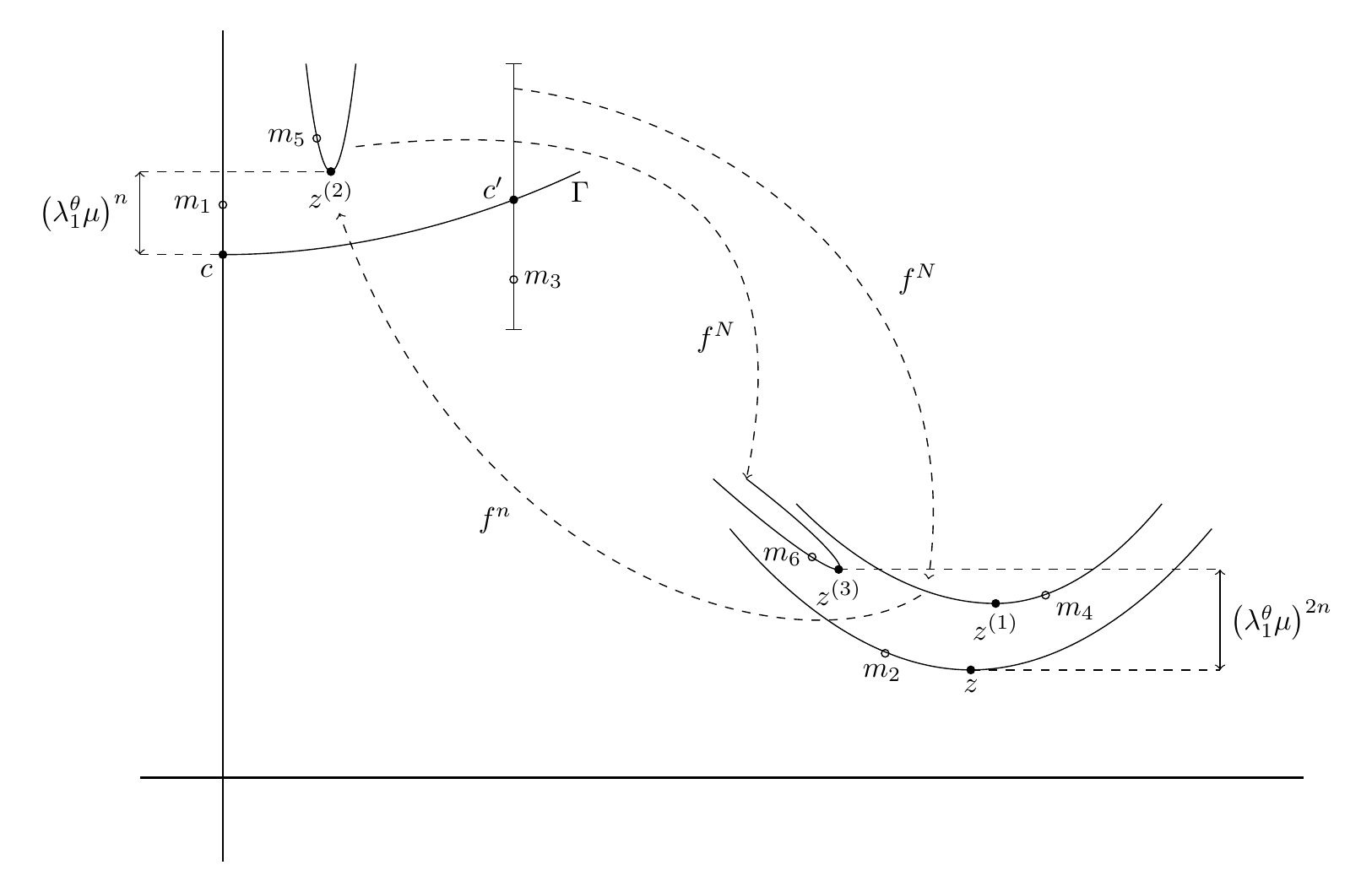}
\caption{$2N+n$ iterates}
\label{Fig4b}
\end{figure}
In the next proposition we estimate the variation of $z_{y}^{(3)}$ with respect to the parameters $a$ and $t$. This gives the speed of $W^u_{\text{\rm loc}}(z^{(3)})$ in the phase space. In order to achieve this we take a vector at a point $m_1\in W^u_{\text{\rm loc}}\left(c\right)$ and we calculate its variation along its orbit at crucial moments, step by step, until we get to a vector at the point $F^{3N+\theta n+n}(m_1)\in W^u_{\text{\rm loc}}(z^{(3)})$.  The steps will be defined precisely in the proof of the proposition. The reader can refer to Figure \ref{Fig4b}. 

A crucial application of Proposition \ref{speed} is the speed of 
$W^u_{\text{\rm loc}}(z^{(3)})$ when varying the parameters along the curve $A_n$. The dominating terms in the estimates for the partial derivatives will cancel in the calculation of this speed, see (\ref{upperineq}). This is the reason why we can not suppress more terms in the Taylor development. This cancellation is also crucial in the proof of Proposition \ref{angle}.


\begin{prop}\label{speed}
For $n\ge 1$ large enough 
\begin{eqnarray*}
\frac{\partial z_{y}^{(3)}}{\partial t}&=&D_5K \left(\lambda_1^{\theta}\mu\right)^n  \frac{n}{\mu}\frac{\partial\mu}{\partial t}+ D_5Kn \left(\lambda_1^{\theta}\mu\right)^{2n}\left[C_3X_2\frac{\theta }{\lambda_1} \frac{\partial \lambda_1}{\partial t}+\frac{K}{\mu}\frac{\partial\mu}{\partial t}\right]  +O\left(\left(|\lambda_1|^{\theta}|\mu|\right)^{2n}\right),\\
 \frac{\partial z_{y}^{(3)}}{\partial a}&=&D_5K\left(\lambda_1^{\theta}\mu^2\right)^n +1+ D_5K \left(\lambda_1^{\theta}\mu\right)^n  \frac{n}{\mu}\frac{\partial\mu}{\partial a}+D_5Kn \left(\lambda_1^{\theta}\mu\right)^{2n}\left[C_3X_2\frac{\theta }{\lambda_1} \frac{\partial \lambda_1}{\partial a}+\frac{K}{\mu}\frac{\partial\mu}{\partial a}\right] \\&+& O\left(\left(|\lambda_1|^{\theta}|\mu|\right)^{2n}\right),
\end{eqnarray*}
where $|\lambda_1|^{\theta}|\mu|^2>1$, $C_3X_2, K,D_5\neq 0$ and they converge to a non zero limit as $n\to\infty$.
\end{prop}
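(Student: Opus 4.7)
I would express $z^{(3)}_y$ in closed form by combining the Taylor expansion of $F^N$ at the critical point $c$ from Lemma~\ref{Nderivatives} with the linearized dynamics between $z$ and $z^{(2)}$. Writing $h = z^{(2)}_y - c_y$ for the vertical displacement of $z^{(2)}$ above $c=(0,1)$, the non-degenerate horizontal tangency at $F^N(c) = z$ together with the envelope theorem applied at the minimum $m_5\in W^u_{\text{loc}}(z^{(2)})$ that defines $z^{(3)}$ yields
$$
z^{(3)}_y \;=\; z_y \;+\; Q\, h^2 \;+\; O\!\bigl((\lambda_1^\theta\mu)^{3n}\bigr),
$$
where $Q = Q_{0,1}>0$ is the curvature from Lemma~\ref{Nderivatives} and the cubic error is controlled by Lemmas~\ref{ymaxymin}, \ref{shape}, and~\ref{curvaturez3}. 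The envelope theorem is what allows us to differentiate $z^{(3)}_y$ without tracking the parameter-dependence of $m_5$ itself.

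Next I would expand $h$ explicitly. Using linearization $z^{(2)}_y = \mu^n z^{(1)}_y$ and Lemma~\ref{z1distz} giving $z^{(1)}_y = z_y + K\lambda_1^{\theta n}(1+o(1))$, together with the normalization $z_y = a$ from Remark~\ref{highzya}, one has
$$
h \;=\; \mu^n a \;+\; K\,\mu^n \lambda_1^{\theta n}(1+o(1)) \;-\; 1.
$$
For $(t,a)\in\mathcal{B}_n$, Lemma~\ref{Imtang} supplies $\mu^n a_n(t) = 1 + O(\lambda_1^n)$, so $h$ simplifies to $\mu^n(a - a_n(t)) + K(\lambda_1^\theta\mu)^n + O(\lambda_1^n)$, giving $h \sim (\lambda_1^\theta\mu)^n$ on $\mathcal{B}_n$. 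Differentiating term-by-term, the leading part of $\partial_t(\mu^n a) = n\mu^{n-1}(\partial_t\mu)\,a$ becomes $(n/\mu)\partial_t\mu\cdot[1+o(1)]$ because $\mu^n a \approx 1$; note this is of order $n$, not exponentially small. The next contribution $\partial_t(K\mu^n\lambda_1^{\theta n})$ has size $Kn(\lambda_1^\theta\mu)^n\bigl[(1/\mu)\partial_t\mu + (\theta/\lambda_1)\partial_t\lambda_1\bigr]$.

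Feeding these into $\partial_t z^{(3)}_y = 2Q h\,\partial_t h + \partial_t z_y + (\text{cubic})$, and using $\partial_t z_y = 0$, the product $2Qh \cdot (n/\mu)\partial_t\mu$ produces the first term $D_5 K (\lambda_1^\theta\mu)^n(n/\mu)\partial_t\mu$ with $D_5K = 2Q\cdot\lim h/(\lambda_1^\theta\mu)^n$, while $2Qh$ times the subdominant derivative yields the second term $D_5 Kn(\lambda_1^\theta\mu)^{2n}\bigl[C_3 X_2(\theta/\lambda_1)\partial_t\lambda_1 + (K/\mu)\partial_t\mu\bigr]$, where the composite constant $C_3 X_2$ absorbs the linearization-to-curvature transfer through the $n$-fold linear iterate. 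For $\partial_a z^{(3)}_y$, additionally $\partial_a z_y = 1$ (the distinguished $+1$) and $\partial_a(\mu^n a) = \mu^n$, which multiplied by $2Q h$ produces the $D_5 K(\lambda_1^\theta\mu^2)^n$ summand. The $\Cd$-regularity of the linearization and the non-degeneracies of Lemma~\ref{Nderivatives} ensure that all structural constants stay bounded away from $0$ and $\infty$ and admit limits as $n\to\infty$.

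The main obstacle is twofold. First, the envelope-theorem reduction to $z^{(3)}_y = z_y + Qh^2 + \text{error}$ must be justified rigorously: the minimizer $m_5$ depends on $(t,a)$ through both the parameter-dependent curve $W^u_{\text{loc}}(z^{(2)})$ and the parameter-dependent map $F^N$, and one must show that these dependencies contribute only to the $O((\lambda_1^\theta\mu)^{2n})$ remainder. This relies on the curvature lower bound of Lemma~\ref{curvaturez3} and on the horizontality of $T_{z^{(3)}}W^u_{\text{loc}}(z^{(3)})$ from Lemma~\ref{hnbounds}. Second, and more delicately, one must retain terms of intermediate size $n(\lambda_1^\theta\mu)^{2n}$ while suppressing bounded multiples of $(\lambda_1^\theta\mu)^{2n}$ — a subtle point because the leading first-order term is \emph{larger} than the quantity $z^{(3)}_y - z_y \sim (\lambda_1^\theta\mu)^{2n}$ being differentiated, reflecting high-frequency oscillation in parameter space; the cancellations highlighted in the paragraph preceding the proposition are exactly what makes this expansion stable and is the reason the intermediate-order coefficients must appear explicitly for the subsequent transversality argument in Proposition~\ref{angle}.
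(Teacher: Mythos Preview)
Your strategy coincides with the paper's: both track the orbit $m_1\to m_2\to\cdots\to m_6$ through the blocks $F^N,\,F^{\theta n},\,F^N,\,F^n,\,F^N$ by the chain rule and then invoke the envelope theorem (the paper phrases it as $\partial m_{6,y}/\partial y=0$ at the minimizer) to identify $\partial_t z^{(3)}_y=\partial_t m_{6,y}$. Your identity $\partial_t z^{(3)}_y = 2Qh\,\partial_t h + \partial_t z_y$ is exactly the paper's last step $dm_{6,y}=D_5(m_{5,y}-1)\,dm_{5,y}+\cdots$ with $D_5=2Q_{0,1}$; computing $\partial_t h$ then forces you back through $m_5\to m_4\to m_3$, which is where $C_3$ and $X_2$ are read off from $DF^N$ at $c'$ and from $m_{2,x}\approx z_x$. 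So the two arguments unfold into the same computation; the paper's step-by-step bookkeeping simply makes the constants explicit rather than introducing them by name at the end.

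One point in your sketch needs tightening. You use the symbol $K$ for two distinct quantities: the constant from Lemma~\ref{z1distz}, which equals $C_3X_2$ up to $o(1)$, and the paper's $K=h/(\lambda_1^\theta\mu)^n$, which additionally absorbs $(\mu^n a-1)/(\lambda_1^\theta\mu)^n$ and agrees with $C_3X_2$ only on $A_n$. The term $(K/\mu)\,\partial_t\mu$ inside the second bracket does \emph{not} come from differentiating the explicit $\lambda_1^{\theta n}$ factor; it arises from the product-rule piece $(n/\mu)\partial_t\mu\cdot \mu^n z^{(1)}_y=(n/\mu)\partial_t\mu\,(1+h)$, whose $h$-part you suppressed as the $[1+o(1)]$. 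After multiplication by $2Qh$ that correction is of size $n(\lambda_1^\theta\mu)^{2n}$, i.e.\ exactly the order you must retain, so it cannot be dropped. Finally, to differentiate $z^{(1)}_y$ cleanly you also need a second envelope-theorem reduction at $z^{(1)}$ (itself a minimum of $F^N$ over $W^u_{\text{loc}}(c')$), which you do not mention; the paper avoids this by carrying the $dy$-variable through all six stages and applying the envelope argument only once, at the end.
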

\begin{proof}
Choose $ t_1\in[-t_0,t_0]$. Observe that, by Lemma \ref{Nderivatives}, the definition of unfolding, by using coordinates $(x,y, t, a)$ centered around $(0,1, t_1, 0)$ in the domain and coordinates centered around $F_{t_1,0}^N(0, 1)$ we get the following expression for $F_{t,a}^N(x, y)$, 
\begin{eqnarray}\label{FNexpr}
F_{t,a}^N(x, y)&=&\left(
\begin{matrix}
Ax + By+tE+ aF\\
Cx+Q y^2+a\left[1+x\varphi_x +y^2\varphi_y\right] 
\end{matrix}
\right),
\end{eqnarray}
where the uniformly bounded coefficients satisfy
\begin{itemize}
\item[-]$A(x,y,t,a)$ is a $(n-1)\times (n-1)$ matrix valued $\Ct$ function,
\item[-]$B(x,y,t,a),E(x,y,t,a),F(x,y,t,a)$ are $n-1$ vector valued $\Ct$ functions,
 \item[-] $C(x,y,t,a)$ is a $1\times (n-1)$ matrix valued $\Ct$ function,
\item[-] $Q(x,y,t,a),\varphi_x(x,y,t,a),\varphi_y(x,y,t,a)$ are $\Ct$ functions.
\end{itemize} Moreover $B,Q\neq 0$. The $x$ component of $F_{t,a}^{N}(x,y)$ is a general expression and the fact that $B\neq 0$ follows from $(f5)$. For the $y$ component of $F_{t,a}^{N}(x,y)$ observe that the linear terms in $t$ and $y$ are absent because in $a=0$, the point $F^N_{t,0}(0,1)$ is a non-degenerate homoclinic tangency, see Remark \ref{highzya}. The $a$-dependence of the second component of $F^N_{t,a}$ follows from Remark \ref{highzya}.
\\
Recall that $c'\in F^{\theta n}_{t,a}\left(W^u_{\text{loc}}(z)\right)\cap\Gamma$, $z^{(1)}$ is defined as the lowest point of $F^N_{t,a}\left(W^u_{\text{loc}}(c')\right)$, $z^{(2)}$ as the lowest point of $F^n_{t,a}\left(W^u_{\text{loc}}(z^{(1)})\right)$ and $z^{(3)}$ as the lowest point of $F^N_{t,a}\left(W^u_{\text{loc}}(z^{(2)})\right)$.
We fix the points $m_6, m_5,m_4, m_3, m_2, m_1$ so that they satisfy the following, see Figure \ref{Fig4b}. 
\begin{itemize}
\item[-] $m_6\in W^u_{\text{loc}}(z^{(3)})$ sufficiently close to $z^{(3)}$.
\item[-] $m_5=(m_{5,x},m_{5,y})\in W^u_{\text{loc}}(z^{(2)})$, $F_{t,a}^{-N}(m_6)=m_5$. By Lemma \ref{ymaxymin} and Remark \ref{muzeroothers}, there exists a uniform constant $K_5>0$ such that, using that $a\in\mathcal B_n$ and for $\epsilon_1$ small enough,
\begin{equation}\label{m5}
\frac{1}{K_5}\left(|\lambda_1|^{\theta}|\mu|\right)^n\leq |m_{5,y}- c_{n,y}|\leq K_5\left(|\lambda_1|^{\theta}|\mu|\right)^n,
\end{equation}
\begin{equation}\label{m5x}
|m_{5,x}|=O\left(|\lambda_1|^{n}\right),
\end{equation}
and by (\ref{eq3})
\begin{equation}\label{m5x1}
|m_{5,x}-z^{(2)}_x|=O\left(\left(\frac{|\lambda_1|^{2-\theta}}{|\mu|^2}\right)^n\right).
\end{equation}
\item[-] $m_4=(m_{4,x},m_{4,y})\in W^u_{\text{loc}}(z^{(1)})$, $F_{t,a}^{-n}(m_5)=m_4$. By Lemma \ref{z1distz} and the fact that $z_{n,y}$ can be estimated by $|\mu|^{-n}$, see Remark \ref{muzeroothers},
\begin{equation}\label{m4}
|m_{4,y}|=O\left(\frac{1}{|\mu|^n}\right),
\end{equation}
and by (\ref{m5x1}), using the linear map backward and $(F5)$ (which ensures that the vector $m_{4,x}-z^{(1)}_x$ has a dominating first component) we get, 
\begin{equation}\label{m41}
|m_{4,x}-z^{(1)}_x|=O\left(\left(\frac{|\lambda_1|^{1-\theta}}{|\mu|^2}\right)^n\right).
\end{equation}
\item[-] $m_3=(m_{3,x},m_{3,y})\in W^u_{\text{loc}}(c')$, $F_{t,a}^{-N}(m_4)=m_3$. By Lemma \ref{z1distz} (see Figure \ref{Fig4a}) and (\ref{m41}),
\begin{eqnarray}\label{m3} 
|m_{3}-c|&=&O\left({|\lambda_1|^{\theta{n}}}\right),\\
|m_{3}-c'|&=&O\left(\left(\frac{|\lambda_1|^{1-\theta}}{|\mu|^2}\right)^n\right).
\end{eqnarray}
\item[-] $m_2=(m_{2,x},m_{2,y})\in W^u_{\text{loc}}(z)$, $F_{t,a}^{-\theta n}(m_3)=m_2$. Therefore, there exists a uniform constant $K_2>0$ such that
\begin{equation}\label{m2}
\frac{1}{K_2}\frac{1}{|\mu|^{\theta{n}}}\leq |m_{2,y}|\leq K_2\frac{1}{|\mu|^{\theta{n}}}.
\end{equation}
\item[-] $m_1=(m_{1,x},m_{1,y})\in W^u_{\text{loc}}(c)$, $F_{t,a}^{-N}(m_2)=m_1$. By (\ref{m2}) and because of the quadratic behavior, there exists a uniform constant $K_1>0$ such that
\begin{equation}\label{m1}
\frac{1}{K_1}\frac{1}{|\mu|^{\theta\frac{n}{2}}}\leq |m_{1,y}- c_{y}|\leq K_1\frac{1}{|\mu|^{\theta\frac{n}{2}}}.
\end{equation}
\end{itemize}
Let us recall that, $m_2=F^N(m_1)$, $m_3=F^{\theta n}(m_2)$,  $m_4=F^N(m_3)$,  $m_5=F^n(m_4)$, and  $m_6=F^N(m_5)$. 
Moreover, $t_1\in[-t_0,t_0]$ was chosen arbitrary. Take $(t_1+t,a)\in\mathcal B_n$ and consider the $\Cq$ map $$(\Delta t, \Delta a, \Delta y)\longmapsto F^{3N+(\theta+1)n}_{t+\Delta t, a+\Delta a}{( m_1+(0,\Delta y))}=m_6+(\Delta m_{6,x}, \Delta m_{6,y}).$$  We are interested in the partial derivatives of $m_6$. 
Observe that for $i=1,3,5$,
\begin{eqnarray}\label{mi+1deltami+1}
d m_{i+1}&=&DF^N_{t,a}\left(m_{i}\right)d m_i+\frac{\partial F^N_{t,a}\left(m_{i}\right)}{\partial t}dt+\frac{\partial F^N_{t,a}\left(m_{i}\right)}{\partial a}da,
\end{eqnarray}
for $i=4$,
\begin{eqnarray}\label{mi+1deltami+1fn}
d m_{i+1}&=&DF^n_{t,a}\left(m_{i}\right) dm_i+\frac{\partial F^n_{t,a}\left(m_{i}\right)}{\partial t}dt+\frac{\partial F^n_{t,a}\left(m_{i}\right)}{\partial a}da,
\end{eqnarray}
and for $i=2$,
\begin{eqnarray}\label{mi+1deltami+1fthetan}
dm_{i+1}&=&DF^{\theta n}_{t,a}\left(m_{i}\right)dm_i+\frac{\partial F^{\theta n}_{t,a}\left(m_{i}\right)}{\partial t}dt+\frac{\partial F^{\theta n}_{t,a}\left(m_{i}\right)}{\partial a}da.
\end{eqnarray}
All partial derivatives in (\ref{mi+1deltami+1}) are uniformly bounded. However we will need more careful estimate for $\Delta m_{i+1,y}$. Namely, for $i=1,3,5$,
\begin{eqnarray*}
DF^N_{t,a}\left(m_{i}\right)&=&\left(\begin{matrix}
A_i&B_i\\
C_i&O\left(|m_{i,x}|\right)+D_i (m_{i,y}-1)
\end{matrix}\right),
\end{eqnarray*}
which follows from (\ref{FNexpr}). Similarly from (\ref{FNexpr}) one obtains
\begin{eqnarray*}
\frac{\partial m_{i+1,y}}{\partial t}&=&\frac{\partial C}{\partial t}m_{i,x}+\frac{\partial Q}{\partial t}\left(m_{i,x}-1\right)^2+a m_{i,x}\frac{\partial\varphi_x}{\partial t}+a \left(m_{i,y}-1\right)^2\frac{\partial\varphi_y}{\partial t}\\
&=&d_i \left| m_{i,y}-1\right |^2+O\left(|m_{i,x}|\right),
\end{eqnarray*}
where $d_i\neq 0$ is uniformly bounded away from zero. By differentiation of (\ref{FNexpr}) with respect of $\Delta a$ we obtain
\begin{eqnarray*}
\frac{\partial m_{i+1,y}}{\partial a}&=&1+O\left(\left| m_{i,y}-1\right |^2\right)+O\left(|m_{i,x}|\right).
\end{eqnarray*}
For $i=3$ we refine the estimate 
\begin{eqnarray*}
DF^N_{t,a}\left(m_{3}\right)&=&\left(\begin{matrix}
A_3&B_3\\
C_3&O\left(|m_{3}-c'|\right)
\end{matrix}\right),
\end{eqnarray*}
where we estimate $DF^N(m_3)$ with $DF^N(c')$ evaluated at a point at distance $|m_{3}-c'|$. For $i=1,5$ we get in an analogous way

\begin{eqnarray}\label{DFN}
d m_{i+1}&=&\left(\begin{matrix}
A_i&B_i\\
C_i&O\left(|m_{i,x}|\right)+D_i (m_{i,y}-1)
\end{matrix}\right)d m_i\\&+&\left(\begin{matrix}
O\left(d t\right)+O\left(d a\right)\\
\left[d_i |m_{i,y}-1|^2+O\left(|m_{i,x}|\right)\right]d t+\left[1+O\left(|m_{i,y}-1|^2+|m_{i,x}|\right)\right]d a
\end{matrix}\right),\nonumber
\end{eqnarray}
and  
\begin{eqnarray}\label{DFN3}
d m_{4}&=&\left(\begin{matrix}
A_3&B_3\\
C_3& O\left(|m_{3}-c'|\right)
\end{matrix}\right)d m_3\\&+&\left(\begin{matrix}
O\left(d t\right)+O\left(d a\right)\\
\left[d_3 |m_{3,y}-1|^2+O\left(|m_{3,x}|\right)\right]d t+\left[1+O\left(|m_{3,y}-1|^2+|m_{3,x}|\right)\right]d a
\end{matrix}\right),\nonumber
\end{eqnarray}
where $D_i,B_i,C_i\neq 0$ (because $q_1$ is a non degenerate tangency in general direction and $DF^N(0,1)$ is non singular). From $d m_4$ to $d m_5$ we use the linear map $F^n$. From (\ref{mi+1deltami+1fn}) and using that $m_{5,y}=\mu^n m_{4,y}$ we get 
\begin{eqnarray}\label{DFn}
d m_{5}&=&\left(\begin{matrix}
\lambda^{n}&0\\
0&\mu^{n}
\end{matrix}\right)d m_4+\left(\begin{matrix}
X_4\frac{n\lambda_1^n}{\lambda_1}\left[ \frac{\partial \lambda_1}{\partial t} d t+\frac{\partial \lambda_1}{\partial a}d a\right]\\
m_{5,y}\frac{n}{\mu}\left[\frac{\partial \mu}{\partial t}d t+\frac{\partial \mu}{\partial a}d a \right]
\end{matrix}\right),
\end{eqnarray}
with $0\neq X_4=m_{4,x}\approx \left(\begin{matrix}
1&0&\hdots&0
\end{matrix} \right)$ and the diagonal matrix $\lambda=(\lambda_1, \lambda_2,\cdots,\lambda_{m-1})$ has the stable eigenvalues along the diagonal. 
A similar formula holds going from $d m_2$ to $d m_3$ where we use the linear map $F^{\theta n}$, (\ref{mi+1deltami+1fthetan}) and the fact that $m_{3,y}=\mu^{\theta n} m_{2,y}$. Namely,
\begin{eqnarray}\label{DFthetan}
d m_{3}&=&\left(\begin{matrix}
\lambda^{\theta n}&0\\
0&\mu^{\theta n}
\end{matrix}\right)d m_2+\left(\begin{matrix}
X_2\frac{\theta n\lambda_1^{\theta n}}{\lambda_1}\left[ \frac{\partial \lambda_1}{\partial t} d t+\frac{\partial \lambda_1}{\partial a}d a\right]\\
m_{3,y}\frac{\theta n}{\mu}\left[\frac{\partial \mu}{\partial t}d t+\frac{\partial \mu}{\partial a}d a \right]
\end{matrix}\right)
\end{eqnarray}
where  $0\neq X_2=m_{2,x}\approx \left(\begin{matrix}
1&0&\hdots&0
\end{matrix} \right)$.
\\
Observe that $m_{1,x}=0$ and recall that $\left(d m_{1,x},d m_{1,y}\right)=\left(0,d y\right)$. By (\ref{DFN}) and by (\ref{m1}),
\begin{eqnarray*}
\left(
\begin{matrix}
d m_{2,x}\\
d m_{2,y}
\end{matrix}\right)=\left(\begin{matrix}
A_1&B_1\\
C_1&D_1\tilde K_1{|\mu|^{\frac{-\theta{n}}{2}}}
\end{matrix}\right)\left(
\begin{matrix}
0\\
d y
\end{matrix}\right)
+\left(\begin{matrix}
O\left(d t\right)+O\left(d a\right)\\
\left[d_1 \tilde K_1^2{|\mu|^{{-\theta{n}}}}\right]d t+\left[1+O\left({|\mu|^{{-\theta{n}}}}\right)\right]d a
\end{matrix}\right),
\end{eqnarray*}
where $\tilde K_1$ is the constant which gives an equality in (\ref{m1}). As a consequence 
\begin{eqnarray*}
d m_{2,x}&=&O\left(d t\right) +O\left(d a\right)+O\left(d y\right),\\
d m_{2,y}&=&\frac{\tilde d_1}{|\mu|^{\theta {n}}}d t+\left[1+O\left(\frac{1}{|\mu|^{\theta{n}}}\right)\right]d a+\frac{\tilde D_1}{|\mu|^{\theta\frac{n}{2}}}d y,
\end{eqnarray*}
where $\tilde d_1=d_1 \tilde K_1^2$ and $\tilde D_1=D_1\tilde K_1$. By using the fact that $F^{\theta n}_{t,a}$ is linear and using (\ref{DFthetan}),
\begin{eqnarray*}
\left(\begin{matrix}
d m_{3,x}\\ d m_{3,y}
\end{matrix}
\right)&=&\left(\begin{matrix}
\lambda^{\theta n}&0\\
0&\mu^{\theta n}
\end{matrix}\right)\left(\begin{matrix}
d m_{2,x}\\ d m_{2,y}
\end{matrix}
\right)+\left(\begin{matrix}
X_2\frac{\theta n\lambda_1^{\theta n}}{\lambda_1}\left[ \frac{\partial \lambda_1}{\partial t} d t+\frac{\partial \lambda_1}{\partial a}d a\right]\\
\left(\left(m_{3,y}-c_{n,y}\right)+c_{n,y}\right)\frac{\theta n}{\mu}\left[\frac{\partial \mu}{\partial t}d t+\frac{\partial \mu}{\partial a}d a \right]
\end{matrix}\right),
\end{eqnarray*}
 and using $(\ref{m3})$ and Remark \ref{highzya} (recall that $c$ is fixed at height $1$, i.e. $c_{n,y}=1$)
\begin{eqnarray*} 
d m_{3,x}&=&X_2\frac{\theta n\lambda_1^{\theta n}}{\lambda_1}\left[ \frac{\partial \lambda_1}{\partial t} d t+\frac{\partial \lambda_1}{\partial a}d a\right]+O\left(|\lambda_1|^{\theta n}d y\right)\\
&+&O\left(|\lambda_1|^{\theta n}d t\right)+O\left(|\lambda_1|^{\theta n}d a\right),\\
d m_{3,y}&=&\left[\frac{\theta n}{\mu}\frac{\partial\mu}{\partial t}+\tilde d_1+O\left(n|\lambda_1|^{\theta n}\right)\right]d t\\&+&\left[|\mu|^{\theta n}+\frac{\theta n}{\mu}\frac{\partial\mu}{\partial a}+O\left(1\right)\right]d a+\tilde D_1{|\mu|^{\theta\frac{n}{2}}}d y,
\end{eqnarray*}
where $X_2\neq 0$ is pointing in the direction of $e_1$, see Figure \ref{Fig4a}.
By (\ref{DFN3}) and (\ref{m3}) (observe that $\left\{|m_{3,y}-c_{n,y}|,|m_{3,x}|\right\}=O\left(|\lambda_1|^{\theta n}\right)$), we get
\begin{eqnarray*}
\left(
\begin{matrix}
d m_{4,x}\\
d m_{4,y}
\end{matrix}\right)=\left(\begin{matrix}
A_3&B_3\\
C_3&O\left(\left(\frac{|\lambda_1|^{1-\theta}}{|\mu|^2}\right)^n\right)
\end{matrix}\right)\left(
\begin{matrix}
d m_{3,x}\\
d m_{3,y}
\end{matrix}\right)
+\left(\begin{matrix}
O\left(d t\right)+O\left(d a\right)\\
O\left(|\lambda_1|^{\theta n}\right)d t+\left[1+O\left(|\lambda_1|^{\theta n}\right)\right]d a
\end{matrix}\right).
\end{eqnarray*}
As consequence
\begin{eqnarray*}
d m_{4,x}&=&\left[B_{3}\frac{\theta n}{\mu}\frac{\partial\mu}{\partial t}+O\left(1\right)\right]d t\\&+&\left[B_{3}|\mu|^{\theta n}+B_{3}\frac{\theta n}{\mu}\frac{\partial\mu}{\partial a}+O\left(1\right)\right]d a\\&+&\left[B_{3}\tilde D_1{|\mu|^{\theta\frac{n}{2}}}+O\left({|\lambda_1|^{\theta{n}}}\right)\right]d y,\\
d m_{4,y}&=&\left[C_3X_2\frac{\theta n\lambda_1^{\theta n}}{\lambda_1} \frac{\partial \lambda_1}{\partial t} + O\left(|\lambda_1|^{\theta n}\right)\right]d t \\&+&\left[1+C_3X_2\frac{\theta n\lambda_1^{\theta n}}{\lambda_1} \frac{\partial \lambda_1}{\partial a} + O\left(|\lambda_1|^{\theta n}\right)\right]d a\\&+&O\left(|\lambda_1 |^{\theta{n}}d y \right),
\end{eqnarray*}
where $C_3X_2\neq 0$, see $(f5)$. By (\ref{DFn}) we get 
\begin{eqnarray*}
d m_{5,x}&=&\left[B_{3}\frac{\theta n \lambda_1^n}{\mu}\frac{\partial\mu}{\partial t}+X_4\frac{n\lambda_1^{n}}{\lambda_1} \frac{\partial \lambda_1}{\partial t}+O\left(|\lambda_1|^n\right)\right]d t\\&+&\left[B_3\left(\lambda_1\mu^{\theta}\right)^n+B_{3}\frac{\theta n \lambda_1^n}{\mu}\frac{\partial\mu}{\partial a}+X_4\frac{n\lambda_1^{n}}{\lambda_1} \frac{\partial \lambda_1}{\partial a}+O\left(|\lambda_1|^n\right)\right]d a\\&+&\left[B_{3}\tilde D_1\left(\lambda_1\mu^{\frac{\theta}{2}}\right)^n+O\left(\left({|\lambda_1|^{\theta+1}}\right)^n\right)\right]d y,\\
d m_{5,y}&=& \left[\frac{n}{\mu}\frac{\partial\mu}{\partial t}+n\left(\lambda_1^{\theta}\mu\right)^n\left[C_3X_2\frac{\theta }{\lambda_1} \frac{\partial \lambda_1}{\partial t}+\frac{K}{\mu}\frac{\partial\mu}{\partial t}\right] +O\left(\left(|\lambda_1|^{\theta}|\mu|\right)^n\right)\right] d t\\&+&
\left[\mu^n+\frac{n}{\mu}\frac{\partial\mu}{\partial a}+n\left(\lambda_1^{\theta}\mu\right)^n\left[C_3X_2\frac{\theta }{\lambda_1} \frac{\partial \lambda_1}{\partial a}+\frac{K}{\mu}\frac{\partial\mu}{\partial a}\right] +O\left(\left(|\lambda_1|^{\theta}|\mu|\right)^n\right)\right] d a\\&+&
O\left(\left(|\lambda_1 |^{\theta}|\mu|\right)^{{n}}d y \right),
\end{eqnarray*}
where we used that $m_{5,y}=(m_{5,y}-c_{n,y})+c_{n,y}$ and the point $c$ is at height $1$, see Remark \ref{highzya}. Moreover $K$ is the constant giving equality in (\ref{m5}).
 By (\ref{DFN}), (\ref{m5}) and (\ref{m5x})  we get 
 \begin{eqnarray*}
\left(
\begin{matrix}
d m_{6,x}\\
d m_{6,y}
\end{matrix}\right)&=&\left(\begin{matrix}
A_5&B_5\\
C_5&O\left(|\lambda_1|^{n}\right)+D_5K\left(|\lambda_1 |^{\theta}|\mu|\right)^{{n}}
\end{matrix}\right)\left(
\begin{matrix}
d m_{5,x}\\
d m_{5,y}
\end{matrix}\right)\\
&+&\left(\begin{matrix}
O\left(d t\right)+O\left(d a\right)\\
\left[d_5K\left(|\lambda_1 |^{\theta}|\mu|\right)^{{2n}}+O\left(|\lambda_1|^{ n}\right)\right]d t+\left[1+O\left(\left(|\lambda_1 |^{\theta}|\mu|\right)^{{2n}}\right)\right]d a
\end{matrix}\right),
\end{eqnarray*}
where $K$ is the constant giving equality in (\ref{m5}).
As consequence

\begin{eqnarray}\label{dm6x}
d m_{6,x}&=& \left [B_5\frac{n}{\mu}\frac{\partial\mu}{\partial t} +O\left(1\right)\right] d t+
\left[B_5\mu^n+B_5\frac{n}{\mu}\frac{\partial\mu}{\partial a}+O\left(1\right)\right]d a\\&+&O\left(\left(|\lambda_1 |^{\theta}|\mu|\right)^{{n}}d y \right),\nonumber
\end{eqnarray}
\begin{eqnarray}\label{dm6y}
d m_{6,y}&=&\nonumber\left[D_5K \left(\lambda_1^{\theta}\mu\right)^n  \frac{n}{\mu}\frac{\partial\mu}{\partial t}+ D_5Kn \left(\lambda_1^{\theta}\mu\right)^{2n}\left[C_3X_2\frac{\theta }{\lambda_1} \frac{\partial \lambda_1}{\partial t}+\frac{K}{\mu}\frac{\partial\mu}{\partial t}\right]\right.
\\ &+& 
\left.O\left(\left(|\lambda_1|^{\theta}|\mu|\right)^{2n}\right)\right]d t
\\\nonumber&+&\left[D_5K\left(\lambda_1^{\theta}\mu^2\right)^n +1+ D_5K \left(\lambda_1^{\theta}\mu\right)^n  \frac{n}{\mu}\frac{\partial\mu}{\partial a}\right.
\\\label{dm6yda} &+&\left. D_5Kn \left(\lambda_1^{\theta}\mu\right)^{2n}\left[C_3X_2\frac{\theta }{\lambda_1} \frac{\partial \lambda_1}{\partial a}+\frac{K}{\mu}\frac{\partial\mu}{\partial a}\right] +O\left(\left(|\lambda_1|^{\theta}|\mu|\right)^{2n}\right)\right]d a\\\nonumber &+&O\left(\left(|\lambda_1 |^{\theta}|\mu|\right)^{{2n}} \right)d y,
\end{eqnarray}
where $D_5, B_{5}, K\ne 0$ and $|\lambda_1|^{\theta}|\mu|^2>1$ (see (\ref{thetacond})). Refer to Figure \ref{Fig4b}. 
The formula for $m_{6,y}$ holds in general for all $\Delta t,\Delta a$ and $\Delta y$. However at the point $(t_1,a)$ when $\Delta t=\Delta a=0$ we have ${\partial m_{6,y}}/{\partial y}=0$. Hence, the Taylor polynomial of second order for $\Delta m_{6,y}$ does not contain a linear term in $\Delta y$. As consequence ${\partial z^{(3)}_{y}}/{\partial t}={\partial m_{6,y}}/{\partial t}$ and ${\partial z^{(3)}_{y}}/{\partial a}={\partial m_{6,y}}/{\partial a}$. 
It is left to prove that $C_3X_2, K,D_5$ converge. Observe that $D_5$ and $C_3$ converge because they are part of the derivative $DF^N$ which converges to $DF^N(0,1)$. Moreover $\lim_{n\to \infty} X_2=\lim_{n\to \infty}\lambda_1^{-\theta n}F^{\theta n}(q_1)$ which is not zero because $q_1$ is in general position, see $(f4)$. Finally $K$ converges because of Lemma \ref{z1distz}. The proposition follows.
\end{proof}

%
%



Consider the transversal homoclinic intersection $q_2$ defined in $(f6)$. Let $W=W^s_{\text{loc}}(q_2)$ and for all $n\in\N$ let $W_n=F^{-n}_{t,a}(W)$. Because $W\pitchfork W^u(p)$ we can apply the $\lambda$-Lemma which implies that $W_n$ converges to $W^s_{\text{loc}}(0)$. In particular $W_n$ is the graph of a function which will also be denoted by $W_n$.  Moreover, because 
\begin{equation}\label{wnformule}
W_n(x)=\mu^{-n}W\left(\lambda_1^nx_1,\lambda_2^nx_2,\dots,\lambda_{m-1}^nx_{m-1}\right),
\end{equation}
 then
\begin{equation}\label{wn}
\frac{1}{2\mu}|\mu|^{-n}\leq |W_n|\leq 2 |\mu|^{-n},
\end{equation}
\begin{equation}\label{dwn}
\left|\frac{\partial W_n}{\partial x}\right|=O\left(\frac{|\lambda_1|}{|\mu|}\right)^{n},
\end{equation}
and
\begin{equation}\label{ddwn}
\left|\frac{\partial^2 W_n}{\partial x^2}\right|=O\left(\frac{|\lambda_1|^2}{|\mu|}\right)^{n}.
\end{equation}
We estimate now the speed of the stable manifold $W_n$ as the graph of a function  in the phase space. The comparison with the speed of $W^u_\text{\rm loc}(z^{(3)})$ will give us a new  tangency.
\begin{lem}\label{partialwn}
$$\frac{\partial W_n}{\partial t}=-\frac{n}{\mu}\frac{\partial\mu}{\partial t}W_n+\sum_i\left(\frac{\lambda_{i}}{\mu}\right)^n\frac{n}{\lambda_i}\frac{\partial\lambda_{i}}{\partial t}\frac{\partial W}{\partial x_i}x_i+\frac{1}{\mu^n}\frac{\partial W}{\partial t}=O\left(\frac{n}{|\mu|^n}\right),$$
$$\frac{\partial W_n}{\partial a}=-\frac{n}{\mu}\frac{\partial\mu}{\partial a}W_n+\sum_i\left(\frac{\lambda_{i}}{\mu}\right)^n\frac{n}{\lambda_i}\frac{\partial\lambda_{i}}{\partial a}\frac{\partial W}{\partial x_i}x_i+\frac{1}{\mu^n}\frac{\partial W}{\partial a}=O\left(\frac{n}{|\mu|^n}\right).$$
\end{lem}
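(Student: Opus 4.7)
The plan is to prove the formula by direct differentiation of the linearization identity (\ref{wnformule}), and then read off the size estimate from the hyperbolicity assumptions.

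First, I would observe that $W = W^s_{\text{loc}}(q_2)$ depends $\Cq$-smoothly on the parameters $(t,a)$: the transversal homoclinic intersection $q_2 = q_2(t,a)$ persists as a smooth function of parameters by the implicit function theorem applied to the transversality condition in $(f6)$, and the local stable manifold depends smoothly on the map and the base point. Hence we may regard $W$ as a $\Cq$ function $W(x,t,a)$ of the horizontal coordinates and the parameters, with $\partial W/\partial x_i$, $\partial W/\partial t$, $\partial W/\partial a$ all uniformly bounded on the compact domain under consideration. The eigenvalues $\mu = \mu(t,a)$ and $\lambda_i = \lambda_i(t,a)$ are likewise smooth with bounded partials by $(F2)$ and Theorem \ref{familydependence}.

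Next I would apply $\partial/\partial t$ to the identity
\begin{equation*}
W_n(x,t,a) = \mu^{-n}\, W\bigl(\lambda_1^n x_1,\lambda_2^n x_2,\dots,\lambda_{m-1}^n x_{m-1},\, t, a\bigr),
\end{equation*}
at fixed $x$. The product rule together with the chain rule produces exactly three contributions: differentiating the prefactor $\mu^{-n}$ yields $-n\mu^{-n-1}(\partial\mu/\partial t)\, W = -(n/\mu)(\partial\mu/\partial t)\, W_n$; differentiating each argument $\lambda_i^n x_i$ yields $n\lambda_i^{n-1}(\partial\lambda_i/\partial t)\, x_i$ multiplying $\mu^{-n}\,\partial W/\partial x_i$, which reorganizes as $(\lambda_i/\mu)^n (n/\lambda_i)(\partial\lambda_i/\partial t)(\partial W/\partial x_i)\, x_i$; and differentiating the explicit parameter slot of $W$ yields $\mu^{-n}\,\partial W/\partial t$. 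This is precisely the stated formula, and the identical computation with $\partial/\partial a$ gives the second formula.

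For the size estimate $O(n/|\mu|^n)$ I would use $|W_n| \le 2|\mu|^{-n}$ from (\ref{wn}) to bound the first term by $O(n/|\mu|^n)$; for the sum I would use that $|\lambda_i|<1<|\mu|$ (from $(f1)$ together with $|\lambda_1||\mu|^3<1$ in $(f2)$, which forces $|\lambda_i|<1$), so $|\lambda_i/\mu|^n \le |\mu|^{-n}$ and each summand is at most $O(n/|\mu|^n)$; the last term is $O(|\mu|^{-n}) = O(n/|\mu|^n)$. Summing the three uniformly bounded contributions gives the claimed $O(n/|\mu|^n)$.

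There is no real obstacle here — the computation is a one-line chain rule once (\ref{wnformule}) is in hand. The only point requiring a brief justification is the smooth dependence of the local stable manifold $W^s_{\text{loc}}(q_2(t,a))$ on $(t,a)$, which is standard from the stable manifold theorem and the smooth persistence of $q_2$ as a transversal intersection.
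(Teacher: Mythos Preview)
Your proof is correct and follows the same approach as the paper: both differentiate the linearization identity (\ref{wnformule}) directly (the paper phrases it via finite increments $(\mu+\Delta\mu)^n(W_n+\Delta W_n)$, you via the chain rule), and the size estimate is read off from (\ref{wn}) and the hyperbolicity of the eigenvalues. Your write-up is in fact more explicit than the paper's, including the remark on smooth parameter dependence of $W$.
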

\begin{proof}
Fix a point $x=(x_i)\in [-2,2]^{m-1}$ and a parameter $(t,a)\in [-r_0,r_0]^2$. We denote by $W_n$ the manifold corresponding to $(t,a)$ and by $W_n+\Delta W_n$ the manifold to $(t,a+\Delta a)$. For all $i=1,\dots, m-1$ let $\Delta\lambda_i={\partial\lambda_i}/{\partial a} \text{ } \Delta a$ and $\Delta\mu={\partial\mu}/{\partial a}\text{ } \Delta a$. Then, because the maps $F^n_{t,a}$ are linear, by differentiating (\ref{wnformule}) we obtain
$$
\left(\mu+\Delta\mu\right)^n\left(W_n+\Delta W_n\right)(x)=\mu^n W_n(x)+\frac{\partial W}{\partial x}\left(\left(\left(\lambda_i+\Delta\lambda_i\right)^n-\lambda_i^n\right)x_i\right)+\frac{\partial W}{\partial a}\Delta a.
$$
Similarly, one gets the same bound for ${\partial W_n}/{\partial t}$.
\end{proof}
A proof similar to that of the previous lemma gives the following.
\begin{lem}\label{partialgamman}
Let $\Gamma_n$ be as in (\ref{defgamma}), then 
$$\frac{\partial\Gamma_n}{\partial t}=-\frac{n}{\mu}\frac{\partial\mu}{\partial t}\Gamma_n+\sum_i\left(\frac{\lambda_{i}}{\mu}\right)^n\frac{n}{\lambda_i}\frac{\partial\lambda_{i}}{\partial t}\frac{\partial \Gamma}{\partial x_i}x_i+\frac{1}{\mu^n}\frac{\partial \Gamma}{\partial t}=O\left(\frac{n}{|\mu|^n}\right),$$
$$\frac{\partial \Gamma_n}{\partial a}=-\frac{n}{\mu}\frac{\partial\mu}{\partial a}\Gamma_n+\sum_i\left(\frac{\lambda_{i}}{\mu}\right)^n\frac{n}{\lambda_i}\frac{\partial\lambda_{i}}{\partial a}\frac{\partial \Gamma}{\partial x_i}x_i+\frac{1}{\mu^n}\frac{\partial \Gamma}{\partial a}=O\left(\frac{n}{|\mu|^n}\right).$$
\end{lem}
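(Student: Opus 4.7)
The plan is to mimic the proof of Lemma \ref{partialwn}, replacing the reference stable leaf $W$ with the codimension $1$ manifold $\Gamma=\text{graph}(c)$ from Lemma \ref{functionc}. The essential input is again the linearization (\ref{Flinear}): for all $n$ large enough that $\Gamma_n$ lies inside the linearization cube $[-2,2]^m$, the map $F^n_{t,a}$ acts as the diagonal multiplication by $(\lambda_1^n,\dots,\lambda_{m-1}^n,\mu^n)$, and so the defining relation (\ref{defgamma}), namely $F^n_{t,a}(x,y)\in\Gamma$, becomes $\mu^n y=c(\lambda_1^n x_1,\dots,\lambda_{m-1}^n x_{m-1},t,a)$. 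This yields the exact analogue of (\ref{wnformule}):
$$\Gamma_n(x)=\mu^{-n}\, c\bigl(\lambda_1^n x_1,\dots,\lambda_{m-1}^n x_{m-1},t,a\bigr).$$

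First I would differentiate this explicit expression with respect to $a$. The $\mu^{-n}$ prefactor contributes $-n\mu^{-n-1}(\partial\mu/\partial a)\,c=-(n/\mu)(\partial\mu/\partial a)\Gamma_n$; differentiating the $i$-th slot of $c$ produces $\mu^{-n}(\partial c/\partial x_i)\cdot n\lambda_i^{n-1}(\partial\lambda_i/\partial a)\,x_i=(\lambda_i/\mu)^n (n/\lambda_i)(\partial\lambda_i/\partial a)(\partial c/\partial x_i)\,x_i$; and the explicit $a$-dependence of $c$ contributes $\mu^{-n}(\partial c/\partial a)$. Identifying $\partial c/\partial x_i$ with $\partial\Gamma/\partial x_i$ and $\partial c/\partial a$ with $\partial\Gamma/\partial a$ gives the first displayed identity. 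The computation for $\partial\Gamma_n/\partial t$ is obtained by the same differentiation with $\partial/\partial a$ replaced by $\partial/\partial t$, since the dependence of $\mu$, $\lambda_i$, and $c$ on the parameter is $\Cd$ by Theorem \ref{familydependence} and Lemma \ref{functionc}.

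For the $O(n/|\mu|^n)$ estimate I would note that by Lemma \ref{functionc} and Remark \ref{c0=1} one has $c(0,t,a)=1$ and $\partial c/\partial x_i,\,\partial c/\partial t,\,\partial c/\partial a=O(|x|)$ near $x=0$, so in particular $c$ is uniformly bounded on the relevant domain; hence $\Gamma_n=O(|\mu|^{-n})$ and the first term is of size $n/|\mu|^n$. The middle sum is of order $n(|\lambda_i|/|\mu|)^n$, which is dominated by $n/|\mu|^n$ since $|\lambda_i|<1$, and the last term is merely $O(|\mu|^{-n})$. Thus all three contributions are bounded by $O(n/|\mu|^n)$, with the dominant term coming from the derivative of the $\mu^{-n}$ prefactor.

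There is no substantive obstacle here beyond bookkeeping, because the linearization makes the $n$-dependence completely explicit. The only tacit point worth verifying is that $\lambda_i^n x_i$ lies in the domain of definition $[-x_0,x_0]^{m-1}$ of $c$ for $|x_i|\le 2$; this holds for all $n$ sufficiently large since $|\lambda_i|<1$ forces $\lambda_i^n x_i\to 0$. With this check, the formal manipulation above completes the proof.
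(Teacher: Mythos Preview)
Your proposal is correct and follows exactly the approach the paper indicates: the paper simply writes ``A proof similar to that of the previous lemma gives the following,'' and your argument is precisely that mimicry, differentiating the explicit formula $\Gamma_n(x)=\mu^{-n}c(\lambda_1^n x_1,\dots,\lambda_{m-1}^n x_{m-1},t,a)$ obtained from the linearization, term by term. Your write-up in fact supplies more detail than the paper does, including the domain check that $\lambda_i^n x_i$ falls into $[-x_0,x_0]$ for large $n$.
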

Let 
$$
S_n=\left\{(x,y)\in [-2,2]^m| y\geq W_n(x)\right\}.
$$
Observe that $z\in S_n\setminus S_{n-1}$. 
Let $n_0$ be the maximal $n\in\N$ such that 
\begin{equation}\label{nminusn0}
W^u_{\text{loc}}(z^{(3)})\subset S_{n-n_0}.
\end{equation}
Observe that $n_0$ gives information on the position of $W^u_{\text{loc}}(z^{(3)})$ with respect to the pull-backs of the stable manifold, see Figure \ref{Fig5}. A more precise estimate on the size of $n_0$ comes from the following lemma.
Let $\alpha={\log\left(|\lambda_1|^{2\theta}|\mu|^3\right)}/{\log |\mu|}$. By (\ref{thetacond1}), $\alpha\in \left(0,{1}\right)$. 
\begin{lem}\label{n0}
The integer $n_0$ satisfies the following:
$$
n_0=n\alpha+O(1).
$$ 
\end{lem}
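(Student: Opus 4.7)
The plan is to locate the minimum $z^{(3)}_y$ of $W^u_{\text{loc}}(z^{(3)})$ in the vertical scale set by the heights $|W_k|\asymp|\mu|^{-k}$ provided by (\ref{wn}), and then translate the resulting relation into a value of $n_0$ via $k=n-n_0$. Since $|W_k|$ decreases in $k$, the family $\{S_k\}$ is increasing in $k$, so the maximal $n_0$ corresponds to the smallest integer $k$ for which the entire curve $W^u_{\text{loc}}(z^{(3)})$ lies above $W_k$.

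First I would establish $z^{(3)}_y\asymp(|\lambda_1|^{\theta}|\mu|)^{2n}$ uniformly for $(t,a)\in\mathcal{B}_n$. For such parameters $z_y=a$ with $|a-a_n(t)|\le\epsilon_1|\lambda_1|^{\theta n}$ and $|a_n(t)|=O(|\mu|^{-n})$ by Lemma \ref{Imtang}, while Lemma \ref{hnbounds} gives $z^{(3)}_y-z_y\asymp(|\lambda_1|^{\theta}|\mu|)^{2n}$. The hypothesis $|\lambda_1|^{2\theta}|\mu|^3>1$ from (\ref{thetacond}) yields
\[
\frac{|\mu|^{-n}}{(|\lambda_1|^{\theta}|\mu|)^{2n}}=(|\lambda_1|^{2\theta}|\mu|^3)^{-n}\longrightarrow 0,
\]
and the same hypothesis together with $|\mu|>1$ forces $|\lambda_1|^{\theta}|\mu|^2>1$, so that
\[
\frac{|\lambda_1|^{\theta n}}{(|\lambda_1|^{\theta}|\mu|)^{2n}}=(|\lambda_1|^{\theta}|\mu|^2)^{-n}\longrightarrow 0.
\]
Both contributions to $z_y$ are therefore dominated by the $R\asymp (|\lambda_1|^{\theta}|\mu|)^{2n}$ correction, which gives the claim.

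Second, I would check that the containment $W^u_{\text{loc}}(z^{(3)})\subset S_k$ is essentially a condition at the minimum. The horizontal extent of $W^u_{\text{loc}}(z^{(3)})=F^N(W^u_{\text{loc}}(z^{(2)}))$ is $O((|\lambda_1|^{\theta}|\mu|)^n)$, which follows by applying the expansion of Lemma \ref{Nderivatives} to the vertical extent of $W^u_{\text{loc}}(z^{(2)})$ furnished by Lemma \ref{ymaxymin}. By (\ref{dwn}) the slope of $W_k$ is $O((|\lambda_1|/|\mu|)^k)$, so $W_k$ varies over this horizontal range by $O(|\lambda_1|^k(|\lambda_1|^{\theta}|\mu|)^n|\mu|^{-k})$, which is negligible relative to $|\mu|^{-k}$ since $|\lambda_1|<1$ and $|\lambda_1|^{\theta}|\mu|<1$. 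Therefore $W^u_{\text{loc}}(z^{(3)})\subset S_k$ is equivalent, up to a bounded multiplicative constant, to $z^{(3)}_y\gtrsim|\mu|^{-k}$, and the maximality of $n_0$ yields
\[
(|\lambda_1|^{\theta}|\mu|)^{2n}\asymp|\mu|^{\,n_0-n}.
\]

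Taking logarithms and dividing by $\log|\mu|$ delivers
\[
n_0=n+2n\,\frac{\theta\log|\lambda_1|+\log|\mu|}{\log|\mu|}+O(1)=n\!\left(3+\frac{2\theta\log|\lambda_1|}{\log|\mu|}\right)+O(1)=n\alpha+O(1).
\]
The delicate point is that the exponents $\mu=\mu(t,a)$ and $\lambda_1=\lambda_1(t,a)$ vary with the parameters and could in principle spoil the $O(1)$ bound; this is controlled by Remark \ref{muzeroothers}, which shows that the relative change of $|\mu|^n$ and $|\lambda_1|^n$ across $\mathcal{B}_n$ is $1+O(n|\lambda_1(t,a_n(t))|^{\theta n})$ and is therefore subexponential and absorbable into the $O(1)$ error.
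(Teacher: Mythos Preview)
Your argument is correct and follows essentially the same route as the paper: estimate $z^{(3)}_y$ via Lemma~\ref{hnbounds} and the height $z_y\asymp|\mu|^{-n}$, compare it with the scale $|\mu|^{-(n-n_0)}$ of $W_{n-n_0}$ from (\ref{wn}), and use (\ref{thetacond}) to extract $n_0$. The paper retains both contributions and writes $z^{(3)}_y\asymp |\mu|^{-n}+(|\lambda_1|^\theta|\mu|)^{2n}$, arriving at the inequality (\ref{alphaofuse}) before invoking $|\lambda_1|^{2\theta}|\mu|^3>1$, whereas you simplify earlier to $z^{(3)}_y\asymp(|\lambda_1|^\theta|\mu|)^{2n}$; your extra paragraph checking that the containment $W^u_{\text{loc}}(z^{(3)})\subset S_k$ is governed by the minimum is a point the paper leaves implicit.
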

\begin{proof}
From Lemma \ref{hnbounds}, (\ref{wn}) and the fact that $z\in S_n\setminus S_{n-1}$, there exists a uniform constant $K>0$ such that 
$$
\frac{1}{K}\left(\frac{1}{|\mu|^n}+\left(|\lambda_1|^{\theta}|\mu|\right)^{2n}\right)\leq z_{y}^{(3)}\leq K\left(\frac{1}{|\mu|^n}+\left(|\lambda_1|^{\theta}|\mu|\right)^{2n}\right).
$$
Moreover the definition of $n_0$ implies that 
$$
\frac{1}{K}\frac{1}{|\mu|^{n-n_0}}\leq  z_{y}^{(3)}\leq {K}\frac{1}{|\mu|^{n-n_0}}.
$$
The two previous inequalities imply that 
\begin{equation}\label{alphaofuse}
\frac{1}{K^2}\leq\frac{1}{|\mu|^{n_0}}\left(1+\left(|\lambda_1|^{2\theta}|\mu|^3\right)^{n}\right)\leq K^2.
\end{equation}
The lemma follows from (\ref{thetacond}).
\end{proof}
\begin{defin}
A tangency between $W^u_{\text{loc}}(z^{(3)})$ and $ W_{n-n_0}$ is called a \emph{secondary tangency of type} $n_0$. We define 
$$
T_{n,n_0}=\left\{(t,a)\in\mathcal B_n | F_{t,a}\text{ has a secondary tangency of type }n_0\right\}.
$$
\end{defin}
\begin{figure}
\centering
\includegraphics[width=0.6\textwidth]{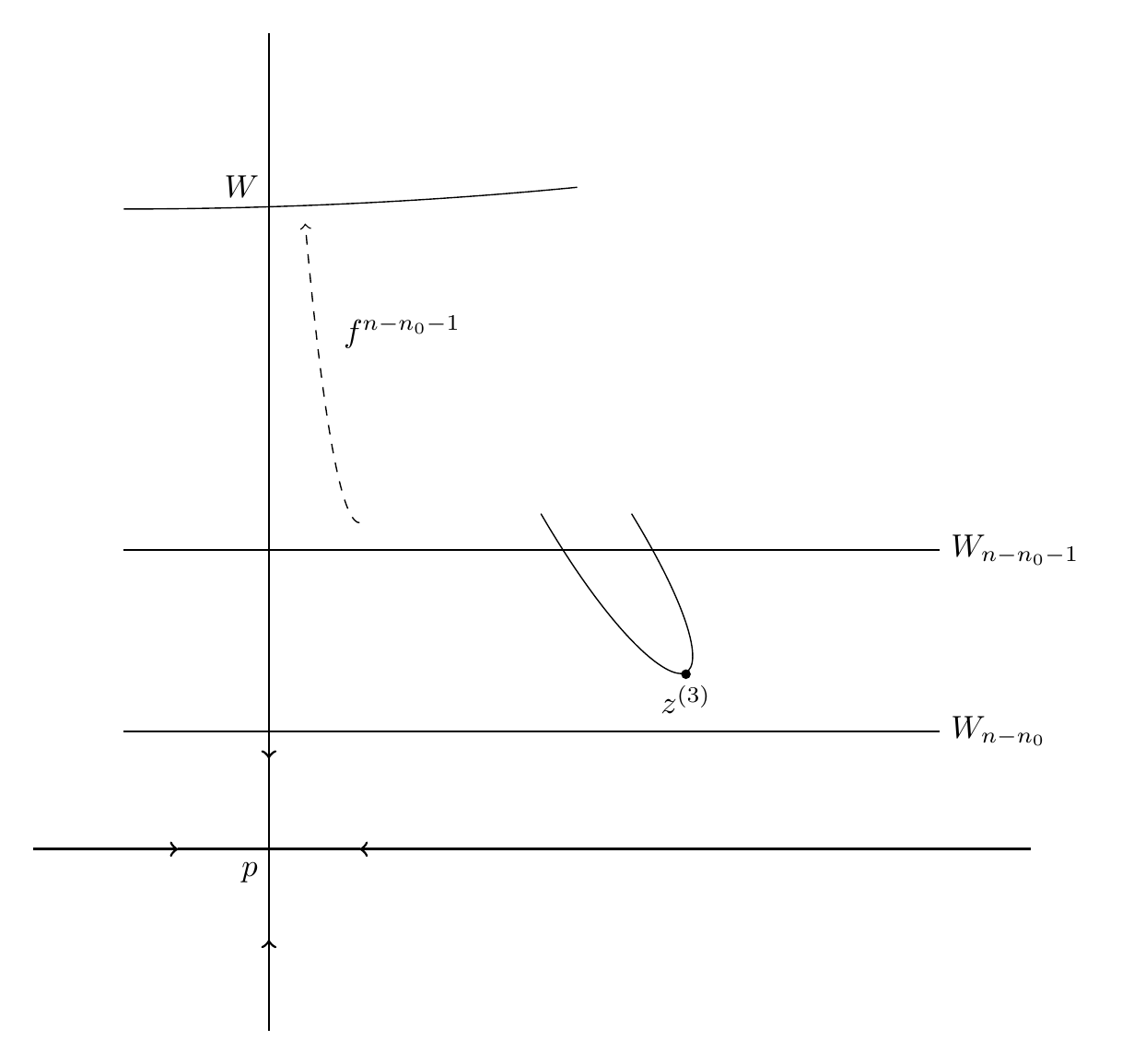}
\caption{Position of $z^{(3)}$}
\label{Fig5}
\end{figure}
In the next proposition we are going to prove that secondary tangencies exist for certain parameters in $A_n$, see Figure \ref{Fig6} and they are at distance of order $1/n$ to each other. The result is achieved by comparing the rate of speed of $W^u_{\text{loc}}(z^{(3)})$ and $ W_{n-n_0}$ when changing parameters in the phase space and it relies on the cancellation of the main term.

\begin{prop}\label{newtangency}
Let $C$ be an arbitrarly large positive constant. For all $t\in\left(-t_0,t_0\right)$ there exists $\theta\in\left(\theta_1,\theta_2\right)$ such that for $n$ large enough there exists $t_n\in\left(-t_0,t_0\right)$ such that the following holds:
\begin{itemize}
\item[-]  $|t- t_n| =O\left(\frac{1}{n}\right)$,
\item[-] $F_{t_n,  a_n(t_n)}$ has a secondary tangency $q_{1,n,n_0}$ and $\left(t_n,  a_n(t_n)\right)\in A_n\cap T_{n,n_0}$.
\end{itemize}
Moreover at the tangency point \begin{equation}\label{speedwnminusn0}
\left|\frac{\partial W_{n-n_0}}{\partial t}(q_{1,n,n_0})\right|\geq C n\left(\lambda_1^{\theta}\mu\right)^{2n}.
\end{equation}

\end{prop}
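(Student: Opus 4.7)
The plan is to detect, along the sink curve $a = a_n(t)$, a parameter value $t_n$ at which $W^u_{\text{loc}}(z^{(3)})$ just touches $W_{n-n_0}$. Since $W^u_{\text{loc}}(z^{(3)})$ is a strongly convex graph over an $x$-direction (Lemma~\ref{curvaturez3}), while $W_{n-n_0}$ has slope $O((|\lambda_1|/|\mu|)^n)$ and curvature $O((|\lambda_1|^2/|\mu|)^n)$ by \eqref{dwn} and \eqref{ddwn}, the vertical separation
\[
G(x,t,a) := U(x,t,a) - W_{n-n_0}(x,t,a),
\]
where $U(\cdot,t,a)$ denotes the graph of $W^u_{\text{loc}}(z^{(3)})$, has a unique interior minimizer $x^*(t,a)$ exponentially close to $z^{(3)}_x$. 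The envelope $h(t) := G(x^*(t,a_n(t)),t,a_n(t))$ vanishes exactly when a secondary tangency of type $n_0$ occurs, so the task reduces to producing a zero of $h$ inside an interval of length $O(1/n)$ around the prescribed base parameter.

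First one bounds $|h|$: Lemma~\ref{hnbounds} gives $z^{(3)}_y \sim (|\lambda_1|^\theta|\mu|)^{2n}$, inequality \eqref{wn} gives $W_{n-n_0}(z^{(3)}_x) \sim |\mu|^{-(n-n_0)}$, and Lemma~\ref{n0} identifies these two orders of magnitude. Hence $|h(t)| = O((|\lambda_1|^\theta|\mu|)^{2n})$ uniformly on $\mathcal{B}_n$.

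The technical heart is to show $|h'(t)| \geq c\,n(|\lambda_1|^\theta|\mu|)^{2n}$ for some $c>0$. Using that $\partial U/\partial t(z^{(3)}_x,t,a) = \partial z^{(3)}_y/\partial t$ by the envelope identity at the interior minimum $z^{(3)}_x$, and that $x^* - z^{(3)}_x$ is exponentially small, the envelope theorem applied to $h$ yields
\[
h'(t) = \Bigl(\tfrac{\partial z^{(3)}_y}{\partial t} - \tfrac{\partial W_{n-n_0}}{\partial t}\Bigr) + \Bigl(\tfrac{\partial z^{(3)}_y}{\partial a} - \tfrac{\partial W_{n-n_0}}{\partial a}\Bigr)\, a_n'(t) + \text{(lower order)}.
\]
The crucial cancellation combines the leading $D_5K(\lambda_1^\theta\mu)^n(n/\mu)\partial\mu/\partial t$-term of $\partial z^{(3)}_y/\partial t$ from Proposition~\ref{speed} with the leading $D_5K(\lambda_1^\theta\mu^2)^n$-term of $\partial z^{(3)}_y/\partial a$ multiplied by $a_n'(t) \sim -n\mu^{-n-1}\partial\mu/\partial t$ from Lemma~\ref{Imtang}: these two exactly cancel. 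After cancellation, the surviving contribution of $U$ is the $n(\lambda_1^\theta\mu)^{2n}$-order term with coefficient $D_5K\bigl[C_3X_2\tfrac{\theta}{\lambda_1}\partial\lambda_1/\partial t + \tfrac{K}{\mu}\partial\mu/\partial t\bigr]$, while the contribution of $W_{n-n_0}$ is dominated by the term $-(n-n_0)/\mu \cdot \partial\mu/\partial t \cdot W_{n-n_0}$ of Lemma~\ref{partialwn}, also of order $n(|\lambda_1|^\theta|\mu|)^{2n}$ because $(n-n_0)/n \to 1-\alpha \in (0,1)$ by Lemma~\ref{n0}. The two surviving coefficients depend differently on $\theta$, via the explicit $\theta$-factor on one side and via $\alpha = \log(|\lambda_1|^{2\theta}|\mu|^3)/\log|\mu|$ on the other; for a generic $\theta$ in the admissible range they fail to cancel, yielding the claimed lower bound on $|h'|$.

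With $|h| = O((|\lambda_1|^\theta|\mu|)^{2n})$ and $|h'| \gtrsim n(|\lambda_1|^\theta|\mu|)^{2n}$, the intermediate value theorem produces $t_n$ with $|t-t_n| = O(1/n)$. The tangency point $q_{1,n,n_0}$ sits at $x = x^*(t_n,a_n(t_n))$ and is a genuine tangency because $\partial G/\partial x(x^*,\cdot) = 0$ by definition of $x^*$; the inclusion $(t_n,a_n(t_n)) \in A_n \cap T_{n,n_0}$ is then immediate. Finally, \eqref{speedwnminusn0} follows from the dominant term $-(n-n_0)/\mu \cdot \partial\mu/\partial t \cdot W_{n-n_0}$ of $\partial W_{n-n_0}/\partial t$ in Lemma~\ref{partialwn}: since $(n-n_0) = n(1-\alpha) + O(1)$ and $1-\alpha$ can be driven close to $1$ by choosing $\theta$ near the upper end of $(\theta_0,\theta_1)$, any prescribed coefficient $C$ is realizable. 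The main obstacle is verifying the cancellation of the $D_5K(\lambda_1^\theta\mu)^n(n/\mu)\partial\mu/\partial t$-terms at accuracy finer than the $O((|\lambda_1|^\theta|\mu|)^{2n})$ remainders in Proposition~\ref{speed}, so that the surviving $n(|\lambda_1|^\theta|\mu|)^{2n}$-order terms genuinely dominate $h'$.
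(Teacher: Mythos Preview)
Your approach via the envelope function $h$ and the intermediate value theorem is essentially the paper's: the crucial cancellation you identify between the leading term of $\partial z^{(3)}_y/\partial t$ and the leading term of $\partial z^{(3)}_y/\partial a$ multiplied by $a_n'(t)$ is exactly what drives the argument there. The existence of $t_n$ at distance $O(1/n)$ is obtained the same way.

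There is, however, a genuine gap in your derivation of \eqref{speedwnminusn0} for an \emph{arbitrary} constant $C$. The dominant term of $\partial W_{n-n_0}/\partial t$ from Lemma~\ref{partialwn} is $-(n-n_0)\mu^{-1}(\partial\mu/\partial t)\,W_{n-n_0}$, and with the natural $n_0$ supplied by Lemma~\ref{n0} one has $|W_{n-n_0}| \asymp (|\lambda_1|^\theta|\mu|)^{2n}$ with a \emph{uniformly bounded} comparison constant, see \eqref{alphaofuse}. Hence $|\partial W_{n-n_0}/\partial t|$ is of order $n(1-\alpha)(|\lambda_1|^\theta|\mu|)^{2n}\,|\partial\mu/\partial t|/|\mu|$ times a fixed bounded factor. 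Pushing $\theta$ toward the endpoint of the admissible interval drives $1-\alpha$ toward $1$, but $1-\alpha\le 1$ always, so this only multiplies the coefficient by at most $1$; it cannot produce an unbounded prefactor. No choice of $\theta$ yields \eqref{speedwnminusn0} once $C$ exceeds a fixed multiple of $|\partial\mu/\partial t|/|\mu|$.

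The paper obtains arbitrary $C$ by a different mechanism: it does not pin $n_0$ to the value dictated by Lemma~\ref{n0}, but exploits the freedom to cross several stable levels. After the cancellation, the speed of $z^{(3)}_y$ along $A_n$ is at least $\tfrac{v}{2}\,n(\lambda_1^\theta\mu)^{2n}$; moving $t$ by $O(\mu^\kappa/n)$ therefore pushes $z^{(3)}_y$ above $W_{n-n_0^*-\kappa}$, producing a secondary tangency of type $n_0=n_0^*+\kappa$. At this tangency one has $|W_{n-n_0}|\sim \mu^{\kappa}(|\lambda_1|^\theta|\mu|)^{2n}$, and the same Lemma~\ref{partialwn} computation now carries an extra factor $\mu^{\kappa-1}$ in $|\partial W_{n-n_0}/\partial t|$ (this is the paper's formula \eqref{dWn-n0dt}). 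Choosing $\kappa$ large realizes any prescribed $C$, while $|t-t_n|=O(\mu^\kappa/n)=O(1/n)$ since $\kappa$ is fixed. Your argument needs this extra degree of freedom in the type $n_0$; adjusting $\theta$ alone is not enough.
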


\begin{proof}
Fix $C>0$ and choose $t^*\in\left(-t_0,t_0\right)$. 
Use the notation of Proposition \ref{speed} and observe that the function 
$$
\theta\mapsto {D_5K}\left[C_3X_2\frac{\theta }{\lambda_1} \frac{\partial \lambda_1}{\partial t}+\frac{K}{\mu}\frac{\partial\mu}{\partial t}\right]
$$
is affine. From $(F2)$, we know that $\left({K}/{\mu}\right){\partial\mu}/{\partial t}\neq 0$. Hence, there exist a uniform $v>0$ and $\theta\in\left(\theta_1,\theta_2\right)$ such that for all $(t,a)$ close to $(t^*,0)$,  $$\left|{D_5K}\left[C_3X_2\frac{\theta }{\lambda_1} \frac{\partial \lambda_1}{\partial t}+\frac{K}{\mu}\frac{\partial\mu}{\partial t}\right]\right|\geq v>0.$$
 Without loss of generality we may assume that $D_5K\left[C_3X_2\frac{\theta }{\lambda_1} \frac{\partial \lambda_1}{\partial t}+\frac{K}{\mu}\frac{\partial\mu}{\partial t}\right]>v>0$. The opposite case is treated by reversing the direction of $t$. Take $n$ large enough and a point $(t, a_n(t))\in A_n$ near $(t^*,a_n(t^*))\in A_n$. By Proposition \ref{speed} and Lemma \ref{Imtang}  
\begin{eqnarray}\label{upperineq}\nonumber
z^{(3)}_y\left(t,a_n(t)\right)&=&z^{(3)}_y\left(t^*,a_n(t^*)\right)+\int_0^{(t- t^*)}\left[\frac{\partial z^{(3)}}{\partial t}+\frac{\partial z^{(3)}}{\partial a}\frac{d a_n}{d t}\right]dt=z^{(3)}_y\left(t^*,a_n(t^*)\right)\\\nonumber &+&\left\{D_5K \left(\lambda_1^{\theta}\mu\right)^n  \frac{n}{\mu}\frac{\partial\mu}{\partial t}+ D_5Kn \left(\lambda_1^{\theta}\mu\right)^{2n}\left[C_3X_2\frac{\theta }{\lambda_1} \frac{\partial \lambda_1}{\partial t}+\frac{K}{\mu}\frac{\partial\mu}{\partial t}\right]\right.\\\nonumber &-&\left.D_5K \left(\lambda_1^{\theta}\mu\right)^n  \frac{n}{\mu}\frac{\partial\mu}{\partial t} +O\left(\left(|\lambda_1|^{\theta}|\mu|\right)^{2n}\right)\right\}(t-t^*)
\\&\geq &z^{(3)}_y\left(t^*,a_n(t^*)\right)+n\left(\lambda_1^{\theta}\mu\right)^{2n}\frac{v}{2}(t-t^*),
\end{eqnarray}
where we used that $\lambda_1^{2\theta}\mu^3>1$ and $\lambda_1^{\theta}\mu^2>1$  see (\ref{thetacond}). Observe that we have a cancellation of the dominant terms in the partial derivatives obtained by combining Proposition \ref{speed} and Lemma \ref{Imtang}.
Let $n^*_0$ be such that $W^u_{\text{loc}}\left(z^{(3)}\left(t^*,a_n(t^*)\right)\right)\subset S_{n-n_0^*}$, then, by (\ref{wn}), 
\begin{equation}\label{ztildeheight}
z^{(3)}_y\left(t^*,a_n(t^*)\right)\geq \frac{2}{\mu}\frac{1}{\mu^{n-n^*_0}}.
\end{equation}
By (\ref{upperineq}) and (\ref{ztildeheight})
\begin{eqnarray}\label{upperineq1}
z^{(3)}_y\left(t,a_n(t)\right)\geq \frac{2}{\mu}\frac{1}{\mu^{n-n^*_0}}+n\left(\lambda_1^{\theta}\mu\right)^{2n}\frac{v}{2}(t-t^*).
\end{eqnarray}
Choose $\kappa\geq 2$. Then by (\ref{wn}), in a neighborhood of $(z^{(3)}\left(t^*,a_n(t^*)\right),t^*,a_n(t^*))$ 
\begin{eqnarray*}
\max W_{n-n_0^*-\kappa}&\leq &\frac{2}{\mu^{n-n^*_0-\kappa}}.
\end{eqnarray*}
From the previous inequality, (\ref{upperineq1}), we get that $W^u_{\text{loc}}(z^{(3)}( t, a_n(t)))$ is above $W_{n-n_0^*-\kappa}$ if
\begin{eqnarray*}
 n\left(\lambda_1^{\theta}\mu\right)^{2n}\frac{v}{2}(t-t^*)\geq \frac{2}{\mu^{n-n^*_0}}\left[\mu^{\kappa}-\frac{2}{\mu}\right]\geq 2C'\left(\lambda_1^{\theta}\mu\right)^{2n}\left[\mu^{\kappa}-\frac{2}{\mu}\right],
\end{eqnarray*}
where we also used (\ref{alphaofuse}) and $C'$ is a uniform constant.
As consequence, if 
$$
t-t^*\geq \frac{4C'}{n}\frac{\mu^{\kappa}}{v},$$
then, $W^u_{\text{loc}}(z^{(3)}( t, a_n(t)))$ is above $W_{n-n_0^*-\kappa}$. Because $W^u_{\text{loc}}( z^{(3)}( t^*, a_n(t^*)))$ contains a point below $ W_{n-n_0^*-1}$, there exists a parameter between $( t, a_n(t))$ and $( t^*, a_n(t^*))$ for which a secondary homoclinic tangency of type $n_0=n_0^*+\kappa$ occurs and $|t^*-t|=O\left(\frac{1}{n}\right)$. Moreover by Lemma \ref{partialwn}, (\ref{wn}), Lemma \ref{n0} and the definition of $\alpha$ we get
\begin{equation}\label{dWn-n0dt}
\frac{\partial W_{n-n_0}}{\partial t}(q_{1,n,n_0})=-2n\left(\lambda_1^{\theta}\mu\right)^{2n}\mu^{\kappa-1}\left(1-\alpha-\frac{\kappa}{n}\right)\frac{\partial\mu}{\partial t}+O\left(\left(\lambda_1^{\theta}\mu\right)^{2n}\right).
\end{equation}
From $(F2)$ it follows that ${\partial\mu}/{\partial t}\neq 0$.
The last statement of the lemma follows by taking $\kappa$ large enough and $n$ large enough.
\end{proof}

\subsection{Existence of tangency curves}
In the previous section we proved the existence of tangency points on the curve $A_n$ whose curvature is estimated in the following lemma. In the sequel we extend these points to create curves in $\mathcal{B}_n$. All points along these curves have a secondary tangency, refer to Figure \ref{Fig6}. 
\begin{lem}\label{curvature}
Let $(t,a)\in\mathcal B_n$ such that $W^u_{\text{\rm loc}}(z^{(3)}(t,a))$ has a secondary tangency $q_{1,n,n_0}$ of type $n_0$, then $q_{1,n,n_0}$ is non degenerate.  Namely, $W^u_{\text{\rm loc}}(q_{1,n,n_0})$ is the graph of a function and its curvature satisfies
$$
\text{\rm curv}\left(W^u_{\text{\rm loc}}( q_{1,n,n_0})\right)\geq C\left(\frac{|\mu|^4}{|\lambda_1|^{2-3\theta}}\right)^n,
$$
with $C>0$ a uniform constant.
\end{lem}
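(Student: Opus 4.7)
The plan is to mimic the argument of Lemma~\ref{curvaturez3} at the pre-image of $q_{1,n,n_0}$ under $F^N_{t,a}$, in place of $m_5$. Set $m' = F^{-N}_{t,a}(q_{1,n,n_0}) \in W^u_{\text{loc}}(z^{(2)})$. By Lemma~\ref{ymaxymin}, $m'$ satisfies the same qualitative bounds as $m_5$ did: $|m'_y - 1|$ is comparable to $(|\lambda_1|^\theta|\mu|)^n$ and $|m'_x| = O(|\lambda_1|^n)$. In particular, by Lemma~\ref{functionc}, the $(2,2)$ entry $D'$ of $DF^N_{t,a}(m')$ still satisfies $D' \geq d(|\lambda_1|^\theta|\mu|)^n$ for a uniform constant $d>0$.

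The only difference from the proof of Lemma~\ref{curvaturez3} is that the tangent to $W^u_{\text{loc}}(z^{(3)})$ at $q_{1,n,n_0}$ is no longer horizontal: by the definition of tangency it equals the tangent to $W_{n-n_0}$, whose slope by (\ref{dwn}) is $O((|\lambda_1|/|\mu|)^{n-n_0})$, exponentially small by Lemma~\ref{n0}. Equation (\ref{eq2}) in the proof of Lemma~\ref{curvaturez3} is therefore replaced by
\[
C v_1 + D' v_2 = O\!\left(\left(|\lambda_1|/|\mu|\right)^{n-n_0}\right),
\]
and combining this with (\ref{eq1}) (which comes from Lemma~\ref{shape} and holds at $m'$ as well) yields the same bound $|\Delta x| = O((|\lambda_1|^{2-\theta}/|\mu|^2)^n)$ as in (\ref{eq3}), because the new error term is of strictly smaller order thanks to (\ref{thetacond}) and Lemma~\ref{n0}.

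From there, the computation from (\ref{m5secondderivative}) to the end of the proof of Lemma~\ref{curvaturez3} carries over unchanged, producing the curvature lower bound
\[
\text{curv}\!\left(W^u_{\text{loc}}(q_{1,n,n_0})\right) \geq C \left(\frac{|\mu|^4}{|\lambda_1|^{2-3\theta}}\right)^n,
\]
and non-degeneracy of the tangency then follows: by (\ref{ddwn}) the curvature of $W_{n-n_0}$ is only $O((|\lambda_1|^2/|\mu|)^{n-n_0})$, strictly dominated by the bound above. The main technical obstacle is the careful bookkeeping of error terms in the modified versions of (\ref{eq2}) and (\ref{eq3}), in order to verify that the slope correction coming from tangency with $W_{n-n_0}$ is indeed sub-leading throughout the curvature computation; this boils down to a comparison of exponents that works out precisely because of the gap $|\lambda_1|^{2\theta}|\mu|^3 > 1$ from (\ref{thetacond}) and the explicit size $n_0 = n\alpha + O(1)$ given by Lemma~\ref{n0}.
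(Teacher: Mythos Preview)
Your proposal is correct and follows essentially the same route as the paper's own proof: both reduce to the argument of Lemma~\ref{curvaturez3} applied at the pre-image $F^{-N}_{t,a}(q_{1,n,n_0})\in W^u_{\text{loc}}(z^{(2)})$, replacing the horizontal-tangent condition at $z^{(3)}$ by the tangency condition with $W_{n-n_0}$, whose slope $O((|\lambda_1|/|\mu|)^{n-n_0})$ from (\ref{dwn}) is exponentially small and thus leaves the estimate (\ref{eq3}) intact. Your added remark comparing the curvature of $W^u_{\text{loc}}(q_{1,n,n_0})$ with that of $W_{n-n_0}$ via (\ref{ddwn}) makes the non-degeneracy conclusion slightly more explicit than the paper does.
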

\begin{proof} Use coordinates centered at $z^{(2)}$ and let $(\Delta x,\Delta y)\in W^u_{\text{loc}}(z^{(2)})$ such that $F^N_{t,a}(\Delta x,\Delta y)=q_{1,n,n_0}$.
Then $0\leq\Delta y\leq L\left(|\lambda_1|^{\theta}|\mu|\right)^{n}$. Let $v=\left(\begin{matrix}
v_1\\v_2
\end{matrix}\right)$ be the tangent vector at $(\Delta x,\Delta y)$ to $ W^u_{\text{loc}}(z^{(2)})$. We use Lemma \ref{shape} and we obtain that 
\begin{equation}\label{eqn1}
|v_2|=\left[2Q_2\left(\frac{|\mu|}{|\lambda_1|^2}\right)^n|\Delta x|+O\left(|\Delta x|^2\left(\frac{|\mu|}{|\lambda_1|^3}\right)^n\right)\right]|v_1|.
\end{equation}
By Lemma \ref{functionc} we have that ${\partial^2 Y}/{\partial y^2}$ is bounded away from zero in a neighborhood of $c$. As consequence, by Lemma \ref{ymaxymin} and Lemma \ref{Nderivatives} we have $DF^N_{t,a}(\Delta x,\Delta y)=\left(\begin{matrix} 
A&B\\C&D
\end{matrix} \right)$ where $D\geq d\left(|\lambda_1|^{\theta}|\mu|\right)^{n}$ and $d>0$ is a uniform constant. Because $F^N_{t,a}(\Delta x,\Delta y)$ is a tangency at $q_{1,n,n_0}$, then 
\begin{equation}\label{eqn2}
Cv_1+Dv_2=O\left(\left(\frac{|\lambda_1|}{|\mu|}\right)^{n-n_0}\left(|v_1|+|v_2|\right)\right),
\end{equation}
where we used (\ref{dwn}). The proof of the lemma is completed by following exactly that of Lemma \ref{curvaturez3}.  
\end{proof}
In the next proposition we prove the existence of curves of secondary tangencies, see Figure \ref{Fig6}.
Let $C=2\max_{(t,a,\theta)}{D_5K}\left[C_3X_2\frac{\theta }{\lambda_1} \frac{\partial \lambda_1}{\partial t}+\frac{K}{\mu}\frac{\partial\mu}{\partial t}\right]$ and 
$$T^*_{n,n_0}=\left\{(t,a)\in T_{n,n_0}\left|\right. \left|\frac{\partial W_{n-n_0}}{\partial t}(q_{1,n,n_0})\right|\geq C n\left(\lambda_1^{\theta}\mu\right)^{2n}\right\}.$$
Observe that the secondary tangencies in $T^*_{n,n_0}$ are the ones for which the stable manifold moves faster than the local unstable manifold at the tangency when varying the $t$ parameter. As consequence of Proposition \ref{newtangency}, we get the following.
\begin{cor}\label{secontangnonempty}
For $n$ large enough, the set of types of secondary tangencies, $$\mathfrak N_n=\left\{(n,n_0)\left|\right. T^*_{n,n_0}\neq\emptyset\right\}$$ is non empty and $$\overline{\bigcup_n\bigcup_{\mathfrak N_n}T^*_{n,n_0}}\supset [-t_0,t_0]\times\left\{0\right\}.$$
\end{cor}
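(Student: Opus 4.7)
The plan is to obtain the corollary as a direct consequence of Proposition \ref{newtangency}, taking care only to match constants so that the tangencies produced there lie in the refined set $T^*_{n,n_0}$ rather than merely in $T_{n,n_0}$.

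Since the constant $C$ in the statement of Proposition \ref{newtangency} is permitted to be arbitrarily large, I would apply that proposition with $C$ chosen to equal the constant appearing in the definition of $T^*_{n,n_0}$ in the corollary. With this choice, every tangency $(t_n, a_n(t_n))$ produced by Proposition \ref{newtangency} satisfies $|\partial W_{n-n_0}/\partial t\,(q_{1,n,n_0})| \geq C n (\lambda_1^\theta \mu)^{2n}$, hence automatically lies in $T^*_{n,n_0}$. In particular, for any fixed $t^* \in (-t_0, t_0)$ and all sufficiently large $n$, there is some $n_0$ with $T^*_{n,n_0} \neq \emptyset$, so $\mathfrak{N}_n$ is non-empty.

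For the density statement I would fix $t^* \in (-t_0, t_0)$ and run the above construction for every large $n$. The parameter $t_n$ obtained satisfies $|t_n - t^*| = O(1/n)$ by Proposition \ref{newtangency}. Moreover, from the relation $\mu^n a_n = c(\lambda^n z_x, t, a_n)$ derived in the proof of Lemma \ref{Imtang} together with $c(0,t,a) = 1$ (Remark \ref{c0=1}), one has $a_n(t_n) = O(|\mu|^{-n}) \to 0$. Therefore the points $(t_n, a_n(t_n)) \in T^*_{n,n_0}$ converge to $(t^*, 0)$, placing $(t^*, 0)$ in the closure on the left-hand side. Since $t^*$ was arbitrary in $(-t_0, t_0)$ and a closure is closed, the endpoints $\pm t_0$ are captured automatically as limits of interior points, yielding $[-t_0, t_0] \times \{0\} \subset \overline{\bigcup_n \bigcup_{\mathfrak{N}_n} T^*_{n,n_0}}$.

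There is no serious obstacle. The substantive work is already packaged in Proposition \ref{newtangency} --- both the existence of secondary tangencies near any given $t^*$ and the key lower bound on $|\partial W_{n-n_0}/\partial t|$. The corollary only repackages these into a closure statement; the remaining bookkeeping is the constant-matching described above and the elementary observation that the curves $t \mapsto a_n(t)$ collapse onto the $a=0$ axis at the exponential rate $|\mu|^{-n}$.
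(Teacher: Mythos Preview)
Your proposal is correct and matches the paper's approach exactly: the paper states the corollary immediately after defining $T^*_{n,n_0}$ with the sentence ``As consequence of Proposition \ref{newtangency}, we get the following'' and gives no further proof. You have simply filled in the bookkeeping the paper leaves implicit --- matching the constant $C$ in Proposition \ref{newtangency} to the specific one in the definition of $T^*_{n,n_0}$, and observing that $a_n(t_n)\to 0$ so that the tangency points accumulate on the $a=0$ axis.
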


In the following proposition we show that the secondary tangencies form curves in parameter space. These curves intersect the curves $A_n$ transversally with small but controlled angle. The precise estimates of Proposition \ref{speed} are used again in a cancellation as in Proposition \ref{newtangency}.

\begin{prop}\label{angle} For each $T^*_{n,n_0}$ there exist  a $\Cd$ function  $b=b_{n,n_0}:[t^{-}_{n,n_0},t^+_{n,n_0}]\mapsto\R$ and a constant $V$ which is bounded away from zero, such that $T^*_{n,n_0}$ is the graph of $b$. In particular, $T^*_{n,n_0}$ is connected, 
$$\frac{db}{dt}=-\frac{n}{\mu^{n+1}}\frac{\partial\mu}{\partial t} +V {n \lambda_1^{\theta n}}+O\left(|\lambda_1|^{\theta n}\right),$$
and, if $(t,a)\in A_n\cap T^*_{n,n_0}$,
$$\frac{db}{dt}=\frac{da_n}{dt}+V{n \lambda_1^{\theta n}}+O\left(|\lambda_1|^{\theta n}\right).$$  Moreover the following holds: 
\begin{itemize}
\item[-]  $\partial T^*_{n,n_0}\subset\partial\mathcal B_n$,
\item[-] each $T^*_{n,n_0}$ has a unique transversal intersection with $A_n$.
\end{itemize}
\end{prop}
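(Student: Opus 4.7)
The plan is to realize $T^*_{n,n_0}$ as the zero set of a scalar function $G(t,a)$, apply the implicit function theorem, and then read off $db/dt$ from ratios of the partial derivatives already computed in Proposition~\ref{speed} and Lemma~\ref{partialwn}. First I would reduce the tangency condition to one scalar equation. By Lemma~\ref{curvature} the curvature of $W^u_{\text{loc}}(z^{(3)})$ is at least of order $(|\mu|^4/|\lambda_1|^{2-3\theta})^n$, while by (\ref{dwn})--(\ref{ddwn}) the stable graph $W_{n-n_0}$ has slope $O((|\lambda_1|/|\mu|)^{n-n_0})$ and curvature $O((|\lambda_1|^2/|\mu|)^{n-n_0})$. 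Hence the tangency point $x^*(t,a)$ lies exponentially close to the minimum $z^{(3)}_x(t,a)$, and the height of $W^u_{\text{loc}}(z^{(3)})$ at $x^*$ differs from $z^{(3)}_y$ by a negligible second-order quantity. Modulo such corrections, the tangency condition becomes
\[
G(t,a):=z^{(3)}_y(t,a)-W_{n-n_0}\bigl(z^{(3)}_x(t,a),t,a\bigr)=0.
\]
Proposition~\ref{speed} then gives that the dominant contribution to $\partial_a G$ is the term $D_5K(\lambda_1^\theta\mu^2)^n$ coming from $\partial_a z^{(3)}_y$, since $\partial_a W_{n-n_0}$ (Lemma~\ref{partialwn}) and the chain-rule term $\partial_x W_{n-n_0}\cdot\partial_a z^{(3)}_x$ are both smaller by a factor $n|\lambda_1|^{\theta n}$. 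Because $|\lambda_1|^\theta|\mu|^2>1$ by (\ref{thetacond}), $\partial_a G$ is bounded away from zero and the implicit function theorem yields a $\Cd$ graph $a=b(t)$.

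For the slope, $db/dt=-\partial_tG/\partial_aG$, I would expand two orders. The leading term of $\partial_t z^{(3)}_y$ is $D_5K(\lambda_1^\theta\mu)^n(n/\mu)\partial_t\mu$; the next-order bracket $D_5Kn(\lambda_1^\theta\mu)^{2n}[C_3X_2(\theta/\lambda_1)\partial_t\lambda_1+(K/\mu)\partial_t\mu]$ is smaller by a factor $(\lambda_1^\theta\mu)^n\to 0$. The leading part of $\partial_t W_{n-n_0}$ is $-(n-n_0)\mu^{-1}\partial_t\mu\cdot W_{n-n_0}$, which by Lemma~\ref{n0} has magnitude $\sim n(\lambda_1^\theta\mu)^{2n}$, of the same order as the subleading bracket. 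Dividing both orders by $D_5K(\lambda_1^\theta\mu^2)^n$ gives
\[
\frac{db}{dt}=-\frac{n}{\mu^{n+1}}\partial_t\mu+Vn\lambda_1^{\theta n}+O(|\lambda_1|^{\theta n}),
\]
where $V$ is the net coefficient obtained from the two subleading contributions. The definition of $T^*_{n,n_0}$, with $C$ chosen as twice the maximum of the $\partial_t z^{(3)}_y$-bracket, is precisely calibrated so that the subtraction does not destroy $V$, forcing $|V|$ to be bounded away from $0$ on $T^*_{n,n_0}$. Combining with $da_n/dt=-(n/\mu^{n+1})\partial_t\mu[1+O(|\lambda_1|^n)]$ from Lemma~\ref{Imtang} gives the identity $db/dt-da_n/dt=Vn\lambda_1^{\theta n}+O(|\lambda_1|^{\theta n})$ along $A_n\cap T^*_{n,n_0}$.

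Connectedness of $T^*_{n,n_0}$ and the boundary property $\partial T^*_{n,n_0}\subset\partial\mathcal B_n$ both follow from the uniformity of the implicit function theorem over $\mathcal B_n$: the graph $b$ extends as a $\Cd$ curve as long as $(t,b(t))\in\mathcal B_n$ and the defining inequality of $T^*_{n,n_0}$ holds, hence its endpoints must be on $\partial\mathcal B_n$. The unique transversal intersection with $A_n$ is a monotonicity consequence: since $d(b-a_n)/dt=Vn\lambda_1^{\theta n}+O(|\lambda_1|^{\theta n})$ has constant sign for large $n$, the difference $b-a_n$ is strictly monotonic and vanishes at most once, while Proposition~\ref{newtangency} supplies existence. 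The main obstacle is the leading-order cancellation in $db/dt$: by (\ref{thetacond1}) one has $|\lambda_1|^{\theta n}\gg|\mu|^{-n}$, so the correction $Vn\lambda_1^{\theta n}$ actually dominates the formally leading $-(n/\mu^{n+1})\partial_t\mu$, and isolating it requires the full sharpness of Proposition~\ref{speed} beyond a naive leading-order IFT. This subdominant-but-actually-largest term is exactly what produces the small, controlled transversal angle driving the Newhouse lamination.
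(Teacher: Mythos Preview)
Your approach is essentially the paper's: implicit function theorem fed by the partial derivatives from Proposition~\ref{speed} and Lemma~\ref{partialwn}, followed by the cancellation against $da_n/dt$ from Lemma~\ref{Imtang}. The only structural difference is that the paper sets up a two-component map $\Psi(\Delta y,\Delta t,\Delta a)\in\R^2$ encoding both height-match and slope-match, and applies the implicit function theorem in three variables; you eliminate $\Delta y$ first by approximating the tangency point by the minimum $z^{(3)}$ and work with a scalar $G(t,a)$. That reduction is correct in spirit, but the sentence ``modulo such corrections'' is doing real work: you must check that the $(t,a)$-derivatives of the discrepancy between $G$ and the exact tangency functional are also $O(|\lambda_1|^{\theta n})$, otherwise the slope formula is not justified. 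The paper's two-component setup avoids this bookkeeping.

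There is, however, a genuine error in your final paragraph. From (\ref{thetacond}) one has $|\lambda_1|^{\theta}|\mu|<1$, hence $|\lambda_1|^{\theta n}\ll|\mu|^{-n}$, the opposite of what you wrote. The term $-(n/\mu^{n+1})\partial_t\mu$ is the dominant term in $db/dt$; the term $Vn\lambda_1^{\theta n}$ is strictly smaller. The ``main obstacle'' is not that $Vn\lambda_1^{\theta n}$ dominates --- it does not --- but that it is the leading \emph{surviving} term in the difference $db/dt-da_n/dt$ after the dominant terms cancel. Your computation of that difference is right; only the verbal hierarchy is inverted.

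One further remark on connectedness: your sentence ``the graph $b$ extends as long as $(t,b(t))\in\mathcal B_n$'' shows only that each component reaches $\partial\mathcal B_n$. To exclude multiple components you need a global statement --- either that $\partial_a G\ne 0$ throughout $\mathcal B_n$ (so the zero set is globally a graph over $t$), or, as the paper does, that $t\mapsto\psi_1(\Delta y,t,a_n(t))$ is strictly monotone, forcing a unique crossing of $A_n$ and hence a single component.
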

\begin{proof}
Let $(t,a)\in T^*_{n,n_0}$. We start by constructing a local function whose graph is contained in $T^*_{n,n_0}$. Let $m_1\in W^u_{\text{loc}}(c)$ such that $F_{t,a}^{3N+(1+\theta)n}(m_1)=q_{1,n,n_0}$. 
To describe the perturbation of $W^u_{\text{loc}}(q_{1,n,n_0})$ we define the following function by choosing coordinates centered in the image at $q_{1,n,n_0}$.
Take some $\epsilon >0$ and consider the $\Cq$ function $
(\tilde x,\tilde y):(-\epsilon,\epsilon)^3\mapsto\R^{m-1}\times\R $ defined by 
$$
\left(\tilde x(\Delta y,\Delta t, \Delta a),\tilde y(\Delta y,\Delta t, \Delta a)\right)=F_{t+\Delta t, a+\Delta a}^{3N+(1+\theta) n}\left(m_1+(0,\Delta y)\right),
$$
which describes $W^u_{\text{loc}}\left(F_{t+\Delta t, a+\Delta a}^{3N+(1+\theta)n}(m_1)\right)$. Observe that the manifolds $W_{n-n_0}$ of $F_{t+\Delta t, a+\Delta a}$ are described locally, near $q_{1,n,n_0}$, as the graph of a $\Cq$ function 
$$
w_{n-n_0}:(\Delta x, \Delta t,\Delta a)\longmapsto w_{n-n_0}(\Delta x, \Delta t,\Delta a)\in\R.
$$
The curves of secondary tangencies will be constructed using the implicit function theorem. For this aim define the $\Cd$ function $\Psi:[-\epsilon,\epsilon]^3\to\R^2$ as 
$$
\Psi\left(\Delta y, \Delta t,\Delta a\right)=\left(
\begin{matrix}
\tilde y(\Delta y,\Delta t, \Delta a)-w_{n-n_0}\left(\tilde x(\Delta y, \Delta t, \Delta a),\Delta t, \Delta a\right)\\ \frac{d\tilde y}{d\tilde x}\left(\Delta y, \Delta t,\Delta a\right)-\frac{dw_{n-n_0}}{d\tilde x}\left(\tilde x(\Delta y, \Delta t, \Delta a),\Delta t, \Delta a\right)\end{matrix}\right)=\left(\begin{matrix}
\psi_1\\ \psi_2\end{matrix}\right).
$$
Observe that $\Psi^{-1}(0)$ describes locally the perturbation of the secondary tangency $q_{1,n,n_0}$ and that
\begin{equation}\label{psimap}
D\Psi({0,0,0})=\left(\begin{matrix}
0&\left(\lambda_1^{\theta}\mu\right)^n  \Psi_{1,2}& \left(\lambda_1^{\theta}\mu^2\right)^n \Psi_{1,3}\\
  \left(\frac{|\mu|^3}{|\lambda_1|^{2-2\theta}}\right)^n\Psi_{2,1}&  \Psi_{2,2} &  \Psi_{2,3}
\end{matrix}\right).
\end{equation}
In particular, by (\ref{dm6y}), (\ref{dwn}), (\ref{dm6x}), (\ref{alphaofuse}) (in the estimation of the order term)
\begin{eqnarray}\label{psi12}
\nonumber
\Psi_{1,2}&=&\frac{1}{\left(\lambda_1^{\theta}\mu\right)^n}\left[\frac{\partial m_{6,y}}{\partial t}-\frac{\partial W_{n-n_0}}{\partial x}\frac{\partial m_{6,x}}{\partial t}-\frac{\partial W_{n-n_0}}{\partial t}\right]
\\\nonumber &=&D_5K \frac{n}{\mu}\frac{\partial\mu}{\partial t}+ D_5Kn \left(\lambda_1^{\theta}\mu\right)^{n}\left[C_3X_2\frac{\theta }{\lambda_1} \frac{\partial \lambda_1}{\partial t}+\frac{K}{\mu}\frac{\partial\mu}{\partial t}\right]
\\\nonumber & +& 
O\left(\left(|\lambda_1|^{\theta}|\mu|\right)^{n}\right)+O\left(n\left(\frac{\lambda_1}{\mu}\right)^{n-n_0}\frac{1}{\left(\lambda_1^{\theta}\mu\right)^n }\right)-\frac{1}{\left(\lambda_1^{\theta}\mu\right)^n }\frac{\partial W_{n-n_0}}{\partial t}
\\\nonumber &=&D_5K  \frac{n}{\mu}\frac{\partial\mu}{\partial t}+ n \left(\lambda_1^{\theta}\mu\right)^{n}\left\{{D_5K}\left[C_3X_2\frac{\theta }{\lambda_1} \frac{\partial \lambda_1}{\partial t}+\frac{K}{\mu}\frac{\partial\mu}{\partial t}\right]-\frac{1}{n\left(\lambda_1^{\theta}\mu\right)^{2n}}\frac{\partial W_{n-n_0}}{\partial t}\right\}\\\nonumber &+&O\left(\left(|\lambda_1|^{\theta}|\mu|\right)^{n}\right)\\
&=&D_5K  \frac{n}{\mu}\frac{\partial\mu}{\partial t}+D_5K V n \left(\lambda_1^{\theta}\mu\right)^{n} +O\left(\left(|\lambda_1|^{\theta}|\mu|\right)^{n}\right),
\end{eqnarray}
where 
\begin{equation}\label{Vespr}
V=\left[C_3X_2\frac{\theta }{\lambda_1} \frac{\partial \lambda_1}{\partial t}+\frac{K}{\mu}\frac{\partial\mu}{\partial t}\right]-\frac{1}{n\left(\lambda_1^{\theta}\mu\right)^{2n}}\frac{\partial W_{n-n_0}}{\partial t}\frac{1}{D_5K}
\end{equation}
is a large uniform constant, see (\ref{speedwnminusn0}), and by (\ref{dm6yda}), (\ref{dwn}), (\ref{dm6x}), Lemma \ref{partialwn} and (\ref{alphaofuse}) (in the estimation of the order term)
\begin{eqnarray}\label{psi13}
\nonumber
\Psi_{1,3}&=&\frac{1}{\left(\lambda_1^{\theta}\mu^2\right)^n}\left[\frac{\partial m_{6,y}}{\partial a}-\frac{\partial W_{n-n_0}}{\partial x}\frac{\partial m_{6,x}}{\partial a}-\frac{\partial W_{n-n_0}}{\partial a}\right]
\\\nonumber &=&\frac{1}{\left(\lambda_1^{\theta}\mu^2\right)^n}\left\{D_5K\left(\lambda_1^{\theta}\mu^2\right)^n +1+ D_5K \left(\lambda_1^{\theta}\mu\right)^n  \frac{n}{\mu}\frac{\partial\mu}{\partial a}\right.
\\ \nonumber &+&\left. D_5Kn \left(\lambda_1^{\theta}\mu\right)^{2n}\left[C_3X_2\frac{\theta }{\lambda_1} \frac{\partial \lambda_1}{\partial a}+\frac{K}{\mu}\frac{\partial\mu}{\partial a}\right] +O\left(\left(|\lambda_1|^{\theta}|\mu|\right)^{2n}\right)\right.\\\nonumber&+&\left.O\left(\mu^n\left(\frac{\lambda_1}{\mu}\right)^{n-n_0}\right)+O\left(\frac{n-n_0}{\mu^{n-n_0}}\right)\right\}
\\&=&D_5K\left[1+O\left({\left({|\lambda_1|}^{1+\theta-\alpha}{|\mu|}\right)^{n}}\right)\right].
\end{eqnarray}
Moreover $ \Psi_{1,2},\Psi_{1,3}\ne 0$ and $ \Psi_{2,1}\ne 0$, see Lemma \ref{curvature}, (\ref{dm6x}) and (\ref{ddwn}). Because $D\Psi({0,0,0})$ is onto we get that the set of secondary tangencies $\Psi^{-1}(0)$ is locally the graph of a $\Cd$ function $b$. Moreover the $T_{(0,0,0)}\Psi^{-1}(0)=\text{Ker} D\Psi$. Hence 
$$
\left(\lambda_1^{\theta}\mu\right)^n  \Psi_{1,2}\Delta t+\left(|\lambda_1|^{\theta}|\mu|^2\right)^n\Psi_{1,3}\Delta b=0,
$$
and $$\frac{db}{dt}=-\frac{\Psi_{1,2}}{\Psi_{1,3}}\frac{1}{\mu^n}=-\frac{n}{\mu^{n+1}}\frac{\partial\mu}{\partial t} -{n \lambda_1^{\theta n}}V +O\left(|\lambda_1|^{\theta n}\right).$$ Moreover, by the previous estimate on the slope of $b$ and by (\ref{dandt}), we have
\begin{equation}\label{dbminusda}
\frac{db}{dt}=\frac{da_n}{dt}-{n \lambda_1^{\theta n}}V +O\left(|\lambda_1|^{\theta n}\right),
\end{equation}
 for $(t,a)\in  A_n\cap T^*_{n,n_0}$. By comparing $\mu^n(t,a)$ and $\frac{\partial\mu}{\partial t}(t,a)$ with $\mu^n(t,a_n(t))$ and $\frac{\partial\mu}{\partial t}(t,a_n(t))$ on $\mathcal B_n$, see Remark \ref{muzeroothers}, one gets the same estimate as in (\ref{dbminusda}) for any other point $(t,a)\in\mathcal B_n$. This uniform bound on the difference of the slopes allows to extend $b$ globally up to both boundaries of $\mathcal B_n$. In particular the length of a component of $T^*_{n,n_0}$ is proportional to $\frac{1}{n}$. Moreover each component of $T^*_{n,n_0}$ intersects $A_n$ transversally, see (\ref{dbminusda}), in a unique point. 
 \\
It is left to show that $T^*_{n,n_0}$ has only one connected component. This follows from the fact that the function $h:t\mapsto\psi_1\left(\Delta y, t, a_n(t)\right)$, where $\Psi=\left(
\psi_1,\psi_2\right)$, is strictly monotone for every $\Delta y$. Namely,
\begin{eqnarray*}
\frac{dh}{dt}&=&\left(\lambda_1^{\theta}\mu\right)^n  \Psi_{1,2}+ \left(\lambda_1^{\theta}\mu^2\right)^n \Psi_{1,3}\frac{da_n}{dt}
\\&=&\left(\lambda_1^{\theta}\mu\right)^{n}D_5K  \frac{n}{\mu}\frac{\partial\mu}{\partial t}+D_5K V n \left(\lambda_1^{\theta}\mu\right)^{2n}-\left(\lambda_1^{\theta}\mu\right)^{n}D_5K  \frac{n}{\mu}\frac{\partial\mu}{\partial t}+O\left(\left(\lambda_1^{\theta}\mu\right)^{2n}\right)
\\&=&\left(\lambda_1^{\theta}\mu\right)^{2n}nD_5KV+O\left(\left(\lambda_1^{\theta}\mu\right)^{2n}\right)\neq 0.
\end{eqnarray*}
 where we used (\ref{psi12}), (\ref{psi13}) and (\ref{dbminusda}).
  
\end{proof}
\begin{rem}\label{anglebound}
Observe that the constant $V$, see (\ref{Vespr}), is mainly determined by ${\partial W_{n-n_0}}/{\partial t}$, see Proposition \ref{newtangency} and the definition of $T^*_{n,n_0}$. Moreover ${\partial W_{n-n_0}}/{\partial t}$ is controlled by the condition $\partial\mu/\partial t\neq 0$, see (\ref{dWn-n0dt}). The angle is determined by infinitesimal properties at the saddle point. In particular, by taking $n$ large enough we can assure that the curves $b$ and $a_n$ intersect transversally. Observe that, in the estimate for the angle, see (\ref{dbminusda}), the error term, which is of the order $\lambda_1^{\theta n}$, is dominated by the term $nV\lambda_1^{\theta n}$.
\end{rem}
\section{Newhouse phenomenon}
In this section we select the parameters corresponding to maps having infinitely many sinks. The proof is done by induction on what we call the "Newhouse boxes". In the first generation, the Newhouse boxes are essentially rectangles in $\mathcal A_n$ whose boundary are defined by the curves of secondary tangencies, see Figure \ref{Fig6}. The family restricted to the Newhouse boxes of first generation have one sink and it is an unfolding of a new homoclinic tangency. The propositions and the lemmas proved in the previous sections apply then to these families creating Newhouse boxes of second generation. As a consequence, the family restricted to the Newhouse boxes of second generation have two sinks and it is an unfolding of a new homoclinic tangency. We proceed by induction.

Let $(t_{n,n_0},a_{n,n_0})$ be the parameters at the intersection point $ A_n\cap T^*_{n,n_0}$, see Proposition \ref{angle}, and $$f_{n,n_0}=F_{t_{n,n_0},a_{n,n_0}}.$$ Recall that $T^*_{n,n_0}$ is the graph of a function  $b=b_{n,n_0}:[t^{-}_{n,n_0},t^+_{n,n_0}]\mapsto\R$. The domains
$$
\mathcal {P}_{n,n_0}=\left\{(t,a)\left|\right.  t\in[t^{-}_{n,n_0},t^+_{n,n_0}], |a-a_n(t)|\leq\frac{\epsilon_0}{|\mu(t,a_n(t))|^{2n}}\right\}
$$
are called the Newhouse boxes of first generation. 
\bigskip

The construction in the previous sections started with a map $f:M\to M$ with a strong homoclinic tangency and an unfolding $F:\mathcal P\times M\to M$. The following inductive construction will repeat the discussion of the previous sections starting with the map $f_{n,n_0}:M\to M$ and an unfolding $F:\mathcal P_{n,n_0}\times M\to M$ which is the restriction of the original family.
\begin{lem}\label{disjP}
The domains $\mathcal {P}_{n,n_0}$ are pairwise disjoint for $n$ large enough and the diameter goes to zero.
\end{lem}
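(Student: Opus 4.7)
The plan is to treat the two assertions of the lemma separately: the diameter bound and pairwise disjointness.

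\emph{Diameter.} Each box $\mathcal P_{n,n_0}$ has $a$-extent equal to $2\epsilon_0/|\mu(t,a_n(t))|^{2n}$, which decays exponentially in $n$. Its $t$-extent is $|t^+_{n,n_0}-t^-_{n,n_0}|=O(1/n)$, by the length estimate for $T^*_{n,n_0}$ in Proposition \ref{angle}. Adding the two contributions, $\operatorname{diam}(\mathcal P_{n,n_0})=O(1/n)\to 0$.

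\emph{Disjointness for different $n$.} From the implicit identity $\mu^n a_n = c(\lambda^n z_x, t, a_n)$ derived in the proof of Lemma \ref{Imtang} together with the normalization $c(0,t,a)=1$ of Remark \ref{c0=1}, one obtains $a_n(t) = |\mu(t,a_n(t))|^{-n}(1+O(|\lambda_1|^n))$. For $n<n'$ both large this forces a vertical gap $|a_n(t)-a_{n'}(t)|\ge C/|\mu|^n$ which dominates the sum of the two $a$-widths $\epsilon_0(|\mu|^{-2n}+|\mu|^{-2n'})$. Hence $\mathcal P_{n,n_0}$ and $\mathcal P_{n',n'_0}$ are vertically separated.

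\emph{Disjointness for the same $n$, different $n_0$.} By Proposition \ref{angle}, each $T^*_{n,n_0}$ meets $A_n$ transversally in the unique point $(t_{n,n_0},a_{n,n_0})$; for distinct $n_0$ the curves $T^*_{n,n_0}$ are disjoint, as they correspond to tangencies with distinct stable leaves $W_{n-n_0}$. The relative slope $db_{n,n_0}/dt-da_n/dt$ has magnitude of order $nV|\lambda_1|^{\theta n}$, so the portion of $b_{n,n_0}$ contained in the narrow $\mathcal A_n$-strip $|a-a_n(t)|\le\epsilon_0/|\mu|^{2n}$ has $t$-width at most $O\!\big(\epsilon_0/(nV|\mu|^{2n}|\lambda_1|^{\theta n})\big)$. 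On the other hand, by Proposition \ref{newtangency} the intersection parameters $t_{n,n_0}$ on $A_n$ are separated by at least $C/n$. Since $(|\lambda_1|^\theta|\mu|^2)^n\to\infty$ by (\ref{thetacond}), the above $t$-width is exponentially smaller than $1/n$ for large $n$, so the relevant $t$-intervals are disjoint.

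\emph{Main obstacle.} The delicate step is the same-$n$ case, which hinges on identifying the $t$-interval $[t^-_{n,n_0},t^+_{n,n_0}]$ appearing in the box definition with the $t$-range where $b_{n,n_0}$ actually stays in the narrow strip $\mathcal A_n$ (rather than its whole extent across $\mathcal B_n$). Once this is reconciled, disjointness comes down to a clean quantitative comparison combining the transversality estimate from Proposition \ref{angle} with the $1/n$-spacing from Proposition \ref{newtangency}; this comparison reduces to the growth of $|\lambda_1|^{\theta n}|\mu|^{2n}$, which is guaranteed by $|\lambda_1|^\theta|\mu|^2>1$ in the selection (\ref{thetacond}) of $\theta$.
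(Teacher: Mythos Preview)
Your proposal is correct and follows essentially the same route as the paper: the paper's proof also compares the horizontal size of the boxes (the estimate you derive is exactly (\ref{horizsize})) with the $1/n$ spacing of the intersection points coming from Proposition~\ref{newtangency}, invokes $|\lambda_1|^{\theta}|\mu|^2>1$ from (\ref{thetacond}), and for different $n$ simply uses that the strips $\mathcal A_n$ are pairwise disjoint (which is what your vertical-gap computation establishes). Your ``main obstacle'' paragraph correctly pinpoints a genuine ambiguity in the paper --- the $t$-interval $[t^{-}_{n,n_0},t^{+}_{n,n_0}]$ as defined in Proposition~\ref{angle} spans $\mathcal B_n$, yet the paper's own proof of the lemma tacitly uses the much smaller width (\ref{horizsize}) corresponding to the portion of $b_{n,n_0}$ inside $\mathcal A_n$; the argument only goes through under the latter reading (or, alternatively, by taking $\epsilon_1$ small enough that the $O(1/n)$ lengths are strictly smaller than the $1/n$ gaps), and you are right to flag this.
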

\begin{proof}
By Proposition \ref{angle} the curve $ T^*_{n,n_0} $ is the graph of the function $b$ and there exists a uniform constant $K>0$ such that
\begin{equation}\label{horizsize}
\frac{1}{K}\frac{1}{n\left(|\lambda_1|^{\theta}|\mu|^2\right)^n}\leq \left| \left\{t\left|\right. (t,b(t))\in\mathcal A_n\right\}\right|\leq K\frac{1}{n\left(|\lambda_1|^{\theta}|\mu|^2\right)^n},
\end{equation}
and the proof of Proposition \ref{newtangency} gives $\text{dist}\left(f_{n,n_0},f_{n,n_0+1}\right)$ is proportional to ${1}/{n}$. The disjointness follows from this estimates, the fact that $|\lambda_1|^{\theta}|\mu|^2\geq \left(|\lambda_1|^{2\theta}|\mu|^3\right)^{1/2}>1$, see (\ref{thetacond}) and the fact that $\mathcal A_n$ are pairwise disjoint.
\end{proof}
The next proposition ensures that the family restricted to $\mathcal P_{n,n_0}$ is again an unfolding of a strong homoclinic tangency. This allows an inductive procedure.
\begin{prop}\label{induction}  
For $n$ large enough, the map $f_{n,n_0}$ has a strong homoclinic tangency and the restriction $F:\mathcal P_{n,n_0}\times M\to M$ can be reparametrized to become an unfolding. Moreover each map in $\mathcal P_{n,n_0}$ has a sink of period $n+N$.
\end{prop}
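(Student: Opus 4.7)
The plan is to verify the three assertions separately, inheriting the structural conditions from the original family by persistence and invoking the detailed estimates of the previous sections for the newly created tangency. Throughout, I work at the parameter $(t_{n,n_0},a_{n,n_0}) = A_n\cap T^*_{n,n_0}$.

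First, the existence of a sink of period $n+N$ throughout $\mathcal P_{n,n_0}$ is essentially immediate: by construction every $(t,a)\in\mathcal P_{n,n_0}$ satisfies $|a-a_n(t)|\leq \epsilon_0|\mu(t,a_n(t))|^{-2n}$, so $\mathcal P_{n,n_0}\subset\mathcal A_n$. Proposition \ref{sink} then provides a unique periodic sink in $B^n_\delta(t,a)$ whose basin contains the box, and the sink depends smoothly on $(t,a)$.

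Next, I verify that $f_{n,n_0}$ has a strong homoclinic tangency by walking through (f1)--(f10). Conditions (f1)--(f3) on the saddle and its eigenvalues are open and pass to every map in the parameter square by (F3) and Theorem \ref{familydependence}. The candidate tangency is $q_{1,n,n_0}\in W^u_{\text{loc}}(z^{(3)})\cap W_{n-n_0}$, non-degenerate by Lemma \ref{curvature}; its general position (f4) follows because $q_{1,n,n_0}$ is obtained by pushing $W^u_{\text{loc}}(z)$ forward through a bounded number of folds plus purely linear iterates, so the asymptotic rate $\log|\lambda_1|$ inherited from the original $q_1$ is preserved. Condition (f5) is the content of Proposition \ref{speed} together with Lemmas \ref{z1distz}--\ref{hnbounds}: the tangent direction $B$ at $q_{1,n,n_0}$ tracks the orbit of $T_{q_1}W^u(p)$ and its dominant first component does not vanish thanks to the non-zero coefficients $B_{x,y}$, $C_3 X_2$, $D_5 K$. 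The transversal intersection $q_2$ and its general position (f6)--(f7) persist since transversality and exponential growth rates are open conditions. Finally, (f8)--(f10) either are vacuous when $\mu<-1$ or follow from the $\lambda$-lemma applied to the persistent transverse intersection $q_2$, since the accumulation properties depend only on the geometry of $W^u(q_2)$ relative to $W^u(p)$, which is preserved under small perturbation.

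For the reparametrization, I introduce new local coordinates $(\tilde t,\tilde a)$ on $\mathcal P_{n,n_0}$ centered at $(t_{n,n_0},a_{n,n_0})$, taking $T^*_{n,n_0}$ as the new $\{\tilde a=0\}$ curve and using a vertical direction transverse to it for $\tilde a$. Condition (P1) then holds by the definition of $T^*_{n,n_0}$: along this curve the new critical value $z^{(3)}$ sits on the stable manifold $W_{n-n_0}$, so after subtracting the position of $W_{n-n_0}$, the vertical coordinate of the new critical value vanishes on $\{\tilde a=0\}$. The non-degeneracy (P2) and the new version of (F2), namely $\partial\mu/\partial\tilde t\neq 0$, reduce to the transversality content of Proposition \ref{angle}: the curves $T^*_{n,n_0}$ and $A_n$ meet at an angle of order $nV\lambda_1^{\theta n}\neq 0$, and the original $\partial\mu/\partial t$ is bounded away from zero by (F2). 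The remaining conditions (F1), (F3)--(F5) are automatically inherited from the original family because they hold uniformly over all of $\mathcal P$.

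The main obstacle, as I see it, is (f5) for the new tangency: one must make sure that through the compound passage $z\to c'\to z^{(1)}\to z^{(2)}\to z^{(3)}\to q_{1,n,n_0}$ the tangent vector to $W^u(p)$ does not lose its dominant $\lambda_1$-component and does not accidentally align with a weaker stable direction. This is exactly what is controlled by the non-vanishing of the coefficients tracked in Proposition \ref{speed} and by the explicit lower bounds on curvature and position in Lemmas \ref{shape}, \ref{curvaturez3}, \ref{curvature}; all later constructions (the next generation of Newhouse boxes) will rely on this inheritance of the ``strong'' nature of the tangency.
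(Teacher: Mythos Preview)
Your overall plan matches the paper's: verify $(f1)$--$(f10)$ and $(F1)$--$(F5)$, $(P1)$--$(P2)$ one by one, using Proposition~\ref{sink} for the sink and Proposition~\ref{angle} for the reparametrization. The sink claim and the inheritance of $(f1)$--$(f3)$, $(f6)$--$(f8)$, $(F1)$--$(F3)$ are handled correctly.

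There is, however, a genuine gap in your treatment of $(f4)$, $(f5)$, $(f9)$ and $(f10)$. These conditions concern the \emph{forward} orbit of the new tangency $q_{1,n,n_0}$ along $W^s(p)$, not the backward construction $z\to c'\to z^{(1)}\to\cdots\to q_{1,n,n_0}$ that produced it. The asymptotic rate in $(f4)$ and $(f5)$ cannot be read off from ``a bounded number of folds plus linear iterates'' of the past; what matters is where $f^k(q_{1,n,n_0})$ goes as $k\to\infty$. The mechanism the paper uses, and which you are missing, is that $q_{1,n,n_0}\in W_{n-n_0}$, hence $f^{n-n_0}_{n,n_0}(q_{1,n,n_0})$ lands near $q_2$. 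One then transfers the general-position conditions $(f6)$, $(f7)$ at $q_2$ to obtain $(f4)$, $(f5)$ at $q_{1,n,n_0}$; likewise $(f8)$ at $q_2$ yields $(f9)$ at $q_{1,n,n_0}$, and $(f10)$ comes from the observation that $m_1$ lies on $W^u_{\text{loc}}(0)$ near $q_3$. Your invocation of the $\lambda$-lemma and of the coefficients $C_3X_2$, $D_5K$ from Proposition~\ref{speed} establishes only that the tangent at $q_{1,n,n_0}$ has nonzero first coordinate (the first half of the paper's argument for $(f5)$), but does not by itself control the forward asymptotic rates.

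A smaller point: your argument for $(P2)$ is not quite the right reduction. The transversality angle $Vn\lambda_1^{\theta n}$ between $A_n$ and $T^*_{n,n_0}$ is not what gives $(P2)$; rather, $(P2)$ is exactly the non-vanishing of $\Psi_{1,3}$ in~(\ref{psimap}), which says that the height of $z^{(3)}$ relative to $W_{n-n_0}$ moves with $a$. This is used \emph{in} the proof of Proposition~\ref{angle}, but it is a separate (and much larger) quantity than the transversality angle you cite.
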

\begin{proof}
Observe the new family is a restriction of our original family and the secondary tangency curve $b_{n,n_0}$ describes homoclinic tangencies associated to the original saddle point. The transversal intersection $q_2$ it is still present in the new family. As a consequence, the conditions $(f1), (f2), (f3),(f6),(f7), (f8), (F1), (F2), (F3)$ are automatically satisfied. We need only to check the conditions involving the secondary tangency $q_{1,n,n_0}$. Let $T$ be the time such that $f^T(q_2)\in [-2,2]^{m-1}\times\left\{0\right\}$. From $(f6)$ we know that the first coordinate of $f^T(q_2)$ is non zero. Recall now that $q_{1,n,n_0}\in W_{n-n_0}$. As consequence, for $n$ large enough, $f_{n,n_0}^{n-n_0}(q_{1,n,n_0})$ is close to $q_2$. Hence,
$f_{n,n_0}^{n-n_0+T}(q_{1,n,n_0})\in [-2,2]^{m-1}\times\left\{0\right\}$ has first coordinate non zero. This proves $(f4)$ for $q_{1,n,n_0}$.
\\
 Use the notation $m_1,m_2,m_3,m_4,m_5$ from Proposition \ref{speed} and observe that $m_1\in W^u_{\text{loc}}(0)$ close to $q_3$. This proves $(f10)$ for $q_{1,n,n_0}$.
 \\
Let $B\in T_{q_1}W^u(p)$ the unit vector. From $(f5)$ we know that $B$ has a non zero first coordinate. For $n$ large, the direction of $T_{m_2}W^u(p)$ is close to $B$, hence it has a non zero first coordinate. This implies that the direction of $T_{m_3}W^u(p)$ is close to $E_{q_3}$ and again from $(f5)$, the direction of $T_{m_4}W^u(p)$ is close to $B$ and it has a non zero first coordinate. Hence the direction of $T_{m_5}W^u(p)$ is close to $E_{q_3}$ and from $(f5)$, the direction of $T_{q_{1,n,n_0}}W^u(p)$ is close to $B$ and it has a non zero first coordinate. The direction of $T_{f^{n-n_0}_{n,n_0}\left(q_{1,n,n_0}\right)}W^u(p)$ is close to $E_{q_2}\cap T_{q_2}W^s(p)$.  Property $(f7)$ implies $(f5)$ for $q_{1,n,n_0}$.
\\
Observe that $f_{n,n_0}^{n-n_0}\left(q_{1,n,n_0}\right)$ converges to $q_2$. Property $(f8)$ implies $(f9)$ for $q_{1,n,n_0}$.
\\
For proving $(F4)$ use the $\Cd$ function $b_{n,n_0}$ and observe that the maps on the graph of this curve, $T^*_{n,n_0}$, have a non degenerate homoclinic tangency, see Lemma \ref{curvature}. The proof that these tangencies are in general position, it is the same as the one that we use to prove that $q_{1,n,n_0}$ is in general position. Similarly one proves $(F5)$. 
\\
Observe that $(P1)$ follows by the fact that $b_{n,n_0}$ is the curve of tangencies and $(P2)$ from the fact that $\Psi_{1,3}\neq 0$, see (\ref{psimap}).
\end{proof}
Inductively we are going to construct parameters with multiple sinks of higher and higher periods and a strong homoclinic tangency.
Let $\mathfrak N^1=\left\{(n,n_0)\left|\right. n_0\in\mathfrak N_n\right\}\subset\N^2$ be the set of labels of the Newhouse boxes of first generation $\mathcal P_{n,n_0}$. As inductive hypothesis assume that there exist sets $\mathfrak N^k\in\left(\N^2\right)^k$ such that, each  $\mathfrak N^k$ is the set of labels of the Newhouse boxes of generation $k$. Moreover the natural projections $\mathfrak N^1\leftarrow\mathfrak N^2\leftarrow\dots\leftarrow\mathfrak N^g $ with $\mathfrak N^k\in\left(\N^2\right)^k$ are countable to $1$ and correspond to inclusion of Newhouse boxes of successive generations. They satisfy the following inductive hypothesis.

 For $\underline n=\left\{(n^{(k)},n_0^{(k)})\right\}_{k=1}^g\in\mathfrak N^g$, there exist sets $\mathcal P^k_{n^{(k)},n_0^{(k)}}\subset\mathcal P$, where $n^{(k)}$ labels the sink and  $n_0^{(k)}$ labels the secondary tangency, such that
\begin{itemize}
\item $\text{diam}\left(\mathcal P^k_{n^{(k)},n_0^{(k)}}\right)\leq\frac{1}{k}$,
\item $\mathcal P^{k+1}_{n^{(k+1)},n_0^{(k+1)}}\subset \mathcal P^k_{n^{(k)},n_0^{(k)}}$ for $k=1,\dots,g-1$,
\item there exists a map $f^k_{n^{(k)},n_0^{(k)}}\in\mathcal P^k_{n^{(k)},n_0^{(k)}}$ which has a strong homoclinic tangency,
\item the restriction $F:{\mathcal P^k_{n^{(k)},n_0^{(k)}}}\times M\to M$ can be reparametrized to become and unfolding of $f^k_{n^{(k)},n_0^{(k)}}$,
\item every map in $\mathcal P^k_{n^{(k)},n_0^{(k)}}$ has at least $k$ sinks of different periods.
\end{itemize}
By induction, using Proposition \ref{induction},  we get an infinite sequence of sets $\mathfrak N^k$. 
\begin{figure}
\centering
\includegraphics[width=0.73\textwidth]{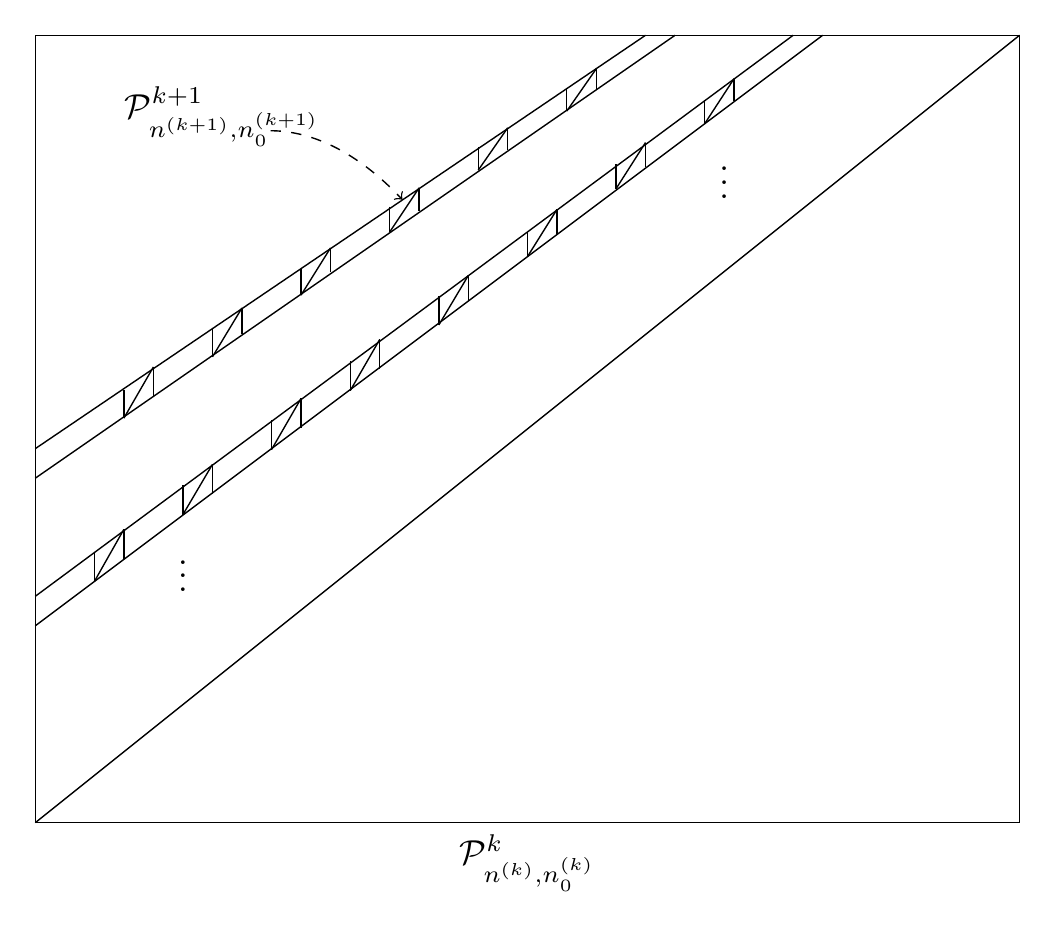}
\caption{Newhouse boxes}
\label{Fig7}
\end{figure}
\begin{defin}
The sets $\mathcal P^g_{n^{(g)},n_0^{(g)}}$ are called Newhouse boxes of generation $g$, see Figure \ref{Fig7}, and 
$$NH=\bigcap_g\bigcup_{\underline n\in\mathfrak N^g}\mathcal P^g_{n^{(g)},n_0^{(g)}}.$$ 
\end{defin}

The set $NH$, consisting of parameters for which the corresponding maps have infinitely many sinks, accumulates on the curve of the original tangency. The inductive construction of these parameters implies that the set $NH$ accumulates on all tangency curves given by $b_{n,n_0}$. We have the following lemma.

\begin{lem}\label{pacc}
\begin{equation*}
\overline{NH}\supset [-t_0,t_0]\times\left\{0\right\}.
\end{equation*}
In particular,
\begin{equation}\label{NHinter}
\overline{NH}\cap \mathcal {P}_{n,n_0}\supset\text{\rm graph}(b_{n,n_0}).
\end{equation}
\end{lem}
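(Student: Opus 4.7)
The plan is to establish \eqref{NHinter} first via the inductive construction, and then to deduce the inclusion $\overline{NH}\supset[-t_0,t_0]\times\{0\}$ from Corollary \ref{secontangnonempty}.

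For \eqref{NHinter}, fix $(n,n_0)$ and a point $p\in\text{graph}(b_{n,n_0})\cap\mathcal{P}_{n,n_0}$. Given $\epsilon>0$, I would construct a nested sequence $\mathcal{P}_{n,n_0}=\mathcal{Q}^1\supset\mathcal{Q}^2\supset\mathcal{Q}^3\supset\cdots$, where $\mathcal{Q}^g$ is a Newhouse box of generation $g$, with $\mathcal{Q}^2\subset B_\epsilon(p)$ and $\operatorname{diam}(\mathcal{Q}^g)\le 1/g$. The construction of $\mathcal{Q}^2$ is the essential step: by Proposition \ref{induction} the family $F:\mathcal{P}_{n,n_0}\times M\to M$ can be reparametrized to be an unfolding of $f_{n,n_0}$, whose principal homoclinic tangency curve is $b_{n,n_0}$. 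Applying Corollary \ref{secontangnonempty} to this new unfolding gives that the corresponding generation-$2$ secondary tangency curves $T^*_{n^{(2)},n_0^{(2)}}$ accumulate on $b_{n,n_0}$. Since for large $n^{(2)}$ the associated Newhouse boxes have arbitrarily small diameter, one can select $\mathcal{Q}^2\subset B_\epsilon(p)$. Iterating this argument inside $\mathcal{Q}^2$, then inside $\mathcal{Q}^3$, etc., produces the desired nested sequence. By the inductive hypothesis of the construction of $NH$, $\bigcap_g\mathcal{Q}^g$ is a single point $q\in NH$, and $q\in\mathcal{Q}^2\subset B_\epsilon(p)$, so $p\in\overline{NH}$.

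For the first inclusion, by Proposition \ref{angle} each $T^*_{n,n_0}$ is the graph of $b_{n,n_0}$ over $[t^-_{n,n_0},t^+_{n,n_0}]$. Combining the slope estimate \eqref{dbminusda} with the horizontal length bound \eqref{horizsize} gives
\[
|b_{n,n_0}(t)-a_n(t)|=O\!\left(\frac{1}{|\mu|^{2n}}\right),
\]
so (for $n$ large enough) $\text{graph}(b_{n,n_0})\subset\mathcal{P}_{n,n_0}$. By \eqref{NHinter} just established, $T^*_{n,n_0}\subset\overline{NH}$ for every $(n,n_0)\in\mathfrak{N}_n$. Taking the union and the closure, and invoking Corollary \ref{secontangnonempty},
\[
\overline{NH}\supset\overline{\bigcup_n\bigcup_{\mathfrak{N}_n}T^*_{n,n_0}}\supset[-t_0,t_0]\times\{0\}.
\]

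The main obstacle is the self-similarity of the induction: one must know that the whole machinery of Sections 3 and 4 (cascades of sinks, existence of secondary tangency curves, and their transversality with the sink curves $A_n$) applies verbatim to the restricted family on each generation-$g$ Newhouse box. This is exactly the content of Proposition \ref{induction}, which ensures that the restricted family is again an unfolding of a map with a strong homoclinic tangency, so that Corollary \ref{secontangnonempty} can be invoked at every level of the recursion to populate the next generation of tangency curves arbitrarily close to $b^{(g)}$.
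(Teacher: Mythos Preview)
Your approach is essentially the same as the paper's---both rely on Proposition~\ref{induction} to propagate the unfolding structure to each Newhouse box and on Corollary~\ref{secontangnonempty} (equivalently Proposition~\ref{newtangency}) to produce boxes accumulating on the relevant tangency curve---but you reverse the order. The paper proves $\overline{NH}\supset[-t_0,t_0]\times\{0\}$ directly in one line (generation-$1$ boxes accumulate at each $(t,0)$ by Proposition~\ref{newtangency}, and each contains a point of $NH$), and then \eqref{NHinter} follows by applying the identical argument to the restricted unfolding on $\mathcal P_{n,n_0}$, whose primary tangency curve is $b_{n,n_0}$. Your route through \eqref{NHinter} first is legitimate but longer.

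There is one technical slip. You claim that \eqref{horizsize} combined with \eqref{dbminusda} gives $|b_{n,n_0}(t)-a_n(t)|=O(|\mu|^{-2n})$ on all of $[t^-_{n,n_0},t^+_{n,n_0}]$, hence $\text{graph}(b_{n,n_0})\subset\mathcal P_{n,n_0}$. This is not right: \eqref{horizsize} measures only the length of $\{t:(t,b(t))\in\mathcal A_n\}$, not the full domain $[t^-,t^+]$ (which has length proportional to $1/n$; see the proof of Proposition~\ref{angle}). In fact Proposition~\ref{angle} states explicitly that $\partial T^*_{n,n_0}\subset\partial\mathcal B_n$, so the endpoints of $\text{graph}(b_{n,n_0})$ lie well outside $\mathcal A_n\supset\mathcal P_{n,n_0}$. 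The statement \eqref{NHinter} should really be read as $\overline{NH}\supset\text{graph}(b_{n,n_0})\cap\mathcal P_{n,n_0}$; this is what both your argument and the paper's actually establish, and it is all that is used in the dimension estimate in Theorem~A. Your deduction of the first inclusion still goes through, since even the smaller pieces $T^*_{n,n_0}\cap\mathcal P_{n,n_0}$ (which contain the crossing points $A_n\cap T^*_{n,n_0}$) accumulate at $[-t_0,t_0]\times\{0\}$.
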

\begin{proof}
Given $(t,0)$, by Proposition \ref{newtangency}, for every $n$ large enough, $A_n$ has a secondary tangency at $(t_n,a_n(t_n))$ with $|t-t_n|=O\left(\frac{1}{n}\right)$. Hence, there exists a sequence of Newhouse boxes in $\mathfrak{N_1}$ accumulating at $(t,0)$. By construction, each box in $\mathfrak{N_1}$ contains points of $NH$. The lemma follows.
\end{proof}
Given any family $F$ of diffeomorphisms, we define the Newhouse set $NH_F$ as the set of parameters having infinitely many sinks.
The upper Minkowski dimension is denoted by $MD$.

%
\vskip .2 cm
\paragraph{\bf Theorem A.}\label{Newhousepoints}
Let $F:\mathcal P\times M\to M$ be an unfolding of a map $f$ with a strong homoclinic tangency, then 
\begin{itemize}
\item[-] $NH\subset NH_F$, every map in $NH$ has infinitely many sinks,
\item[-] $NH$ is homeomorphic to $\R\setminus\Q$,
\item[-]$MD(NH)\geq\frac{1}{2}$. 

\end{itemize}
\vskip .2 cm
\begin{proof} The inductive construction, using  Proposition \ref{induction}, implies that all maps in $NH$ have infinitely many sinks. From  Lemma \ref{disjP} and Corollary \ref{secontangnonempty} we know that 
$$
\bigcup_{\underline n\in\mathfrak N^g}\mathcal P^g_{n^{(g)},n_0^{(g)}},
$$
consists of countably many disjoint boxes. Each box $P^g_{n^{(g)},n_0^{(g)}}$ contains countably many pairwise disjoint boxes of the next generation. Hence, the nested intersection $NH$ is homeomorphic to $\R\setminus\Q$, see \cite{Gaal}. For the last property, let $(t^*,0)$ such that ${\log{\lambda(t^*,0)^{-1}}}/{\log\mu(t^*,0)}$ is the maximum of ${\log{\lambda(t,0)^{-1}}}/{\log\mu(t,0)}$. Consider a sequence of first generation Newhouse boxes $\mathcal {P}_{n,n_0}\in\mathfrak{N_1}$ accumulating at $(t^*,0)$. This is possible because of Lemma \ref{pacc}. Choose $\epsilon>0$ and let $n$ be maximal such that $\epsilon\leq {\epsilon_0}/{\mu(t^*,a^*)^{2n}}$. 
 Because (\ref{horizsize}), (\ref{NHinter}) and  the fact that the vertical size of $\mathcal {P}_{n,n_0}$ is ${\epsilon_0}/{\mu(t^*,a^*)^{2n}}$, we need at least ${K}/{\lambda(t^*,a^*)^{\theta n}}$ balls of radius $\epsilon$ to cover $\overline{NH}\cap \mathcal {P}_{n,n_0}$. As consequence $$MD(NH)\geq \frac{\theta}{2}\max_{NH}\left(\frac{\log\frac{1}{\lambda}}{\log\mu}\right),$$
 and $MD(NH)\ge\frac{1}{2}$, 
  where we used  (\ref{thetacond1}).
\end{proof}
\begin{rem}
Observe that the estimate for the upper Minkowski dimension is not sharp.  Other dimension estimates for maps with infinitely many sinks were obtained in \cite{BDS, TY, W}. 
\end{rem}
%

The construction of $NH$ involves the Newhouse boxes. These boxes are constructed using the curves $a_n$ and $b_{n,n_0}$. The transversality of these curves implies the stability of these boxes. By considering higher dimensional families, with more than two parameters, the boxes will move smoothly. The intersection $NH$ will create a lamination in higher dimensional unfoldings. This shows that the Newhouse phenomenon has a codimension 2 nature.

\vskip .2 cm
\paragraph{\bf Theorem B.}\label{Newhouselamination}
Let $M$, $\mathcal P$ and $\mathcal T$ be $\Cinf$ manifolds and $F:\left(\mathcal P\times\mathcal T\right)\times M\to M$ a $\Cinf$ family with $\text{dim}(\mathcal P)=2$ and $\text{dim}(\mathcal T)\geq 1$. If $F_0:\left(\mathcal P\times\left\{\tau_0\right\}\right)\times M\to M$ is an unfolding of a map $f_{\tau_0}$ with a strong homoclinic tangency, then 
\begin{itemize}
\item[-] $NH_F$ contains a codimension $2$ lamination $L_F$,
\item[-] $L_F$ is homeomorphic to $\left(\mathbb R\setminus\mathbb Q\right)\times\mathbb R^{\dim (\mathcal T)}$,
\item[-] the leaves of $L_F$ are $\Cuno$ codimension $2$ manifolds,
\item[-] infinitely many sinks persist along each leave of the lamination.
\end{itemize}
\vskip .2 cm
\begin{proof}
Observe that there exists a small neighborhood $\tau_0\in U\subset\mathcal T$ and a $\Cinf$ function $U\ni\tau\to f_t$ such that, for all $\tau\in U$, $f_{\tau}$ has a strong homoclinic tangency and the family $F_{\tau}:\left(\mathcal P\times\left\{\tau\right\}\right)\times M\to M$ is an unfolding of $f_{\tau}$.
 Let $D_5,K,C_3,X_2$ be the constants as in Proposition \ref{speed} and
 $$C=2\max_{(t,\tau, a,\theta)}{D_5K}\left[C_3X_2\frac{\theta }{\lambda_1} \frac{\partial \lambda_1}{\partial t}+\frac{K}{\mu}\frac{\partial\mu}{\partial t}\right],$$ and choose a secondary tangencies at $(t_0,\tau_0,a_0)$ from
$$\left\{(t,\tau, a)\in T_{n,n_0}\left|\right. \left|\frac{\partial W_{n-n_0}}{\partial t}(q_{1,n,n_0})\right|\geq C n\left(\lambda_1^{\theta}\mu\right)^{2n}\right\}\cap A_n.$$ In the corresponding unfolding $F_{\tau_0}$, there is the tangencies curve $b$ which intersects transversally the sinks curve $A_n$ in the point $(t_0,\tau_0,a_0)$. From Proposition \ref{angle}  we get a lower bound for the angle which is independent on the parameter $\tau$, see Remark \ref{anglebound}. This transversality implies that this intersection persists in a neighborhood of $\tau_0$ as the graph of a smooth function. The uniform lower bound of the angle implies that this smooth function extends globally. In particular the secondary tangency $(t_0,\tau_0,a_0)$ on the sink curve has its smooth continuation in all unfoldings $F_{\tau}$ for any given $\tau$ creating Newhouse boxes.
\\
Fix $\tau\in U$ and denote the Newhouse boxes of the family $F_{\tau}$ by $\left\{\mathcal N_{\underline n}(\tau)\right\}^g$. 
These boxes are defined in terms of the smooth functions $a$ and $b$. Because the angle between $a$ and $b$ is uniformly bounded by a constant independent of $\tau$, see Proposition \ref{angle}, the boxes $P^k_{n^k,n^k_0}(\tau)$ move smoothly with $\tau$. Let 
\begin{equation}\label{alongated}
P^k_{n^{(k)},n^{(k)}_0}(U)=\bigcup_{\tau\in U}P^k_{n^{(k)},n^{(k)}_0}(\tau),
\end{equation}
 then the sets  $P^k_{n^{(k)},n^{(k)}_0}(U)$ approximate the lamination, in the sense that they are homeomorphic to $$P^k_{n^{(k)},n^{(k)}_0}(\tau_0)\times U.$$ Let $$L_F=\bigcap_g\bigcup_{\underline n\in\mathfrak N^g}\mathcal P^g_{n^{(g)},n_0^{(g)}}(U),$$ and observe that $L_F$ is homeomorphic to $U\times NH(\tau_0)$.
\\
It is left to prove that the leaves $L_F$ are $\Cuno$. Let $\text{Tang}^k_{n^{(k)},n^{(k)}_0}$ be the codimension one surface of tangencies contained in $P^k_{n^{(k)},n^{(k)}_0}(U)$.  Let $P^k_{n^{(k)},n^{(k)}_0}(U)\subset P^{k-1}_{n^{(k-1)},n^{(k-1)}_0}(U)$ and reparametrize $P^{k-1}_{n^{(k-1)},n^{(k-1)}_0}(U)$ in coordinates, also denoted by $(t,a,\tau)$, such that the restriction of this reparametrization to a slice at $\tau$, using only the coordinates $(t,a)$, is an unfolding.  In particular, $\text{Tang}^{k-1}_{n^{(k-1)},n^{(k-1)}_0}=\left\{a=0\right\}$. Observe that there is no difference between parameters $t$ and parameters $\tau$ and Proposition \ref{angle} applies to both.
According to Proposition \ref{angle} we have, in the coordinates of $ P^{k-1}_{n^{(k-1)},n^{(k-1)}_0}(U)$, with $\tau=(\tau_i)$
\begin{eqnarray*}
\frac{db}{dt}&=&-\frac{1}{\mu^{n^{(k)},}}\left[\frac{{n^{(k)}}}{\mu}\frac{\partial\mu}{\partial t} +O\left({n^{(k)}}\left(|\lambda_1|^{\theta}|\mu|\right)^{n^{(k)}}\right)\right],\\
\frac{db}{d\tau_i}&=&-\frac{1}{\mu^{n^{(k)}}}\left[\frac{{n^{(k)}}}{\mu}\frac{\partial\mu}{\partial \tau_i} +O\left({n^{(k)}}\left(|\lambda_1|^{\theta}|\mu|\right)^{n^{(k)}}\right)\right].
\end{eqnarray*}
The codimension one surface $\text{Tang}^k_{n^{(k)},n^{(k)}_0}$ can be described as a graph of a smooth function over a domain in $\text{Tang}^{k-1}_{n^{(k-1)},n^{(k-1)}_0}$, denoted as $\text{Tang}^k_{n^{(k)},n^{(k)}_0}$. 
As a consequence of the previous estimates, the graph $\text{Tang}^k_{n^{(k)},n^{(k)}_0}$ is $O\left(\frac{{n^{(k)}}}{|\mu|^{n^{(k)}}}\right)$ $\Cuno$ close to $\text{Tang}^{k-1}_{n^{(k-1)},n^{(k-1)}_0}$. 
By Lemma \ref{Imtang} we have
\begin{eqnarray}\label{dadtnew}
\frac{d a}{d t}&=&O\left(\frac{{n^{(k)}}}{|\mu|^{n^{(k)}}}\right),\\\label{dadtaunew}
\frac{d a}{d \tau_i}&=&O\left(\frac{{n^{(k)}}}{|\mu|^{n^{(k)}}}\right).
\end{eqnarray}
By Proposition \ref{angle}, the graphs of $a$ and $b$ intersect transversally in a manifold $\ell$. Notice that $\ell$ is the graph of a $\Cd$ function $\ell:\tau\mapsto\ell(\tau)\in P^k_{n^{(k)},n^{(k)}_0}(\tau)$. In particular $\ell$ is a codimension $2$ manifold.  According to Proposition \ref{speed}, we have
\begin{eqnarray}\label{dbdtminusdadt1}
 \frac{db}{d t}-\frac{da}{d t}&=&V n\lambda_1^{\theta n}+O\left(\left|\lambda_1\right|^{\theta n}\right),\\\label{dbdtminusdadt2}
\frac{db}{d\tau_i}-\frac{da}{d \tau_i}&=&V_i n\lambda_1^{\theta n}+O\left(\left|\lambda_1\right|^{\theta n}\right),
\end{eqnarray}
where $V$ and $V_i$ are continuous and uniformly away from zero. Let $(\Delta t,\Delta\tau_i,\Delta a)$ be a tangent vector to $\ell$. Then 
$$
\frac{\partial (b-a)}{\partial t}\Delta t+\sum_i\frac{\partial (b-a)}{\partial \tau_i}\Delta \tau_i=0,
$$
and 
$$
\frac{\partial a}{\partial t}\Delta t+\sum_i\frac{\partial a}{\partial \tau_i}\Delta \tau_i=\Delta a.
$$
By (\ref{dbdtminusdadt1}), (\ref{dbdtminusdadt2}), (\ref{dadtnew}) and  (\ref{dadtaunew}), the tangent space of $\ell$ is given by 
\begin{eqnarray*}
\left(1+O\left(\frac{1}{n}\right)\right)\left(V\Delta t+\sum V_i\Delta\tau_i \right)&=&0,\\
O\left(\frac{{n^{(k)}}}{|\mu|^{n^{(k)}}}\left(\Delta t+\sum\Delta\tau_i\right)\right)&=&\Delta a.
\end{eqnarray*}
By restricting $\mathfrak N^k$ to large values of $n^{(k)}$ the tangent spaces of $\ell$ converge to
$\Delta a=0$ and 
$
V\Delta t+\sum V_i \Delta\tau_i=0 .
$
As consequence, the leaves of the lamination are smooth manifolds and the tangent spaces to the leaves of the lamination vary continuously.
\end{proof}

%
%
%
\section{Real polynomial families}
Consider the real H\' enon family $F:\mathbb{R}^2\times\mathbb{R}^2\to\mathbb{R}^2$, 
$$
F_{a,b}\left(\begin{matrix}
x\\y
\end{matrix}
\right)=\left(\begin{matrix}
a-x^2-by\\x
\end{matrix}
\right),
$$
a two parameter family. In this section we are going to prove that the H\' enon family is an unfolding of a map with a strong homoclinic tangency. In particular we can apply Theorem A and Theorem B to get a Newhouse lamination in the space of polynomial maps, see Theorem C. Actually we could have started with any two parameter polynomial family which is an unfolding of a map with a strong homoclinic tangency and we would have got Newhouse laminations scattered throughout the space of polynomial maps. 
\vskip .2 cm
\paragraph{\bf Theorem C.}
The real H\' enon family contains a set $NH$, homeomorphic to $\mathbb{R}\setminus \mathbb{Q}$, of maps with infinitely many sinks.  Moreover the space $\text{Poly}_d({\mathbb{R}^n})$ of real polynomials of $\mathbb{R}^n$ of degree at most $d$, with $d\ge 2$, contains a codimension $2$ lamination of maps with infinitely many sinks. The lamination is homeomorphic to $\left(\mathbb R\setminus\mathbb Q\right)\times\mathbb R^{\text{D}-2}$ where $\text{D}$ is the dimension of $\text{Poly}_d({\mathbb{R}^n})$ and the leaves of the lamination are $\Cuno$ smooth. The sinks persist along each leave of the lamination. 
\vskip .2 cm
\begin{proof}
Consider the map $f(x)=2-x^2$. Then $x=-2$ is an expanding fixed point and $f^2(0)=-2$ where $0$ is the critical point. For given $b>0$ and $a$ large enough, the H\' enon map $F_{a,b}$ has an horse-shoe. By decreasing $a$ to $a(b)$, one arrives at the first homoclinic tangency. Hence, there exists an analytic curve $b\mapsto a(b)$ with $a(0)=2$ such that the parameter $(a(b),b)$ corresponds to a H\' enon map with an homoclinic tangency of the saddle point $p(b)$  which is a continuation of $p(0)=(-2,-2)$. For all $b$ positive and small enough $F_{a(b),b}$ has a strong homoclinic tangency and the H\' enon family $F_{a,b}$ restricted to a small neighborhood of $(a(b),b)$ is an unfolding. The theorem follows for $\text{Poly}_d({\mathbb{R}^2})$ from Theorem A and Theorem B applied to the unfolding $F_{a,b}$. 
\\
Extend now the family $F_{a,b}$ to $F:\mathbb R^2\times\mathbb R^n\mapsto\mathbb R^n$ as 
$$
F_{a,b}\left(\begin{matrix}
x\\y\\y_3\\\vdots\\y_n
\end{matrix}
\right)=\left(\begin{matrix}
a-x^2-by\\x\\b^3y_3\\\vdots\\b^ny_n
\end{matrix}
\right).
$$
Observe that this higher dimensional H\' enon family is again an unfolding in the same neighborhood of $(a(b),b)$. As consequence Theorem $A$ and Theorem $B$ apply and the proof is complete.
 \end{proof}
 In the rest of the section we prove that the leaves of the Newhouse laminations in the space of real polynomial maps of $\mathbb R^2$ are in fact real analytic. Besides this property of the Newhouse laminations has its own interest, it will also be used in the next section. We would like to stress that the result is presented only in two dimension because of its application in the next section, see Theorem F. However it is valid in higher dimension.
 \begin{prop}\label{analyticleaves} The space $\text{\rm Poly}_d({\mathbb{R}^2})$ of real polynomials of $\mathbb{R}^2$ of degree at most $d$, with $d\ge 2$, contains a codimension $2$ lamination $L_F$ of maps with infinitely many sinks. The lamination is homeomorphic to $\left(\mathbb R\setminus\mathbb Q\right)\times\mathbb R^{\text{D}-2}$ where $\text{D}$ is the dimension of $\text{\rm Poly}_d({\mathbb{R}^2})$. The leaves of the laminations are analytic and the sinks move simultaneously along the leaves.
 \end{prop}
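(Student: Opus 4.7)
The plan is to use complexification. Since $\text{Poly}_d(\mathbb{R}^2)$ parameterizes polynomial maps, the family $F$ extends naturally to a holomorphic family on $\mathbb{C}^2$, parameterized by a complex neighborhood of the real parameter region. All the geometric objects in the Newhouse construction of Theorems A and B then admit holomorphic continuations, and the task is to upgrade the $\Cuno$ regularity of the leaves established in Theorem B to real-analyticity.

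First, I would verify that at each inductive level $g$, the relevant objects are real-analytic. The saddle point $p(t,a,\tau)$ is defined by the polynomial equation $F_{t,a,\tau}(p)=p$, so it is real-analytic by the analytic implicit function theorem; its stable and unstable manifolds depend real-analytically on parameters by the analytic invariant manifold theorem. Hence the critical point function $c$ of Lemma \ref{functionc}, the primary critical value $z$, the sink curves $a_n$ of Lemma \ref{Imtang}, and the tangency curves $b_{n,n_0}$ of Proposition \ref{angle} are all real-analytic graphs (given locally as zero sets of real-analytic functions). At each level $g$, the codimension-$2$ manifold $\ell^g_{\underline n}$ from the proof of Theorem B is the transversal intersection of two real-analytic hypersurfaces, hence itself real-analytic.

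Next, I would establish uniform complex extensions. All estimates in Propositions \ref{speed}, \ref{newtangency}, and \ref{angle} are polynomial in the parameters together with the eigenvalues and their derivatives, and extend to a uniform complex neighborhood $V^{\mathbb{C}}$ of the real parameter region. The transversality bound of Remark \ref{anglebound} depends only on infinitesimal data at the saddle, which vary holomorphically and uniformly in $\tau$. Consequently, the complex continuations of the Newhouse boxes $\mathcal{P}^{g,\mathbb{C}}_{\underline n}(\tau)$ are nested with diameters shrinking geometrically as in Lemma \ref{disjP}, and the level-$g$ leaves extend to holomorphic codimension-$2$ submanifolds $\ell^{g,\mathbb{C}}_{\underline n}$ of uniform complex size independent of $g$.

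Finally, the leaf of the lamination, defined pointwise by $\ell_{\underline n}(\tau)=\bigcap_g \mathcal{P}^g_{\underline n}(\tau)$, is the $\Cuno$-limit of the $\ell^g_{\underline n}$ established in the proof of Theorem B. By the uniform complex extension, the holomorphic continuations $\ell^{g,\mathbb{C}}_{\underline n}\colon U^{\mathbb{C}}\to \mathcal{P}^{\mathbb{C}}$ form a uniformly bounded family, hence a normal family by Montel's theorem. The $\Cuno$ convergence on the real slice, combined with uniform boundedness on $V^{\mathbb{C}}$, lifts to local uniform convergence on compact subsets of $U^{\mathbb{C}}$, so by Weierstrass's convergence theorem the limit is holomorphic; restricting to the real slice yields the real-analytic leaf. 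The main obstacle is the second step: verifying that the delicate cancellation of dominating terms underlying the transversality in Proposition \ref{angle} remains valid on a complex neighborhood of definite size, uniformly in $g$. Persistence of the sinks along each leaf in the analytic category then follows from the analytic implicit function theorem applied to the polynomial periodic-point equations.
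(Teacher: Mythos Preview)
Your overall architecture---complexify, obtain holomorphic extensions on a fixed polydisc, then pass to the limit via Montel---matches the paper's, and your final normal-family step is exactly right. However, there is a genuine gap in your first step: you assert that the sink curves $a_n$ of Lemma~\ref{Imtang} are real-analytic, but this is false as constructed. The curve $a_n$ is defined by the condition that the critical value $z_{t,a}$ lands on $\Gamma_{t,a}$ after $n$ iterates, and both $z_{t,a}$ and $\Gamma_{t,a}$ are built from the critical-point function $c$ of Lemma~\ref{functionc}, which in turn lives in the Sternberg-linearized coordinates of Theorem~\ref{Ctlinearization}. That linearization is only $\mathcal{C}^4$; there is in general no holomorphic linearization near a saddle (resonances aside), and no alternative analytic definition of $a_n$ is offered. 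The paper names this explicitly as ``the main obstacle'' and it is not a technicality one can wave past: all the estimates you cite from Propositions~\ref{speed} and~\ref{angle} are carried out in these merely-$\mathcal{C}^4$ coordinates.

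The paper's remedy is to \emph{replace} $a_n$ by a different curve $sa_n$, defined by the intrinsic algebraic condition that the periodic point of period $N+n$ has $\text{tr}\,DF^{N+n}_p=0$. This trace condition is polynomial in the original coordinates, so $sa_n$ is genuinely holomorphic on a fixed polydisc $\mathbb{D}\times\mathbb{D}^T$ (Lemma~\ref{datrace}); one then checks (Lemma~\ref{dsandt}) that $sa_n$ lies inside $\mathcal{A}_n$ and meets each $b_{n,n_0}$ transversally. The tangency curves $b_{n,n_0}$ themselves are handled by a separate argument (Lemma~\ref{degreetwomap}) showing the secondary-tangency condition has degree two and hence extends holomorphically. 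Only after this substitution does the uniform-domain normal-family argument go through. Your identification of ``the main obstacle'' as the persistence of the cancellation in Proposition~\ref{angle} is therefore misplaced; the real issue is replacing the non-analytic linearization-based objects by analytically defined surrogates.
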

  \vskip .2 cm

The proof needs some preparation which is given in the following lemmas. We first explain the steps of the proof. The main obstacle is that the curve of sinks $A_n$ is only smooth and not analytic. This is a consequence of the fact that its construction relies on the linearization of the saddle. We replace the curve $A_n$ by the graph of an analytic function $sa_n$ characterized by the property that each sink corresponding to parameters in $sa_n$ has trace zero. The construction of $sa_n$, defined on a fixed parameters domain $\mathbb D\times \mathbb D^{T}$, relies on Lemma \ref{datrace}. Moreover its analyticity is due to fact that it is defined by the equation $\text{trace}=0$. In Lemma \ref{dsandt} we prove that the curve $sa_n$ is very close to the curve $A_n$ and as corollary we obtain that it intersects transversally each curve of secondary tangencies, $b_{n,n_0}$. Moreover the curves $b_{n,n_0}$ naturally extend to holomorphic functions. By adding any number of new parameters $\tau=(\tau_i)$, the intersection point between $sa_n$ and $b_{n,n_0}$ is the graph of an holomorphic function defined on a fixed domain $\mathbb D^{T}$. The real parts of the limits of these graphs are the real-analytic leaves of the Newhouse lamination. 

\bigskip

Consider a polynomial unfolding $F_{t,a}$ of a strong homoclinic tangency in $\text{Poly}_d({\mathbb{R}^2})$, say with $(t,a)\in (-1,1)\times (-1,1)$. Consider this unfolding as a family 
$$\mathbb{D}\times \mathbb{D}\ni (t,a)\mapsto F_{t,a}\in \text{Poly}_d({\mathbb{C}^2}).
$$
Assume also that the unfolding is contained in a larger polynomial family
$$\mathbb{D}\times \mathbb{D}\times \mathbb{D}^T\ni (t,a, \tau)\mapsto F_{t, a, \tau}\in \text{Poly}_d({\mathbb{C}^2}).
$$
There is a local holomorphic change of coordinates such that the saddle point becomes $(0,0)$, the local stable manifold contains the unit disc in the $x$-axis, and the local unstable manifold contains the unit disc in the $y$-axis. Moreover, the restriction of the map to the invariant manifolds is linearized, that is 
\begin{equation}\label{semilinearization}
F(x,0)=(\lambda_1 x,0) \text{ and } F(0,y)=(0, \mu y).
\end{equation}
The domain $\mathbb{D}\times \mathbb{D}$ where (\ref{semilinearization}) holds, is called the domain of semi-linearization.
The change of coordinates depends holomorphically on the parameters.  Observe that, when $(x,y)\in \mathbb{D}\times \mathbb{D}$, then
\begin{equation}\label{DFnCC}
DF(x,y)=\left(\begin{matrix}
\lambda_1+O(y) & O(x)\\
O(y) & \mu +O(x)
\end{matrix}\right).
\end{equation}
\begin{rem}\label{semilin} By shrinking the semi-linearization domain we may assume that the error terms are small, say $\max\left\{O(x), O(y)\right\}\le 1/2$.
\end{rem}
\begin{lem}\label{DFnC} If $(x,y)\in \mathbb{D}\times \mathbb{D}$ and $F^i(x,y)\in \mathbb{D}\times \mathbb{D}$, for $i\le n$, then
$$
DF^n(x,y)=\left(\begin{matrix}
a_{11} \lambda_1^n \mu^n& a_{12} \lambda_1^n \mu^n\\
a_{21} & a_{22} \mu^n
\end{matrix}\right),
$$
where $a_{kl}$ are uniformly bounded holomorphic functions and $a_{22}\ne 0$ and uniformly away from zero.
\end{lem}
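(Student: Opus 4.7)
The plan is to prove the lemma by induction on $n$, computing the matrix product $DF^n=DF(x_{n-1},y_{n-1})\cdots DF(x_0,y_0)$ where $(x_i,y_i)=F^i(x,y)$. First I would establish a priori bounds on the orbit. From (\ref{semilinearization}) together with Remark \ref{semilin}, the semi-linearization forces the contraction estimate $|x_i|\lesssim|\lambda_1|^i|x_0|$ in the stable direction. For the unstable coordinate, the assumption that the entire orbit segment stays inside the bidisc forces the dual estimate $|y_i|\lesssim|\mu|^{-(n-i)}$: running the unstable dynamics forward multiplies $|y|$ by approximately $|\mu|$ at each step, and the endpoint condition $|y_n|\le 1$ then controls $|y_i|$ for every intermediate $i$.

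The next step is the inductive matrix calculation. Writing
$$DF(x_i,y_i)=\begin{pmatrix}\lambda_1+A_i & B_i\\ C_i & \mu+D_i\end{pmatrix}$$
with $A_i,C_i=O(y_i)$ and $B_i,D_i=O(x_i)$, the inductive hypothesis is that $DF^k$ already has the claimed factored form with uniformly bounded holomorphic coefficients. Multiplying by $DF(x_k,y_k)$ on the left and tracking the four entries, one checks that the dominant term of each new entry matches the scaling predicted by the lemma, while the subdominant corrections give rise to geometric-like series that converge uniformly in $n$ thanks to one of the inequalities $|\lambda_1|<1$, $|\mu|^{-1}<1$, or $|\lambda_1/\mu|<1$ depending on the entry. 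This closes the induction and yields uniform boundedness of $a_{11},a_{12},a_{21},a_{22}$ in $(x,y)$ and the parameters.

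The main obstacle is showing $a_{22}$ is bounded away from zero. For this I would track $\delta_k:=(DF^k e_2)_2$ via the recursion $\delta_{k+1}=(\mu+D_k)\delta_k+C_k\beta_k$, where $\beta_k=(DF^k e_2)_1$ and $\delta_0=1$. The multiplicative factor yields a leading contribution $\delta_n\sim \mu^n\prod_{k<n}(1+D_k/\mu)$; Remark \ref{semilin} ensures each factor $|1+D_k/\mu|$ stays bounded away from zero, so the product has modulus bounded below. The additive correction $C_k\beta_k$ is of strictly smaller order, by the inductive estimate $\beta_k=O((\lambda_1\mu)^k)$ together with $C_k=O(|\mu|^{k-n})$, and summing these over $k<n$ gives a contribution comparable to $|\mu|^n$ times a small quantity, which does not destroy the lower bound on $a_{22}=\delta_n/\mu^n$. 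Holomorphy of each $a_{kl}$ is automatic, since the entries $A_i,B_i,C_i,D_i$ depend holomorphically on $(x,y)$ and on the parameters through the semi-linearization, and finite matrix products preserve holomorphy.
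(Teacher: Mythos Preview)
Your approach is essentially the paper's: both establish the orbit bounds $|x_k|=O(|\lambda_1|^k)$ and $|y_k|=O(|\mu|^{-(n-k)})$, set up recursions for the rescaled entries $a_{ij}(k)$, and close an induction by showing the cross terms contribute summable series.

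One imprecision in your $a_{22}$ lower bound: saying that Remark~\ref{semilin} makes each factor $|1+D_k/\mu|$ bounded away from zero does \emph{not} imply the $n$-fold product has modulus bounded below (a uniform lower bound $c<1$ on each factor would still allow $c^n\to 0$). What actually gives the lower bound is the stronger fact $D_k=O(x_k)=O(\lambda_1^k)$, which you already established, so that $\sum_k|D_k/\mu|<\infty$ and $\prod_{k<n}(1+D_k/\mu)$ converges to a nonzero limit; this is also how the paper argues via its recursion $a_{22}(k+1)=(1+O(\lambda_1^k))a_{22}(k)+O(y_k\lambda_1^k)a_{12}(k)$. Invoke the decay of $D_k$ rather than Remark~\ref{semilin} at that step and the argument is complete.
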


\begin{proof} Let $k\le n$ and 
$$
DF^k(x,y)=\left(\begin{matrix}
a_{11}(k) \lambda_1^k\mu^k& a_{12}(k) \lambda_1^k \mu^k\\
a_{21}(k) & a_{22}(k) \mu^k
\end{matrix}\right),
$$
and
$F^k(x,y)=(x_k,y_k)$. Observe, $x_k=O(\lambda_1^k)$ and $y_k=O\left(\frac{1}{\mu^{n-k}}\right)$.
Then
\begin{equation}\label{a11}
a_{11}(k+1)=\frac{1}{\mu}\left(1+O(y_k)\right)a_{11}(k)+O(\mu^{-k}) a_{21}(k),
\end{equation}
and
\begin{equation}\label{a21}
a_{21}(k+1)= O(y_k(\lambda_1\mu)^k) a_{11}(k)+ \mu(1+O(\lambda_1^k))a_{21}(k).
\end{equation}
We may restrict ourselves to $k_0\le k\le k_1=n-n_0$ with $k_0, n_0$ fixed but large. From (\ref{a21}) we get, using that $a_{21}(0)=0$, 
\begin{equation}\label{a21O}
a_{21}(k)=O\left(\sum_{i<k} \mu^{k-i} y_i (\lambda_1\mu)^ia_{11}(i)\right) =O\left(\sum_{i<k} \frac{1}{\mu^{n-k}}  (\lambda_1\mu)^ia_{11}(i)\right).
\end{equation}
Let $M_k=\max_{i\le k} |a_{11}(i)|$. Then, using the last estimate for $a_{21}(k)$ and (\ref{a11}),
$$
M_{k+1}\le \left[\frac{1}{\mu}(1+O(y_k))+ 
O\left(\sum_{i<k}  (\lambda_1\mu)^i\right) \frac{1}{\mu^{n}}\right] M_k\le M_k,
$$
when $n_0$ is large enough. Hence,   the entries $a_{11}$ and $a_{21}$, using (\ref{a21O}), are uniformly bounded. 
Similarly,
\begin{equation}\label{a12}
a_{12}(k+1)=\frac{1}{\mu}\left(1+O(y_k)\right)a_{12}(k)+
O(a_{22}(k)),
\end{equation}
and
\begin{equation}\label{a22}
a_{22}(k+1)= O(y_k\lambda_1^k) a_{12}(k)+ (1+O(\lambda_1^k))a_{22}(k).
\end{equation}
From (\ref{a22}) and the fact that $a_{22}(0)=1$, we get
\begin{equation}\label{a22O}
a_{22}(k)=O\left( 1+\frac{1}{\mu^n} \sum_{i<k}  (\lambda_1\mu)^i    a_{12}(i) \right).
\end{equation}
This estimate and (\ref{a12}) imply that $\max_{i\le k} |a_{12}(i)|$ is bounded. In particular, the entries $a_{12}(k)$ are uniformly bounded. 
The estimate (\ref{a22O}) implies that also $a_{22}(k)$ are uniformly bounded. By shrinking the semi-linearization domain, see Remark \ref{semilin}, and by taking $n$ large enough, (\ref{a22}) assures that $a_{22}=a_{22}(n)$ stays away from zero.
\end{proof}

Choose  a parameter $(t,a,\tau)$ and assume that there is a periodic point $p$ in the domain of semi-linearization which returns in $N$ steps into the domain of semi-linearization and then needs $n$ steps inside to return to itself. Let $$
(t,a, \tau)\mapsto \text{tr} DF^{N+n}_p.
$$
Observe, if $\text{\rm tr} DF^{N+n}_p=0$ then, for $n\ge 1$ large enough,  the periodic orbit of $p$ is attractive, called {\it strong sink}. Here we use that the dimension of the phase space is two. The trace at a periodic point is invariant under smooth coordinate change. The proof of the following lemma will be in the coordinates of semi-linearization.

\begin{lem}\label{datrace} There exists $K>0$ such that the following holds. If 
$
\text{\rm tr} DF^{N+n}_p=0
$
then
$$
\frac{1}{K}|\mu|^{2n}\le \left|\frac{\partial }{\partial a}\left(\text{\rm tr} DF^{N+n}_p\right)\right|\le K|\mu|^{2n}.
$$
\end{lem}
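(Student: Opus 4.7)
The plan is to decompose $DF^{N+n}_p = DF^N_q\cdot DF^n_p$ at $q = F^n(p)$ and, after differentiating the trace in $a$, isolate a single dominant contribution of order $|\mu|^{2n}$. Applying Lemma~\ref{DFnC} and renaming its entries $\alpha_{ij}$ to avoid any collision with the parameter $a$, the rows of $DF^n_p$ are $(\alpha_{11}(\lambda_1\mu)^n, \alpha_{12}(\lambda_1\mu)^n)$ and $(\alpha_{21}, \alpha_{22}\mu^n)$, with $\alpha_{22}$ bounded and bounded away from zero. Since the orbit of the sink passes close to the critical point $c = (0,1)$, we have $q \approx c$; the Taylor expansion (\ref{FNexpr}) then gives $DF^N_q$ with uniformly bounded entries $A_q, B_q, C_q$ (with $|B_q|$ bounded below) and $(2,2)$-entry $D' = D_q(q_y - c(q_x)) + O((q_y - c(q_x))^2)$, where $D_q$ is bounded and bounded away from zero. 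Multiplying,
$$
\text{\rm tr}\,DF^{N+n}_p = D'\,\alpha_{22}\mu^n + B_q\alpha_{21} + O((\lambda_1\mu)^n),
$$
and the hypothesis $\text{\rm tr}\,DF^{N+n}_p = 0$ forces $|D'| = O(|\mu|^{-n})$, hence $|q_y - c(q_x)| = O(|\mu|^{-n})$; geometrically, $q$ is exponentially close to the curve $\Gamma$ of pre-critical points.

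Differentiating the trace in $a$, all terms other than $D'\alpha_{22}\mu^n$ contribute at most $O(1)$. The product rule on the dominant term gives $\partial_a(D'\alpha_{22}\mu^n) = (\partial_a D')\,\alpha_{22}\mu^n + O(1)$, using $|D'| = O(|\mu|^{-n})$ and $|\partial_a(\alpha_{22}\mu^n)| = O(|\mu|^n)$. Because $q_x = O(|\lambda_1|^n)$ makes $\partial_a c(q_x)$ negligible, one has $\partial_a D' = D_q\,\partial_a q_y + O(|\mu|^{-n})$. Splitting by the chain rule $\partial_a q = \partial_a F^n|_p + DF^n_p\,\partial_a p$, the $y$-component of $\partial_a q$ equals $\alpha_{22}\mu^n\,\partial_a p_y$ modulo $O(1)$, so the whole problem reduces to showing that $\partial_a p_y$ stays bounded away from zero as $n \to \infty$.

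The crux and main obstacle is to prove $\partial_a p_y = 1 + o(1)$ uniformly in $n$. Differentiating the fixed-point equation $F^{N+n}(p) = p$ yields $(I - DF^{N+n}_p)\,\partial_a p = \partial_a F^{N+n}|_p$; since $\text{\rm tr}\,DF^{N+n}_p = 0$ and $\det DF^{N+n}_p = O((\lambda_1\mu)^n)$, both eigenvalues of $DF^{N+n}_p$ are $O((\lambda_1\mu)^{n/2})$, hence $(I - DF^{N+n}_p)^{-1}$ converges to the identity. The $y$-component of $\partial_a F^{N+n}|_p = \partial_a F^N|_q + DF^N_q\,\partial_a F^n|_p$ is dominated by $(\partial_a F^N|_q)_y$, which by the unfolding normalization $z_y = a$ of Remark~\ref{highzya} and the expansion (\ref{FNexpr}) equals $1 + o(1)$. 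Collecting all the estimates gives
$$
\partial_a\,\text{\rm tr}\,DF^{N+n}_p = D_q\,\alpha_{22}^{\,2}\,\mu^{2n}\,(1 + o(1)),
$$
which, together with the uniform bounds on $D_q$ and $\alpha_{22}$, yields both the lower and the upper inequality of the lemma with a single constant $K$.
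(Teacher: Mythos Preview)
There is a genuine gap. Your claim that $(I-DF^{N+n}_p)^{-1}\to I$ because the eigenvalues are $O((\lambda_1\mu)^{n/2})$ is false: small spectrum does not imply small matrix norm for non-normal matrices. At your base point $p$ (near $z$) one has
\[
DF^{N+n}_p=DF^N_q\cdot DF^n_p=
\begin{pmatrix}
B_q\alpha_{21}+O((\lambda_1\mu)^n) & B_q\alpha_{22}\mu^n+O((\lambda_1\mu)^n)\\
O(\mu^{-n}) & -B_q\alpha_{21}+O((\lambda_1\mu)^n)
\end{pmatrix},
\]
so the $(1,2)$ entry blows up and the diagonal entries need not vanish. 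Using $M^2=-\det(M)\,I$ one only gets $(I-M)^{-1}=I+M+O((\lambda_1\mu)^n)$, which yields $\partial_a p_x=B_q\alpha_{22}\mu^n(1+o(1))$ (large) and $\partial_a p_y=(1-B_q\alpha_{21})(1+o(1))$. Nothing in Lemma~\ref{DFnC} forces $|B_q\alpha_{21}|$ away from $1$, so the lower bound on $\partial_a p_y$ --- and hence on $\partial_a q_y$ and on the trace derivative --- is unproved. Your subsequent line ``$\partial_a q_y=\alpha_{22}\mu^n\partial_a p_y$ modulo $O(1)$'' also fails, since $\alpha_{21}\,\partial_a p_x$ is itself of order $\mu^n$.

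The paper sidesteps exactly this difficulty by working at the other base point of the orbit, your $q$ (near $c$), and using the decomposition $DF^{N+n}_q=DF^n_p\cdot DF^N_q$. In that order the \emph{first row} of $DF^{N+n}$ is $O((\lambda_1\mu)^n)$, so the implicit differentiation of the fixed-point equation decouples cleanly: one obtains $\partial_a q_y=K\mu^n(1+o(1))$ with $K$ bounded away from zero, and then the dominant contribution $\tfrac{\partial(a_{22}D)}{\partial y}\,\partial_a q_y\,\mu^n$ gives the $|\mu|^{2n}$ estimate directly. Your overall strategy (isolate the $D'$-factor, control $\partial_a q_y$) is the right one; the fix is to differentiate the fixed-point equation at $q$ rather than at $p$.
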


\begin{proof} From Lemma \ref{DFnC}, using the fact that the trace is zero we get 
$$
DF^{N+n}_p=DF^{n}_{F^N\left(p\right)}DF^{N}_p=\left(\begin{matrix}
O\left((\lambda_{1}\mu)^n\right) & O\left((\lambda_{1}\mu)^n\right)\\
O\left(\mu ^n\right)& O\left((\lambda_{1}\mu)^n\right)
\end{matrix}\right).
$$
The periodic point $p=(p_x, p_y)$ has coordinates $\left(p_x, p_y\right)\in \mathbb{D}\times \mathbb{D}$. We claim, by differentiating with respect to $a$ the $x$-component of the equation $F^{N+n}(p_x, p_y)=(p_x, p_y)$ that
\begin{equation}\label{dpx}
(1+O((\lambda_1\mu)^n))\frac{\partial p_x}{\partial a}= O((\lambda_1\mu)^n)\frac{\partial p_y}{\partial a}+ \frac{\partial F^{N+n}_x}{\partial a}=
O\left((\lambda_1\mu)^n\right)\frac{\partial p_y}{\partial a}+O\left((\lambda_1\mu)^n\right).
\end{equation}
Correspondingly, for the $y$-component, we claim
\begin{equation}\label{dpy}
(1+O((\lambda_1\mu)^n))\frac{\partial p_y}{\partial a}=
O(\mu^n) \frac{\partial p_x}{\partial a}+\frac{\partial F^{N+n}_y}{\partial a}=
O(\mu^n) \frac{\partial p_x}{\partial a}+K\mu^n,
\end{equation}
where $K>0$ and bounded away from zero. The equations for ${\partial F^{N+n}_x}/{\partial a}$ and ${\partial F^{N+n}_y}/{\partial a}$ are obtained as follows. 
Observe that 
\begin{eqnarray}\label{dFNnday}
\nonumber
\frac{\partial F^{N+n}_y}{\partial a}&=&a_{21}\frac{\partial F^{N}_x}{\partial a}(p)+a_{22}\mu^n\frac{\partial F^{N}_y}{\partial a}(p)+\frac{\partial F^{n}_y}{\partial a}(F^N(p))\\\nonumber
&=&O(1)+a_{22}\mu^n+\frac{\partial }{\partial a}\int_0^{F_y^N(p)}\left[DF^n\left(F^N_x(p)\right)\left(\begin{matrix}
0\\1
\end{matrix}\right)     
\right]_ydy\\\nonumber
&=&O(1)+a_{22}\mu^n+\frac{\partial }{\partial a}\int_0^{F_y^N(p)}a_{22}\left(F^N_x(p),y\right)\mu^n dy
\\\nonumber
&=&O(1)+a_{22}\mu^n+O\left(n\mu^n F_y^N(p)\right)\\
&=&K\mu^n,
\end{eqnarray}
where we used that $ F^i(F_y^N(p))$ for $i<n$ is in the domain of linearization, namely $F_y^N(p)=O\left(1/\mu^n\right)$ and $O\left(n\mu^n F_y^N(p)\right)=O(n)$.
Similarly,
\begin{eqnarray}\label{dFNndax}
\nonumber
\frac{\partial F^{N+n}_x}{\partial a}&=&a_{11}(\lambda_1\mu)^n\frac{\partial F^{N}_x}{\partial a}(p)+a_{12}(\lambda_1\mu)^n\frac{\partial F^{N}_y}{\partial a}(p)+\frac{\partial F^{n}_x}{\partial a}(F^N(p))\\\nonumber
&=&O((\lambda_1\mu)^n)+\frac{\partial }{\partial a}\left(\int_0^{F_y^N(p)}a_{12}\left(F^N_x(p),y\right)(\lambda_1\mu)^n dy+\lambda_1^n F^N_x(p)\right)
\\\nonumber
&=&O((\lambda_1\mu)^n)+O(n\lambda_1^n)\\
&=&O((\lambda_1\mu)^n),
\end{eqnarray}
where we used that $ F_y^N(p)=O(1/\mu^n)$.
From (\ref{dpx}), (\ref{dpy}) and the fact that $\lambda_1\mu^2<1$, see $(F3)$, we have
\begin{equation}\label{partial pya}
\frac{1}{2K} |\mu|^n\le \left|\frac{\partial p_y}{\partial a}\right|\le 2K |\mu|^n,
\end{equation} 
and
\begin{equation}\label{partial pxa}
\left|\frac{\partial p_x}{\partial a}\right|=O\left(\left(\lambda_1\mu^2\right)^n\right).
\end{equation}
Observe that, by Lemma \ref{DFnC},
$$
\text{tr} DF^{N+n}_p=\tilde{A}(p_x,p_y,t,a)(\lambda_1\mu)^n+a_{22}(p_x,p_y,t,a)D(p_x,p_y,t,a)\mu^n+a_{21}B(p_x,p_y,t,a),
$$
where $D$ is the entry $(DF^N_p)_{22}$, which tends to zero when $n$ gets large, $B=(DF^N_p)_{12}$ and the factors $\tilde A$ are $\Cuno$ uniformly bounded when $n$ gets large. 
Hence,
\begin{eqnarray}\label{tracefnN}
\nonumber \frac{\partial }{\partial a}\left(\text{tr} DF^{N+n}_p\right)&= 
 &\left[\frac{\partial \tilde{A}}{\partial x}\frac{\partial p_x}{\partial a}+ \frac{\partial \tilde{A}}{\partial y}\frac{\partial p_y}{\partial a}+\frac{\partial \tilde{A}}{\partial a}\right] (\lambda_1\mu)^n+ n\tilde{A}(\lambda_1\mu)^{n-1} \frac{\partial \lambda_1\mu}{\partial a}\\\nonumber &+
 &\left[\frac{\partial \left(a_{22} D\right)}{\partial x}\frac{\partial p_x}{\partial a}+ \frac{\partial \left(a_{22}D\right)}{\partial y}\frac{\partial p_y}{\partial a}+\frac{\partial \left(a_{22}D\right)}{\partial a}\right] \mu^n\\\nonumber &+
  &
 na_{22}D\mu^{n-1} \frac{\partial \mu}{\partial a}+\frac{\partial \left(a_{21}B\right)}{\partial a}\\
 &=& O(n|\mu|^n) +\frac{\partial \left(a_{22}D\right)}{\partial y}\frac{\partial p_y}{\partial a}\mu^{n},
 \end{eqnarray}
 where we used (\ref{partial pya}), (\ref{partial pxa}) and $a_{22}D\mu^n=-\tilde{A}(\lambda_1\mu)^n$. Observe, 
 $$\frac{\partial \left(a_{22}D\right)}{\partial y}=
 \frac{\partial a_{22}}{\partial y}D+a_{22}\frac{\partial D}{\partial y}
 $$ 
 is bounded away from zero because of the following properties. First, $D$ tends to zero, $a_{22}$ is bounded away from zero, see Lemma \ref{DFnC}, and ${\partial D}/{\partial y}$ is away from zero 
 because the family is an unfolding of a non-degenerate tangency. The lemma follows from (\ref{tracefnN}) and (\ref{partial pya}).
 \end{proof}

Choose a real parameter $\tau$ and $(t, a_n(t))\in \mathcal{A}_n$. Let $\tilde{p}=(\tilde p_x,\tilde p_y)$ be the corresponding sink of period $N+n$ near $z$. Observe that, for parameters $(t, a_n(t))$, Lemma \ref{periodicpoint} can be refined to obtain an invariant square of size of order $\lambda_1^n$. Hence $|\tilde p_y-z_y|=O(|\lambda_1|^n)$. 
This implies, by calculating the $\text{\rm tr} DF^{N+n}_p$ in the smooth linearization, that
\begin{equation}\label{trdfnN}
\text{\rm tr} DF^{N+n}_p=O\left(\left(\lambda_1 \mu\right)^n\right).
\end{equation}
Lemma \ref{datrace} and (\ref{trdfnN}) implies that, for some 
$$
\Delta a=O\left(\left(\lambda_1 \mu\right)^n\right) \frac{\epsilon_0}{|\mu(t, a_n(t)|^{2n}},
$$ 
the sink at $(t, a_n(t)+\Delta a)\in \mathcal{A}_n$  is a strong sink. In particular, there is an analytic global function $t\mapsto sa_n(t)$ such that $(t, sa_n(t))\in \mathcal{A}_n$ and the sink is a strong sink.
\\
Consider the periodic point of period $N+n$ at parameter $(t,sa_n(t))$. This periodic orbit spend $n$ step in the domain of semi linearization and needs $N$ steps to return to the domain of semi linearization. Moreover it has trace zero. Consider the maximal domain of parameters in $\mathbb{D}\times \mathbb{D}^T$ to which the function $sa_n(t,\tau)$ extends holomorphically. On the boundary of this domain the map still has a periodic point of period $n$ with trace zero and which spend $n$ step in the domain of semi linearization and needs $N$ steps to return to it. Lemma \ref{datrace} implies that the function $sa_n$ has a local holomorphic extension in this boundary point. This implies that the map $sa_n$ has an holomorphic extension
\begin{equation}\label{sandescr}
sa_n: \mathbb{D}\times \mathbb{D}^T\to \mathbb{C}.
\end{equation}
Along the graph of $sa_n$, in parameters of the form $(t,sa_n(t,\tau),\tau )$ the periodic point is a strong sink.

\begin{lem} \label{dsandt}
The slope of the strong sink curve is of the form
\begin{equation}
\frac{d sa_n}{d t}=-n\frac{\partial\mu}{\partial t}\frac{1}{\mu^{n+1}}\left[1+O\left(\frac{n}{\mu^n}\right)\right].
\end{equation}
\end{lem}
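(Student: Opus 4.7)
First, I would apply implicit differentiation to the defining equation $\Phi(t,a) := \mathrm{tr}\, DF^{N+n}_{p(t,a)} = 0$ of $sa_n$, giving $dsa_n/dt = -\partial_t\Phi/\partial_a\Phi$. Lemma \ref{datrace} already supplies $\partial_a\Phi = K\mu^{2n}(1+O(n/\mu^n))$ with $K$ bounded away from zero, so the problem reduces to computing $\partial_t\Phi$ and showing that its leading contribution is $Kn\mu^{n-1}\partial_t\mu$.

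The plan is to repeat the proof of Lemma \ref{datrace} with $\partial_t$ in place of $\partial_a$. The essential input is $\partial_t p_y$, extracted from $(I-DF^{N+n})\partial_t p = \partial_t F^{N+n}|_p$. Using the chain-rule decomposition $F^{N+n}=F^n\circ F^N$,
\[
\partial_t F^{N+n}_y(p) = a_{21}\partial_t F^N_x(p) + a_{22}\mu^n\partial_t F^N_y(p) + \partial_t F^n_y(F^N(p)).
\]
The middle term, which dominated in the $\partial_a$-case via $\partial_a F^N_y(c)=1$, now collapses: the unfolding normalization (Remark \ref{highzya}) makes $F^N_y(c;t,a) = z_y(t,a) = a$ independent of $t$, so $\partial_t F^N_y(c) = 0$ and the term is only $O(|p-c|\mu^n) = O(1)$. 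The dominant contribution instead comes from $\partial_t F^n_y(F^N(p))$; using the semi-linearization formula $F^n_y(x,y) = \mu^n\int_0^y a_{22}(x,s)\,ds$, the leading piece is $n\mu^{n-1}\partial_t\mu\cdot F^N(p)_y\cdot a_{22}$, of order $n\partial_t\mu/\mu$. An analogous computation gives $\partial_t F^{N+n}_x = O((\lambda_1\mu)^n)$, so the $O(\mu^n)$ off-diagonal entry of $(I-DF^{N+n})^{-1}$ contributes only a negligible $O(\lambda_1^n\mu^{2n})$, absorbed by $(F3)$. Consequently $\partial_t p_y = \kappa\, n\partial_t\mu/\mu + O(1)$ for some bounded nonzero $\kappa$.

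Substituting into
\[
\partial_t\Phi = \frac{\partial(a_{22}D)}{\partial y}\,\partial_t p_y\,\mu^n + na_{22}D\mu^{n-1}\partial_t\mu + \text{(smaller terms)},
\]
and using that $a_{22}D\mu^n = O(1)$ on the trace-zero curve (so the second term is only $O(n/\mu)$), one obtains $\partial_t\Phi = Kn\mu^{n-1}\partial_t\mu\bigl(1 + O(n/\mu^n)\bigr)$, with $K = \kappa\,\partial_y(a_{22}D)$ exactly the constant that appeared in $\partial_a\Phi$. Dividing yields the claimed
\[
\frac{dsa_n}{dt} = -\frac{n}{\mu^{n+1}}\partial_t\mu\bigl(1+O(n/\mu^n)\bigr).
\]

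The main difficulty is to keep careful track of all subleading pieces so that they remain within the error budget $O(n/\mu^n)$ after the division; the conceptual heart of the matter, however, is the asymmetry exposed above. The unfolding normalization annihilates the $\partial_t$-analog of the $\mu^n$-sized term that dominated $\partial_a p_y$, forcing $\partial_t p_y$ to be a factor of $\mu^n/n$ smaller than $\partial_a p_y$, and this is precisely the ratio that converts the derivative into the small slope $n\partial_t\mu/\mu^{n+1}$. All the exponent bookkeeping is controlled throughout by the standing hypothesis $(F3)$ that $\lambda_1\mu^3 < 1$.
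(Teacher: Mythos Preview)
Your approach is essentially the paper's: compute $\partial_t\Phi$ and $\partial_a\Phi$ via the fixed-point equation for $p$, identify the asymmetry coming from the unfolding normalization $\partial_t z_y=0$ versus $\partial_a z_y=1$, and divide. The key insight---that the $\mu^n$-sized forcing in $\partial_a p_y$ is replaced by the $O(n)$-sized term $n\mu^{n-1}\partial_t\mu\cdot F^N(p)_y$ in $\partial_t p_y$---is exactly what drives the paper's proof.

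There is one technical point worth flagging. You propose to carry out the computation in the semi-linearization coordinates of Lemma~\ref{datrace}, but the paper explicitly switches to the \emph{smooth} linearization for this lemma (first sentence of its proof). The reason is precisely your claim that ``$K=\kappa\,\partial_y(a_{22}D)$ is exactly the constant that appeared in $\partial_a\Phi$'': in semi-linearization one finds $K_p\approx a_{22}$ (from $a_{22}\mu^n\,\partial_a F^N_y(p)$) while $\kappa\approx p_y$ (from $n\mu^{n-1}\partial_t\mu\cdot p_y/\mu^n$), and there is no direct reason these agree to the required precision $1+O(n/\mu^n)$. In smooth linearization both collapse to $1$: $a_{22}\equiv 1$ since $DF^n$ is exactly diagonal, and $p_y=1+O((\lambda_1\mu)^n)$ by the refined invariant-box estimate (equation~(\ref{py})). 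So the matching of constants you assert is genuine, but it relies on the better coordinate system rather than on the structure visible in semi-linearization alone. Once you make this switch, your argument and the paper's coincide.
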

\begin{proof}
In the proof of Lemma \ref{datrace} we used an holomorphic coordinate change. However the trace of a periodic point is invariant under coordinate change and we will use the smooth linearizing coordinates to calculate the slope of the curve of strong sinks. Choose a parameter pair $(t,a)$ on the curve $sa_n$ and denote the corresponding periodic point by $p=(p_x, p_y)$. Observe that 
\begin{equation}\label{py}
p_y=\left[1+O\left(\left(\lambda_1\mu\right)^n\right)\right].
\end{equation}
Moreover
$$
DF^{N+n}_p=\left(\begin{matrix}
A\lambda_1^n & B\lambda_1^n\\
C\mu^n& D\mu^n
\end{matrix}\right),
$$
where, because the trace is zero,
\begin{equation}\label{tracezero}
D\mu^n=-A\lambda_1^n,
\end{equation}
and because (\ref{FNexpr}), $D=O\left(\Delta x,\Delta y\right)$ in coordinate centered in $(0,1)$  and hence 
\begin{equation}\label{dDdt}
\frac{\partial D}{\partial t}=O\left(\left(\lambda_1\mu\right)^n\right),
\end{equation}
where we used that $p_x=O\left(\lambda_1^n\right)$ and (\ref{py}).
From $F^{N+n}(p_x, p_y)=(p_x, p_y)$ one obtains, by differentiating with respect to $a$ in the $x$-direction,
$$
\left(1-A\lambda_1^n\right )\frac{\partial p_x}{\partial a}= B\lambda_1^n\frac{\partial p_y}{\partial a}+ \frac{\partial F^{N+n}_x}{\partial a}=
B\lambda_1^n\frac{\partial p_y}{\partial a}+O\left(\left(\lambda_1\mu\right)^n\right),
$$
and in the $y$-direction, using (\ref{tracezero})
$$
\left(1+A\lambda_1^n\right )\frac{\partial p_y}{\partial a}=
C\mu^n\frac{\partial p_x}{\partial a}+\frac{\partial F^{N+n}_y}{\partial a}=
C\mu^n\frac{\partial p_x}{\partial a}+\mu^n+O\left(n\right).
$$
where we used (\ref{dFNndax}) and (\ref{dFNnday}).
 This implies
\begin{equation}\label{partial pysa}
\frac{\partial p_y}{\partial a}=\mu^n+O\left(n\right),
\end{equation}
and
\begin{equation}\label{partial pxsa}
\frac{\partial p_x}{\partial a}=O\left(\left(\lambda_1\mu\right)^n\right).
\end{equation}
Observe that,
$$
\text{tr} DF^{N+n}_p=A\lambda_1^n+D\mu^n.
$$
Hence,
\begin{equation}\label{dtraceda}
\begin{aligned}
 \frac{\partial }{\partial a}\left(\text{tr} DF^{N+n}_p\right)= 
 &\left[\frac{\partial {A}}{\partial x}\frac{\partial p_x}{\partial a}+ \frac{\partial {A}}{\partial y}\frac{\partial p_y}{\partial a}+\frac{\partial {A}}{\partial a}\right] \lambda_1^n+ n{A}\lambda_1^{n-1} \frac{\partial \lambda_1}{\partial a}\\+
 &\left[\frac{\partial D}{\partial x}\frac{\partial p_x}{\partial a}+ \frac{\partial D}{\partial y}\frac{\partial p_y}{\partial a}+\frac{\partial D}{\partial a}\right] \mu^n+
 nD\mu^{n-1} \frac{\partial \mu}{\partial a}\\
 =& \frac{\partial D}{\partial y}\mu^{2n}+O(n\mu^n),
 \end{aligned}
 \end{equation}
 where we used (\ref{partial pysa}), (\ref{partial pxsa}), and (\ref{tracezero}). 
 By differentiating by $t$ in the $x$-direction,
$$
\left(1-A\lambda_1^n\right )\frac{\partial p_x}{\partial t}= B\lambda_1^n\frac{\partial p_y}{\partial t}+ \frac{\partial F^{N+n}_x}{\partial t}=
B\lambda_1^n\frac{\partial p_y}{\partial t}+O\left(n\lambda_1^n\right),
$$
and in the $y$-direction, using (\ref{tracezero}) and (\ref{py})
\begin{eqnarray*}
\left(1+A\lambda_1^n\right )\frac{\partial p_y}{\partial t}&=&
C\mu^n\frac{\partial p_x}{\partial t}+\frac{\partial F^{N+n}_y}{\partial t}=
C\mu^n\frac{\partial p_x}{\partial t}+\frac{n}{\mu}\frac{\partial\mu}{\partial t}\mu^n \left(F^N_y(p)\right)_y+O\left(\left(\lambda_1\mu\right)^n\right)\\
&=&C\mu^n\frac{\partial p_x}{\partial t}+\frac{n}{\mu}\frac{\partial\mu}{\partial t}\left[1+O\left(\left(\lambda_1\mu\right)^n\right)\right].
\end{eqnarray*}
 This implies
\begin{equation}\label{partial pyst}
\frac{\partial p_y}{\partial t}=\frac{n}{\mu}\frac{\partial\mu}{\partial t}\left[1+O\left(\left(\lambda_1\mu\right)^n\right)\right],
\end{equation}
and
\begin{equation}\label{partial pxst}
\frac{\partial p_x}{\partial t}=O\left(n\lambda_1^n\right).
\end{equation}
Hence,
\begin{equation}\label{dtracedt}
\begin{aligned}
 \frac{\partial }{\partial t}\left(\text{tr} DF^{N+n}_p\right)= 
 &\left[\frac{\partial {A}}{\partial x}\frac{\partial p_x}{\partial t}+ \frac{\partial {A}}{\partial y}\frac{\partial p_y}{\partial t}+\frac{\partial {A}}{\partial t}\right] \lambda_1^n+ n{A}\lambda_1^{n-1} \frac{\partial \lambda_1}{\partial t}\\+
 &\left[\frac{\partial D}{\partial x}\frac{\partial p_x}{\partial t}+ \frac{\partial D}{\partial y}\frac{\partial p_y}{\partial t}+\frac{\partial D}{\partial t}\right] \mu^n+
 nD\mu^{n-1} \frac{\partial \mu}{\partial t}\\
 =& \frac{\partial D}{\partial y}\frac{n}{\mu}\frac{\partial\mu}{\partial t}\left[1+O\left(\left(\lambda_1\mu\right)^n\right)\right],
 \end{aligned}
 \end{equation}
 where we used (\ref{partial pyst}), (\ref{partial pxst}), (\ref{tracezero}) and (\ref{dDdt}). The lemma follows by calculating the gradient of the trace using (\ref{dtraceda}) and  (\ref{dtracedt}).
\end{proof}
Choose a real parameter $\tau$. Each curve $b_{n,n_0}$ crosses the graph of strong sinks, see Proposition \ref{angle}. Namely, the real part of the graph of $sa_n$ is contained in the strip $\mathcal{A}_n$, which is contained in the strip $\mathcal{B}_n$. Hence, there are secondary tangencies where the corresponding sink is a strong sink.
The next corollary follows from Lemma \ref{dsandt}, Proposition \ref{angle} and (\ref{thetacond}).
\begin{cor}\label{sanbntransv}
The curves $sa_n$ and $b_{n,n_0}$ intersect transversally.
\end{cor}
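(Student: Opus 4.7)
The plan is to compute the difference of the slopes $db_{n,n_0}/dt$ and $dsa_n/dt$ and show it is bounded away from zero for $n$ large, which forces transversality of the two graphs. Both slopes are already written down: Proposition \ref{angle} gives
\[
\frac{db_{n,n_0}}{dt}=-\frac{n}{\mu^{n+1}}\frac{\partial\mu}{\partial t}+V n\lambda_1^{\theta n}+O\!\left(|\lambda_1|^{\theta n}\right),
\]
while Lemma \ref{dsandt} gives
\[
\frac{dsa_n}{dt}=-\frac{n}{\mu^{n+1}}\frac{\partial\mu}{\partial t}+O\!\left(\frac{n^2}{|\mu|^{2n+1}}\right).
\]
First I would subtract these two expressions. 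The leading $-(n/\mu^{n+1})(\partial\mu/\partial t)$ terms cancel exactly, and what survives is
\[
\frac{db_{n,n_0}}{dt}-\frac{dsa_n}{dt}=V n\lambda_1^{\theta n}+O\!\left(|\lambda_1|^{\theta n}\right)+O\!\left(\frac{n^2}{|\mu|^{2n+1}}\right).
\]

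Next I would verify that the main term $Vn\lambda_1^{\theta n}$ dominates both error terms. Compared with $O(|\lambda_1|^{\theta n})$ this is obvious, since the prefactor $n$ grows while $V$ is uniformly bounded below by Remark \ref{anglebound} (and the definition of $T^*_{n,n_0}$). The comparison with $O(n^2/|\mu|^{2n+1})$ is the place where (\ref{thetacond}) enters: the condition $1<\lambda_{\min}^{2\theta}\mu_{\min}^{3}$ is equivalent to $|\lambda_1|^{\theta}|\mu|^{3/2}>1$, which in particular implies $|\lambda_1|^{\theta}|\mu|^{2}>1$, so that
\[
\frac{n\lambda_1^{\theta n}}{n^2/|\mu|^{2n+1}}=\frac{|\mu|^{2n+1}\lambda_1^{\theta n}}{n}=\frac{|\mu|}{n}\bigl(|\mu|^{2}|\lambda_1|^{\theta}\bigr)^{n}\longrightarrow\infty.
\]
Thus for all sufficiently large $n$ the difference $db_{n,n_0}/dt-dsa_n/dt$ has the same sign as $V$ and is bounded away from zero; in particular it is nonzero.

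Finally, since both curves are parametrized as graphs over the $t$-coordinate on the same interval (on which they meet, by Proposition \ref{angle} and the fact that the real part of $\mathrm{graph}(sa_n)$ lies inside $\mathcal{A}_n\subset\mathcal{B}_n$), a strictly nonzero difference of slopes at any intersection point is exactly the transversality statement claimed. The one delicate point—and the step I would check most carefully—is the bookkeeping of error terms: one must confirm that the $O(n^2/|\mu|^{2n+1})$ remainder in Lemma \ref{dsandt}, which is of a different flavor from the $O(|\lambda_1|^{\theta n})$ remainder in Proposition \ref{angle}, is indeed dominated by $Vn\lambda_1^{\theta n}$; that is precisely what (\ref{thetacond}) is there to guarantee.
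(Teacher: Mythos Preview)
Your proof is correct and is exactly the argument the paper intends: the corollary is stated in the paper as an immediate consequence of Lemma \ref{dsandt}, Proposition \ref{angle}, and (\ref{thetacond}), and you have simply written out that computation explicitly. The only cosmetic point is that the cancellation of the leading $-(n/\mu^{n+1})\partial\mu/\partial t$ terms is exact at the intersection point (where the two curves share the same $(t,a)$), which is what you are implicitly using.
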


Choose a parameter $(t_0,sa_n(t_0,\tau_0),\tau_0)\in \mathbb{D}\times \mathbb{D}\times \mathbb{D}^T$ where the secondary tangency coexists with a strong sink. We may use again a local holomorphic change of coordinates such that the stable manifold of the saddle point coincides locally with the $x$-axis and the unstable manifold locally with the $y$-axis.  Let $(0,y_0)\in W^u_\text{loc}((0,y_0))$ such that $F^{3N+(1+\theta)n}(0,y_0)$ is the secondary tangency. The projection to the $y$-axis is as before denoted by an index $y$. Consider a small disc $\mathbb{D}_\rho$ of radius $\rho>0$ centered around $y_0$ and contained in $W^u_{\text{loc}}(y_0)$. 

\begin{lem} \label{degreetwomap} For parameters near $(t_0,sa_n(t_0,\tau_0),\tau_0)$ the map
$$
\mathbb{D}_\rho\ni y\mapsto \left(F^{3N+(1+\theta)n}(0,y_0)\right)_y\in \mathbb{C}
$$
has degree two.
\end{lem}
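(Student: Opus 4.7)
}
The map $\phi_{t,a,\tau}(y):=\left(F_{t,a,\tau}^{3N+(1+\theta)n}(0,y)\right)_y$ is a polynomial in $y$ whose coefficients depend holomorphically on $(t,a,\tau)$. The plan is to show that $\phi$, restricted to $\mathbb{D}_\rho$, is a branched cover of degree two onto its image, which is equivalent to showing that $\phi'$ has exactly one zero in $\mathbb{D}_\rho$ and that this zero is simple. Once this is established at the real central parameter $(t_0,sa_n(t_0,\tau_0),\tau_0)$, a standard application of Rouch\'e's theorem propagates the conclusion to a neighborhood in $\mathbb{D}\times\mathbb{D}\times\mathbb{D}^T$.

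First, I would locate the unique critical point of $\phi$ close to $y_0$ at the real parameter. The curve $y\mapsto F^{3N+(1+\theta)n}(0,y)$ parametrizes the local unstable manifold $W^u_{\text{loc}}(z^{(3)})$ in a neighborhood of the secondary tangency $q_{1,n,n_0}$. By Lemma \ref{curvature}, this curve is a non-degenerate parabola whose $y$-component has a unique horizontal tangent at the point $z^{(3)}$; the tangency $q_{1,n,n_0}$ differs from $z^{(3)}$ only by a quantity controlled by the slope $O((|\lambda_1|/|\mu|)^{n-n_0})$ of $W_{n-n_0}$, so there is $y^*\in\mathbb{D}_\rho$ close to $y_0$ with $\phi'(y^*)=0$ and, by Lemma \ref{curvature},
\begin{equation*}
|\phi''(y^*)|\ge C\left(\frac{|\mu|^4}{|\lambda_1|^{2-3\theta}}\right)^n>0.
\end{equation*}
Hence $y^*$ is a simple zero of $\phi'$.

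Next, I would establish uniqueness of this critical point in $\mathbb{D}_\rho$. Decompose $F^{3N+(1+\theta)n}$ along the itinerary of points $m_1,m_2,m_3,m_4,m_5,m_6$ used in Proposition \ref{speed}, i.e. $F^N\circ F^n\circ F^N\circ F^{\theta n}\circ F^N$. The intermediate orbit lies on curves $W^u_{\text{loc}}(z)$, $W^u_{\text{loc}}(c')$, $W^u_{\text{loc}}(z^{(1)})$, $W^u_{\text{loc}}(z^{(2)})$, each of which is realized as a graph in the analysis of Lemma \ref{z1distz}, Lemma \ref{shape} and Lemma \ref{curvaturez3}. The general-position conditions $(f5)$ and $(f7)$ ensure that at $m_3$ and $m_5$ the tangent vector to the corresponding local unstable arc has a non-zero first coordinate, so the transition $F^N$ between these points is a local diffeomorphism restricted to the relevant arc; the linear pieces $F^{\theta n}$ and $F^n$ are diffeomorphisms by construction. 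Only the last transition, the $F^N$ that folds $W^u_{\text{loc}}(z^{(2)})$ onto $W^u_{\text{loc}}(z^{(3)})$, creates a quadratic fold, precisely the one detected by $y^*$. Choosing $\rho$ small enough, uniformly in the parameters, guarantees that $y^*$ is the only zero of $\phi'$ in $\mathbb{D}_\rho$.

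Finally, since $\phi'$ depends holomorphically on $(t,a,\tau)$ and has a simple zero $y^*$ in the interior of $\mathbb{D}_\rho$ at the central parameter, Rouch\'e's theorem implies that for all $(t,a,\tau)$ in a neighborhood of $(t_0,sa_n(t_0,\tau_0),\tau_0)$ in $\mathbb{D}\times\mathbb{D}\times\mathbb{D}^T$ the derivative $\phi'_{t,a,\tau}$ still has exactly one zero in $\mathbb{D}_\rho$, counted with multiplicity. Consequently $\phi_{t,a,\tau}\colon\mathbb{D}_\rho\to\mathbb{C}$ is a degree two branched cover onto its image, which is the assertion of the lemma. The principal obstacle is the uniqueness step: the polynomial $\phi$ has very high global degree (exponential in $n$), so many critical points exist globally, and the non-trivial content is to rule them out of $\mathbb{D}_\rho$ using the general-position information already gathered in Section 4.
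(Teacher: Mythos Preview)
Your overall strategy---locate a unique simple zero of $\phi'$ in $\mathbb{D}_\rho$ and then propagate to nearby complex parameters via Rouch\'e---is sound and matches the paper's approach: the paper also decomposes $F^{3N+(1+\theta)n}$ along the itinerary $m_1\to\cdots\to m_6$ and tracks the degree of the $y$-projection stage by stage. The discrepancy is in \emph{where} the fold sits. The paper shows that $y\mapsto (F^{N+\theta n}(0,y))_y$ is univalent and that the degree jumps to two at the \emph{second} application of $F^N$ (the step $m_3\to m_4$): this is the passage through the binding point $c'\in\Gamma$, where the nearly vertical arc $W^u_{\text{loc}}(c')$ is sent by $F^N$ to the parabola $W^u_{\text{loc}}(z^{(1)})$ with its horizontal tangent at $z^{(1)}$ (Lemma~\ref{z1distz}). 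The remaining linear step $F^n$ and the final $F^N$ do not increase the degree; the paper's whole point is that ``the next stage will not cause more folding''.

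Your justification for the $m_3\to m_4$ step being fold-free---that $(f5)$, $(f7)$ force the tangent at $m_3$ to have ``non-zero first coordinate''---is incorrect: by \eqref{slopeespr} the slope $\Delta x/\Delta y$ of $W^u_{\text{loc}}(c')$ at $m_3$ is $O\bigl((|\lambda_1|^2/|\mu|)^{\theta n/2}\bigr)$, so that arc is essentially \emph{vertical} there, and $F^N$ genuinely folds its $y$-projection. More fundamentally, whether a given $F^N$-step folds the $y$-projection is governed by whether the \emph{image} arc acquires a horizontal tangent, not by the first coordinate of the source tangent; $(f5)$ and $(f7)$ are not the relevant inputs. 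Your uniqueness argument is easily repaired by a cleaner route that sidesteps the question of which step folds: since $F^{3N+(1+\theta)n}$ is a local diffeomorphism, $y\mapsto m_6(y)$ is injective onto an arc of $W^u_{\text{loc}}(z^{(3)})$, and that arc has exactly one horizontal tangent, namely at $z^{(3)}$, with non-degenerate curvature there (Lemmas~\ref{hnbounds} and~\ref{curvaturez3}). This gives the unique simple zero of $\phi'$ directly and is equivalent to the paper's stage-by-stage count.
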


\begin{proof} The map 
$$
\mathbb{D}_\rho\ni y\mapsto \left(F^{N+\theta n}(0,y_0)\right)_y\in \mathbb{C}
$$
is univalent. The map 
$$
\mathbb{D}_\rho\ni y\mapsto \left(F^{2N+\theta n}(0,y_0)\right)_y\in \mathbb{C}
$$
has degree two. This is where the folding happens to create the tangency. The next stage will not cause more folding and the map 
$$
\mathbb{D}_\rho\ni y\mapsto \left(F^{2N+(1+\theta) n}(0,y_0)\right)_y\in \mathbb{C}
$$
has still degree two. Finally,
the last stage will not cause more folding and the map 
$$
\mathbb{D}_\rho\ni y\mapsto \left(F^{3N+(1+\theta) n}(0,y_0)\right)_y\in \mathbb{C}
$$
has degree two.
\end{proof}
\noindent{\it Proof of Proposition \ref{analyticleaves}.}
Consider the functions $sa_n$ as described in \ref{sandescr}. From Lemma \ref{dsandt} we know that 
\begin{equation}\label{sanbound}
\left|{sa_n}\right|=O\left(\frac{n}{|\mu|_{\text{min}}^n}\right).
\end{equation}
For any parameter $(t, sa_n(t,\tau), \tau)$ near $(t_0,sa_n(t_0,\tau_0),\tau_0)$ the image of 
$$
\mathbb{D}_\rho\ni y\mapsto F_{t, sa_n(t,\tau), \tau}^{3N+(1+\theta)n}(0,y)
$$
intersects twice $W_{n-n_0}$ which is a graph of a holomorphic function. At $(t_0,sa_n(t_0,\tau_0),\tau_0)$ the two intersections coincide to form the non-degenerate tangency. Consider the maximal extension of the function $b_{n,n_0}(t,\tau)$ to a certain complex domain $U\in\mathbb D\times \mathbb D^{T}$. Observe that, if $(t,\tau)\in U$, then in the parameter $(t,b_{n,n_0}(t,\tau),\tau)$ there is a non degenerate homoclinic tangency, see Lemma \ref{degreetwomap}. Hence the function $b_{n,n_0}(t,\tau)$ can be extended to a neighborhood and as consequence, $U$ is open and closed. In particular $U=\mathbb D\times \mathbb D^{T}$.
\\
The set of parameters $\left\{(t,a,\tau)\right\}$ where a non-degenerate tangency and a strong sink coexist is given by two equations: $a=sa_n(t,\tau)$ and $a=b_{n,n_0}(t,\tau)$. Hence, this set forms a branched cover over the $\tau$-domain. The transversality in Corollary \ref{sanbntransv} implies that there are no branch points in the real slice. Moreover, from the fact that, in the real slice, there is a unique parameter corresponding to a map where a tangency and a strong sink coexist, see Proposition \ref{angle},  it follows that this branched cover is actually the graph of a holomorphic function 
$$
\mathbb{D}^T\ni \tau\mapsto l_{n,n_0}(\tau)=(t(\tau), a(\tau)).
$$ 
In particular, in the parameter $(l_{n,n_0}(\tau), \tau)$ the map has a non-degenerate secondary tangency and a strong sink. Observe that the points $(t(\tau), a(\tau),\tau)$ are in the graph of $sa_n$ and because of (\ref{sanbound}) they are uniformly bounded. For real parameters $\tau\in (-1,1)^T$, $l_{n,n_0}(\tau)$ selects in an analytic manor a strong sink along the secondary tangency curve $b_{n,n_0}$ at $\tau$.
\\
The leaves of the lamination $L_F$, as constructed in the proof of Theorem $B$ are the uniform limits of the real part of the graphs of holomorphic functions
$$
\mathbb{D}^T\ni \tau\mapsto l_{n^{(k)},n^{(k)}_0}(\tau)=(t(\tau), a(\tau))\in \mathbb{D}\times \mathbb{D},
$$
which are all defined in the same uniform domain $\mathbb{D}^T$.
Hence, by a normal family argument, the leaves of $L_F$ are analytic. This finishes the proof of Proposition \ref{analyticleaves}.
\qed
\begin{rem}
Observe that each leave of the lamination $L_F$ is the real part of the graph of an holomorphic function $\ell:\mathbb{D}^T\to \mathbb{D}\times\mathbb{D}$.
\end{rem}
\begin{figure}
\centering
\includegraphics[width=0.76\textwidth]{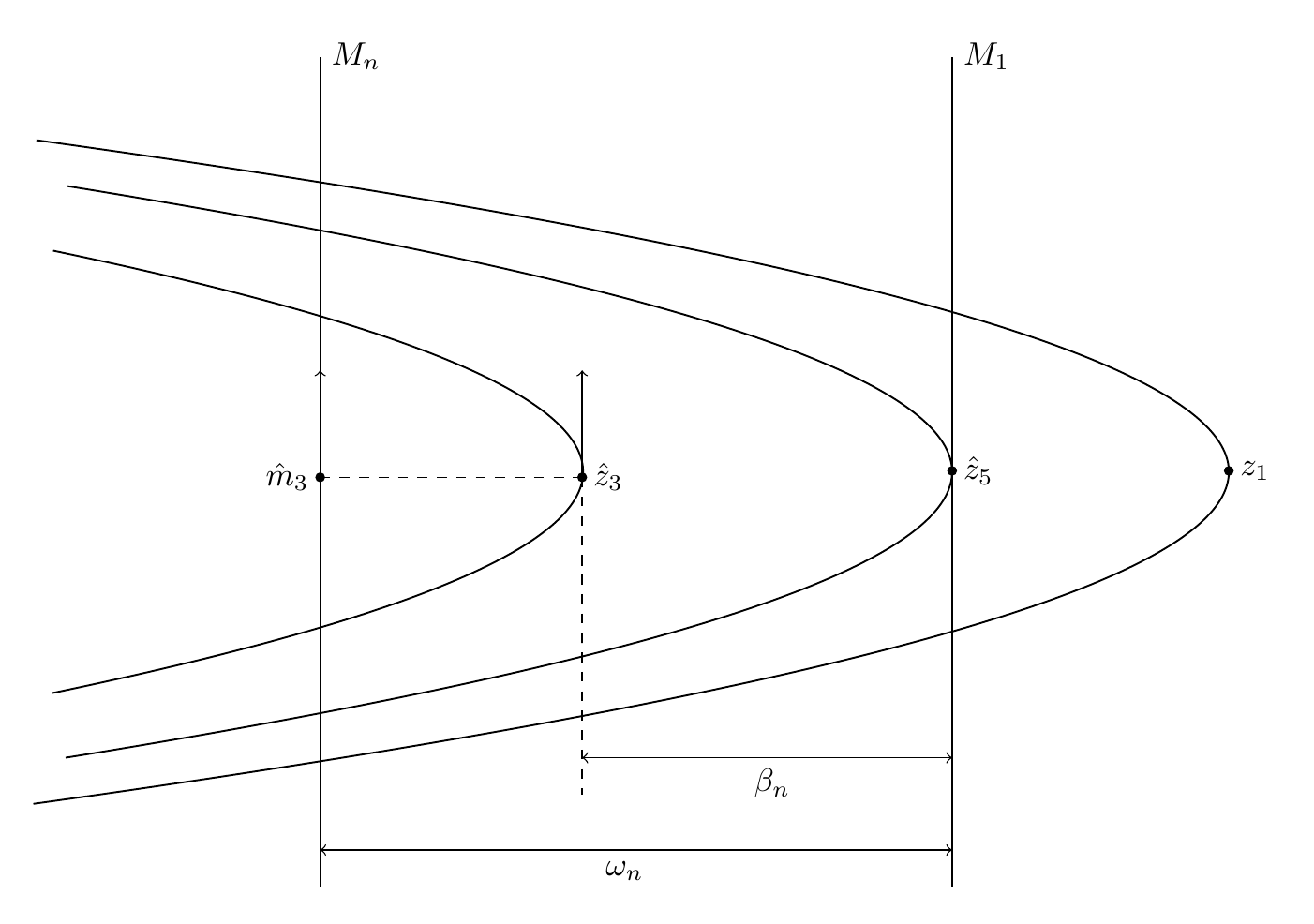}
\caption{Illustration to the proof of Proposition \ref{doubletg}}
\label{Fig9}
\end{figure}
\section{Coexistence of sinks with non-periodic attractors}

In this section we will construct quadratic H\'enon-like maps which have infinitely many sinks and a non-periodic attractor.  The coexistence of sinks with a period doubling attractor is the first example.

\begin{defin}\label{cantorA} Let $M$ be a manifold and $f:M\to M$. An invariant Cantor set $A\subset M$ is called a period doubling Cantor attractor of $f$ if $f|A$ is conjugated to a $2$-adic  adding machine and there is a neighborhood $M\supset U\supset A$ such that the orbit of almost every point in $U$ accumulates at $A$.
\end{defin}

\begin{rem} A period doubling Cantor attractor has zero topological entropy. It carries a unique invariant probability measure. Strongly dissipative H\'enon-like maps at the boundary of chaos have period doubling Cantor attractors, see \cite{CLM}.
\end{rem}

The existence of a double tangency was shown already in  \cite{BP}.  Here we need the following version. 

\begin{prop}\label{doubletg}
There exist two analytic functions $a_{1,2}:[b_{-},b_{+}]\mapsto\R$ such that, for all $b\in [b_{-},b_{+}]$, each H\'enon map $F_{a_1(b),b}$ and $F_{a_2(b),b}$ has a strong homoclinic tangency which depends analytically on $b$. The graphs of the functions $a_{1,2}$ have a unique intersection at $b=b_0$. Moreover the intersection is transversal.
\end{prop}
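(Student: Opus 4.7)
The plan is to produce the two analytic curves by analytic continuation from the one-dimensional quadratic limit $b=0$, using two distinct Misiurewicz-type combinatorics, and then to show that for a suitable choice these continuations must cross once and transversally at some intermediate value $b_0$.

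First I would fix two parameters $a^{(1)}\neq a^{(2)}$ for the one-dimensional quadratic family $g_a(x)=a-x^2$ at which the orbit of the critical point $0$ is preperiodic to the expanding fixed point $p_a=(-1-\sqrt{1+4a})/2$, but with different preperiods $n_1\neq n_2$. The Misiurewicz parameter $a^{(1)}=2$ with $g^2(0)=-2=p_2$ used in the proof of Theorem C is one natural choice; for $a^{(2)}$ any other parameter with $g_a^{n_2}(0)=p_a$ for some $n_2>2$ will do. For each $i$, the condition that the H\'enon map $F_{a,b}$ admits a homoclinic tangency of $p(a,b)$ whose orbit follows the itinerary of the corresponding quadratic critical orbit is an analytic equation $\Phi_i(a,b)=0$ in $(a,b)$ whose restriction to $b=0$ is $g_a^{n_i}(0)-p_a=0$, and at $(a^{(i)},0)$ one has $\partial_a\Phi_i\neq 0$. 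The implicit function theorem then yields real-analytic branches $b\mapsto a_i(b)$ with $a_i(0)=a^{(i)}$ defined on some initial interval $[0,b^*)$ and extending analytically along any arc on which the transversality of $\Phi_i$ persists.

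Next I would verify that the tangencies on these curves are actually strong in the sense of Definition \ref{stronghomtang}. Since $\dim M=2$ and the unstable eigenvalue $\mu(a,b)\approx -2p_x$ is negative and of absolute value close to $4$, the remarks after Definition \ref{stronghomtang} reduce $(f5)$--$(f10)$ to non-degeneracy of the tangency, which is inherited from the quadratic limit. The substantive remaining condition is $(f2)$: since $|\lambda_1(a,b)|=O(b)$, one has $|\lambda_1||\mu|^3=O(b)<1$ for all sufficiently small $b$, uniformly along the two curves. Thus both $a_i$ are curves of strong homoclinic tangencies on some common interval.

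The third step is to force the crossing. Implicit differentiation of $\Phi_i(a_i(b),b)=0$ at $b=0$ expresses $da_i/db(0)$ as a sum along the preperiodic critical orbit of $g_{a^{(i)}}$, explicit in the itinerary and hence in the combinatorics $n_i$. By choosing the two combinatorics so that $a^{(1)}>a^{(2)}$ while $da_2/db(0)>da_1/db(0)$, the real-analytic difference $a_1(b)-a_2(b)$ is positive near $b=0$ and eventually changes sign, so it vanishes at some $b_0\in(0,b^*)$; restricting to a short closed subinterval $[b_-,b_+]\ni b_0$ on which the slopes of $a_1$ and $a_2$ remain distinct makes $b_0$ the unique intersection and the crossing transversal. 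The main obstacle will be the explicit slope comparison: one must carry out the implicit differentiation along two preperiodic quadratic orbits and exhibit a concrete pair $(n_1,n_2)$ for which the resulting algebraic inequality $da_2/db(0)>da_1/db(0)$ holds together with $a^{(1)}>a^{(2)}$. Once such a pair is produced, extending the $a_i$ from $b=0$ to a common interval $[b_-,b_+]$ containing $b_0$ only requires avoiding the discrete set of $b$'s where the tangency combinatorics bifurcate, and uniqueness plus transversality of the intersection follow from real-analyticity together with the distinctness of the slopes at $b_0$.
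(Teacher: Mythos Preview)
Your crossing argument in the third step has a genuine gap. From $a_1(0)-a_2(0)>0$ together with $a_1'(0)<a_2'(0)$ you can only conclude that $a_1(b)-a_2(b)$ is decreasing at $b=0$, not that it ``eventually changes sign''. A real-analytic function which is positive at $0$ with negative derivative there may perfectly well stay positive forever (e.g.\ $1-b+b^2$). Worse, your two curves start at two \emph{distinct} Misiurewicz parameters $a^{(1)}\ne a^{(2)}$, so $a_1(0)-a_2(0)$ is a fixed nonzero number; since the implicit function theorem only produces $a_i(b)$ on a small interval with $a_i(b)-a_i(0)=O(b)$, the difference $a_1(b)-a_2(b)$ remains close to $a^{(1)}-a^{(2)}$ and cannot vanish there. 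Forcing a crossing would require extending both curves over an interval of length comparable to $|a^{(1)}-a^{(2)}|/|a_1'(0)-a_2'(0)|$, and nothing in your outline controls the curves, or even guarantees the tangency combinatorics persist, on such a scale.

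The paper takes a completely different route that avoids this difficulty. Both tangencies are built in the \emph{same} parameter neighbourhood of $(a,b)=(2,0)$, for the \emph{same} saddle. One first fixes the primary first-bifurcation curve $a_1(b)$ (tangency of a fold near $z_5$ with a preimage $M_1$ of the local stable manifold). Then, \emph{along} $a_1$, one looks at a second fold of $W^u$ near $z_3$ and the nearby stable leaves $M_n$; the signed gap decomposes as $\omega_n-\beta_n$ with $\beta_n=Cb^4$ and $\omega_n=C'/|\mu|^n$. Choosing $n$ so that $|\mu|^{-n}\asymp b^4$, one computes directly that $\tfrac{d}{db}(\omega_n-\beta_n)\le -3Cb^3<0$, which both locates a zero $b_0$ and gives transversality of the resulting second tangency curve $a_2$ with $a_1$ at $b_0$. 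The point is that the two curves are not continuations from two separate one-dimensional limits but are forced to meet by a quantitative local estimate; your first-order slope comparison at $b=0$ is too coarse to achieve this.
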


\begin{proof}
Choose a parameter $(a,b)$ close to $(2,0)$. The H\'enon map $F_{a,b}$ has two saddle points. We consider the saddle point $p$ in the positive quadrant where the unstable eigenvalue is negative. Observe that the map $F^2_{a,b}$, for small $b$ and $a$ sufficiently large has a full horse shoe. Let $z_1$, $z_2$, $z_3$, $z_4$ and $z_5$ be the first five points on the right leg of the unstable manifold of $p$ where the tangent space is vertical. 
\\
Consider the local stable manifold at $p$, $M_0$, of unit length. This is an almost vertical curve whose slope is of order of $b$. Let $M_1$ be the local preimage of $M_0$ of unit length around $z_5$. Again this is an almost vertical curve whose slope is of order of $b$. Let $M_n$ be a segment of the stable manifold of $p$ near $z_3$ of unit length  such that $(F^2)^n(M_n)\subset M_0$. Indeed there are many such components. However with an appropriate choice we can assure that each curve is an almost vertical curve of unit length and that the horizontal distance between $M_n$ and $M_1$ is proportional to ${1}/{|\mu|^n}$, where $\mu$ is the square of the unstable eigenvalue of $p$. 
\\
We are now ready to define the $a_1$ curve of tangencies. Namely, choose $b$ small and $a$ large, then $z_5$ is to the right of the curve $M_1$. By diminishing $a$ we find a point, $\hat z_5$, near $z_5$, where the local unstable manifold of $z_5$ is tangent to $M_1$. This tangency persists along a curve of the form $(a_1(b),b)$ where $a_1:[0,b_1]\ni b\mapsto a_1(b)$ is an analytic function, see Figure \ref{Fig9}. 
\\
Take now $b\in [0,b_1]$ and consider the $z_3$ point associate to the map $F_{a_1(b),b}$. Let $n$ be maximal such that $M_n\cap W^u_\text{loc}(z_3)\neq\emptyset$ and let $\hat z_3$ be in the local unstable manifold of $z_3$ such that $$T_{\hat z_3}W^u_{\text{loc}}(z_3)=T_{\hat m_3}M_n,$$ where $\hat m_{3,y}=\hat z_{3,y}$. Let 
$$\omega_n=\hat z_{5,x}-\hat m_{3,x},$$ and 
$$\beta_n=\hat z_{5,x}-\hat z_{3,x},$$ see Figure \ref{Fig9}. Observe that $\omega_n-\beta_n\geq 0$ and when $\hat z_3$ is an homoclinic tangency in $M_n$ then $\beta_n-\omega_n=0$. Moreover 
\begin{equation}\label{eq0newprop}
\beta_n=C b^4\text{ and }\omega_n=\frac{C'}{|\mu|^n},
\end{equation}  where $C$ and $C'$ depend analytically  on the parameter $b$ and they are uniformly bounded away from zero. As a consequence we get
\begin{equation}\label{eq1newprop}
\frac{1}{|\mu|^n}=K b^4,
\end{equation}
 where $K$ is bounded and uniformly bounded away from zero. Moreover we have that 
$$
\frac{d}{db}(\omega_n-\beta_n)=\frac{d}{db}\left(\frac{C'}{|\mu|^n}-Cb^4\right)=O\left(\frac{n}{\mu^n}\right)-4b^3C+O\left(b^4\right).
$$
By using (\ref{eq1newprop}), we get
\begin{equation}\label{eq2newprop}
\frac{d}{db}(\omega_n-\beta_n)=O\left(\log\left(\frac{1}{b}\right)b^4\right)-4b^3C\leq -3C b^3<0,
\end{equation}
for $b$ small enough.
Because $\omega_n-\beta_n=K_1 b^4$, see (\ref{eq0newprop}) and (\ref{eq1newprop}), there exists $\Delta b>0$ of order $b$, such that $\omega_n=\beta_n$ at $b+\Delta b$. Hence, for every $b$ there exists $b_0=O\left(b\right)$ such that $\hat z_3$ is an homoclinic tangency for the parameter $(a_1(b_0),b_0)$. This tangency persists along an analytic curve $a_2:b\mapsto (a_2(b),b)$ defined in a neighborhood of $b_0$. Because of (\ref{eq2newprop}), the graphs of $a_1$ and $a_2$ intersect transversally.
\end{proof}
A diffeomorphism of the plane $\mathbb{R}^2$ of the form 
\begin{equation}\label{cubichenon}
F_{a,b,\tau}\left(\begin{matrix}
x\\y
\end{matrix}
\right)=\left(\begin{matrix}
a-x^2-by+\tau y^2
\\
x
\end{matrix}
\right)
\end{equation}
is called a {\it quadratic H\'enon-like map}.
\begin{figure}
\centering
\includegraphics[width=0.76\textwidth]{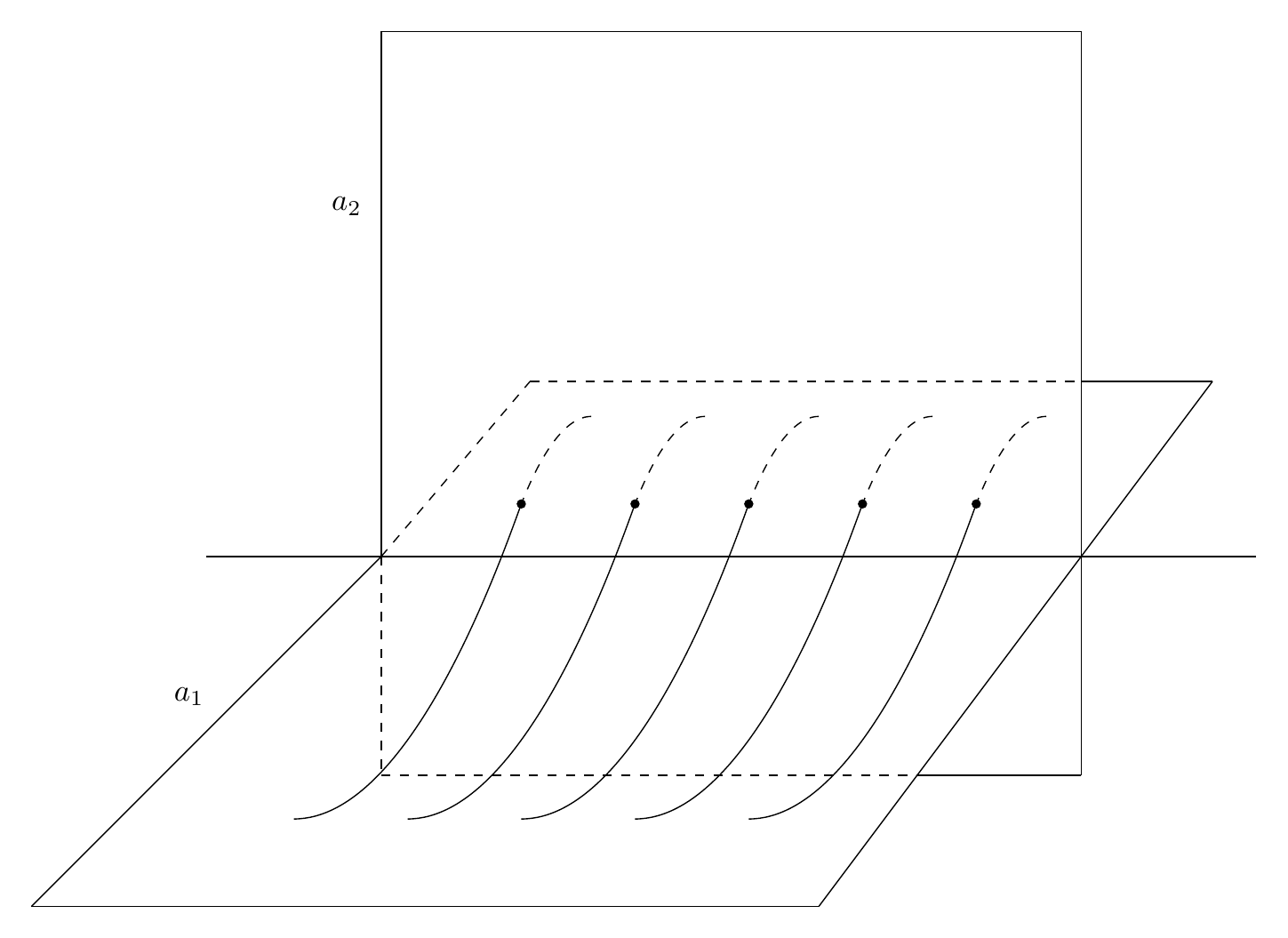}
\caption{The surfaces of tangencies and Newhouse lines}
\label{Fig8}
\end{figure}

\vskip .2 cm
\paragraph{\bf Theorem D.} There are uncountable many quadratic H\'enon-like maps with infinitely many sinks and a period doubling Cantor attractor.

\begin{proof}
The analytic functions $a_{1,2}$ from Proposition \ref{doubletg}, describing the homoclinic tangencies at $z_1$ and $z_2$, have a real-analytic extension to $[b_{-},b_{+}]\times [-\tau_0,\tau_0]$ for some $\tau_0>0$. Denote the graphs of the extensions of $a_{1,2}$ also by $a_{1,2}$. The transversality of the intersection between $a_1$ and $a_2$ at $\tau=0$ implies that the graph of $a_1$ and $a_2$ intersects in an analytic curve.
Along this curve there are two strong homoclinic tangencies. The graphs intersect transversally. We can locally reparametrize the quadratic H\'enon-like family near $(a_1(b_0),b_0,0)$ to obtain a family $F_{t, a,\tau }$ such that, the maps $F_{t,a,0}$ are the maps in the graph of $a_2$ with a strong homoclinic tangency at $z_2$ and the maps $F_{t,0,\tau}$ are the maps in the graph of $a_1$ with a strong homoclinic tangency at $z_1$. The curve $F_{t,0,0}$ consists of the maps in the intersection of the two graphs having two strong homoclinic tangencies, see Figure \ref{Fig8}. In particular $F_{0,0,0}$ is the H\'enon map with a double tangency.
\\
If $\partial\mu/\partial t\neq 0$ at $(0,0,0)$, then the family $F_{t,a,0}$ is an unfolding of the strong homoclinic tangency near the point $z_2$.  According to Theorem A the family $(t,a)\mapsto F_{t,a,0}$ contains an uncountable set $NH$ of parameters of maps with infinitely many sinks. This set accumulates at a segment of parameters $(t,0,0)$. According to Theorem B and Proposition \ref{analyticleaves}, each point $m\in NH$ is contained in an analytic curve $\gamma_m: \tau\mapsto (t(\tau),a(\tau),\tau)$. These curves are pairwise disjoint forming a lamination. Each curve is a one-parameter unfolding of the homoclinic tangency near $z_2$ with $\tau=0$.
\\
If $\partial\mu/\partial t= 0$ at $(0,0,0)$, then without loss of generality we may assume that $\partial\mu/\partial \tau\neq 0$ at $(0,0,0)$. This follows from the transversality between the curves $a_1$ and $a_2$ in the H\'enon family and the fact that this double homoclinic tangencies occur near $a=2$ and $b=0$ where $\partial\mu/\partial a\neq 0$.  By reparametrizing the $\tau,t$ coordinates we may assume that the $\tau$-axis is perpendicular to the $t$-axis. For $\epsilon>0$ very small, consider the family $G_{t,a}=F_{t,a,\epsilon\tau}$. This family is very close to the family  $F_{t,a,0}$. As consequence $G_{t,a}$ contains a curve close to the $t$-axis of maps with a tangency near $z_1$. The derivative at $(0,0)$ of the eigenvalue $\mu$ along this curve is non zero by construction. Hence the family $G_{t,a}$ is an unfolding of a tangency near $z_1$. According to Theorem A the family $(t,a)\mapsto G_{t,a}$ contains an uncountable set $NH$ of parameters of maps with infinitely many sinks. This set accumulates at the curve of tangencies near $z_1$. According to Theorem B and Proposition \ref{analyticleaves}, each point $m\in NH$ is contained in an analytic curve $\gamma_m: \tau\mapsto (t(\tau),a(\tau),\tau)$. These curves are pairwise disjoint forming a lamination. Moreover they have a definite length and they are graphs over the $\tau$ direction. These curves accumulate at $(0,0,0)$. As consequence the curves which are very close to $(0,0,0)$ intersect transversally the family $F_{t,a,0}$. In particular each curve is a one-parameter unfolding of the homoclinic tangency near $z_2$ with $\tau=0$.
\\
Consider one of the curves $\gamma_m$. For $n\ge 1$ large enough there exits, according to \cite{PT}, a rectangle $Q_n$ near $z_2$ and an interval $[\tau^0_n,\tau^1_n]$ such that the family
$$
[\tau^0_n,\tau^1_n]\ni \tau \mapsto F^n_{\gamma_m(\tau)}|Q_n,
$$
after an analytic change of coordinates becomes a one-parameter family of H\'enon-like maps which is exponentially $\Cuno$ close to the degenerated H\'enon family
$$
F_{a,0}\left(\begin{matrix}
x\\y
\end{matrix}
\right)=\left(\begin{matrix}
a-x^2\\x
\end{matrix}
\right).
$$
At $\tau^0_n$ the map is exponentially close to $F_{0,0}$, a map with a sink and at $\tau^1_n$ the map is exponentially close to $F_{3,0}$, a map with a fully developed horse shoe. According to \cite{CLM}, for $n\ge 1$ large enough,  there is a unique $\tau\in [\tau^0_n,\tau^1_n]$ such that $F^n_{\gamma_m(\tau)}|Q_n$ has a period doubling Cantor attractor. Namely, at $\tau$ the curve $
[\tau^0_n,\tau^1_n]\ni \tau \mapsto F^n_{\gamma_m(\tau)}|Q_n
$ crosses transversally a codimension one manifold of maps which have a period doubling Cantor attractor.
\end{proof}
The coexistence of sinks with a strange attractor is discussed next.

\begin{defin} \label{strange} Let $M$ be a manifold and $f:M\to M$. An open set $U\subset M$ is called a trapping region if $\overline{f(U)}\subset U$. An attractor in the sense of Conley is
$$
\Lambda=\bigcap_{j\ge 0} f^j(U).
$$
The attractor $\Lambda$ is called topologically transitive if it contains a  dense orbit. If $\Lambda$ contains a dense orbit which satisfies the Collet-Eckmann condition, i.e. there exist a point $z$, a vector $v\in T_zM$ and a constant $\kappa>0$ such that $$|Df^n(z)v|\geq e^{\kappa n} \text{ for all } n>0,$$ then $\Lambda$ is called a strange attractor.
\end{defin}

\vskip .2 cm
\paragraph{\bf Theorem E.} The set of quadratic H\'enon-like maps with infinitely many sinks and a strange attractor has Hausdorff dimension at least one.

\begin{proof} Consider the curves $\gamma_m$ introduced in the proof of Theorem D. The maps in these curves have infinitely many sinks. Moreover, they are one-parameter analytic unfoldings of a homoclinic tangency, see Proposition \ref{analyticleaves}. According to \cite{BC} and \cite{MV} each curve contains a set of positive arclength measure of maps with a strange attractor.
\end{proof}

There might be a H\'enon map which have infinitely many sinks and a period doubling Cantor attractor. However, the two-dimensional 
H\'enon family is not the natural family in which to look for such dynamics. This coexistence type of infinitely many sinks and a period doubling Cantor attractor is a codimension three phenomenon. This comes from the fact that the codimension two Newhouse leaves intersect transversally the codimension one manifold of maps with a period doubling attractor. The proof of Theorem D actually shows:

\vskip .2 cm
\paragraph{\bf Theorem F.}
The space $\text{Poly}_d({\mathbb{R}^2})$, $d\ge 2$,  of real polynomials of $\mathbb{R}^2$ of degree at most $d$ contains a codimension $3$ lamination of maps with infinitely many sinks and a period doubling Cantor attractor. The lamination is homeomorphic to $\left(\mathbb R\setminus\mathbb Q\right)\times\mathbb R^{\text{D}-3}$ where $\text{D}$ is the dimension of $\text{Poly}_d({\mathbb{R}^2})$ and the leaves of the lamination are real-analytic. The sinks and the period doubling Cantor attractor persist along the leaves. 
\vskip .2 cm

As a consequence of the proof of Theorem D we have the following:
\vskip .2 cm
\paragraph{\bf Theorem G.}
Every $d+2$-dimensional unfolding, $d\geq 1$, of a map with a strong homoclinic tangency contains smooth $d$-dimensional families of maps where each map has infinitely many sinks.
In particular, there are non trivial $d$-dimensional analytic families of polynomial H\'enon-like maps in which every map has infinitely many sinks.

\end{document}